\definecolor{winered}{rgb}{0.6,0,0}
\definecolor{lessblue}{rgb}{0,0,0.7}
\newcommand{\myitem}[3]{\item[#2]\def\@currentlabel{#3}\label{#1}}
\def\@tocline#1#2#3#4#5#6#7{
\begingroup
  \par
    \parindent\z@ \leftskip#3 \relax \advance\leftskip\@tempdima\relax
                  \rightskip\@pnumwidth plus 4em \parfillskip-\@pnumwidth
    \ifcase #1 
       \vskip 0.6em \hskip 0em 
       \or
       \or \hskip 0em 
       \or \hskip 1em 
    \fi%
    %
    #6
    %
    \nobreak\relax{\leavevmode\leaders\hbox{\,.}\hfill}
    \hbox to\@pnumwidth {\@tocpagenum{#7}}
  \par
\endgroup
}
 \def\l@section{\@tocline{0}{0pt}{0pc}{}{}}
\renewcommand{\tocsection}[3]{%
  \indentlabel{\@ifnotempty{#2}{ 
    \ignorespaces\bfseries{#2. #3}}}
  \indentlabel{\@ifempty{#2}{\ignorespaces\bfseries{#3}}{}} 
    \vspace{1.5pt}}
\renewcommand{\tocsubsection}[3]{%
  \indentlabel{\@ifnotempty{#2}{
    \ignorespaces#2. #3}}
  \indentlabel{\@ifempty{#2}{\ignorespaces #3}{}}
    \vspace{1.5pt}}
\renewcommand{\tocsubsubsection}[3]{%
  \indentlabel{\@ifnotempty{#2}{
    \ignorespaces#2. #3}}
  \indentlabel{\@ifempty{#2}{\ignorespaces #3}{}}
    \vspace{1.5pt}}
\def\@nomenstarted{0}
\newlength{\@nomenoldtabcolsep}
\newcommand{\nomenstart}
  {%
    \def\@nomenstarted{1}%
    \setlength{\@nomenoldtabcolsep}{\tabcolsep}%
    \setlength{\tabcolsep}{3.5pt}%
    \begin{longtable}{p{0.11\textwidth} p{0.86\textwidth}}
  }
\newcommand{\nomenitem}[2]{%
    \ifcase\@nomenstarted%
      \or 
      \or \\ 
    \fi%
    #1\,{\leavevmode\leaders\hbox{\,.}\hfill} & #2%
    \def\@nomenstarted{2}%
  }%
\newcommand{\nomenend}
  {\\%
      \end{longtable}%
      \setlength{\tabcolsep}{\@nomenoldtabcolsep}%
      \def\@nomenstarted{0}%
  }
\newcommand{\vast}{\bBigg@{4}}
\newcommand{\Vast}{\bBigg@{5}}
\newcommand{\VAST}[1]{\bBigg@{#1}}
\numberwithin{equation}{section}
\numberwithin{figure}{section}
\newtheorem{thm}{Theorem}[section]
\newtheorem{prop}[thm]{Proposition}
\newtheorem{lemma}[thm]{Lemma}
\newtheorem{cor}[thm]{Corollary}
\newtheorem*{thm*}{Theorem}
\newtheorem*{prop*}{Proposition}
\newtheorem*{cor*}{Corollary}
\newtheorem*{conj*}{Conjecture}
\theoremstyle{definition}
\newtheorem{definition}[thm]{Definition}
\theoremstyle{remark}
\newtheorem{rmk}[thm]{Remark}
\newtheorem{example}[thm]{Example}
\newcommand{\fakephantomsection}{%
  \Hy@MakeCurrentHref{\@currenvir.\the\Hy@linkcounter}
  \Hy@raisedlink{\hyper@anchorstart{\@currentHref}\hyper@anchorend}%
  \Hy@GlobalStepCount\Hy@linkcounter%
}
\newcommand{\mc}{\mathcal}
\newcommand{\cA}{\mc A}
\newcommand{\cC}{\mc C}
\newcommand{\cD}{\mc D}
\newcommand{\cE}{\mc E}
\newcommand{\cF}{\mc F}
\newcommand{\cG}{\mc G}
\newcommand{\cK}{\mc K}
\newcommand{\cL}{\mc L}
\newcommand{\cM}{\mc M}
\newcommand{\cN}{\mc N}
\newcommand{\cO}{\mc O}
\newcommand{\cP}{\mc P}
\newcommand{\cR}{\mc R}
\newcommand{\cS}{\mc S}
\newcommand{\cU}{\mc U}
\newcommand{\cV}{\mc V}
\newcommand{\cX}{\mc X}
\newcommand{\cY}{\mc Y}
\newcommand{\ms}{\mathscr}
\newcommand{\sD}{\ms D}
\newcommand{\C}{\mathbb{C}}
\newcommand{\N}{\mathbb{N}}
\newcommand{\R}{\mathbb{R}}
\newcommand{\Z}{\mathbb{Z}}
\newcommand{\Sph}{\mathbb{S}}
\newcommand{\sfs}{\mathsf{s}}
\newcommand{\bfw}{\mathbf{w}}
\newcommand{\bfB}{\mathbf{B}}
\newcommand{\fa}{\mathfrak{a}}
\newcommand{\fe}{\mathfrak{e}}
\newcommand{\fm}{\mathfrak{m}}
\newcommand{\fp}{\mathfrak{p}}
\newcommand{\ft}{\mathfrak{t}}
\newcommand{\ran}{\operatorname{ran}}
\newcommand{\End}{\operatorname{End}}
\newcommand{\Hom}{\operatorname{Hom}}
\renewcommand{\Re}{\operatorname{Re}}
\renewcommand{\Im}{\operatorname{Im}}
\newcommand{\Id}{\operatorname{Id}}
\newcommand{\mathspan}{\operatorname{span}}
\newcommand{\supp}{\operatorname{supp}}
\newcommand{\tr}{\operatorname{tr}}
\newcommand{\diag}{\operatorname{diag}}
\newcommand{\eps}{\epsilon}
\newcommand{\hra}{\hookrightarrow}
\newcommand{\la}{\langle}
\newcommand{\extcup}{\mathrel{\ol\cup}}
\newcommand{\ol}{\overline}
\newcommand{\pa}{\partial}
\newcommand{\dd}{{\mathrm d}}
\newcommand{\ra}{\rangle}
\newcommand{\ul}[1]{\underline{#1}{}}
\newcommand{\wh}{\widehat}
\newcommand{\wt}{\widetilde}
\newcommand{\xra}{\xrightarrow}
\newcommand{\ubar}[1]{\underaccent{\bar}#1}
\newcommand{\pfstep}[1]{$\bullet$\ \underline{\textit{#1}}}
\newcommand{\bop}{{\mathrm{b}}}
\newcommand{\qop}{{\mathrm{q}}}
\newcommand{\scop}{{\mathrm{sc}}}
\newcommand{\cl}{{\mathrm{cl}}}
\newcommand{\cp}{{\mathrm{c}}}
\newcommand{\Diff}{\mathrm{Diff}}
\DeclareMathOperator{\Op}{Op}
\newcommand{\Vb}{\cV_\bop}
\newcommand{\Vq}{\cV_\qop}
\newcommand{\Diffb}{\Diff_\bop}
\newcommand{\Diffq}{\Diff_\qop}
\newcommand{\Vsc}{\cV_\scop}
\newcommand{\Omegab}{{}^{\bop}\Omega}
\newcommand{\Tb}{{}^{\bop}T}
\newcommand{\Tq}{{}^\qop T}
\newcommand{\Tsc}{{}^{\scop}T}
\newcommand{\sigmab}{{}^\bop\upsigma}
\newcommand{\sigmaq}{{}^\qop\upsigma}
\newcommand{\loc}{{\mathrm{loc}}}
\newcommand{\CI}{\cC^\infty}
\newcommand{\CIdot}{\dot\cC^\infty}
\newcommand{\CIc}{\cC^\infty_\cp}
\newcommand{\Hb}{H_{\bop}}
\newcommand{\Hbext}{\bar H_{\bop}}
\newcommand{\phg}{{\mathrm{phg}}}
\newcommand{\Ric}{\mathrm{Ric}}
\newcommand{\Ein}{\mathrm{Ein}}
\newcommand{\bhm}{\fm}
\newcommand{\bha}{\fa}
\newcommand{\openbigpmatrix}[1]
  {%
    \def\@bigpmatrixsize{#1}%
    \addtolength{\arraycolsep}{-#1}%
    \begin{pmatrix}%
  }
\newcommand{\closebigpmatrix}
  {%
    \end{pmatrix}%
    \addtolength{\arraycolsep}{\@bigpmatrixsize}%
  }
\newlength{\enummargin}\setlength{\enummargin}{1.5em}
\newcommand{\usref}[1]{{\upshape\ref{#1}}}
\newcommand*{\fwbw}[1]{\expandafter\@fwbw\csname c@#1\endcsname}
\newcommand*{\@fwbw}[1]{\ifcase #1 \or {\rm fw}\or {\rm bw}\fi}
\AddEnumerateCounter{\fwbw}{\@fwbw}
\begin{document}

\title{Gluing small black holes into initial data sets}

\date{\today}


\subjclass[2010]{Primary 83C05, 35B25, Secondary 35C20, 35N10, 83C57}

\author{Peter Hintz}
\address{Department of Mathematics, ETH Z\"urich, R\"amistrasse 101, 8092 Z\"urich, Switzerland}
\email{peter.hintz@math.ethz.ch}

\begin{abstract}
  We prove a strong localized gluing result for the general relativistic constraint equations (with or without cosmological constant) in $n\geq 3$ spatial dimensions. We glue an $\eps$-rescaling of an asymptotically flat data set $(\hat\gamma,\hat k)$ into the neighborhood of a point $\fp\in X$ inside of another initial data set $(X,\gamma,k)$, under a local genericity condition (non-existence of KIDs) near $\fp$. As the scaling parameter $\eps$ tends to $0$, the rescalings $\frac{x}{\eps}$ of normal coordinates $x$ on $X$ around $\fp$ become asymptotically flat coordinates on the asymptotically flat data set; outside of any neighborhood of $\fp$ on the other hand, the glued initial data converge back to $(\gamma,k)$. The initial data we construct enjoy polyhomogeneous regularity jointly in $\eps$ and the (rescaled) spatial coordinates.

  Applying our construction to unit mass black hole data sets $(X,\gamma,k)$ and appropriate boosted Kerr initial data sets $(\hat\gamma,\hat k)$ produces initial data which conjecturally evolve into the extreme mass ratio inspiral of a unit mass and a mass $\eps$ black hole.

  The proof combines a variant of the gluing method introduced by Corvino and Schoen with geometric singular analysis techniques originating in Melrose's work. On a technical level, we present a fully geometric microlocal treatment of the solvability theory for the linearized constraints map.
\end{abstract}

\maketitle

\tableofcontents

\section{Introduction}
\label{SI}

Let $n\geq 3$. For a smooth Riemannian $n$-manifold $(X,\gamma)$ and a symmetric 2-tensor $k$ on $X$, the \emph{constraint equations} for $\gamma,k$ are
\begin{equation}
\label{EqICE}
  \begin{cases}
    R_\gamma - |k|_\gamma^2 + (\tr_\gamma k)^2 = 2\Lambda, \\
    \delta_\gamma k + \dd(\tr_\gamma k) = 0.
  \end{cases}
\end{equation}
Here, $R_\gamma$ is the scalar curvature of $\gamma$, $\delta_\gamma$ is the negative divergence, and $\Lambda\in\R$ is the cosmological constant. We say that $(X,\gamma,k)$ is an \emph{initial data set} if~\eqref{EqICE} holds. Given a Lorentzian manifold $(M,g)$ of dimension $(n+1)$ and signature $(-,+,\ldots,+)$, and given an embedded spacelike hypersurface $X\subset M$, the first and second fundamental form $\gamma$ and $k$ of $X$ satisfy~\eqref{EqICE} provided $g$ satisfies the Einstein vacuum equations
\begin{equation}
\label{EqIEVE}
  \Ein(g)+\Lambda g=0,\qquad \Ein(g):=\Ric(g)-\frac12 R_g g.
\end{equation}
The fundamental theorems of Choquet-Bruhat and Geroch \cite{ChoquetBruhatLocalEinstein,ChoquetBruhatGerochMGHD} show, conversely, that given $(X,\gamma,k)$ satisfying~\eqref{EqICE}, there exists a maximal globally hyperbolic spacetime $(M,g)$, unique up to isometries, which solves the Einstein vacuum equations and into which $X$ embeds with first and second fundamental form given by $\gamma$ and $k$, respectively.

We consider the problem of gluing the initial data of a small asymptotically flat black hole (such as a Schwarzschild or Kerr black hole with small mass), or more generally of a rescaled asymptotically flat initial data set $(\R^n\setminus\hat K^\circ,\hat\gamma,\hat k)$ with cosmological constant $0$, into a neighborhood of a point $\fp\in X$ in a given generic (near $\fp$) smooth initial data set $(X,\gamma,k)$ with arbitrary cosmological constant $\Lambda\in\R$. Here $\hat K\subset\R^n$ is compact (and possibly empty); and the asymptotic flatness condition, in standard coordinates $\hat x\in\R^n$, means that $\hat\gamma(\pa_{\hat x^i},\pa_{\hat x^j})\to\delta_{i j}$ and $k(\pa_{\hat x^i},\pa_{\hat x^j})\to 0$ as $|\hat x|\to\infty$, with appropriate rates of convergence. (See Definition~\ref{DefCEAf} for the precise definition used in this paper.)

\begin{thm}[Main result, rough version]
\label{ThmI}
  Assume that $X$ is generic in a connected neighborhood $\cU^\circ$ of $\fp$ (in the sense that it does not admit any KIDs in $\cU^\circ$, see Definition~\usref{DefCEPctKID}). Then there exist $\eps_\sharp>0$ and a family $(X_\eps,\gamma_\eps,k_\eps)$, $\eps\in(0,\eps_\sharp)$, of initial data sets with cosmological constant $\Lambda$ with the following properties.
  \begin{enumerate}
  \item On $X\setminus\cU^\circ\subset X_\eps$, we have $(\gamma_\eps,k_\eps)=(\gamma,k)$.
  \item In geodesic normal coordinates $x=(x^1,\ldots,x^n)\in\R^n$ around $\fp\in X$, the manifold $X_\eps$ is equal to $B(0,1)\setminus\eps\hat K^\circ$, and we have smooth convergence $(\gamma_\eps,k_\eps)\to(\gamma,k)$ as $\eps\searrow 0$ in $|x|>\delta$ for any $\delta>0$. (That is, the matrix coefficients $(\gamma_\eps)_{i j}=\gamma_\eps(\pa_{x^i},\pa_{x^j})$ and $k_\eps(\pa_{x^i},\pa_{x^j})$ converge to those of $\gamma$ and $k$.)
  \item\label{ItIhat} The tensors $\gamma_\eps|_{\eps\hat x}=(\gamma_\eps|_{\eps\hat x}(\pa_{x^i},\pa_{x^j}))_{i,j=1,\ldots,n}$ and $\eps k_\eps|_{\eps\hat x}$ converge, smoothly and locally uniformly in $\hat x\in\R^n\setminus\hat K^\circ$, to $\hat\gamma|_{\hat x}=(\hat\gamma|_{\hat x}(\pa_{\hat x^i},\pa_{\hat x^j}))$ and $\hat k|_{\hat x}$, respectively, as $\eps\searrow 0$.
  \end{enumerate}
\end{thm}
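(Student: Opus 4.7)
The plan is to carry out a Corvino--Schoen-type correction scheme, made uniform in the small parameter $\eps$ by geometric singular analysis on a blown-up space. Writing $P(\gamma,k) := \bigl(R_\gamma - |k|_\gamma^2 + (\tr_\gamma k)^2,\ \delta_\gamma k + \dd(\tr_\gamma k)\bigr)$, an initial data set is a solution of $P(\gamma,k) = (2\Lambda,0)$. First I would construct an approximate solution $(\gamma_\eps^0,k_\eps^0)$ by an interpolation ansatz in geodesic normal coordinates $x$ around $\fp$: on $|x|\geq 2\delta(\eps)$ take $(\gamma,k)$, on $|x|\leq\delta(\eps)$ take the rescaled data obtained by pulling back $\hat\gamma$ and $\eps^{-1}\hat k$ under $x=\eps\hat x$, and interpolate between them in the annulus $\delta(\eps)\leq|x|\leq 2\delta(\eps)$ with a cutoff (taking $\delta(\eps)$ an intermediate scale, e.g.\ $\delta(\eps)=\eps^{1/2}$). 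Because $(\gamma,k)$ near $\fp$ and $(\hat\gamma,\hat k)$ at infinity are both asymptotically Euclidean at leading order, the constraint error $e_\eps := P(\gamma_\eps^0,k_\eps^0) - (2\Lambda,0)$ is supported in the annular gluing region and, in appropriate $\eps$-weighted norms on the resolved space introduced below, tends to zero as $\eps\searrow 0$.

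\textbf{Correction ansatz.} To upgrade $(\gamma_\eps^0,k_\eps^0)$ to an exact solution I would seek $(\gamma_\eps,k_\eps) = (\gamma_\eps^0,k_\eps^0) + D P^*_{(\gamma_\eps^0,k_\eps^0)} u_\eps$, where $u_\eps=(f_\eps,Z_\eps)$ is a pair of a scalar and a vector field supported essentially in $\cU^\circ$ and vanishing to infinite order at $\pa\cU^\circ$; this is the Corvino--Schoen ansatz and, combined with suitable weights near $\pa\cU^\circ$, makes property~(1) automatic by extension by zero. Substituting into the constraint map yields a nonlinear equation
\begin{equation*}
  L_\eps u_\eps + Q_\eps(u_\eps) = -e_\eps,\qquad L_\eps := D P_{(\gamma_\eps^0,k_\eps^0)}\circ D P^*_{(\gamma_\eps^0,k_\eps^0)},
\end{equation*}
with $Q_\eps$ quadratic and higher. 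The absence of KIDs on $\cU^\circ$ forces $D P^*$ to be injective on the relevant classes, so $L_\eps$ is formally elliptic and, for appropriately chosen weights, invertible. The substance of the proof is to establish this invertibility with estimates uniform in $\eps$.

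\textbf{Uniform linear theory.} For the uniform inversion of $L_\eps$ I would work on the resolved manifold $\cX$ obtained from $[0,\eps_\sharp)_\eps\times X$ by blowing up $\{(0,\fp)\}$. This introduces a new front face $\rmf$ diffeomorphic to the radial compactification of $\R^n\setminus\hat K^\circ$, with interior parametrized by $\hat x=x/\eps$, and a side face $\wt X$ equal to $X$ with $\fp$ blown up. The operator $L_\eps$ extends to a member of a geometric operator calculus on $\cX$ (a hybrid of $\bop$-, scattering-, and edge-type operators dictated by the local structure), with two principal normal operators: at $\rmf$, the analogous CS-double operator on the asymptotically flat data set $(\R^n\setminus\hat K^\circ,\hat\gamma,\hat k)$; at $\wt X$, the CS-double operator on $(X,\gamma,k)$ regarded as a $\bop$-operator at the blow-up of $\fp$. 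Uniform Fredholm estimates for $L_\eps$ then follow from the elliptic symbol calculus on $\cX$ together with invertibility of these two model operators in suitably weighted Sobolev spaces; the joint polyhomogeneity of $u_\eps$ in $\eps$ and $\hat x$ asserted by the theorem is bootstrapped from the weighted-Sobolev solution using the structure of the normal operators (standard phg-regularity in Melrose's calculus).

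\textbf{Nonlinear closure and main obstacle.} Given a right inverse $T_\eps$ for $L_\eps$ bounded uniformly in $\eps$ on the chosen spaces, the nonlinear equation $u_\eps = -T_\eps\bigl(e_\eps + Q_\eps(u_\eps)\bigr)$ has a unique small fixed point by contraction mapping, with size controlled by $\|T_\eps e_\eps\|$, which can be made small in $\eps$ by the weight choices and the smallness of $e_\eps$. Property~(2) is recovered by restricting $(\gamma_\eps,k_\eps)$ to the interior of $\wt X$, and property~(3) by restricting to the interior of $\rmf$; property~(1) is the Corvino--Schoen localization. The main obstacle is the uniform linear theory on $\cX$: one must set up the appropriate operator calculus, identify and match weights at the two boundary hypersurfaces (indicial roots at the b-end of $\wt X$ near the lift of $\fp$, decay exponents at spatial infinity on $\rmf$), verify that no threshold values obstruct Fredholmness across the corner $\rmf\cap\wt X$, and check invertibility of both normal operators in the resulting function spaces---the first using asymptotic rigidity of $(\hat\gamma,\hat k)$ and the second using the non-KID condition near $\fp$. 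Carrying out this two-scale microlocal package while preserving the Corvino--Schoen localization is where the bulk of the technical work lies.
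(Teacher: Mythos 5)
Your proposal identifies the correct geometric-microlocal framework — the blow-up of $[0,\eps_\sharp)\times X$ at $\{(0,\fp)\}$, the two boundary hypersurfaces with their model (normal) operators, the Corvino--Schoen localization via exponential weights at $\pa\cU^\circ$, the use of the non-KID condition to invert the $X$-side normal operator, and a fixed-point closure. However, the approximate solution on which everything rests is constructed by cutoff interpolation at an intermediate scale $\delta(\eps)=\eps^{1/2}$, and this is precisely the ``bare interpolation'' that the paper's Remark~\ref{RmkIGCorvino} introduces only to argue it does \emph{not} work. Two problems arise.

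First, the constraint violation of $\chi(|x|/\eta)\hat\gamma(x/\eps)+(1-\chi(|x|/\eta))\gamma(x)$ in the annulus $|x|\simeq\eta$ is of size $\cO(1+\eps^{1+\delta}\eta^{-2-\delta})$ in the sup norm — in particular $\cO(1)$, not small — because the two metrics differ from Euclidean by terms of size $\eta^2$ (resp.\ $(\eta/\eps)^{-\delta}$), which get hit by $\eta^{-2}$ from the second derivatives of the cutoff. You assert this error ``tends to zero in appropriate $\eps$-weighted norms on the resolved space,'' but this is exactly the step the paper flags as non-obvious; even if one rescales to $\tilde x=x/\eta$ to make the error small, the resulting operator degenerates to the Euclidean constraints operator on the annulus (approximate cokernel), while trying to exploit genericity on a fixed neighborhood of $\fp$ requires working on a domain of diameter $\simeq\eta^{-1}\to\infty$ in $\tilde x$-coordinates. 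Second, and structurally fatal for the machinery you set up: the function $|x|/\eps^{1/2}=(\hat\rho/\rho_\circ)^{1/2}$ is not a boundary defining function on $\wt X$ but a ratio of two of them, so the cutoff $\chi(|x|/\eps^{1/2})$ and hence your $(\gamma^0_\eps,k^0_\eps)$ and its constraint error are not conormal on $\wt X$; the error is supported in a wedge asymptoting to the codimension-two corner, exactly where the b-/q-calculus on $\wt X$ has no leverage. The paper instead uses the \emph{additive} interpolation $(\wt\gamma_0)_{ij}=\gamma_{ij}(x)-\delta_{ij}+\hat\gamma_{\hat i\hat j}(x/\eps)$ (Equation~\eqref{ItIGCorvinoNo}, and~\eqref{EqFoNaive}), which lifts to a genuinely polyhomogeneous total family on $\wt X$ with the prescribed boundary data, and whose constraint violation automatically vanishes to leading order at \emph{both} boundary faces because both boundary data sets already solve the constraints.

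Related to this, you skip the formal-solution step entirely (Step~\eqref{ItIG2} in the paper's outline, carried out in Proposition~\ref{PropGlFo}) and go directly to the fixed point. In the paper's scheme, that step iteratively solves away the polyhomogeneous expansion of the error at both boundary faces, so the input to the contraction mapping vanishes to infinite order at $\eps=0$; this is what lets the contraction mapping close with $\cO(\eps^N)$-small input for every $N$, and is also what produces the infinite-order vanishing of the correction needed for the convergence statements (2) and (3). Without an approximate solution whose error already decays at a high rate at both boundary faces, your estimate ``$\|T_\eps e_\eps\|$ small'' is unsubstantiated, and even if the fixed point existed, the limits in (2) and (3) would not follow, because the correction would have no reason to vanish at $\hat X$ and $X_\circ$.
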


To explain part~\eqref{ItIhat}, we first note that $\eps^2(\hat\gamma,\hat k)$ is a rescaled asymptotically flat data set: asymptotically flat coordinates for it are $\eps\hat x$, which as $\eps\searrow 0$ become \emph{local} coordinates $x$ on $X$ near $\fp$. Note that the ADM mass of $\eps^2(\hat\gamma,\hat k)$ is $\eps$ times that of $(\hat\gamma,\hat k)$. Since in the coordinates $\hat x=\frac{x}{\eps}$ we have $\pa_{\hat x^i}=\eps\pa_{x^i}$, part~\eqref{ItIhat} states that $(\gamma_\eps,\eps k_\eps)\approx\eps^2(\hat\gamma,\hat k)$ in $x$-coordinates when $|x|\lesssim\eps$.\footnote{The $\eps^{-1}$-scaling of the second fundamental form in $(\gamma_\eps,k_\eps)\approx(\eps^2\hat\gamma,\eps^{-1}\eps^2\hat k)$ arises from the scaling properties of~\eqref{EqICE}, see Lemma~\ref{LemmaCETotScale}; heuristically, it follows from the fact that the future unit normal for the embedding of $(X,\gamma,k)$ into the spacetime $M=\R_t\times X$ is, near $\fp$, scaled by $\eps^{-1}$ relative to the future unit normal of the embedding of $(\hat X,\hat\gamma,\hat k)$, $\hat X=\R_{\hat x}^n$, into $\R_{\hat t}\times\hat X$; that is, the time scale $\hat t$ of the small black hole is related to the time scale $t$ of the ambient spacetime $M$ by $\hat t=\frac{t}{\eps}$.} See Figure~\ref{FigI}. In the region $\eps\lesssim|x|\lesssim 1$, the family $(\gamma_\eps,k_\eps)$ transitions from the $\eps$-rescaling $\eps^2(\hat\gamma,\hat k)$ to the original data set $(\gamma,k)$ in an appropriate manner; we describe this more precisely in~\S\ref{SsIG} below.

\begin{figure}[!ht]
\centering
\includegraphics{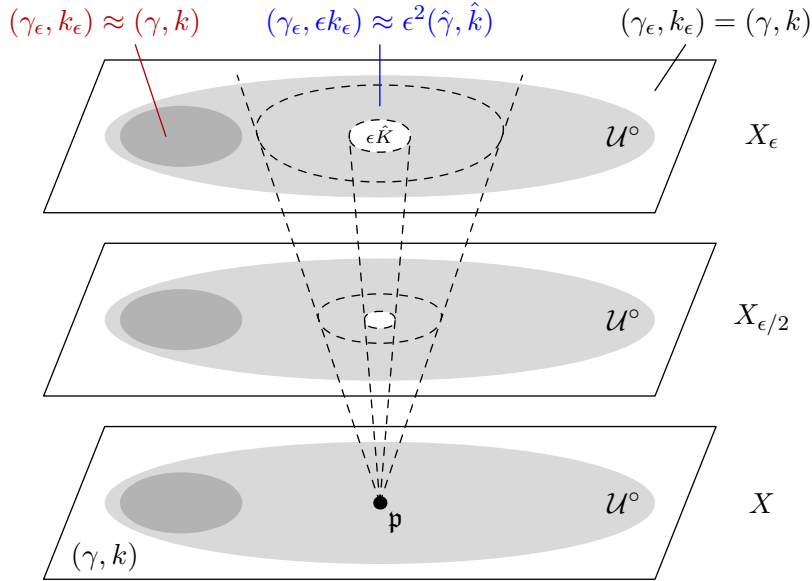}
\caption{Illustration of Theorem~\ref{ThmI}. The data set $(\gamma,k)$ is unchanged outside of $\cU^\circ$, and in $\ol{\cU^\circ}\setminus\{\fp\}$ it converges smoothly and locally uniformly to $(\gamma,k)$ as $\eps\searrow 0$. Near $\fp$ on the other hand, the glued initial data are close to the $\eps$-rescaling $\eps^2(\hat\gamma,\hat k)$ of the asymptotically flat data set $(\hat\gamma,\hat k)$.}
\label{FigI}
\end{figure}

A natural choice for $(\hat\gamma,\hat k)$ for $n=3$ is the initial data set of a (boosted) Schwarzschild or Kerr black hole \cite{SchwarzschildPaper,KerrKerr}; see~\S\ref{SsAK}. To illustrate Theorem~\ref{ThmI} in this case, consider the initial data $(\hat\gamma,\hat k)$ of an unboosted mass $\hat\bhm>0$ Schwarzschild black hole in polar coordinates $\hat x=\hat r\omega$, $\omega\in\Sph^2$,
\[
  \hat\gamma=\Bigl(1-\frac{\hat\bhm}{\hat r}\Bigr)\dd\hat r^2 + \hat r^2 g_{\Sph^2},\qquad \hat k=0,
\]
on $\R^3\setminus\hat K^\circ$ where $\hat K$ is a bounded closed ball of radius $>\max(0,2\hat\bhm)$. Then
\[
  \eps^2\hat\gamma = \Bigl(1-\frac{\eps\hat\bhm}{r}\Bigr)\dd r^2+r^2 g_{\Sph^2},\qquad r=\eps\hat r,
\]
is the metric of a mass $\eps\hat\bhm$ Schwarzschild black hole. The initial data set $(\gamma_\eps,k_\eps)$ thus describes a mass $\eps\hat\bhm$ black hole glued into the given data set $(X,\gamma,k)$.

We do not concern ourselves here with the construction of appropriate initial data sets $(X,\gamma,k)$. We recall that Beig--Chru\'sciel--Schoen \cite{BeigChruscielSchoenKIDs} demonstrated the genericity of the assumption on the absence of KIDs in a large number of settings; moreover, Moncrief \cite{MoncriefLinStabI} showed that the absence of KIDs is equivalent to the absence of Killing vector fields in the evolving spacetime. KIDs on $(X,\gamma,k)$ near $\fp$ are nonzero elements, defined in a neighborhood of $\fp$, of the cokernel of the linearization of the constraint equations around $(\gamma,k)$. While this cokernel is necessarily finite-dimensional, it may be non-trivial. However, a non-trivial cokernel is typically an obstruction for localized gluing constructions or deformations of initial data sets; for instance, the rigidity part of the Positive Mass Theorem \cite{SchoenYauPMT} in the time-symmetric setting ($k=0$, and $\gamma$ is scalar-flat) implies that one cannot compactly perturb the Euclidean metric to a non-isometric scalar-flat metric. (See however the work by Czimek--Rodnianski \cite{CzimekRodnianskiGluing} on how to overcome the presence of a cokernel in characteristic gluing problems by taking advantage of the nonlinear nature of the constraint equations.) In this paper, we impose the local genericity condition on $(X,\gamma,k)$ in order to obtain a local gluing result in Theorem~\ref{ThmI} via an appropriate solvability theory for the linearized constraints map.

The initial data of a subextremal Kerr (or Kerr--de~Sitter or Kerr--anti de~Sitter) black hole do not satisfy the genericity condition required in Theorem~\ref{ThmI}. Nonetheless, we show in~\S\ref{SsAK} how to prove Theorem~\ref{ThmI} also in this case, provided $\cU^\circ$ intersects the black hole interior; see Theorem~\ref{ThmAK}. (The main idea in the proof is to eliminate the cokernel by allowing for a violation of the constraint equations deep inside the black hole.) In particular, we are thus able to glue a small black hole into a unit mass black hole initial data set.

\subsection{Context}
\label{SsIC}

Starting with the construction by Majumdar--Papapetrou \cite{MajumdarSolution,PapapetrouSolution} of electrovacuum spacetimes via the superposition of extremally charged black holes (with Kastor--Traschen \cite{KastorTraschenManyBH} performing a similar construction in $\Lambda>0$), several constructions of initial data sets containing several black hole regions have been proposed. By solving the constraint equations using explicit ansatzes mainly involving superpositions of scalings and translations of the harmonic function $|x|^{-1}$, Brill--Lindquist \cite{BrillLindquist} constructed initial data for the Einstein--Maxwell equations with any finite number $N$ of charged wormholes (Einstein--Rosen bridges) at arbitrary points in $\R^3$ with arbitrary masses; the resulting data have $N+1$ asymptotically flat regions. Misner's time-symmetric `matched throat' vacuum initial data identify all but $2$ asymptotically flat ends; see Lindquist \cite{LindquistIVP} for the Einstein--Maxwell case. There also exist hybrid approaches, such as the one developed by Brandt--Bruegmann \cite{BrandtBruegmannMultipleBH} which involves an explicit prescription for the conformal class of $k$ (with $\gamma$ conformally Euclidean) together with a numerical scheme for finding the conformal factor.

The key mathematical technique allowing for flexible and localized gluing constructions for solutions of the constraint equations was introduced by Corvino \cite{CorvinoScalar} with Schoen \cite{CorvinoSchoenAsymptotics}; see also \cite{ChruscielDelayMapping} and~\S\ref{SsIG} below. It is based on the observation that the adjoint of the linearized constraint equations is overdetermined and permits coercive estimates on function spaces with very strong weights at the boundary of the gluing region. Concretely, Corvino--Schoen prove that an asymptotically flat data set can be perturbed near infinity to an exact Kerr data set for a suitable choice of Kerr parameters (mass and angular momentum). The relevant linear operator in this setting (namely, the linearization of the constraints map around the trivial Minkowski data) has a nontrivial cokernel, which is accounted for by appropriately choosing the parameters of the Kerr data set.

This technique was generalized and used by Chru\'sciel--Delay \cite[\S8.9]{ChruscielDelayMapping} to construct initial data containing many Kerr black holes. (See \cite{ChruscielDelaySimple} for the time-symmetric case of data containing several Schwarzschild black holes.) The initial data of \cite{ChruscielDelayMapping} are symmetric under the parity map $x\mapsto -x$; given pairwise disjoint balls $B(x_i,4 r_i)$, the data are equal to Kerr data (with arbitrary parameters, subject to the parity condition) in each of the $B(x_i,2 r_i)$, and also in $\R^3\setminus\bigcup B(x_i,4 r_i)$. The gluing procedure succeeds when all black hole masses are sufficiently small relative to the radii $r_i$ and the pairwise distances of the $x_i$. We also recall that Isenberg--Mazzeo--Pollack \cite[\S9]{IsenbergMazzeoPollackWormholes} constructed many-black-hole initial data by connecting Euclidean spaces to the neighborhood of any finite number of points on a given asymptotically flat maximal (i.e.\ $\tr_\gamma k=0$) data set via wormholes. Chru\'sciel--Mazzeo \cite{ChruscielMazzeoManyBH} proved the presence of multiple black holes in the spacetime development of the data produced in \cite{ChruscielDelaySimple,ChruscielDelayMapping,IsenbergMazzeoPollackWormholes} under suitable smallness conditions. See the hypotheses in \cite[\S3]{ChruscielMazzeoManyBH} for details; in particular, the first fundamental form must globally be sufficiently close to the Euclidean metric.

Another construction of many-black-hole initial data was given by Chru\'sciel--Corvino--Isenberg \cite{ChruscielCorvinoIsenbergNbody}. Phrasing their main result in a manner which relates more directly to the present work, \cite{ChruscielCorvinoIsenbergNbody} glues sufficiently small rescalings of large compact subsets of $N$ given asymptotically flat initial data sets (modified to be exact Kerr data near their respective asymptotically flat ends) into $N$ disjoint balls in $\R^3$; the glued data are exact Kerr data, with carefully chosen small parameters, also near infinity in $\R^3$.

The many-black-hole initial data sets obtained by (repeated\footnote{One can extend Theorem~\ref{ThmI} to simultaneously glue any finite number of asymptotically flat data sets into neighborhoods of a matching number of distinct points in $X$, under a local genericity condition near each of these points. One may also apply Theorem~\ref{ThmI} to a data set $(X_\eps,\gamma_\eps,k_\eps)$, for some fixed small $\eps>0$, in place of $(X,\gamma,k)$, or more generally iterate a combination of these two procedures finitely many times.}) application of Theorem~\ref{ThmI} combine features of those of \cite{IsenbergMazzeoPollackWormholes} (in that the background geometry is arbitrary) with those of \cite{ChruscielDelayMapping,ChruscielCorvinoIsenbergNbody} (in that one glues in rescaled asymptotically flat data sets into neighborhoods of points in a background data set). We stress, however, that the background data $(X,\gamma,k)$ and the asymptotically flat data $(\hat\gamma,\hat k)$ in Theorem~\ref{ThmI} are unrestricted except for the genericity assumption on $X$ (which can be relaxed in some circumstances, cf.\ \S\ref{SsAK}). Moreover, and importantly for intended future applications (see~\S\ref{SsIO}), our construction gives detailed control on the $\eps$-dependence of the glued initial data; see~\S\ref{SsIG}. (Moreover, this control cannot be obtained by a naive application of Corvino-type gluing methods; see Remark~\ref{RmkIGCorvino}.)

In the presence of a positive cosmological constant $\Lambda>0$, the author showed how to glue Schwarzschild-- or Kerr--de~Sitter black holes into neighborhoods of points on the future conformal boundary of de~Sitter space \cite{HintzGluedS}, thus producing examples of spacetimes solving~\eqref{EqIEVE} with precisely controlled structure at future timelike infinity and the future conformal boundary. A fortiori, this gives initial data sets on $\Sph^3$ (or its punctured version $\R^3$) containing any finite number of (large) regions with exact Kerr--de~Sitter data, while away from these regions the data are close to de~Sitter data. Apart from \cite{HintzGluedS,ChruscielMazzeoManyBH}, there do not appear to be any results (beyond simple domain-of-dependence type considerations) on the structure of the future evolution of the many-black-hole initial data sets obtained by any of the aforementioned methods. The author conjectures that a subclass of initial data of the type constructed by Theorem~\ref{ThmI} allows for such results; see~\S\ref{SsIO}.

Further results on initial data gluing which are not directly tied to many-black-hole settings include Cortier's gluing into Kerr--de~Sitter data sets \cite{CortierKdSGluing}, and the Carlotto--Schoen gluing in asymptotic cones \cite{CarlottoSchoenData} which produces asymptotically flat initial data containing any finite number of non-trivial conic regions which do not interact for any desired amount of time. We also mention the problem of filling in a given asymptotically flat data set in a controlled singularity-free manner, as studied in \cite[\S5]{BieriChruscielBondi}. In more recent work, Aretakis--Czimek--Rodnianski \cite{AretakisCzimekRodnianskiGluing,AretakisCzimekRodnianskiGluingAnalysis,AretakisCzimekRodnianskiGluingSpacelike} developed a gluing procedure for the \emph{characteristic} initial value problem, in which case the constraint equations become a coupled system of transport equations along the null generators of the incoming and outgoing null cones emanating from a spacelike 2-sphere, see e.g.\ \cite[\S2.3]{LukCharacteristic}. Applications include a sharp (as far as decay is concerned) improvement of the Carlotto--Schoen result as well as an alternative proof of the Corvino--Schoen gluing results. For non-vacuum initial data sets, there are further types of gluing problems one may consider; we mention in particular the results by Corvino--Huang \cite{CorvinoHuangDEC} on promoting the dominant energy condition to a strict inequality (again under suitable genericity conditions).

Besides the problem of gluing given or known initial data sets, one may wish to construct initial data sets \textit{ab initio}. This is typically done via variants of the \emph{conformal method} \cite{LichnerowiczConstraints,YorkConstraints} in which $\gamma,k$ are expressed in terms of suitable seed data and a conformal factor which satisfies a semilinear elliptic PDE. We refer the reader to Carlotto's excellent review article \cite{CarlottoConstraints} for a detailed discussion and further references.

\subsection{Gluing and asymptotic expansions}
\label{SsIG}

We proceed to set up a more precise version of Theorem~\ref{ThmI}. To begin, we say that a function $u$ on $\R^n$ is \emph{conormal} with weight $\alpha\in\R$ if
\[
  |(\la r\ra\nabla_x)^j u(x)|\lesssim\la r\ra^{-\alpha}
\]
for all $j\in\N_0$, where $r=|x|$.\footnote{Later on, we use the notation $\cA^\alpha(\ol{\R^n})$ for the space of such functions.} (In other words, every derivative in $x$ gains a power of $\la r\ra^{-1}$.) \emph{Polyhomogeneity} of $u$ is the stronger requirement that in $|x|>1$, we can write
\begin{equation}
\label{EqIGPhg}
  u(x)\sim\sum r^{-z_\ell}(\log r)^{k_\ell} u_\ell\Bigl(\frac{x}{|x|}\Bigr),
\end{equation}
where $z_\ell\in\C$, $k_\ell\in\N_0$, $u_\ell\in\CI(\Sph^{n-1})$, with $\Re z_\ell\to\infty$; the symbol `$\sim$' means that for any $N$, the difference of $u$ and the truncation of the sum to all $\ell$ with $\Re z_\ell\leq N$ is conormal with weight $N$. Thus, polyhomogeneous functions have generalized Taylor expansions at the boundary at infinity\footnote{We make this more precise using the radial compactification $\ol{\R^n}$ of $\R^n$ in~\S\ref{SsBb}.} $r^{-1}=0$ of $\R^n$.

We then call a pair $(\hat\gamma,\hat k)$ of smooth symmetric 2-tensors on $\R^n\setminus\hat K^\circ$ (with $\hat K\Subset\R^n$) \emph{$\delta$-asymptotically flat} if the following holds: writing coordinates on $\R^n$ as $\hat x=(\hat x^1,\ldots,\hat x^n)$, the coefficients $\hat\gamma_{\hat i\hat j}=\hat\gamma(\pa_{\hat x^i},\pa_{\hat x^j})$ and $\hat k_{\hat i\hat j}$ are polyhomogeneous conormal at infinity, with $\hat\gamma_{\hat i\hat j}-\delta_{i j}$ conormal with weight\footnote{That is, the exponents $z_\ell$ in the expansion~\eqref{EqIGPhg} for $\hat\gamma_{\hat i\hat j}$ satisfy $\Re z_\ell\geq\delta$ (with strict inequality if $k_\ell\geq 1$), except for a single term $(z_0,k_0)=(0,0)$ with coefficient $u_0=\delta_{i j}$. The definition used in the main part of the paper records the set $\{(z_\ell,k_\ell)\}$, see Definition~\ref{DefCEAf}.} $\delta>0$ and $\hat k$ conormal with weight $1+\delta$.

Following a standard procedure in geometric singular analysis (see e.g.\ \cite{MazzeoMelroseAdiabatic}), we construct the glued metrics $(\gamma_\eps,k_\eps)$ for all sufficiently small $\eps>0$ in one fell swoop by working on an appropriate total space $\wt X\setminus\wt K^\circ$, of dimension $n+1$, which is a manifold with corners; here $\wt X$ is a resolution of $[0,\eps_\sharp)\times X$ at $\{0\}\times\{\fp\}$, and $\wt K=\bigsqcup\{\eps\}\times\eps\hat K$ in geodesic normal coordinates on $X$ around $\fp$. See~\S\ref{SG} for details. We only note here that the space $\wt X$ is equipped with a map down to $[0,\eps_\sharp)$ which is a fibration over $(0,\eps_\sharp)$, and the fibers of $\wt X\setminus\wt K^\circ\to(0,\eps_\sharp)$ are the manifolds $X_\eps$ in Theorem~\ref{ThmI}. The fibers become singular as $\eps\searrow 0$ however, and the fiber over $\eps=0$ is the disjoint union of two manifolds: one is the compactification at infinity of $\R^n\setminus\hat K^\circ$ which carries the asymptotically flat data set $(\hat\gamma,\hat k)$, and the other is a compactification of $X\setminus\{\fp\}$ (namely, the real blow-up of $X$ at $\{\fp\}$) which carries the original data set $(\gamma,k)$. Local coordinates on this space are illustrated in Figure~\ref{FigIG}. In particular, the transitional region $\eps\lesssim|x|\lesssim 1$ omitted in the statement of Theorem~\ref{ThmI} is simply a neighborhood of the codimension $2$ corner of $\wt X$.

\begin{figure}[!ht]
\centering
\includegraphics{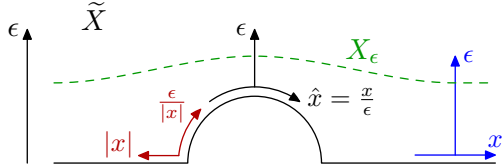}
\caption{Illustration of the total space $\wt X$ and of three local coordinate systems on this manifold with corners. Also shown is a fiber $X_\eps$ of $\wt X$ over $\eps>0$ (with $\hat K=\emptyset$).}
\label{FigIG}
\end{figure}

We may now state a more precise version of Theorem~\ref{ThmI}, in which we construct a single pair $(\wt\gamma,\wt k)$ of sections of the bundle $S^2\wt T^*\wt X$ which is defined as the pullback of the second symmetric power of the cotangent bundle of $X$ to $\wt X$:

\begin{thm}[Main result, more precise version]
\label{ThmIG}
  Let $\hat\gamma$ and $\hat k$ be $\delta$-asymptotically flat on $\R^n\setminus\hat K^\circ$ for some $\delta>0$. Suppose that the initial data set $(X,\gamma,k)$ does not admit any KIDs in a smoothly bounded connected open neighborhood $\cU^\circ\subset X$ of $\fp\in X$. Denote by $x\in\R^n$ geodesic normal coordinates around $\fp\in X$, and write $\hat x=\frac{x}{\eps}$. Then there exist polyhomogeneous sections $\wt\gamma,\wt k$ of $S^2\wt T^*\wt X$ with the following properties.
  \begin{enumerate}
  \item the restriction $(\gamma_\eps,k_\eps)$ of $(\wt\gamma,\wt k)$ to each fiber $X_\eps$ of $\wt X\setminus\wt K^\circ\to(0,\eps_\sharp)$ satisfies the constraint equations~\eqref{EqICE} with the same cosmological constant as $(\gamma,k)$.
  \item On $[0,\eps_\sharp)\times(X\setminus\cU^\circ)$, we have $(\wt\gamma,\wt k)=(\gamma,k)$.
  \item The leading order terms of $(\wt\gamma,\wt k)$ at the $X\setminus\{\fp\}$ component of the singular fiber over $\eps=0$ are given by $(\gamma,k)$; the leading order terms of $(\wt\gamma_{i j},\eps\wt k_{i j})$ at the $\R^n\setminus\hat K^\circ$ component are $(\hat\gamma_{\hat i\hat j},\hat k_{\hat i\hat j})$. Near the codimension $2$ corner of $\wt X$, and writing $\omega=\frac{x}{|x|}$, this means the following: the tensors
    \[
      \wt\gamma = \wt\gamma_{i j}\Bigl(|x|,\frac{\eps}{|x|},\omega\Bigr)\dd x^i\,\dd x^j,\quad
      \wt k = \wt k_{i j}\Bigl(|x|,\frac{\eps}{|x|},\omega\Bigr)\dd x^i\,\dd x^j,
    \]
    which are jointly polyhomogeneous in $|x|\in[0,1)$ and $\frac{\eps}{|x|}\in[0,1)$, have the following leading order behavior at $\frac{\eps}{|x|}=0$, resp.\ $|x|=0$:
    \begin{equation}
    \label{EqIGBdyData}
    \begin{alignedat}{2}
      \wt\gamma_{i j}(|x|,0,\omega)&=\gamma_{i j}(|x|\omega),&\qquad
      \wt\gamma_{i j}\Bigl(0,\frac{1}{|\hat x|},\omega\Bigr)&=\hat\gamma_{\hat i\hat j}(|\hat x|\omega), \\
      \wt k_{i j}(|x|,0,\omega)&=k_{i j}(|x|\omega), &\qquad
      \eps\wt k_{i j}\Bigl(0,\frac{1}{|\hat x|},\omega\Bigr)&=\hat k_{\hat i\hat j}(|\hat x|\omega).
    \end{alignedat}
    \end{equation}
  \end{enumerate}
\end{thm}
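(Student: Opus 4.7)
The strategy I would pursue is a Corvino--Schoen-type gluing implemented directly on the resolved total space $\wt X$, so that the construction is uniform in $\eps$ and automatically produces polyhomogeneous families. First I would build an \emph{approximate solution}: using cutoffs in $\frac{\eps}{|x|}$, I would interpolate in the transitional region $\eps\lesssim|x|\lesssim 1$ between $(\gamma,k)$ (the model at the lift of $\{0\}\times X$) and $\eps^2(\hat\gamma,\eps^{-1}\hat k)$ (the model at the lift of the blow-up of $\{0\}\times\{\fp\}$, which carries the asymptotically flat cap). Using the boundary data prescribed in \eqref{EqIGBdyData} together with the asymptotic flatness of $(\hat\gamma,\hat k)$, a short computation shows that the constraints error $\Phi(\wt\gamma_0,\wt k_0)-(2\Lambda,0)$ of this approximate solution vanishes to positive order at both front faces and is supported in $\cU^\circ$.

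The next step is to correct the approximate solution by a perturbation $(u,v)$ solving a nonlinear equation of the form $\Phi(\wt\gamma_0+u,\wt k_0+v)=(2\Lambda,0)$, where both $u$ and $v$ are supported in $\cU^\circ$ (which on the total space is a region whose closure meets the two front faces but not the lift of $\{\fp\}\neq x\in X\setminus\cU^\circ$). The heart of the matter is then the solvability theory for the linearization $P$ of $\Phi$ around $(\wt\gamma_0,\wt k_0)$. Following Corvino--Schoen--Chru\'sciel--Delay, I would seek the perturbation in the range of $P^*$, thereby reducing to solving $P P^*\psi=f$, which is (overdetermined-to-underdetermined) elliptic of order $4$. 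The KID hypothesis on $(X,\gamma,k)$ in $\cU^\circ$ is exactly the statement that $P^*$ has trivial kernel on compactly supported distributions in $\cU^\circ$; combined with the overdetermined ellipticity of $P^*$ this yields a coercive $P^*$-estimate with strong boundary weights at $\pa\cU^\circ$, and by duality a solvability theorem for $P$.

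The main obstacle, and the technical heart of the argument, is to make this solvability \emph{uniform} in $\eps$ as $\eps\searrow 0$. This is where the geometric singular analysis enters. The operator $P P^*$ degenerates in a controlled way at the two front faces of $\wt X$: at the lift of $\{0\}\times(X\setminus\{\fp\})$ the natural calculus is the b-calculus (since the transition region $|x|\sim\eps$ looks like a b-cusp), while at the asymptotically flat cap the natural calculus combines scattering behavior at $|\hat x|\to\infty$ with a b-type degeneration at the corner. I would introduce a joint pseudodifferential calculus on $\wt X$ whose symbolic model at each face is the corresponding model of $P P^*$: at the AF face, the linearized constraints around $(\hat\gamma,\hat k)$ in scattering-weighted spaces; at the background face, the linearized constraints around $(\gamma,k)$ on the b-compactification of $X\setminus\{\fp\}$. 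Invertibility of both model operators (the first from asymptotic flatness and standard AF solvability of $P P^*$, the second from the KID-free hypothesis and b-ellipticity) together with an inductive parametrix construction across the corner then produces a right inverse $Q$ for $P P^*$ on uniform weighted b-Sobolev spaces on $\wt X$, with norms controlled uniformly in $\eps$.

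With this uniform linear theory in hand, the nonlinear equation is solved by a Banach contraction on a small ball in the appropriate weighted space, exploiting the quadratic nature of the nonlinearity in $\Phi$ and the smallness of the approximate constraints error. This produces a solution $(u,v)$ that is conormal on $\wt X$ with positive weights at both front faces. Polyhomogeneity is then obtained by the standard bootstrap: one formally solves $P P^*\psi=f$ to all orders at each front face using the explicit model operators and their indicial roots, subtracts the resulting polyhomogeneous approximation, and applies the conormal solvability theory to the remainder to upgrade conormality to polyhomogeneity. The resulting $(\wt\gamma,\wt k)=(\wt\gamma_0+u,\wt k_0+v)$ is the desired polyhomogeneous glued family, and the boundary identifications \eqref{EqIGBdyData} follow from the corresponding identifications of the approximate solution since $(u,v)$ vanishes at both front faces.
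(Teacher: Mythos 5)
The main gap is in the approximate solution. A multiplicative cutoff interpolation in $\rho_\circ=\frac{\eps}{|x|}$ of the form $\chi(\rho_\circ)\,\gamma_{ij}(x)+(1-\chi(\rho_\circ))\,\hat\gamma_{\hat i\hat j}(\hat x)$ (in the $\dd x^i\,\dd x^j$ frame, the second summand being the coefficient form of $\eps^2\hat\gamma$) does \emph{not} have boundary data $\hat\gamma$ at $\hat X$: restricting to $|x|=0$, where $\gamma_{ij}(\eps\hat x)\to\delta_{ij}$, yields $\chi(1/|\hat x|)\,\hat\fe+(1-\chi(1/|\hat x|))\,\hat\gamma(\hat x)$. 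This interpolant of $\hat\fe$ with $\hat\gamma$ is not constraint-satisfying, so the rescaled constraints error $\eps^2 P(\wt\gamma_0,\wt k_0;\Lambda)$ is nonzero on $\hat X$ rather than vanishing there---the ``short computation'' you claim does not hold. This is precisely the pitfall spelled out in Remark~\ref{RmkIGCorvino}. The fix is the \emph{additive} formula~\eqref{ItIGCorvinoNo}, $(\wt\gamma_0)_{ij}=\gamma_{ij}(x)-\delta_{ij}+\hat\gamma_{\hat i\hat j}(\hat x)$, with a cutoff $\hat\chi$ in $|x|$ multiplying only the \emph{correction} $\sfs(\hat\gamma)-\fe$: the shared Euclidean leading term cancels across the corner, the restrictions to $X_\circ$ and $\hat X$ are exactly $(\gamma,k)$ and $(\hat\gamma,\hat k)$, and the constraints error then genuinely vanishes to positive order at both faces. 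This cannot be repaired downstream: the Corvino-type correction $(u,v)$ is by construction trivial at the two faces, so whatever boundary data the approximate solution carries persists into the final family.

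Two further points are underspecified but fixable. First, ``standard AF solvability'' of $L_{\hat\gamma,\hat k}$ at the $\hat X$-face has a genuine cokernel obstruction on the weight scale enforcing $\cO(|\hat x|^{-(n-2)})$ decay of the correction (nontrivial e.g.\ for Kerr data, which have global KIDs). The paper eliminates it by permitting slower decay ($\alpha_\circ<n-2$ in Proposition~\ref{PropCEAfSolv}, justified by Lemma~\ref{LemmaCEAfCoker}); your outline should specify this weight choice and why the cokernel vanishes there. Second, the exponential weight $e^{\beta/\rho_2}$ at $\pa\cU^\circ$ is not a b-weight: conjugating the (suitably $\rho_2$-rescaled) adjoint $L_{\gamma,k}^*$ by it lands in the 00-calculus of \S\ref{SsB00}--\S\ref{SsBAn}, whose \emph{commutative boundary symbol}---absent in the b- and scattering calculi---is what yields the semi-Fredholm estimate and the support control. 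With the additive approximate solution and these two points substituted, the rest of your outline (deformation in $\ran L_{\wt\gamma,\wt k}^*$, matched parametrices at the two faces on q-00-Sobolev spaces, contraction mapping, polyhomogeneity bootstrap) does track the paper's strategy.
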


See Theorem~\ref{ThmGlT} for the full statement. We stress that the glued initial data $(\wt\gamma,\wt k)$ have exactly two singular limits, namely the two given data sets $(\hat\gamma,\hat k)$ and $(\gamma,k)$; we do not need to introduce any sort of intermediate gluing region.

\begin{rmk}[Limited regularity]
\label{RmkIGLimited}
  One can construct the families $(\wt\gamma,\wt k)$, and thus the glued data $(\gamma_\eps,k_\eps)$, under weaker assumptions on regularity and asymptotics. Note first that for $\hat\rho=(\eps^2+|x|^2)^{1/2}$, the rescaling $\hat\rho\wt k_{i j}$ can be restricted to $\frac{\eps}{|x|}=0$ and $|x|=0$, with leading order terms $|x|k_{i j}$ and $\la\hat x\ra\hat k_{\hat i\hat j}$, respectively. When $(\hat\gamma,\hat k)$ is merely conormal, then one can construct a conormal solution $(\wt\gamma,\wt k)$ of the gluing problem so that $(\wt\gamma,\hat\rho\wt k)$ has boundary data as in~\eqref{EqIGBdyData}. Less restrictively still, it suffices to assume that $\hat\gamma$ and $\hat k$ have weighted H\"older regularity of degree $(s,\alpha)$ with $\N_0\ni s\geq 2$, $\alpha\in(0,1)$, with weights $\delta$ and $1+\delta$ as above (see also \cite[Appendix~A]{ChruscielDelayMapping}); the solution $(\wt\gamma,\hat\rho\wt k)$, with boundary data~\eqref{EqIGBdyData}, given by our methods then has matching regularity on an appropriate scale of weighted H\"older spaces. One can similarly weaken the regularity requirements on $\gamma,k$. We shall omit a detailed treatment of such finite regularity versions of our main results in this paper.
\end{rmk}

The construction of $(\wt\gamma,\wt k)$ proceeds in three steps.
\begin{enumerate}
\item\label{ItIG1} The first step is to define sections $(\wt\gamma_0,\wt k_0)$ on the total space which have the desired boundary data as in~\eqref{EqIGBdyData}; such sections already solve the constraint equations (with cosmological constant $\Lambda$) to leading order at both components of the singular fiber at $\eps=0$. We note here that at the $\R^n\setminus\hat K^\circ$ component, the appropriately rescaled cosmological constant vanishes to leading order (see Lemma~\ref{LemmaCETotScale} and Proposition~\ref{PropCETot}\eqref{ItCETotPhg}).
\item\label{ItIG2} In a second step,\footnote{If one is satisfied with less than full polyhomogeneity of $(\wt\gamma,\wt k)$, one skip all, or part, of this step.} one corrects the lower order terms of $(\wt\gamma_0,\wt k_0)$ in turns at the $\R^n\setminus\hat K^\circ$, resp.\ the $X\setminus\{\fp\}$ components of the singular fiber (denoted $\hat X\setminus\hat K^\circ$, resp.\ $X_\circ$ in the bulk of the paper); this requires right-inverting the linearized constraints maps on the asymptotically flat space $(\R^n\setminus\hat K^\circ,\hat\gamma,\hat k)$, resp.\ on the punctured space $(X\setminus\{\fp\},\gamma,k)$ with suitable control of the solution at infinity, resp.\ at $\fp$. This is accomplished in~\S\S\ref{SsCEAf} and \ref{SsCEPct}. The genericity condition on $(X,\gamma,k)$ is used to effect the right inversion on the punctured space with control on supports; this uses that on appropriate function spaces on the punctured space the triviality of the cokernel of the linearized constraints map can be inferred from the triviality on the original space $X$. We do not require any genericity for $(\R^n\setminus\hat K^\circ,\hat\gamma,\hat k)$, as the relevant cokernel is trivial if we permit the solution of the linearized constraints map to have less decay than $|\hat x|^{-n+2}$. (See Proposition~\ref{PropCEAfSolv}.)

  Using an asymptotic summation argument, we can then construct a formal solution $(\wt\gamma',\wt k')$ of the gluing problem: it satisfies the constraints modulo rapidly vanishing errors as $\eps\searrow 0$. Steps~\eqref{ItIG1} and \eqref{ItIG2} are completed in~\S\ref{SsGlFo} (the formal solution being denoted $(\wt\gamma,\wt k)$ there).
\item\label{ItIG3} Finally, we correct the formal solution to a true solution $(\wt\gamma,\wt k)$ by solving the nonlinear constraint equations with rapidly vanishing (as $\eps\searrow 0$) right hand side. This is done using a standard contraction mapping argument on a nonstandard scale of Sobolev spaces with $\eps$-dependent norms. The correction term vanishes to infinite order at $\eps=0$ as well. See~\S\ref{SsGlT} (the true solution being denoted $(\wt\gamma',\wt k')$ there).
\end{enumerate}

\begin{rmk}[Difficulties of a bare interpolation approach]
\label{RmkIGCorvino}
  We contrast the above construction with a more traditional approach as used e.g.\ in \cite{CorvinoScalar,ChruscielDelayMapping}. For brevity, we only consider the time-symmetric case, so $\hat k=0$ and $k=0$. We use geodesic normal coordinates $x\in\R^n$ on $X$ near $\fp$. For small parameters $\eps,\eta>0$, we define a bare interpolation of $\hat\gamma$ and $\gamma$ by
  \begin{equation}
  \label{ItIGCorvino}
    \gamma^{(\eps,\eta)}_{i j}(x) := \chi\Bigl(\frac{|x|}{\eta}\Bigr)\hat\gamma_{\hat i\hat j}\Bigl(\frac{x}{\eps}\Bigr) + \Bigl(1-\chi\Bigl(\frac{|x|}{\eta}\Bigr)\Bigr)\gamma_{i j}(x),
  \end{equation}
  where the non-negative function $\chi\in\CIc([0,2))$ is identically $1$ on $[0,1]$. The metric $\gamma^{(\eps,\eta)}$ violates the (time-symmetric) constraint equations only in the gluing region $U_\eta=\{\eta\leq|x|\leq 2\eta\}$. Taking $\eta$ small and then $\eps\ll\eta$ ensures that $\gamma^{(\eps,\eta)}$ is Riemannian (and includes a large piece of the asymptotically flat metric $\hat\gamma$); moreover, in $U_\eta$ we have $\hat\gamma_{\hat i\hat j}(\frac{x}{\eps}),\gamma_{i j}(x)\approx\delta_{i j}$ and thus also $\gamma_{i j}^{(\eps,\eta)}(x)\approx\delta_{i j}$. However, the violation of the constraint equations in $U_\eta$ is typically of size $\cO(1+\eta^{-1}(\frac{\eps}{\eta})^{1+\delta})$,\footnote{Indeed, the scalar curvature of $\gamma^{(\eps,\eta)}$ is schematically of the form $\sum_{j=1}^2\pa^j(\chi(\frac{|x|}{\eta}))\pa^{2-j}(\gamma(x)-\hat\gamma(\frac{x}{\eps}))$, and thus---using that $\pa^k(\gamma_{i j}(x)-\delta_{i j})=\cO(|x|^{2-k})$ and $\pa^k(\hat\gamma_{i j}(\frac{x}{\eps})-\delta_{i j})\lesssim|\frac{x}{\eps}|^{-\delta-k}$ for $k=0,1,2$---of size $\sum_{j=1}^2\eta^{-j}(\eta^{2-(2-j)}+(\frac{\eta}{\eps})^{-\delta-(2-j)})\simeq 1+\eps^{1+\delta}\eta^{-2-\delta}$ for $|x|\simeq\eta$ indeed.} and therefore is not small; hence, it is not clear how one can re-impose the constraints in $U_\eta$ with perturbative methods, regardless of the smallness of $\eps\ll\eta\ll 1$. This discussion serves to indicate that the deformation of $\gamma^{(\eps,\eta)}$ is a delicate task.\footnote{Passing to the rescaled coordinates $\tilde x=\frac{x}{\eta}$ (and regarding $\gamma_{i j}(\eta\tilde x)$ and $\hat\gamma_{\hat i\hat j}(\eta\eps^{-1}\tilde x)$ as the coefficients of a metric in the frame $\dd\tilde x^1,\ldots,\dd\tilde x^n$) does not eliminate this issue. The violation of the constraints in the gluing region $\tilde U=\{1\leq|\tilde x|\leq 2\}$ is now of size $\cO(\eta^2+(\frac{\eps}{\eta})^\delta)$, which is small for $\eps\ll\eta\ll 1$. However, the metric $\gamma^{(\eps,\eta)}(\eta\tilde x)$ converges to the Euclidean metric in $\tilde U$ as $\eta\to 0$, and hence one faces an approximate cokernel of the linearized constraints map when attempting to deform $\gamma^{(\eps,\eta)}$ to a solution of the constraints (cf.\ \cite{CorvinoScalar}). If one relaxed the localization requirements of the deformation and attempted to take advantage of the local genericity of $X$ in a fixed neighborhood of $\fp$, one would have to work on a domain with diameter $\simeq\eta^{-1}\to\infty$ in $\tilde x$-coordinates, which would lead to different analytic subtleties.} But even if one can correct $\gamma^{(\eps,\eta)}$ in $U_\delta$ to a solution of the constraint equations in some way, the glued data set would transition from $\gamma$ to the rescaling of $\hat\gamma$ in the region $|x|\simeq\eta$, with $\eps\ll\eta\ll 1$. Thus, one would produce an intermediate gluing region $\{\eps\ll|x|\ll 1\}$, which asymptotes to the codimension $2$ corner of $\wt X$. --- By contrast to~\eqref{ItIGCorvino}, the interpolation $\wt\gamma_0$ of $\hat\gamma$ and $\gamma$ which is the starting point of our construction (see Step~\eqref{ItIG1} above) can be defined as
  \begin{equation}
  \label{ItIGCorvinoNo}
    (\wt\gamma_0^{(\eps)})_{i j} = \gamma_{i j}(x)-\delta_{i j}+\hat\gamma_{\hat i\hat j}\Bigl(\frac{x}{\eps}\Bigr)
  \end{equation}
  on the $\eps$-level set of $\wt X$ near $x=0$ (while globally on $X$, one may take $\sum_{i,j}\chi(|x|)\cdot\eqref{ItIGCorvinoNo}\,\dd x^i\,\dd x^j+(1-\chi(|x|))\gamma$). Thus, there is only a single gluing region, which is of the rough form $\eps\lesssim|x|\lesssim 1$; in particular, the lower gluing radius shrinks to $0$. Moreover, carefully note that the modification of $\wt\gamma_0$ to a solution of the constraint equations uses deformations which are global in a fixed punctured neighborhood of $\fp$ in $X\setminus\{\fp\}$ (i.e.\ all the way down to $\fp$) and global on $\R^n_{\hat x}$ near $|\hat x|=\infty$ (i.e.\ all the way up to infinity).
\end{rmk}

\begin{rmk}[Approximate solutions]
\label{RmkIGApprox}
  The construction of $(\wt\gamma,\wt k)$ in (generalized) Taylor series at the two boundary hypersurfaces at $\eps=0$ in Step~\eqref{ItIG2} produces increasingly accurate approximate solutions $(\gamma_\eps,k_\eps)$ for small $\eps$; thus, one may be able to use part of the above procedure to produce rather accurate initial data for use in numerical evolutions.
\end{rmk}

The right inversion of the linear operators in Steps~\eqref{ItIG2} and \eqref{ItIG3} is accomplished much as in the original works \cite{CorvinoScalar,CorvinoSchoenAsymptotics,ChruscielDelayMapping} using exponential weights at the boundary of the gluing region. Our discussion of the coercivity of the adjoint of the linearized constraints map in the asymptotically flat setting takes inspiration from \cite{DelayCompact} in that we combine standard elliptic theory with a prolongation argument (Lemma~\ref{LemmaCEProlong}), cf.\ the `Kernel Restriction Condition' and Lemma~8.2 in \cite{DelayCompact}.

We briefly comment on the choice of function spaces. In the asymptotically flat setting, one measures regularity with respect to the weighted operator $\la\hat x\ra\hat\nabla$ (differentiation in the $\hat x$-coordinates); this is also called \emph{b-regularity} on $\ol{\R^n}$ in the language of \cite{MelroseAPS}. Similarly, near the puncture of $X$ at $\fp$, one needs to measure regularity with respect to the weighted operator $|x|\nabla$ (which is the same as b-regularity on the blow-up $[X;\{\fp\}]$). Indeed, this is the notion of regularity which is compatible with polynomial weights or polyhomogeneous expansions at $|x|=0$; moreover, it arises naturally by computing the form of $\la\hat x\ra\hat\nabla$ near the asymptotically flat end in $\hat x$-coordinates in the local coordinates $x$; see~\eqref{EqIGNabla} below. On the total space $\wt X$, these two notions can be merged into the notion of \emph{q-regularity}, introduced by the author in a geometrically related context in \cite{HintzKdSMS}: this amounts to measuring regularity with respect to
\begin{equation}
\label{EqIGNabla}
  \hat\rho\nabla := (\eps^2+|x|^2)^{1/2}\nabla = \la|\hat x|^{-1}\ra|x|\nabla = \la\hat x\ra\hat\nabla\qquad \Bigl(\hat\rho=(\eps^2+|x|^2)^{1/2},\ \hat x=\frac{x}{\eps}\Bigr).
\end{equation}
The corresponding q-Sobolev spaces are standard Sobolev spaces on $X_\eps$ as sets, but their norms depend on $\eps$. The contraction mapping argument takes place on tensors on $\wt X$ measured in such q-Sobolev spaces\footnote{H\"older spaces would work just as well; we use Sobolev spaces for convenience only.} (and thus the corrections to the formal solution are constructed for all small $\eps>0$ simultaneously).

Melrose and Singer, in unpublished work \cite{MelroseSingerKahlerGluing}, have developed a singular-geometric and analytic point of view for the problem of gluing constant scalar curvature K\"ahler (cscK) metrics into neighborhoods of points of a given compact cscK manifold which is closely related to the point of view adopted in the present paper. The gluing constructions for magnetic monopoles by Kottke--Singer \cite{KottkeSingerMonopoles} and for gravitational instantons by Schroers--Singer \cite{SchroersSingerInstantons} are also closely related, though either the total space or the relevant rescaled tangent bundles differ from (and are in fact more delicate than) those used in the present work.

\subsection{Exponential weights and geometric singular analysis}
\label{SsI00}

We briefly comment on a novel technical aspect of our analysis, which is due to Mazzeo \cite{Mazzeo00}. Our discussion here shall take place in the half space $[0,\infty)_x\times\R^{n-1}_y\subset\R^n_{(x,y)}$. If one wishes to construct a smooth solution $u$ of some underdetermined PDE $P u=f$ (with $x>0$ on $\supp f$) with support in $x\geq 0$, one can use coercivity estimates for the overdetermined operator $P^*$ on spaces with exponential weights $e^{\beta/x}$, $\beta>0$ (as done in all of the aforementioned gluing papers), or equivalently for $e^{-\beta/x}P^* e^{\beta/x}$ on unweighted spaces. Consider, for example, the case $P=\delta$, the (negative) divergence on 1-forms. Then $P^*=\dd=(\pa_x,\pa_{y^1},\ldots,\pa_{y^{n-1}})$ in the basis $\dd x,\dd y^1,\ldots,\dd y^{n-1}$; this is left elliptic (and indeed overdetermined). Conjugating $P^*$ by $e^{-\beta/x}$ produces singular terms of size $\cO(x^{-2})$. We thus consider the rescaling $x^2 P^*$ as a left elliptic operator built from the 00-vector fields (`double 0-vector fields') $x^2\pa_x$ and $x^2\pa_{y^j}$; conjugation of such a \emph{00-differential operator} by $e^{\beta/x}$ merely produces smooth lower order terms.

A 00-differential operator has a principal symbol which is an extension of the usual principal symbol map down to $x=0$; this symbol takes the $x^2$-degeneration into account by passing to an appropriate rescaling of the cotangent bundle. That is, the principal symbol is a fiberwise polynomial in the momentum variables $\xi,\eta$ defined by writing 1-forms as $\xi\frac{\dd x}{x^2}+\eta\frac{\dd y}{x^2}$.

The crucial observation is that 00-differential operators have yet another \emph{commutative} symbol which captures their leading order behavior at any boundary point $(0,y)\in[0,\infty)\times\R^{n-1}$; for example, the boundary principal symbol of $e^{-\beta/x}P^* e^{\beta/x}=(x^2\pa_x-\beta,\pa_{y^1},\ldots,\pa_{y^{n-1}})$ is $(i\xi-\beta,i\eta)$, and this is well-defined for \emph{all} $(\xi,\eta)\in\R\times\R^{n-1}$. The existence of this second commutative symbol is structurally due to the fact that 00-vector fields commute to leading order at $x=0$.

The left ellipticity of both symbols then suffices to prove a semi-Fredholm estimate for $e^{-\beta/x}P^* e^{\beta/x}$ on a scale of 00-Sobolev spaces. Complemented with a direct analysis of the kernel, one obtains the solvability of $P u=f$ on 00-Sobolev spaces, with $u$ having the rapidly decaying weight $e^{-\beta/x}$ near $x=0$. The extension of $u$ by $0$ to $x<0$ furnishes a solution with the desired support property. See~\S\S\ref{SsB00} and \ref{SsBAn} for a detailed discussion, and Examples~\ref{ExB00Ext} and \ref{ExBAnExt} for the case of the exterior derivative mentioned here.

In~\S\ref{SsBAn}, we explain the relationship of the part of 00-analysis which only uses the standard principal symbol to bounded geometry analysis as discussed in \cite{ShubinBounded}; this is also related to the \emph{scaling property} of \cite[Appendix~B]{ChruscielDelayMapping}. The existence and importance of the second, boundary, principal symbol on the other hand appears to be noted here for the first time.

\subsection{Outlook: evolution of glued initial data}
\label{SsIO}

We make the conjecture in~\S\ref{SsIC} about the evolution of glued initial data more precise:

\begin{conj*}
  Let $(\hat\gamma,\hat k)$ denote boosted subextremal Kerr black hole initial data with parameters $\hat\bhm$ (mass) and $\hat\bha$ (specific angular momentum). If $(M,g)$ is a compact globally hyperbolic subset of the maximal globally hyperbolic development of $(X,\gamma,k)$, then the maximal globally hyperbolic development of $(X_\eps,\gamma_\eps,k_\eps)$, for a `suitable' family $(\gamma_\eps,k_\eps)$ of initial data with boundary data $(\gamma,k)$ and $(\hat\gamma,\hat k)$, contains a region $(M_\eps,g_\eps)$ with the following properties:
  \begin{enumerate}
  \item $M_\eps$ is obtained from $M$ by excising a size $\eps$-neighborhood of the geodesic $\cC\subset M$ whose initial conditions at $X\subset M$ are determined by the point $\fp\in X$ and the boost parameter of the Kerr data set;
  \item $g_\eps$ tends to $g$ away from $\cC$, while in an $\cO(\eps)$-neighborhood of $\cC$, the rescaling $\eps^{-2}g_\eps$ tends to a family (depending on the point in $\cC$) of Kerr black hole metrics with parameters $(\hat\bhm,\hat\bha)$.
  \item The total family $\eps\mapsto g_\eps$ is polyhomogeneous on a total space obtained by resolving $[0,1)\times M$ at $\{0\}\times\cC$.
  \end{enumerate}
\end{conj*}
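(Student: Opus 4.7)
The plan is to mirror the initial-data construction: build a resolved total spacetime $\wt M$ from $[0,\eps_\sharp)\times M$ by blowing up $\{0\}\times\cC$, so that the fiber over $\eps=0$ has two components, a compactification of $M\setminus\cC$ carrying the background $g$, and the normal bundle of $\cC$ (rescaled in the transverse directions) carrying a family of Kerr metrics parameterized by points of $\cC$. Given the Cauchy data $(X_\eps,\gamma_\eps,k_\eps)$ from Theorem~\ref{ThmIG}, I first choose a wave-type gauge (generalized harmonic coordinates, or a DeTurck-type map) adapted to this resolution: away from $\cC$ it is the standard wave gauge relative to $g$, while in an $\cO(\eps)$-neighborhood of $\cC$ and in rescaled coordinates $\hat x=x/\eps$, $\hat t=t/\eps$, it is the stationary wave gauge for a Kerr background whose parameters $(\hat\bhm,\hat\bha)$ and whose four-velocity are constant along $\cC$. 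The requirement that a single gauge match the two leading-order models forces $\cC$ to be a \emph{timelike geodesic}: the leading transverse equation at the $\R^n\setminus\hat K^\circ$-face is precisely the geodesic equation for the center of mass (this is the standard derivation of geodesic motion from matched asymptotics, following Gralla--Wald and Kates).

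Next, I would set up an \emph{order-by-order formal solution} on $\wt M$, analogous to Step~\eqref{ItIG2} of~\S\ref{SsIG}. At the Kerr face, the model equation is the linearized gauge-fixed Einstein operator on subextremal Kerr; at the background face, the model is the linearized gauge-fixed operator on $(M,g)$. The formal series is constructed by alternately right-inverting these two linearized operators, using the linear mode stability of subextremal Kerr (together with the expected self-force / dissipative corrections to $\hat\bhm$, $\hat\bha$, and the geodesic $\cC$, which play the role of KIDs here) on the Kerr side, and finite-speed-of-propagation combined with linear stability of $(M,g)$ on the background side. The boundary data at the corner match in the same way as in~\eqref{EqIGBdyData}. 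The output is a polyhomogeneous family $\wt g$ on $\wt M$ solving $\Ein(\wt g)+\Lambda\wt g=0$ modulo errors vanishing to infinite order at $\eps=0$.

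Finally, one promotes the formal solution to a true solution by a contraction mapping argument on a q-type scale of Sobolev spaces adapted to the vector fields $\hat\rho\,\nabla$ on $\wt M$, where $\hat\rho$ now denotes the total blow-down distance to $\{0\}\times\cC$ in the spacetime sense. The $\eps$-uniform energy estimates required are the hyperbolic analogue of the q-Sobolev estimates used in Step~\eqref{ItIG3}, and they decouple, to leading order at $\eps=0$, into (i) an $\eps$-uniform energy estimate on Kerr and (ii) an energy estimate on a compact region of $(M,g)$; the two are coupled only through the transition region, where finite speed of propagation localizes the problem.

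The main obstacle, by a wide margin, is ingredient (i): one needs quantitative, decay-uniform linear (and ultimately nonlinear) stability of subextremal Kerr in a form compatible with q-type spaces on $\wt M$. In full generality for $|\hat\bha|<\hat\bhm$ this is the subject of the nonlinear Kerr stability program, and any precise statement of the conjecture must be tethered to what is available there; in the Schwarzschild or slowly rotating Kerr regime, the existing stability results, combined with the microlocal framework developed in this paper (in particular the 00-analysis of~\S\ref{SsI00}, which is what makes the exponentially weighted solvability of the adjoint compatible with boundary faces at $\eps=0$), should suffice to carry out the scheme above on compact time intervals and produce the polyhomogeneous family $g_\eps$ claimed in the conjecture.
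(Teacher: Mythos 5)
This is a \emph{conjecture}, not a theorem: the paper offers no proof, and the author explicitly states ``We hope to address this conjecture in future work.'' There is therefore no paper-internal argument to compare your proposal against. What I can do is assess your sketch on its own terms and against the two pieces of information the paper does give about the conjecture.

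Your outline is broadly in the right spirit and mirrors the initial-data construction in the natural way: resolving $[0,1)\times M$ along $\{0\}\times\cC$, choosing a gauge that interpolates between a background wave gauge and a stationary Kerr gauge in rescaled coordinates, building a formal polyhomogeneous solution by alternately inverting the two linearized model operators, and then closing with a contraction argument on q-type Sobolev spaces. Your observation that the leading transverse matching condition forces $\cC$ to be a timelike geodesic (Gralla--Wald / Kates matched asymptotics) is correct and germane, and you are right to flag uniform linear and nonlinear stability of subextremal Kerr as the principal technical obstruction, together with the role of the 00-calculus in making exponentially weighted adjoint estimates compatible with the singular fiber at $\eps=0$.

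The genuine gap is that your proposal does not engage with the paper's own caveat about the conjecture: the author remarks immediately after its statement that ``the conjecture in the stated form cannot hold without restricting to `suitable' initial data; roughly speaking, the initial data must be such that the evolving spacetime metric is adiabatic, i.e.\ does not vary much on the fast time scale $\hat t$ of the small black hole.'' You gesture at self-force and dissipative corrections to $(\hat\bhm,\hat\bha,\cC)$ ``which play the role of KIDs here,'' but this is not the same point: the obstruction is not merely the presence of a cokernel to be absorbed by parameter modulation, it is that the two-time-scale structure of the evolution problem can fail to admit a polyhomogeneous expansion at the Kerr face at all unless the glued data are prepared so that the fast-time dynamics near $\cC$ are quasi-stationary. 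Theorem~\ref{ThmIG} produces a family of initial data, not a single canonical one, and the conjecture requires selecting a subfamily for which adiabaticity holds. Your formal-series step at the Kerr face implicitly assumes that the model problem (linearized gauge-fixed Einstein on Kerr with polyhomogeneous forcing in slow time) has polyhomogeneous solutions, and this is exactly where non-adiabatic data would produce secular, non-polyhomogeneous growth in $\hat t$. Any serious attempt at the conjecture needs to (a) make precise which initial data are ``suitable,'' and (b) show that the construction of Theorem~\ref{ThmIG} can be specialized to produce such data; neither appears in your sketch.

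A secondary point: in part~(3) of the conjecture the resolution is of $[0,1)\times M$ at $\{0\}\times\cC$, where $\cC$ is a one-dimensional curve, so the front face is a bundle over $\cC\times[0,1)$ rather than a single asymptotically flat end; the normal operator at that face is accordingly a family over $\cC$ of stationary Kerr operators, not a single one. You acknowledge this (``a family... parameterized by points of $\cC$''), but your formal-series step is written as if one inverts a single linearized Kerr operator. Since the Kerr parameters and four-velocity may themselves drift along $\cC$ at $\cO(\eps)$ (radiation reaction), the inversion is a parameter-dependent problem and the modulation equations for $(\hat\bhm,\hat\bha,\cC)$ along $\cC$ become part of the hierarchy. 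This is consistent with your remark about dissipative corrections, but the proposal would need to make explicit that these modulation equations are what determines the index sets in the asserted polyhomogeneity.
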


In particular, when $(M,g)$ is a unit mass Kerr--de~Sitter black hole, and the gluing data are chosen so that $\cC$ starts in the exterior region but crosses the event horizon in finite proper time, then $(M_\eps,g_\eps)$ describes the merger of a mass $\eps$ black hole with a unit mass black hole. (Moreover, once the black holes have merged, one may restrict to a suitable neighborhood of the Kerr--de~Sitter exterior region and apply \cite{HintzVasyKdSStability} to conclude that the merged black hole settles down to a Kerr--de~Sitter black hole.) We hope to address this conjecture in future work. At this point, we merely remark that the conjecture in the stated form cannot hold without restricting to `suitable' initial data; roughly speaking, the initial data must be such that the evolving spacetime metric is adiabatic, i.e.\ does not vary much on the fast time scale $\hat t$ of the small black hole.

\subsection*{Outline of the paper}

In~\S\ref{SB}, we recall the notions of geometric singular analysis which are relevant for the present paper, namely b-vector fields and operators (and their normal operators), scattering vector fields, and real blow-ups; we further discuss 00-geometry and the associated scales of Sobolev spaces, and develop elliptic theory in this context. In~\S\ref{SG}, we define the manifold with corners $\wt X$ on which the gluing procedure will take place, and we describe the class of q-differential operators on $\wt X$ which will precisely capture the behavior of the (linearized) constraints map.

In~\S\ref{SCE}, we study the (linearized) constraints map in detail. Estimates and solvability results for the (linearized) constraints map on asymptotically flat and punctured manifolds are combined into a uniform solvability theory on $\wt X$. In~\S\ref{SGl}, we apply these results to construct the glued initial data set first on the level of (generalized) Taylor series; we then correct this formal solution to a true solution using a suitable contraction mapping principle.

Applications and variants of the main result are discussed in~\S\ref{SA}.

\subsection*{Acknowledgments}

I would like to thank Richard Melrose and Andr\'as Vasy for a number of conversations about black hole gluing related to the present work. I am also grateful to Rafe Mazzeo for suggesting the use of his 00-calculus \cite{Mazzeo00} to facilitate a geometric singular analysis style treatment of localized gluing. Many thanks are also due to Alessandro Carlotto and Piotr Chru\'sciel for a number of helpful comments and suggestions on content and exposition, and to Michael Singer for important pointers to existing literature.

\section{Geometric and analytic background}
\label{SB}

\subsection{b- and scattering structures; blow-ups}
\label{SsBb}

For manifolds with boundary or with corners, we demand that all boundary hypersurfaces be embedded. A defining function of a boundary hypersurface $H\subset X$ of a manifold with corners is a function $\rho\in\CI(X)$ for which $H=\rho^{-1}(0)$, further $\rho>0$ on $X\setminus H$, and finally $\dd\rho\neq 0$ along $H$. In the case that $X$ is a manifold with boundary $H=\pa X$, we called such a function $\rho$ simply a \emph{boundary defining function}.

The radial compactification
\[
  \ol{\R^n} := \Bigl(\R^n \sqcup \bigl( [0,\infty)_\rho\times\Sph^{n-1}_\omega \bigr) \Bigr) / \sim,
\]
where we identify $0\neq x=r\omega\in\R^n$ with $(\rho,\omega)=(r^{-1},\omega)$, is a manifold with boundary `at infinity' given by $\pa\ol{\R^n}\cong\Sph^{n-1}$; the interior is $\R^n$. Invertible linear maps on $\R^n$ extend by continuity to diffeomorphisms of $\ol{\R^n}$; thus, the radial compactification of a finite-dimensional real vector space is well-defined.

On a manifold with corners $X$, we write $\cV(X)$ for the Lie algebra of all smooth vector fields (sections of $T X\to X$), and $\Vb(X)\subset\cV(X)$ for the Lie algebra of b-vector fields \cite{MelroseAPS}, i.e.\ vector fields which are tangent to $\pa X$. In local coordinates $x^1,\ldots,x^k\geq 0$, $x^{k+1},\ldots,x^n\in\R$, near a codimension $k$ corner of $X$, the space $\Vb(X)$ is spanned over $\CI(X)$ by the vector fields $x^i\pa_{x^i}$ ($i=1,\ldots,k$), $\pa_{x^j}$ ($j=k+1,\ldots,n$), and therefore these vector fields are a frame of the \emph{b-tangent bundle} $\Tb X\to X$. When $X$ is a manifold with boundary and boundary defining function $\rho\in\CI(X)$, then $\Vsc(X):=\rho\Vb(X)$ is the Lie algebra of \emph{scattering vector fields} \cite{MelroseEuclideanSpectralTheory}; in local coordinates $x\geq 0$, $y=(y^1,\ldots,y^{n-1})\in\R^{n-1}$, this space is spanned over $\CI(X)$ by the vector fields $x^2\pa_x$ and $x\pa_{y^j}$ ($j=1,\ldots,n-1$), which are a frame of the \emph{scattering tangent bundle} $\Tsc X\to X$. The dual bundles $\Tb^*X\to X$ and $\Tsc^*X\to X$ have frames $\frac{\dd x^i}{x^i}$ ($i=1,\ldots,k$), $\dd x^j$ ($j=k+1,\ldots,n$) and $\frac{\dd x}{x^2}$, $\frac{\dd y^j}{x}$ ($j=1,\ldots,n-1$), respectively. In the special case $X=\ol{\R^n}$, a change of coordinates calculation shows that $\CI(\ol{\R^n})=S^0_{\rm cl}(\R^n)$ consists of classical symbols on $\R^n$ (i.e.\ smooth functions which are in addition smooth in $(r^{-1},\omega)$), and $\Vsc(\ol{\R^n})$ is spanned over $\CI(\ol{\R^n})$ by the (translation-invariant) coordinate vector fields $\pa_{x^i}$ ($i=1,\ldots,n$). In particular, the Euclidean metric on $\R^n$ is a smooth Riemannian scattering metric, i.e.\ a positive definite section of $S^2\,\Tsc^*\ol{\R^n}$.

When $X$ is a manifold with corners, we write
\[
  \Diffb^k(X)
\]
for the space of all $k$-th order b-differential operators, i.e.\ locally finite sums of up to $k$-fold compositions of elements of $\Vb(X)$. If $H_1,\ldots,H_N$ is a list of the boundary hypersurfaces of $X$, and if $\rho_j\in\CI(X)$ denotes a defining function of $H_j$ for $j=1,\ldots,N$, then we furthermore write
\[
  \rho_1^{\alpha_1}\cdots\rho_N^{\alpha_N}\Diffb^k(X) = \{ \rho_1^{\alpha_1}\cdots\rho_N^{\alpha_N}P \colon P\in\Diffb^k(X) \}
\]
for the space of weighted b-differential operators; here $\alpha_j\in\R$, $j=1,\ldots,N$. We note that conjugation by $\rho_j^{\alpha_j}$ preserves the space $\Diffb^k(X)$ and its weighted analogues. These spaces are $\CI(X)$-modules; spaces $\Diffb^k(X;E,F)$ of operators acting between sections of vector bundles $E,F\to X$ can thus be defined in the usual manner, likewise for weighted operators.

The b-principal symbol of $A\in\Diffb^k(X)$ is an element $\sigmab^k(A)\in P^k_{\rm hom}(\Tb^*X)$, the space of smooth functions on $\Tb^*X$ which are fiberwise homogeneous polynomials of degree $k$; it is multiplicative, given via pullback along the base projection for $k=0$, and given by $\sigmab^1(V)(z,\zeta)=i \zeta(V|_z)$ for $V\in\Vb(X)$. We make this explicit in the case that $X$ is a manifold with boundary: in local coordinates $x\geq 0$, $y\in\R^{n-1}$ as above, and writing b-covectors as $\xi\frac{\dd x}{x}+\eta\,\dd y$,
\[
  \sigmab^k(A)(x,y,\xi,\eta) \colon A=\sum_{j+|\alpha|\leq k} a_{j\alpha}(x,y)(x D_x)^j D_y^\alpha \mapsto \sum_{j+|\alpha|=k} a_{j\alpha}(x,y)\xi^j\eta^\alpha.
\]
The principal symbol captures $A\in\Diffb^k(X)$ to leading order in the differential order sense, in that $\sigmab^k(A)=0$ implies $A\in\Diffb^{k-1}(X)$. In order to capture $A$ to leading order at $\pa X$, one fixes a collar neighborhood $[0,x_0)_x\times\pa X$ of $\pa X$, and defines, in local coordinates as above, the \emph{b-normal operator} of $A$ by
\[
  N(A) := \sum_{j+|\alpha|\leq k} a_{j\alpha}(0,y)(x D_x)^j D_y^\alpha \in \Diffb^k([0,\infty)_x\times\pa X).
\]
This operator is dilation-invariant in $x$, and it differs from $A$ on $[0,x_0)\times\pa X$ by an element of $x\Diffb^k$. Formally conjugating $N(A)$ by the Mellin transform in $x$ (see also~\S\ref{SsBFn}) gives the \emph{Mellin transformed normal operator family}
\begin{equation}
\label{EqBNormMT}
  N(A,\lambda) := \sum_{j+|\alpha|\leq k} a_{j\alpha}(0,y)\lambda^j D_y^\alpha \in \Diff^k(\pa X).
\end{equation}

We next recall the notion of (real) blow-up; see \cite{MelroseDiffOnMwc} for a detailed account. We are given a manifold with corners $X$ and a p-submanifold $S\subset X$, i.e.\ a submanifold so that near all $p\in S$ there exist local coordinates $x^1,\ldots,x^k\geq 0$, $x^{k+1},\ldots,x^n\in\R$ on $X$ so that $S$ is given by the vanishing of a subset of these coordinates. (If this subset always contains at least one of the $x^1,\ldots,x^k$, we call $S$ a boundary p-submanifold.) The blow-up of $X$ along $S$ is defined as
\[
  [X;S] := (X\setminus S) \sqcup S N^+S,
\]
where $S N^+S=(N^+S\setminus o)/\R^+$ is the spherical inward pointing normal bundle of $S$; here, we set $N^+S=T^+_S X/T S$, with $T^+_p X\subset T_p X$ consisting of all (non-strictly) inward pointing tangent vectors on $X$ at $p$. The space $[X;S]$ is equipped with a blow-down map $\upbeta\colon[X;S]\to X$, which is the identity on $X\setminus S$ and the base projection on the \emph{front face} $S N^+S$. The space $[X;S]$ can be given the structure of a smooth manifold by declaring polar coordinates around $S$ to be smooth down to the origin. To make this concrete, say $S$ is given by $x^1=\ldots=x^p=0$, $x^{k+1}=\ldots=x^{k+q}=0$ for some $0\leq p\leq k$ and $0\leq q\leq n-k$ with $(p,q)\neq(0,0)$; setting $R=(\sum_{i=1}^p (x^i)^2+\sum_{j=k+1}^{k+q}(x^j)^2)^{1/2}$ and $\Omega=R^{-1}(x^1,\ldots,x^p,x^{k+1},\ldots,x^{k+q})\in\Sph_p^{p+q-1}$ (the unit sphere in $\R^{p+q}$ intersected with $[0,\infty)^p\times\R^q$), the map $(x^1,\ldots,x^n)\mapsto(R,\Omega,x^{p+1},\ldots,x^k,x^{k+q+1},\ldots,x^n)$ extends from $X\setminus S$ to a diffeomorphism from a neighborhood of the front face of $[X;S]$ to $[0,\infty)\times\Sph_+^{p+q-1}\times\R^{(k-p)+(n-k-q)}$. The blow-down map is the product of the map $(R,\Omega)\mapsto R\Omega$ and the identity map in the remaining coordinates. Finally, if $T\subset X$ is another p-submanifold so that for each point in $S\cap T$ there exist local coordinates in which both $S$ and $T$ are of the above product form, then the lift of $T$ to $[X;S]$ is defined by $\upbeta^*T:=\upbeta^{-1}(T)$ when $T\subset S$, and $\upbeta^*T:=\cl(\upbeta^{-1}(T\setminus S))$ otherwise (where `$\cl$' denotes the closure in $[X;S]$). One can then define the iterated blow-up $[X;S;T]:=[[X;S];\upbeta^*T]$. (This construction can be iterated.)

When using index notation, we use the summation convention; that is, indices appearing twice are summed over (unless otherwise noted).

\subsection{00-structures}
\label{SsB00}

Mazzeo~\cite{Mazzeo00} defines spaces of 00-(pseudo)differential operators and studies their symbolic properties and parametrices; in the present paper, we restrict ourselves to the case of 00-\emph{differential} operators, and pursue a hands-on approach to their analysis. Such operators are tailored to the analysis of overdetermined operators on spaces which feature exponential weights at hypersurfaces. Thus, on a manifold $X$ with boundary $\pa X$, denote by $\rho\in\CI(X)$ a boundary defining function and recall from~\cite{MazzeoMelroseHyp} the Lie algebra $\cV_0(X)$ of 0-vector fields consisting of all $V\in\cV(X)$ which vanish at $\pa X$. We then set
\[
  \cV_{0 0}(X) = \rho\cV_0(X).
\]
In local coordinates $x\geq 0$, $y\in\R^{n-1}$, near a boundary point of $X$, elements of $\cV_{0 0}(X)$ are of the form $a(x,y)x^2\pa_x+b^j(x,y)x^2\pa_{y^j}$ for smooth $a,b$. Using that $\cV_0(X)$ is a Lie algebra (or by direct computation), one finds that commutators of 00-vector fields gain a factor of the boundary defining function,
\[
  [\cV_{0 0}(X),\cV_{0 0}(X)] \subset \rho\cV_{0 0}(X).
\]
Moreover, since $\cV_{0 0}(X)\subset\rho\Vb(X)$, we also have $\rho^{-\alpha}V(\rho^\alpha)\in\rho\CI(X)$ for $\alpha\in\R$. For the corresponding spaces $\Diff_{0 0}^k(X)$ and $\rho^{-\alpha}\Diff_{0 0}^k(X)$ of (weighted) 00-differential operators, this implies
\[
  A_j\in\rho^{-\alpha_j}\Diff_{0 0}^{k_j}(X),\ j=1,2 \implies
  [A_1,A_2]\in\rho^{-\alpha_1-\alpha_2+1}\Diff_{0 0}^{k_1+k_2-1}(X),
\]
i.e.\ composition of 00-operators is commutative to leading order in the differential order sense as usual, and also to leading order at $\pa X$. We can use more singular weights as well: for $\beta\in\R$, we have $e^{-\beta/\rho}V(e^{\beta/\rho})\in\CI(X)$, as is easily checked in local coordinates, and therefore conjugation by $e^{\beta/\rho}$ preserves the space $\rho^{-\alpha}\Diff_{0 0}^k(X)$

The space $\cV_{0 0}(X)$ is the space of smooth sections of the 00-tangent bundle ${}^{0 0}T X\to X$, with local frame $x^2\pa_x$, $x^2\pa_{y^j}$ ($j=1,\ldots,n-1$); the dual bundle is the 00-cotangent bundle ${}^{0 0}T^*X\to X$. Letting $P^k({}^{0 0}T^*X)$ denote the space of smooth functions on ${}^{0 0}T^*X$ which are fiberwise polynomials of degree $k$, we can define a surjective (full\footnote{This map combines the two symbols alluded to in~\S\ref{SsI00} into a single object.}) \emph{principal symbol} map
\begin{equation}
\label{EqB00Symb}
  {}^{0 0}\upsigma^k \colon \Diff_{0 0}^k(X) \to (P^k/\rho P^{k-1})({}^{0 0}T^*X)
\end{equation}
with kernel $\rho\Diff_{0 0}^{k-1}(X)$ as follows: in local coordinates $x\geq 0$, $y\in\R^{n-1}$, and writing 00-covectors as $\xi\frac{\dd x}{x^2}+\eta_j\frac{\dd y^j}{x^2}$ (so $\xi\in\R$, $\eta\in\R^{n-1}$), we assign
\begin{equation}
\label{EqB00Def}
  {}^{0 0}\upsigma^k \colon A = \sum_{j+|\alpha|\leq k} a_{j\alpha}(x,y)(x^2 D_x)^j (x^2 D_y)^\alpha \mapsto \sum_{j+|\alpha|\leq k} a_{j\alpha}(x,y)\xi^j\eta^\alpha.
\end{equation}
(The map ${}^{0 0}\upsigma^{k,\alpha}\colon\rho^{-\alpha}\Diff^k_{0 0}\to\rho^{-\alpha}P^k/\rho^{-\alpha+1}P^{k-1}$ is defined analogously.) Thus, ${}^{0 0}\upsigma^k(A)$ is the usual principal symbol over $X^\circ$ (where ${}^{0 0}T^*X$ and $T^*X$ are equal) and can be identified there with a \emph{homogeneous} polynomial, whereas over $x=0$, the 00-principal symbol captures the \emph{full} operator $A$ modulo 00-operators with decaying coefficients.

\begin{lemma}[Well-definedness of the principal symbol]
\label{LemmaB00Well}
  The map ${}^{0 0}\upsigma^k$, defined by~\eqref{EqB00Def}, is well-defined as a map~\eqref{EqB00Symb}, i.e.\ it is independent of the choice of local coordinates. It is multiplicative in the sense that ${}^{0 0}\upsigma^{k_1+k_2}(A_1\circ A_2)={}^{0 0}\upsigma^{k_1}(A_1)\cdot{}^{0 0}\upsigma^{k_2}(A_2)$ for $A_j\in\Diff_{0 0}^{k_j}(X)$, $j=1,2$, and it maps adjoints to adjoints (complex conjugates).
\end{lemma}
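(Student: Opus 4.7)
The plan is to leverage the commutator structure $[\cV_{00}(X),\cV_{00}(X)] \subset \rho\cV_{00}(X)$ already noted in the excerpt, together with the observation that any $V \in \cV_{00}(X)$ acts on smooth coefficients by $V(a) \in \rho^2\CI(X) \subset \rho\Diff_{00}^0(X)$, since a 00-vector field vanishes quadratically at $\pa X$ in adapted coordinates. These two facts together ensure that every reordering of a composition of 00-operators produces an error that is both lower by one differential order and carries an extra factor of at least $\rho$ — precisely what is needed to work modulo $\rho\Diff_{00}^{k-1}$.

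For multiplicativity, write $A_i = \sum a^{(i)}_{j\alpha}(x,y)(x^2 D_x)^j (x^2 D_y)^\alpha$ and compute $A_1\circ A_2$ by bringing all coefficients to the left. Each nontrivial commutator between a monomial $(x^2 D_x)^j(x^2 D_y)^\alpha$ and a coefficient $a^{(2)}_{l\beta}$ lies in $\rho\Diff_{00}^{k_1+k_2-1}(X)$ by a Leibniz expansion using the commutator observations; the remaining term $\sum a^{(1)}_{j\alpha}a^{(2)}_{l\beta}(x^2 D_x)^{j+l}(x^2 D_y)^{\alpha+\beta}$ reads off to the stated product of symbols. For adjoints with respect to any fixed smooth positive density $\mu$, direct computation gives $V^* - \bar V \in \rho\Diff_{00}^0(X)$ for $V \in \cV_{00}(X)$ (the correction terms $V(\log\mu)$ and $V(1)$ are of order $\rho^2$), so by induction on $k$ via the just-proved multiplicativity the symbol of $A^*$ is the complex conjugate of the symbol of $A$.

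The main step, and the one demanding the most careful bookkeeping, is coordinate invariance. Let $(x,y)$ and $(\tilde x,\tilde y)$ be two boundary-adapted coordinate systems near a point of $\pa X$; since both $x$ and $\tilde x$ are boundary defining functions, $\tilde x = x\phi_1(x,y)$ with $\phi_1 > 0$, and $\tilde y^l = \psi^l(x,y)$ is smooth. The chain rule gives
\[
  x^2 \pa_x = c_0^{\;0}(x,y)\,\tilde x^2\pa_{\tilde x} + c_0^{\;l}(x,y)\,\tilde x^2\pa_{\tilde y^l},\qquad x^2 \pa_{y^j} = c_j^{\;0}(x,y)\,\tilde x^2\pa_{\tilde x} + c_j^{\;l}(x,y)\,\tilde x^2\pa_{\tilde y^l},
\]
where the coefficients $c_a^{\;b}$ are smooth on $X$: the potentially singular factor $1/\tilde x^2 = 1/(x^2\phi_1^2)$ is cancelled by the fact that $\pa_{y^j}\tilde x = x\pa_{y^j}\phi_1$ and $\pa_x\tilde y^l$ need only be multiplied by $x^2$ rather than a higher power. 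Substituting these expressions into any $(x,y)$-normal-form representation of $A$ and then reordering the resulting products of $\tilde x^2\pa_{\tilde x}$, $\tilde x^2\pa_{\tilde y^l}$ back into normal form, every commutator error again lands in $\rho\Diff_{00}^{k-1}(X)$ by the same mechanism as in the multiplicativity argument. What remains is exactly the fiberwise polynomial pullback of the original symbol under the smooth bundle isomorphism of ${}^{00}T X$ determined by the matrix $(c_a^{\;b})$, so ${}^{00}\upsigma^k(A)$ descends to a well-defined element of $(P^k/\rho P^{k-1})({}^{00}T^*X)$.

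The only delicate verification is the smoothness and invertibility of the matrix $(c_a^{\;b})$ at $\pa X$ — i.e.\ checking that no $1/x$ singularity sneaks through — which follows from the vanishing orders of $\pa_{y^j}\tilde x$ and of $\pa_x\tilde y^l$ near $x=0$, combined with $\phi_1|_{x=0} > 0$. This is what makes the 00-setting behave well under coordinate change; once it is verified, the rest of the lemma is a mechanical consequence of the Lie filtration property $[\cV_{00},\cV_{00}] \subset \rho\cV_{00}$.
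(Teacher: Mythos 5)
Your proof is correct and follows essentially the same route as the paper: express the 00-frame of one coordinate system in terms of the other, check that the resulting coefficient matrix is smooth (and invertible at $\pa X$, using $\pa_{y^j}\tilde x = \cO(x)$ and $\phi_1>0$), and then control all reordering errors via the filtration $[\cV_{0 0}(X),\cV_{0 0}(X)]\subset\rho\cV_{0 0}(X)$ together with $V(a)\in\rho^2\CI(X)$ for $V\in\cV_{0 0}(X)$; multiplicativity and adjoints are handled by the same mechanism, just as in the paper. One small inaccuracy in your adjoint step: for $V=a\,x^2 D_x$ the correction $V^*-\bar V$ picks up a divergence term from $D_x(x^2\bar a)$, namely $-2ix\bar a+\cdots$, which is only $\cO(\rho)$ rather than $\cO(\rho^2)$ as your parenthetical suggests — but $\cO(\rho)$ already places it in $\rho\Diff_{0 0}^0(X)$, which is all that is needed, so the conclusion stands.
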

\begin{proof}
  We can write coordinates $\tilde x\geq 0$, $\tilde y\in\R^{n-1}$ near $(0,0)$ as $\tilde x=x b(x,y)$ and $\tilde y=\Phi(x,y)$ with $0<b$ and $\Phi(x,-)$ a local diffeomorphism for small $x$. Then
  \[
    x^2 D_x = a_{1 1}\tilde x^2 D_{\tilde x} + a_{1 2}\tilde x^2 D_{\tilde y},\quad
    x^2 D_y = a_{2 1}\tilde x^2 D_{\tilde x} + a_{2 2}\tilde x^2 D_{\tilde y};
  \]
  where $a_{1 1}=b^{-2}(b + x b'_x)$, $a_{1 2}=b^{-2}\Phi'_x$, $a_{2 1}=x b^{-2}b'_y$, and $a_{2 2}=b^{-2}\Phi'_y$ are smooth. We likewise have
  \[
    \tilde\xi\frac{\dd\tilde x}{\tilde x^2} + \tilde\eta\frac{\dd\tilde y}{\tilde x^2} = \xi\frac{\dd x}{x^2}+\eta\frac{\dd y}{x^2},
  \]
  with $\xi=a_{1 1}\tilde\xi+a_{1 2}\tilde\eta$ and $\eta=a_{2 1}\tilde\xi+a_{2 2}\tilde\eta$. (Thus, the putative principal symbols of $x^2 D_x$ and $x^2 D_y$ are indeed well-defined.) Expanding $(x^2 D_x)^j=\sum_{p+|\beta|\leq j} f_{p\beta}(\tilde x,\tilde y)(\tilde x^2 D_{\tilde x})^p (\tilde x^2 D_{\tilde y})^\beta$, all terms with $p+|\beta|<j$ involve at least one derivative $\tilde x^2 D_{\tilde x}$ or $\tilde x^2 D_{\tilde y}$ falling onto a coefficient $a_{1 1},a_{1 2}$, or $\tilde x^2 D_{\tilde x}$ falling onto $\tilde x^2$; any such term thus comes with at least one additional power of $\tilde x$. Therefore, modulo $\tilde x\Diff_{0 0}^{j-1}$, only the terms with $p+|\beta|=j$ survive, and the putative principal symbols of the two sides are thus $\xi^j$ and $(a_{1 1}\tilde\xi+a_{1 2}\tilde\eta)^j$, which are equal. The same argument applies more generally to $(x^2 D_x)^j(x^2 D_y)^\alpha$.

  The multiplicativity of the 00-principal symbol follows similarly: upon writing
  \[
    a^{1,j\alpha}(x,y)(x^2 D_x)^j(x^2 D_y)^\alpha \circ a_{2,k\beta}(x,y)(x^2 D_x)^k(x^2 D_y)^\beta
  \]
  in the form~\eqref{EqB00Def} by commuting $(x^2 D_x)^j(x^2 D_y)^\alpha$ through $a_{2,k\beta}$, and further commuting $(x^2 D_y)^\alpha$ through $(x^2 D_x)^k$, each commutator drops one derivative and gains one power of $x$.
\end{proof}

The setting in which 00-operators arise in the present paper takes the following general form: let $X^\circ$ be an open manifold which is separated into two connected components $X_+$ and $X_-$ by a hypersurface $H$; let $\rho\in\CI(X^\circ)$ be a defining function of $H$ inside $X_+$. Let $L\in\Diff^m(X^\circ)$; then
\[
  L_+ := \rho^{2 m}L \in \Diff_{0 0}^m(X_+).
\]
Indeed, in local coordinates $x\in\R$, $y\in\R^{n-1}$ in which $X_+$ is given by $x\geq 0$, the operator $L$ is a linear combination (with $\CI$ coefficients) of $D_x^j D_y^\alpha$, $j+|\alpha|\leq m$, and therefore
\[
  x^{2 m}D_x^j D_y^\alpha=x^{2(m-j-|\alpha|)}\bigl(x^{2 j}D_x^j (x^2 D_y)^\alpha - [x^{2 j}D_x^j,x^{2|\alpha|}]x^{-2|\alpha|}(x^2 D_y)^\alpha\bigr)\in\Diff_{0 0}^m
\]
indeed. This calculation also implies ${}^{0 0}\upsigma^m(L_+)=M^*\upsigma^m(L)$, where $\upsigma^m(L)\in P^m_{\rm hom}(T^*X^\circ)$ denotes the standard (homogeneous) principal symbol of $L$, and $M\colon{}^{0 0}T^*X_+\to T^*_{X_+}X^\circ$ is the bundle isomorphism given by multiplication by $\rho^2$. In particular, the boundary principal symbol ${}^{0 0}\upsigma^m(L_+)|_{{}^{0 0}T^*_H X_+}$ is a homogeneous polynomial, and correspondingly $L_+$ cannot have an injective principal symbol (except in the case $m=0$). A conjugation by an exponential weight can change this, since
\begin{equation}
\label{EqB00SymbConj}
  {}^{0 0}\sigma^m(e^{-\beta/\rho}L_+ e^{\beta/\rho})(z,\zeta) = {}^{0 0}\sigma^m(L_+)\Bigl(\zeta+i\beta\frac{\dd\rho}{\rho^2}\Bigr)=\upsigma^m(L)(\rho^2\zeta+i\beta\,\dd\rho),\quad \zeta\in{}^{0 0}T^*_z X_+.
\end{equation}
(This follows in local coordinates by multiplicativity from the easy cases $L_+=\rho^2 D_\rho$---with $e^{-\beta/\rho}\rho^2 D_\rho e^{\beta/\rho}=\rho^2 D_\rho+i\beta$---and $L_+=\rho^2 D_y$.) Note that the right hand side of~\eqref{EqB00SymbConj} is not homogeneous in $\zeta$ anymore (unless $m=0$ and $\beta=0$).

\begin{example}[Exterior derivative]
\label{ExB00Ext}
  Consider $X^\circ=\R^n$, with coordinates $(x,y)\in\R\times\R^{n-1}$, and $H=\{x=0\}$, $X_\pm=\{\pm x\geq 0\}$, and $\rho=x$; let $L=\dd\colon\CI(\R^n)\to\CI(\R^n;T^*\R^n)$, with principal symbol $\upsigma^1(L)(z,\zeta)=i\zeta$. Then $L_+=x^2\,\dd=(x^2\pa_x,x^2\pa_y)$, and its conjugation
  \[
    e^{-\beta/x}L_+ e^{\beta/x}=x^2\,\dd - \beta\,\dd x \in \Diff_{0 0}^1(X_+;\ul\R,T^*_{X_+}\R^n),
  \]
  has principal symbol ${}^{0 0}T^*X_+ \ni (\xi,\eta) \mapsto (i x^2\xi-\beta,i\eta)$. This is injective for $\beta\neq 0$; note that $x^2\colon{}^{0 0}T^*X_+\to T^*_{X_+}\R^n$ is an isomorphism.
\end{example}

\subsection{Function spaces}
\label{SsBFn}

Let $X$ be a manifold with corners and boundary hypersurfaces $H_1,\ldots,H_N$, and denote a boundary defining function of $H_j$ by $\rho_j$. We write $\CIdot(X)\subset\CI(X)$ for the space of smooth functions which vanish to infinite order at each $H_j$. For $\alpha_j\in\R$ ($j=1,\ldots,N$), set $w=\prod_{j=1}^N \rho_j^{\alpha_j}$; we then define the space of \emph{conormal functions}
\[
  \cA^{\alpha_1,\ldots,\alpha_N}(X) = \{ u\in\CI(X^\circ) \colon L u \in w L^\infty_\loc(X)\ \forall L\in\Diffb(X) \}.
\]
We refer to $\alpha_j$ as the weight at the $j$-th boundary hypersurface of $X$. Polyhomogeneous spaces refine the mere boundedness to generalized Taylor expansions. \emph{Index sets} capture the exponents in the Taylor expansion; recall here that an index set is a subset $\cE\subset\C\times\N_0$ so that $(z,k)\in\cE$ implies $(z+1,k)\in\cE$, and also $(z,k-1)\in\cE$ when $k\geq 1$, and so that $\Re z_j\to\infty$ whenever $(z_j,k_j)\in\cE$ tends to infinity. (Thus, the subset of elements $(z,k)\in\cE$ with $\Re z<C$ is finite for any choice of $C\in\R$.) Frequently occurring index sets are denoted somewhat imprecisely as
\begin{alignat*}{2}
  &\N_0&\quad&\text{(for the index set $\N_0\times\{0\}$)}, \\
  &\N_0+z\ \text{or simply}\ z&\quad&\text{(for the index set $\{(z+j,0)\colon j\in\N_0\}$)}, \\
  &(z,k)&\quad&\text{(for the index set $\{(z+j,l)\colon j\in\N_0,\ 0\leq l\leq k$)}.
\end{alignat*}
We write $\min\Re\cE=\min\{\Re z\colon(z,k)\in\cE\}$; for $a\in\R$, and we write $\Re\cE>a$ as an abbreviation for $\min\Re\cE>a$. If now $\cE=(\cE_1,\ldots,\cE_N)$, $\cE_j\subset\C\times\N_0$, is a collection of index sets, and $\Re\cE_j>a_j$, then
\[
  \cA_\phg^{\cE_1,\ldots,\cE_N}(X) = \cA_\phg^\cE(X) \subset \cA^{a_1,\ldots,a_N}(X)
\]
consists of all conormal functions $u$ so that for all $j=1,\ldots,N$, the following holds in a collar neighborhood $[0,\eps)_{\rho_j}\times H_j$ of $H_j\subset X$: letting $\alpha^{(j)}=(\alpha_1,\ldots,\alpha_{j-1},\alpha_{j+1},\ldots,\alpha_N)$, there exist $a_{(z,k)}\in\cA^{\alpha^{(j)}}(H_j)$ for $(z,k)\in\cE_j$ so that
\[
  u(\rho_j,x)-\sum_{\genfrac{}{}{0pt}{}{(z,k)\in\cE_j}{\Re z\leq C}}\rho_j^z(\log\rho_j^{-1})^k a_{(z,k)}(x)\in\cA^{\alpha_1,\ldots,\alpha_{j-1},C,\alpha_{j+1},\ldots,\alpha_N}(X)
\]
for all $C\in\R$. Necessarily, then, the $a_{(z,k)}$ are themselves polyhomogeneous on $H_j$, with index set $\cE_i$ at the boundary hypersurface $H_j\cap H_i$ of $H_j$ for those $i$ for which $H_j\cap H_i\neq\emptyset$. See \cite[Chapter~4]{MelroseDiffOnMwc} for a detailed treatment of polyhomogeneous distributions. If $u\in\cA_\phg^\cE(X)$ and $v\in\cA_\phg^\cF(X)$, then $u+v\in\cA_\phg^{\cE\cup\cF}(X)$ and $u\cdot v\in\cA_\phg^{\cE+\cF}(X)$, where we set $\cE+\cF=\{(z_1+z_2,k_1+k_2)\colon (z_1,k_1)\in\cE,\ (z_2,k_2)\in\cF\}$. For $k\in\N$, we also write
\[
  k\cE = \cE+\ldots+\cE\quad\text{($k$ summands)};
\]
thus $u^k\in\cA_\phg^{k\cE}(X)$ when $u\in\cA_\phg^\cE(X)$. We say that an index set $\cE$ is \emph{nonlinearly closed} if $\cE=\bigcup_{j\in\N}j\cE$. The \emph{nonlinear closure} of an index set $\cE$ with $\Re\cE>0$ is defined by $\cF:=\bigcup_{j\in\N}j\cE$; it is the smallest index set with the property that $u^k\in\cA_\phg^\cF(X)$ for all $k\in\N_0$ and for all $u\in\cA_\phg^\cE(X)$. We finally recall the \emph{extended union} of index sets
\[
  \cE\extcup\cF := \cE\cup\cF \cup \{(z,k_1+k_2+1)\colon(z,k_1)\in\cE,\ (z,k_2)\in\cF \}.
\]

We next recall \emph{weighted b-Sobolev spaces} on a compact manifold $X$ with boundary $\pa X\neq\emptyset$. Fix a smooth positive b-density on $X$, i.e.\ a positive section of the density bundle $\Omegab X\to X$ associated to $\Tb X\to X$; in local coordinates $x\geq 0$, $y\in\R^{n-1}$, such a b-density takes the form $a(x,y)|\frac{\dd x}{x}\,\dd y|$ where $0<a\in\CI$. Then $L^2(X)$ denotes the corresponding $L^2$-space on $X^\circ$. For $s\in\N_0$, we define more generally the b-Sobolev space $\Hb^s(X)$ to consist of all $u\in L^2(X)=\Hb^0(X)$ so that $L u\in\Hb^0(X)$ for all $L\in\Diffb^s(X)$. If $\{V_1,\ldots,V_N\}\subset\Vb(X)$ is a collection of b-vector fields which spans $\Vb(X)$, then $\|u\|_{\Hb^s(X)}^2:=\sum\|V_{i_1}\ldots V_{i_m}u\|_{L^2(X)}^2$, with the sum taken over all $m\leq s$ and $1\leq i_1,\ldots,i_m\leq N$, gives $\Hb^s(X)$ the structure of a Hilbert space. Weighted b-Sobolev spaces are denoted
\[
  \Hb^{s,\alpha}(X)=\rho^\alpha\Hb^s(X)=\{\rho^\alpha u\colon u\in\Hb^s(X)\},
\]
with $\rho\in\CI(X)$ denoting a boundary defining function; this is a Hilbert space with norm $\|u\|_{\Hb^{s,\alpha}(X)}=\|\rho^{-\alpha}u\|_{\Hb^s(X)}$. For $s\in\R$, one can define $\Hb^{s,\alpha}(X)$ via interpolation and duality. Since conjugation by $\rho^\alpha$ preserves the space $\Diffb^k(X)$, an operator $A\in\rho^{-\beta}\Diffb^k(X)$ defines a bounded linear map $\Hb^{s,\alpha}(X)\to\Hb^{s-k,\alpha-\beta}(X)$ for all $s,\alpha\in\R$. In a completely analogous fashion, one can define 00-Sobolev spaces $H_{0 0}^s(X)$ (relative to a positive 00-density, in local coordinates $a(x,y)|\frac{\dd x}{x^2}\frac{\dd y^1}{x^2}\cdots\frac{\dd y^{n-1}}{x^2}|$) and their weighted analogues\footnote{The sign convention is such that increasing any one of $\alpha,\beta,s$ gives a smaller space.}
\[
  e^{-\beta/\rho}\rho^\alpha H_{0 0}^s(X),
\]
with (weighted) 00-operators giving bounded maps between such spaces with appropriately shifted orders. When one order, say $s$, takes the value $+\infty$ (resp.\ $-\infty$), the corresponding function space is defined as the intersection (resp.\ union) over all $s\in\R$, so e.g.\ $H_{0 0}^\infty(X)=\bigcap_{s\in\R}H_{0 0}^s(X)$; likewise in the cases $\alpha=\infty$ and $\beta=\infty$.

Returning to b-Sobolev spaces, we recall their interaction with the Mellin-transform from \cite[\S3.1]{VasyMicroKerrdS} (though we opt to use large parameter spaces here instead of semiclassical ones). To wit, we work on $[0,\infty)_x\times\pa X$, with $\pa X$ a compact manifold without boundary; this is the model for a manifold with boundary near its boundary. The Mellin transform of a function $u=u(x,y)$ with compact support in $x^{-1}((0,\infty))$ is then the Fourier transform in logarithmic coordinates,
\[
  (\cM u)(\lambda,y) = \int_0^\infty x^{-i\lambda}u(x,y)\,\frac{\dd x}{x}.
\]
The range can be characterized using the Paley--Wiener theorem; the inverse Mellin transform is $(\cM_\alpha^{-1}\hat u)(x,y) = \int_{\Im\lambda=-\alpha} x^{i\alpha}\hat u(\lambda,y)\,\dd\lambda$ for any $\alpha\in\R$. The Mellin transform diagonalizes dilation-invariant operators: if $N(A)\in\Diffb^k([0,\infty)\times\pa X)$ is the (dilation-invariant) b-normal operator of a b-differential operator $A$, then $\cM(N(A)u)(\lambda)=N(A,\lambda)(\cM u)(\lambda)$.

Equipping $\pa X$ with a volume density, and taking its product with $|\frac{\dd x}{x}|$ to define a b-density on $[0,\infty)\times\pa X$, Plancherel's Theorem gives an isomorphism
\[
  \cM \colon x^\alpha\Hb^0([0,\infty)\times\pa X) \xra{\cong} L^2(\{\Im\lambda=-\alpha\};L^2(\pa X)).
\]
Upon defining higher order b-Sobolev spaces via testing with dilation-invariant vector fields, i.e.\ with $x\pa_x$ and $x$-independent vector fields on $\pa X$, this extends to an isomorphism
\begin{equation}
\label{EqBFnMIso}
  \cM \colon x^\alpha\Hb^s([0,\infty)\times\pa X) \to L^2(\{\Im\lambda=-\alpha\};H^s_{\la\lambda\ra}(\pa X))
\end{equation}
for $s\in\N_0$, where we introduced the \emph{large parameter Sobolev space} $H_{\la\lambda\ra}^s(\pa X)$. As a set, the space $H_{\la\lambda\ra}^s(\pa X)$ is equal to $H^s(\pa X)$, but its norm depends on $\lambda$: if $V_1,\ldots,V_N\in\cV(\pa X)$ is a collection of vector fields on $X$ which spans $\cV(\pa X)$, we set
\[
  \|u\|_{H^s_{\la\lambda\ra}(\pa X)}^2 = \sum_{\genfrac{}{}{0pt}{}{\alpha\in\N_0^{N+1}}{|\alpha|\leq s}} \|(V_1,\ldots,V_N,\la\lambda\ra)^\alpha u\|_{L^2(\pa X)}^2.
\]
Via interpolation and duality, one can extend the isomorphism~\eqref{EqBFnMIso} to $s\in\R$.

We also recall the action of the Mellin-transform on conormal or polyhomogeneous distributions; see \cite[\S2A]{MazzeoEdge} for an overview, and \cite[\S4]{MelroseDiffOnMwc} for a detailed account. To wit, if $u\in\cA^\alpha([0,\infty)_x\times H)$ is supported in $x\leq x_0\in(0,\infty)$, then $(\cM u)(\lambda)$ is holomorphic in $\Im\lambda>-\alpha$, and for any $\eps>0$ we have the Paley--Wiener type bound
\[
  \|(\cM u)(\lambda)\|_{\cC^N(H)} \leq C_{N,\eps} x_0^{\Im\lambda}\la\Re\lambda\ra^{-N},\qquad N\in\N,\ \Im\lambda\geq-\alpha+\eps.
\]
Conversely, if $\cM u$ satisfies these bounds, then $u$ is supported in $x\leq x_0$, and $u\in\cA^{\alpha-\eps}$ for all $\eps>0$.

When $\cE$ is an index set and $u\in\cA^\cE_\phg([0,\infty)_x\times H)$, with $x\leq x_0$ on $\supp u$, then $(\cM u)(\lambda)$ is meromorphic in $\lambda\in\C$ with values in $\CI(H)$, with poles of order $\leq k+1$ at $\lambda=-i z$ for $(z,k)\in\cE$; and the above Paley--Wiener type bound holds for any $\Im\lambda\geq -C$ for sufficiently large $|\Re\lambda|$ (depending on $\cE$ and $C$). Conversely, if $\cM u$ satisfies these conditions, then $u\in\cA^\cE_\phg$ (together with the support property).

\subsection{Estimates for b- and 00-differential operators}
\label{SsBAn}

In this paper, we only work with differential operators $L$ that have injective, surjective, or invertible (elliptic) principal symbols $\upsigma(L)$ in the appropriate sense; note that if $\upsigma(L)$ is injective, resp.\ surjective, then $L^*L$, resp.\ $L L^*$ is elliptic, and therefore (approximate) left, resp.\ right inverses of $L$ can be constructed from (approximate) inverses of an appropriate elliptic operator. In the b-setting, the (large) calculus of b-pseudodifferential operators \cite{MelroseAPS} is a very precise tool for this purpose; one can similarly construct an equally useful algebra of 00-pseudodifferential operators (which is rather similar to the scattering ps.d.o.\ algebra) for the (approximate) inversion of elliptic 00-differential operators. Here, in order to keep the paper more easily accessible, we opt for a direct approach based on a bounded geometry perspective, cf.\ \cite{ShubinBounded}, and (for the 00-boundary symbol) on direct Fourier inversion.

We begin with the b-setting, and indeed work in a local coordinate chart $X=[0,\infty)_x\times\R^{n-1}_y$ which we equip with the b-density $|\frac{\dd x}{x}\,\dd y|$; we moreover define b-Sobolev spaces via testing with the vector fields $x\pa_x$ and $\pa_y$. For $p\in\Z^{n-1}$ and $j\in\Z$, we define the diffeomorphism
\begin{equation}
\label{EqBBdd}
\begin{split}
  \Phi_{p,j}&\colon \cD:=(-2,2)_w\times B(0,2)_z\xra{\cong} \cD_{p,j}:=(2^{-j-2},2^{-j+2})\times B(p,2) \subset X, \\
  \Phi_{p,j}(w,z)&=(2^{-j-w},p+z).
\end{split}
\end{equation}
Note that $\Phi_{p,j}^*((\log 2)x\pa_x)=-\pa_w$ and $\Phi_{p,j}^*(\pa_y)=\pa_z$, so $\cD_{p,j}$ is a unit size cell for b-vector fields. See Figure~\ref{FigBBdd}.

\begin{figure}[!ht]
\centering
\includegraphics{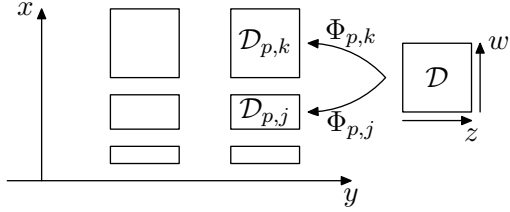}
\caption{Illustration of the unit cells in the b-setting; see~\eqref{EqBBdd}.}
\label{FigBBdd}
\end{figure}

Fix a nonnegative function $\chi\in\CIc(\cD)$ so that $\chi=1$ on $[-1,1]\times\{|z|\leq 1\}$, and put $\chi_{p,j}:=(\Phi_{p,j})_*\chi$; then the sum of all $\chi_{p,j}$, $p\in\Z^{n-1}$, $j\in\Z$, is locally finite and everywhere positive, and there exists some number $J\in\N$ so that the intersection of $\supp\chi_{p,j}$ for more than $J$ pairwise distinct pairs $(p,j)\in\Z^{n-1}\times\Z$ is empty.

\begin{lemma}[b-Sobolev norms]
\label{LemmaBAnHb}
  Let $s,\alpha\in\R$. Then we have an equivalence of norms
  \[
    \|u\|_{\Hb^{s,\alpha}(X)}^2 \sim \sum_{\genfrac{}{}{0pt}{}{p\in\Z^{n-1}}{j\in\Z}} 2^{2 j\alpha}\|\Phi_{p,j}^*(\chi_{p,j}u)\|_{H^s(\R^n)}^2,
  \]
  in the sense that there exists a constant $C>1$ only depending on $s,\alpha$ so that the left hand side is bounded by $C$ times the right hand side and vice versa.
\end{lemma}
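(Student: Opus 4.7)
The equivalence rests on three elementary observations about the diffeomorphism $\Phi_{p,j}$: (i) the b-vector fields $V_1=(\log 2)x\pa_x,\ V_i=\pa_{y^{i-1}}$ ($i\geq 2$), which span $\Vb(X)$ over $\CI(X)$, pull back to the \emph{fixed} unit-size, translation-invariant vector fields $-\pa_w,\pa_{z^1},\ldots,\pa_{z^{n-1}}$ on $\cD$; (ii) the b-density satisfies $\Phi_{p,j}^*|\tfrac{\dd x}{x}\,\dd y|=(\log 2)|\dd w\,\dd z|$; (iii) on $\cD_{p,j}$, the weight satisfies $c_\alpha 2^{-j\alpha}\leq x^\alpha\leq C_\alpha 2^{-j\alpha}$ with $c_\alpha,C_\alpha>0$ depending only on $\alpha$. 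In particular, the constants in all subsequent estimates are independent of $(p,j)$.

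\textbf{Step 1 (cellwise equivalence for $s\in\N_0$).} For $v\in\CIc(\cD_{p,j})$, (i)--(iii) give
\[
  \|v\|_{\Hb^{s,\alpha}(X)}^2 = \sum_{|\beta|\leq s}\int x^{-2\alpha}|V^\beta v|^2\,\tfrac{\dd x}{x}\,\dd y\ \sim\ 2^{2 j\alpha}\sum_{|\beta|\leq s}\int|\pa^\beta\Phi_{p,j}^*v|^2\,\dd w\,\dd z = 2^{2 j\alpha}\|\Phi_{p,j}^*v\|_{H^s(\R^n)}^2,
\]
uniformly in $(p,j)$.

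\textbf{Step 2 (summation for $s\in\N_0$).} Apply Step 1 to $v=\chi_{p,j}u$ to obtain the claimed right-hand side from $\sum_{p,j}\|\chi_{p,j}u\|_{\Hb^{s,\alpha}}^2$. It remains to prove
\[
  \|u\|_{\Hb^{s,\alpha}(X)}^2 \sim \sum_{p,j}\|\chi_{p,j}u\|_{\Hb^{s,\alpha}(X)}^2.
\]
The upper bound $\sum\|\chi_{p,j}u\|_{\Hb^{s,\alpha}}^2\lesssim\|u\|_{\Hb^{s,\alpha}}^2$ follows from the Leibniz rule applied to $V^\beta(\chi_{p,j}u)$, the uniform boundedness of $V^\gamma\chi_{p,j}$ (which is just $\pa^\gamma\chi$ read off on $\cD$), and the bounded-overlap property that at each point at most $J$ of the $\chi_{p,j}$ are nonzero. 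For the lower bound, choose the cutoffs so that $\sigma:=\sum_{p,j}\chi_{p,j}^2$ satisfies $\sigma\geq c>0$ everywhere (which holds by the covering hypothesis on $\{\chi_{p,j}=1\}$) and observe that $\sigma\in\Cb^\infty(X)$ with uniform b-derivatives, so $\sigma^{-1}$ is a b-multiplier. Then
\[
  V^\beta u = \sigma^{-1}\sum_{p,j}\chi_{p,j}^2\,V^\beta u = \sigma^{-1}\sum_{p,j}\chi_{p,j}\bigl(V^\beta(\chi_{p,j}u) + [\chi_{p,j},V^\beta]u\bigr);
\]
expand the commutator $[\chi_{p,j},V^\beta]=\sum_{|\gamma|<|\beta|}a_{\beta,\gamma}^{p,j}V^\gamma$ with coefficients $a_{\beta,\gamma}^{p,j}$ of uniformly bounded $\Cb^\infty$-norm supported on $\cD_{p,j}$, take $\Hb^{0,\alpha}$-norms, use Cauchy--Schwarz together with finite overlap to absorb the sum in $(p,j)$, and conclude by induction on $s$.

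\textbf{Step 3 (general $s\in\R$).} The map $T\colon u\mapsto(2^{j\alpha}\Phi_{p,j}^*(\chi_{p,j}u))_{p,j}$ is, by Steps~1--2, bounded $\Hb^{s,\alpha}(X)\to\ell^2(\Z^{n-1}\times\Z;H^s(\R^n))$ with a bounded left inverse
\[
  S\colon (f_{p,j})_{p,j}\mapsto \sigma^{-1}\sum_{p,j}2^{-j\alpha}\chi_{p,j}\cdot(\Phi_{p,j})_*f_{p,j}
\]
for every $s\in\N_0$. Complex interpolation extends these mapping properties to all $s\geq 0$, and duality (with respect to the b-pairing, which is compatible with the Lebesgue pairing on $\R^n$ under the pullback in view of (ii)) extends them to all $s\in\R$; the identity $S\circ T=\mathrm{Id}$ then gives the norm equivalence for arbitrary $s$.

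\textbf{Main obstacle.} The only genuinely nontrivial point is the lower bound in Step~2, where one must commute cutoffs through $V^\beta$ while keeping all constants uniform in $(p,j)$; this is what forces the explicit choice of unit cells $\cD_{p,j}$ and the dyadic scaling $\Phi_{p,j}$, and is the reason one uses b-vector fields (whose pullbacks are $x$-independent) rather than arbitrary smooth vector fields.
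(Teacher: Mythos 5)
Your proof is correct and follows essentially the same route as the paper's: reduce to $\alpha=0$, establish boundedness of the localization map and a synthesis map (your $T$ and $S$ are the paper's $\Xi$ and $\Xi'$ up to the $\sigma^{-1}$ normalization, and $S\circ T=\mathrm{Id}$ is the paper's implicit $\Xi'\circ\Xi=\sigma$), handle $s\in\N_0$ by the same commutator/finite-overlap induction, then pass to general $s$ by interpolation and duality. The only difference is that you spell out the left-inverse identity and the commutator expansion more explicitly, whereas the paper leaves these to the reader.
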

\begin{proof}
  Since $x/2^{-j}\in[\frac14,4]$ on $\cD_{p,j}$, we only need to consider the case $\alpha=0$. By interpolation and duality, it moreover suffices to show, for $s\in\N_0$, the boundedness of the maps
  \begin{alignat*}{2}
    \Xi &\colon \Hb^s(X) \to \ell^2(\Z^{n-1}\times\Z;H^s(\R^n)),&\qquad u &\mapsto (\Phi_{p,j}^*(\chi_{p,j}u))_{(p,j)\in\Z^{n-1}\times\Z}, \\
    \Xi' & \colon \ell^2(\Z^{n-1}\times\Z;H^s(\R^n)) \to \Hb^s(X), &\qquad (v_{p,j}) &\mapsto \sum_{(p,j)\in\Z^{n-1}\times\Z} (\Phi_{p,j})_*(\chi v_{p,j}).
  \end{alignat*}
  (For $s=0$, the map $\Xi'$ is the adjoint of $\Xi$.) The boundedness of $\Xi$ is equivalent to the estimate $\sum\|\chi_{p,j}u\|_{H^s_\bop}^2\lesssim\|u\|_{H^s_\bop}^2$; this is clear for $s=0$, and follows for $s\in\N$ by induction, noting that the pullbacks of $[x\pa_x,\chi_{p,j}]$ and $[\pa_y,\chi_{p,j}]$ along $\Phi_{p,j}$ are uniformly bounded in $\CI(\R^n)$. The boundedness of $\Xi'$ follows for $s=0$ from the Cauchy--Schwarz inequality, using the finite intersection property of $\supp((\Phi_{p,j})_*\chi)=\supp\chi_{p,j}$; for $s\in\N$, one uses induction.
\end{proof}

\begin{cor}[b-Sobolev embedding]
\label{CorBAnSob}
  On an $n$-dimensional compact manifold $X$ with boundary, equipped with a smooth positive b-density, we have $\Hb^{s,\alpha}(X)\hra\rho^\alpha L^\infty(X)$ for $s>\frac{n}{2}$, where $\rho\in\CI(X)$ is a boundary defining function. More generally, $\Hb^{s,\alpha}(X)\hra\rho^\alpha\cC_\bop^k(X)$ for $s>\frac{n}{2}+k$, where $\cC_\bop^k(X)$ denotes the space of $\cC^k$-functions on $X^\circ$ all of whose b-derivatives of order up to $k$ are bounded. Finally, $\Hb^{\infty,\alpha}(X)\hra\cA^\alpha(X)$.
\end{cor}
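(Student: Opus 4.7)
The strategy is to reduce everything to standard Euclidean Sobolev embedding via the uniform unit-cell decomposition of Lemma~\ref{LemmaBAnHb}. By a partition of unity, it suffices to work in the local model $X=[0,\infty)_x\times\R^{n-1}_y$ with the b-density $|\tfrac{\dd x}{x}\,\dd y|$. Given $u\in\Hb^{s,\alpha}(X)$ with $s>\tfrac{n}{2}$, the equivalence of norms in Lemma~\ref{LemmaBAnHb} shows that each term satisfies
\[
  \|\Phi_{p,j}^*(\chi_{p,j}u)\|_{H^s(\R^n)} \leq C\, 2^{-j\alpha}\|u\|_{\Hb^{s,\alpha}(X)}.
\]
The standard Sobolev embedding $H^s(\R^n)\hookrightarrow L^\infty(\R^n)$ with $s>\tfrac{n}{2}$ then furnishes a uniform-in-$(p,j)$ bound $\|\chi_{p,j}u\|_{L^\infty(\cD_{p,j})}\lesssim 2^{-j\alpha}\|u\|_{\Hb^{s,\alpha}}$. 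Since the $\chi_{p,j}$ form a locally finite cover with uniform lower bound on their sum, and since $x\sim 2^{-j}$ on $\cD_{p,j}$, this translates to $|u(z)|\lesssim x(z)^\alpha\|u\|_{\Hb^{s,\alpha}}$, which is exactly the first embedding.

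For the $\cC_\bop^k$ statement, I would iterate the above argument using that $\Vb(X)$ preserves weights: conjugation by $\rho^\alpha$ acts on $\Diffb^k(X)$, so any $L\in\Diffb^k(X)$ is a bounded map $\Hb^{s,\alpha}(X)\to\Hb^{s-k,\alpha}(X)$. Hence if $s>\tfrac{n}{2}+k$ and $V_1,\ldots,V_m\in\Vb(X)$ with $m\leq k$, then $V_1\cdots V_m u\in\Hb^{s-m,\alpha}(X)\hookrightarrow\rho^\alpha L^\infty(X)$ by the case already proved. Taking the supremum over such families of b-vector fields (it suffices to test against a finite spanning family for $\Vb(X)$, possible since $X$ is compact) gives the $\rho^\alpha\cC_\bop^k$ bound.

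Finally, the conormal statement is immediate from the definition: if $u\in\Hb^{\infty,\alpha}(X)=\bigcap_s\Hb^{s,\alpha}(X)$, then for any $L\in\Diffb^k(X)$ and any $k\in\N_0$ one has $L u\in\Hb^{\infty,\alpha}(X)\subset\Hb^{s,\alpha}(X)$ for arbitrarily large $s$; applying the $L^\infty$ embedding already established yields $L u\in\rho^\alpha L^\infty(X)\subset\rho^\alpha L^\infty_\loc(X)$, which is exactly the defining property of $\cA^\alpha(X)$.

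The only mild technical point I anticipate is bookkeeping uniformity of constants across cells: the maps $\Phi_{p,j}$ are isometries between the unit cell $\cD$ and $\cD_{p,j}$ with respect to the b-structure (they pull $x\pa_x$ and $\pa_{y^j}$ back to $(\log 2)^{-1}\pa_w$ and $\pa_{z^j}$), so all Sobolev constants on the pulled-back side are $(p,j)$-independent. The only $(p,j)$-dependence is the weight $2^{-j\alpha}$ tracking the value of $\rho$ on the cell, which is exactly what is needed.
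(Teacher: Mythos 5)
Your proof is correct and takes exactly the approach the paper intends: the paper's proof is the one-line remark that the corollary "follows from Lemma~\ref{LemmaBAnHb} and Sobolev embedding on $\R^n$," and your argument is a careful expansion of that, including the reduction to unit cells, the uniformity of constants under $\Phi_{p,j}$, the weight bookkeeping via $x\simeq 2^{-j}$, the iteration over b-derivatives for the $\cC_\bop^k$ statement, and the reduction of the $\cA^\alpha$ statement to the $L^\infty$ case.
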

\begin{proof}
  This follows from Lemma~\ref{LemmaBAnHb} and Sobolev embedding on $\R^n$.
\end{proof}

As a consequence of elliptic regularity on bounded subsets of $\R^n$, we obtain:

\begin{lemma}[Elliptic b-estimate]
\label{LemmaBAnEll}
  Let $L\in\rho^{-\beta}\Diffb^k(X)$. Let $\chi_1,\chi_2\in\CIc(X)$, with $\chi_2\equiv 1$ near $\supp\chi_1$, and with $\rho^\beta L$ having an elliptic principal symbol near $\supp\chi_2$. Then for all $s,s_0\in\R$, there exists a constant $C$ so that
  \begin{equation}
  \label{EqBAnEllEst}
    \|\chi_1 u\|_{\Hb^{s,\alpha}(X)} \leq C\Bigl( \|\chi_2 L u\|_{\Hb^{s-k,\alpha-\beta}(X)} + \|\chi_2 u\|_{\Hb^{s_0,\alpha}(X)}\Bigr).
  \end{equation}
  This holds in the strong sense that if the right hand side is finite, then so is the left hand side and the estimate holds.
\end{lemma}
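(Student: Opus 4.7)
The plan is to localize via the unit-cell cover of Lemma~\ref{LemmaBAnHb} so as to reduce the weighted b-estimate on $X$ to a family of standard interior elliptic estimates on the fixed cube $\cD\subset\R^n$, with constants \emph{uniform} in the cell index $(p,j)$, and then reassemble by summing in $\ell^2$. First I would reduce to the unweighted case $\alpha=\beta=0$: multiplication by $\rho^{-\alpha}$ is an isometric isomorphism $\Hb^{s,\alpha}(X)\to\Hb^s(X)$, conjugation by $\rho^\alpha$ preserves $\Diffb^k(X)$ together with its b-principal symbol (so the ellipticity region is unchanged), and $\rho^\beta L\in\Diffb^k(X)$. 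It thus suffices to prove~\eqref{EqBAnEllEst} when $L\in\Diffb^k(X)$ is b-elliptic on $\supp\chi_2$ and all weights are zero.

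For each $(p,j)\in\Z^{n-1}\times\Z$, the pullback $L_{p,j}:=\Phi_{p,j}^*L$ is a differential operator on $\cD$. Writing $L=\sum_{j_1+|\gamma|\leq k}a_{j_1\gamma}(x,y)(x D_x)^{j_1}D_y^\gamma$ with $a_{j_1\gamma}\in\CI(X)$, Lemma~\ref{LemmaBAnHb} shows that $\Phi_{p,j}^*(x D_x)$ and $\Phi_{p,j}^*(D_y)$ are constant multiples of $D_w$ and $D_z$, while the pulled-back coefficient $a_{j_1\gamma}\circ\Phi_{p,j}$ has all its $\CI(\cD)$-derivatives expressible as finite linear combinations of $(x\pa_x)$- and $\pa_y$-derivatives of $a_{j_1\gamma}$ evaluated on $\cD_{p,j}$, which are uniformly bounded in $(p,j)$ by smoothness of $a_{j_1\gamma}$ on $X$. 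Hence $\{L_{p,j}\}_{(p,j)}$ is a uniformly bounded family of $k$-th order differential operators on $\cD$, and the standard principal symbol of $L_{p,j}$ at $(w,z)$ equals (up to the constants above) the b-principal symbol of $L$ at $\Phi_{p,j}(w,z)$. In particular $L_{p,j}$ is uniformly elliptic on $\Phi_{p,j}^{-1}(\supp\chi_2)\cap\cD$.

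Next, choose $\chi_3\in\CIc(X)$ with $\chi_3\equiv 1$ near $\supp\chi_1$ and $\chi_2\equiv 1$ near $\supp\chi_3$, let $\chi\in\CIc(\cD)$ be the cutoff from Lemma~\ref{LemmaBAnHb}, and let $\wt\chi\in\CIc(\cD)$ equal $1$ on $\supp\chi$. The classical interior elliptic estimate on $\cD$, applied uniformly in $(p,j)$, gives
\[
  \|\chi\,\Phi_{p,j}^*(\chi_1 u)\|_{H^s(\cD)} \leq C\bigl( \|\wt\chi\,\Phi_{p,j}^*(\chi_2 L u)\|_{H^{s-k}(\cD)} + \|\wt\chi\,\Phi_{p,j}^*(\chi_3 u)\|_{H^{s-1}(\cD)} \bigr),
\]
where the commutator term $[L,\chi_1]u$ has been absorbed into the second summand since $[L,\chi_1]\in\Diffb^{k-1}(X)$ is supported in $\supp\chi_3$. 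Squaring, summing over $(p,j)$, and applying Lemma~\ref{LemmaBAnHb} to each term yields~\eqref{EqBAnEllEst} with $s_0=s-1$; iterating on $s$ then gives the result for any $s_0\in\R$. The strong-sense statement is obtained by the usual mollifier/commutator argument applied uniformly on each cell.

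The only substantive point---hardly an obstacle in view of the setup---is the uniformity of the interior elliptic constant across all cells; this is built directly into the definition of $\Diffb^k(X)$, whose coefficients by hypothesis have \emph{all} b-derivatives bounded, which after pullback by the bounded-geometry charts $\Phi_{p,j}$ translates into $\CI(\cD)$-bounds independent of $(p,j)$. The remaining work is purely organizational: the passage between the global cutoffs $\chi_i$ on $X$ and the local cutoffs $\chi,\wt\chi$ on $\cD$, which is handled by the partition-of-unity argument already underlying Lemma~\ref{LemmaBAnHb}.
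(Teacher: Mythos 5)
Your proof is correct and follows essentially the same route as the paper's: reduce to $\alpha=\beta=0$ by conjugation with $\rho^\alpha$, pull back to the bounded-geometry unit cells $\cD_{p,j}$ from Lemma~\ref{LemmaBAnHb}, apply the interior elliptic estimate on $\cD$ with constants uniform in $(p,j)$, absorb the $[L,\chi_1]u$ commutator into the error term, reassemble by $\ell^2$-summation, and iterate to lower $s_0$. The only difference is cosmetic bookkeeping with the extra cutoff $\chi_3$; the paper works directly with $\chi_1,\chi_2$.
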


Via a partition of unity argument, we also obtain the estimate~\eqref{EqBAnEllEst} for elliptic b-differential operators on manifolds.

\begin{proof}[Proof of Lemma~\usref{LemmaBAnEll}]
  We can reduce to the case $\alpha=\beta=0$. Then, for $\tilde\chi\in\CIc(\cD)$ identically $1$ near $\supp\chi$, the pullback $\tilde\chi\Phi_{p,j}^*L\in\Diff^k(\cD)$ is uniformly bounded and uniformly elliptic on $\tilde\chi^{-1}(1)$. Thus, for $s_0\leq s-1$, we have a uniform bound
  \begin{align*}
    \|\Phi_{p,j}^*(\chi_{p,j}\chi_1 u)\|_{H^s(\R^n)} &= \|\chi \Phi_{p,j}^*(\chi_1 u)\|_{H^s(\R^n)} \\
      &\leq C\Bigl( \|\tilde\chi \Phi_{p,j}^*(L\chi_1 u) \|_{H^s(\R^n)} + \|\tilde\chi\Phi_{p,j}^*(\chi_1 u)\|_{H^{s_0}(\R^n)}\Bigr) \\
      &\leq C\Bigl( \|\tilde\chi\Phi_{p,j}^*(\chi_1 L u)\|_{H^s(\R^n)} + \|\tilde\chi\Phi_{p,j}^*(\chi_2 u)\|_{H^{s-1}(\R^n)}\Bigr),
  \end{align*}
  which gives~\eqref{EqBAnEllEst} for $s_0=s-1$. Iterating this estimate on the error term (upon enlarging the supports of the cutoffs) finitely many times gives~\eqref{EqBAnEllEst} in general.
\end{proof}

Consider now the case $L\in\Diffb^k(X)$. Even when $X$ is compact and one takes $\chi_1=\chi_2=1$ in~\eqref{EqBAnEllEst}, the error term is not relatively compact (i.e.\ the inclusion $\Hb^{s,\alpha}(X)\hra\Hb^{s_0,\alpha}(X)$ fails to be a compact map); in fact, the Fredholm property of elliptic b-operators requires the invertibility of its normal operator \cite[\S5.17]{MelroseAPS}. Note then that when $L\in\Diffb^k(X)$ is elliptic, the Mellin-transformed normal operator family $N(L,\lambda)\in\Diff^k(\pa X)$, regarded as an operator with large parameter $\Re\lambda$, is elliptic when $|\Im\lambda|$ is bounded by an arbitrary but fixed constant $C_1$, as follows from the explicit expression~\eqref{EqBNormMT} and the fact that the contributions from $\Im\lambda$ (when writing $\lambda=\Re\lambda+i\Im\lambda$) are subprincipal. A parametrix $Q(\lambda)$ for $N(L,\lambda)$ can then be constructed within the class of pseudodifferential operators with large parameter \cite{ShubinSpectralTheory}, and as such, $Q(\lambda)$ has order $-k$; it has the property that $Q(\lambda)\circ N(L,\lambda)=I+R(\lambda)$ where $R(\lambda)$ is residual, in the sense that the Schwartz kernel of $R(\lambda)$ is smooth and rapidly vanishing as $|\Re\lambda|\to\infty$, $|\Im\lambda|<C_1$. Thus, $I+R(\lambda)$ is invertible for large $|\Re\lambda|$, with inverse on $L^2(\pa X)$ given by a convergent Neumann series whose limit is again of the form $I+\tilde R(\lambda)$ where $\tilde R(\lambda)$ is residual; therefore, $N(L,\lambda)^{-1}=(I+\tilde R(\lambda))Q(\lambda)$ is, for large $|\Re\lambda|$ (depending on the bound $|\Im\lambda|<C_1$) a large parameter ps.d.o.\ of order $-k$. As such, it is uniformly bounded as a map
\[
  N(L,\lambda)^{-1} \colon H_{\la\lambda\ra}^s(\pa X)\to H_{\la\lambda\ra}^{s-k}(\pa X)
\]
between large parameter Sobolev spaces (see~\S\ref{SsBFn}). The Fredholm property of an elliptic operator $L$ as a map $L\colon\Hb^{s,\alpha}(X)\to\Hb^{s-k,\alpha}(X)$ is equivalent to the invertibility of $N(L,\lambda)$ for \emph{all} $\lambda\in\C$ with $\Im\lambda=-\alpha$. See \cite[\S6]{VasyMinicourse} for details. (A particular instance of this is given in the proof of Lemma~\ref{LemmaCEAfCoker} below.)

\bigskip

We now turn to the 00-setting on $X=[0,\infty)_x\times\R^{n-1}_y$. The unit cells being of size $\sim x^2$ in the $x$- and $y$-directions at a point $(x,y)\in X^\circ$, we now define, for $p\in\Z^{n-1}$ and $3\leq j\in\N$, the diffeomorphism\footnote{We repurpose the notation previously used in the b-setting.}
\begin{equation}
\label{EqBAn00Cells}
\begin{split}
  \Phi_{p,j} \colon \cD=(-2,2)_w\times B(0,2)_z &\xra{\cong} \cD_{p,j} = \Bigl(\frac{1}{j+2},\frac{1}{j-2}\Bigr)_x \times B\Bigl(\frac{p}{j^2},\frac{2}{j^2}\Bigr), \\
  \Phi_{p,j}(w,z)&=\Bigl(\frac{1}{j+w},\frac{p+z}{j^2}\Bigr),
\end{split}
\end{equation}
where $\cD=(-2,2)_w\times B(0,2)_z$ as before. See Figure~\ref{FigBdd00} for an illustration. The inverse map is $(x,y)\mapsto(\frac{1}{x}-j,j^2 y-p)$; thus, the 00-vector fields $x^2\pa_x$ and $x^2\pa_y$ pull back to the uniformly bounded vector fields $-\pa_w$ and $(\frac{j}{j+w})^2\pa_z$ on $\cD$. Fixing $\chi\in\CIc(\cD)$, with $\chi\equiv 1$ on $[-1,1]\times\{|z|\leq 1\}$, the pushforwards $\tilde\chi_{p,j}=(\Phi_{p,j})_*\chi$ have similar properties as above: their sum is locally finite and everywhere positive for $x\leq\frac12$, and the intersections of more than a suitable fixed number $J\in\N$ of supports are empty.

\begin{figure}[!ht]
\centering
\includegraphics{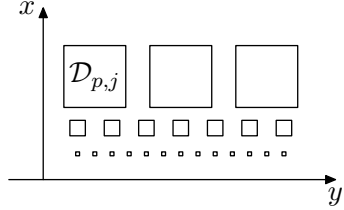}
\caption{Illustration of the unit cells in the 00-setting, cf.\ \eqref{EqBAn00Cells}.}
\label{FigBdd00}
\end{figure}

The restriction to $x\leq\frac12$ above is due to the fact that the scaling of the `natural' 00-vector fields $x^2\pa_x$ and $x^2\pa_y$ is artificial for $x\gtrsim 1$. For technical convenience, we thus fix $\chi_\pa\in\CIc([0,\frac12))$ to be identically $1$ on $[0,\frac14]$, and we fix $\rho=\rho(x)$ so that $\rho(x)=x$ for $x\leq\frac12$, $\rho$ is increasing, and $\rho(x)=1$ for $x\geq 1$; we then set
\[
  \|u\|_{e^{-\beta/\rho}\rho^\alpha H_{0 0}^s(X)}^2 := \|\chi_\pa u\|_{e^{-\beta/x}x^\alpha H_{0 0}^s}^2 + \|(1-\chi_\pa)u\|_{H^s(\R^n)}^2.
\]
The analogue of Lemma~\ref{LemmaBAnHb} is then for $s,\alpha,\beta\in\R$ the norm equivalence
\begin{equation}
\label{EqBAnH00}
  \|u\|_{e^{-\beta/\rho}\rho^\alpha H_{0 0}^s}^2 \sim \|(1-\chi_\pa)u\|_{H^s(\R^n)}^2 + \sum_{\genfrac{}{}{0pt}{}{p\in\Z^{n-1}}{j\in\Z}} e^{2\beta j}j^{2\alpha}\|\Phi_{p,j}^*(\chi_{p,j}\chi_\pa u)\|_{H^s(\R^n)}^2.
\end{equation}
The proof is analogous to that of Lemma~\ref{LemmaBAnHb}. (Only $\supp(1-\chi_\pa)$ intersects more than $J$ of the $\supp\chi_{p,j}$ nontrivially, which still allows for an application of the Cauchy--Schwarz inequality in the proof of the boundedness of the analogue of the map denoted $\Xi'$ there.)

\begin{cor}[00-Sobolev embedding]
\label{CorBAn00Sob}
  On an $n$-dimensional compact manifold $X$ with boundary, equipped with a positive 00-density, we have $H_{0 0}^s(X)\hra L^\infty(X)$ for $s>\frac{n}{2}$. Moreover, if $\beta>0$, then $e^{-\beta/\rho}H_{0 0}^\infty(X)\hra\CIdot(X)$.
\end{cor}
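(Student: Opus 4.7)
The first embedding is a direct consequence of the norm equivalence \eqref{EqBAnH00} combined with the standard Sobolev embedding $H^s(\R^n) \hookrightarrow L^\infty(\R^n)$ for $s > n/2$. Away from the boundary, on $\supp(1-\chi_\pa)$, the $H^s_{0 0}$-norm reduces to the ordinary $H^s$-norm, so the embedding there is immediate. Near the boundary, I would use that the cutoff $\chi$ is identically $1$ on the smaller block $[-1,1]\times\{|z|\le 1\}$, hence $\chi_{p,j}\equiv 1$ on the subcells $\Phi_{p,j}([-1,1]\times\{|z|\le 1\})$, which still cover a full boundary neighborhood of $X$. For any point $q$ in such a subcell and any $(p,j)$ for which $q\in\Phi_{p,j}([-1,1]\times\{|z|\le 1\})$, one has
\[
  |u(q)| = |(\chi_{p,j}\chi_\pa u)(q)| = |\Phi_{p,j}^*(\chi_{p,j}\chi_\pa u)(\Phi_{p,j}^{-1}(q))| \le C\,\|\Phi_{p,j}^*(\chi_{p,j}\chi_\pa u)\|_{H^s(\R^n)},
\]
and each summand on the right of \eqref{EqBAnH00} is dominated by the whole $\ell^2$ sum, hence by $C\|u\|_{H^s_{0 0}(X)}$.

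For the second embedding, I would combine the first part, applied to every $A v$ with $A\in\Diff_{0 0}^k(X)$, with explicit expressions for ordinary derivatives in terms of $00$-vector fields. Writing $v\in H^\infty_{0 0}(X)$ and $u=e^{-\beta/\rho}v$, the first part yields $A v\in L^\infty(X)$ for every $k$ and every $A\in\Diff_{0 0}^k(X)$. In local coordinates $(x,y)$ near $\pa X$ (where $\rho = x$), induction on the order shows that ordinary derivatives can be expanded as
\[
  \pa_x^j\,\pa_y^\alpha \;=\; \sum_{j'+|\alpha'|\le j+|\alpha|} c_{j\alpha;j'\alpha'}(x,y)\,\rho^{-N_{j\alpha;j'\alpha'}}\,(x^2\pa_x)^{j'}(x^2\pa_y)^{\alpha'},
\]
with smooth bounded $c_{j\alpha;j'\alpha'}$ and $N_{j\alpha;j'\alpha'}\in\N_0$; so $|\pa_x^j\pa_y^\alpha v|\le C_{j\alpha}\rho^{-N_{j\alpha}}$ on a boundary collar.

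Applying Leibniz, $\pa_x^j\pa_y^\alpha(e^{-\beta/\rho}v)$ is a finite sum of terms of the form $P(\rho^{-1})\,e^{-\beta/\rho}\cdot(\pa_x^{j_2}\pa_y^{\alpha_2}v)$, where $P$ is a polynomial (coming from differentiating $e^{-\beta/\rho}$). By the previous bound the second factor is $O(\rho^{-M})$ for some $M$, while since $\beta>0$ the prefactor satisfies $P(\rho^{-1})e^{-\beta/\rho}=O(\rho^N)$ for every $N$. Thus $\pa_x^j\pa_y^\alpha u$ vanishes to infinite order at $\pa X$ for all $(j,\alpha)$, which is exactly the definition of $u\in\CIdot(X)$. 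No serious obstacle arises: both statements are bookkeeping on top of \eqref{EqBAnH00} and Sobolev embedding on $\R^n$, the only mild point being the reduction of pointwise bounds to a single local cell via the subcells on which $\chi_{p,j}\equiv 1$.
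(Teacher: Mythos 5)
Your proof is correct, and the first part coincides with the intended argument (the paper in fact only writes out a proof of the second part, taking the first as a direct consequence of \eqref{EqBAnH00} and Euclidean Sobolev embedding, as you do). For the second part your route is genuinely, if mildly, different from the paper's. You first bound every ordinary derivative of $v\in H_{00}^\infty$ by an inverse power of $\rho$, using the explicit change of frame $\pa_x^j\pa_y^\alpha = \sum c\,\rho^{-N}(x^2\pa_x)^{j'}(x^2\pa_y)^{\alpha'}$, and then feed these bounds into a Leibniz expansion of $\pa^j\pa^\alpha(e^{-\beta/\rho}v)$, letting the exponential absorb all the $\rho^{-M}$ losses. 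The paper instead conjugates through the exponential directly: it writes $V u = \rho^{-2}e^{-\beta/\rho}\bigl(\rho^2 V u_0 + e^{\beta/\rho}[\rho^2 V,e^{-\beta/\rho}]u_0\bigr)$, observes that the bracketed term stays in $H_{00}^{s-1}$ (since $\rho^2 V\in\cV_{00}$ and conjugation by $e^{\beta/\rho}$ preserves $\Diff_{00}$), and iterates to land $V_1\cdots V_k u$ in $\rho^{-2k}e^{-\beta/\rho}\cC_{00}^0(X)\subset e^{-\beta'/\rho}\cC^0(X)$. Your version trades the conjugation-invariance of the 00-calculus for a direct calculus computation; both reach the same $O(\rho^\infty)$ conclusion, and the underlying structural fact ($k$ ordinary derivatives cost $\rho^{-2k}$ relative to 00-derivatives, which is beaten by $e^{-\beta/\rho}$) is the same. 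One small piece of housekeeping worth making explicit in your write-up: when you bound $|\pa_x^{j_2}\pa_y^{\alpha_2}v|\le C\rho^{-M}$ via the $L^\infty$ bound on $Av$ for $A\in\Diff_{00}^k$, you are implicitly using that $v\in H_{00}^\infty$ and hence $Av\in H_{00}^{n/2+1}$ for all such $A$; that is indeed exactly what the hypothesis provides.
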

\begin{proof}
  (Cf.\ the discussion before \cite[Theorem~4]{CorvinoScalar}, and also \cite[Corollary~4.3]{DelayCompact}.) We only prove the last statement. Note that $H_{0 0}^s(X)\hra\cC_{0 0}^k(X)$ for $s>\frac{n}{2}+k$, where $\cC_{0 0}^k(X)$ consists of all elements of $\cC^k(X^\circ)$ which are uniformly bounded upon application of up to $k$ 00-derivatives. Let $u=e^{-\beta/\rho} u_0$ with $u_0\in H_{0 0}^s(X)$. Suppose first that $s>\frac{n}{2}$; then $u\in e^{-\beta/\rho}\cC_{0 0}^0(X)\hra e^{-\beta'/\rho}\cC^0(X)$ for all $\beta'<\beta$. If $s>\frac{n}{2}+1$, and if $V\in\cV(X)$, then
  \[
    V u = \rho^{-2} \rho^2 V e^{-\beta/\rho} u_0 = \rho^{-2}e^{-\beta/\rho}\bigl(\rho^2 V u_0 + e^{\beta/\rho}[\rho^2 V,e^{-\beta/\rho}]u_0\bigr).
  \]
  The terms in parentheses lie in $H_{0 0}^{s-1}(X)$, and thus $V u\in\rho^{-2}e^{-\beta/\rho}\cC_{0 0}^0(X)\hra e^{-\beta'/\rho}\cC^0(X)$. Continuing in this manner, we conclude that $e^{-\beta/\rho}H_{0 0}^s(X)\hra e^{-\beta'/\rho}\cC^k(X)$ for $s>\frac{n}{2}+k$ and $\beta'<\beta$. This implies the Corollary upon letting $s,k\nearrow\infty$, since $e^{-\beta'/\rho}\CI(X)\hra\CIdot(X)$ for $\beta'>0$.
\end{proof}

The characterization~\eqref{EqBAnH00} can be used to prove an analogue of Lemma~\ref{LemmaBAnEll} for 00-differential operators $L\in\Diff_{0 0}^k(X)$ which only need to be assumed to be elliptic in the differential order sense, meaning that the image of ${}^{0 0}\upsigma^m(L)$ in the quotient space $(P^m/P^{m-1})({}^{0 0}T^*X)=P^m_{\rm hom}({}^{0 0}T^*X)$ is elliptic. Thus,
\[
  \|\chi_1 u\|_{e^{-\beta/\rho}\rho^\alpha H_{0 0}^s(X)} \leq C\Bigl(\|\chi_2 L u\|_{e^{-\beta/\rho}\rho^\alpha H_{0 0}^{s-k}(X)} + \|\chi_2 u\|_{e^{-\beta/\rho}\rho^\alpha H_{0 0}^{s_0}(X)}\Bigr).
\]
Improving the error term---concretely, weakening the polynomial weight $\rho^\alpha$ to $\rho^{\alpha'}$ with $\alpha'<\alpha$---requires the ellipticity of the boundary principal symbol of $L$:

\begin{prop}[Elliptic 00-estimate near the boundary]
\label{PropBAnEll00}
  Let $X=[0,\infty)\times\R^{n-1}$ and $L\in\Diff_{0 0}^k(X)$. Let $R>0$, and suppose that $\ell={}^{0 0}\upsigma^k(L)$ is elliptic at $(0,y)\in X$ when $|y|<R$ (i.e.\ $\ell(0,y)\in P^m({}^{0 0}T^*_{(0,y)}X)$ is elliptic and non-vanishing). Let $\chi_1,\chi_2\in\CIc([0,\frac14)_x\times B(0,R))$, with $\chi_2\equiv 1$ near $\supp\chi_1$. Then for all $s,\alpha\in\R$ there exists a constant $C$ so that
  \begin{equation}
  \label{EqBAnEll00}
    \|\chi_1 u\|_{x^\alpha H_{0 0}^s(X)} \leq C\Bigl( \|\chi_2 L u\|_{x^\alpha H_{0 0}^{s-k}(X)} + \|\chi_2 u\|_{x^{\alpha-1}H_{0 0}^s(X)}\Bigr).
  \end{equation}
  This holds in the strong sense that if the right hand side is finite, then so is the left hand side and the estimate holds.
\end{prop}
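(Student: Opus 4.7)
The plan is to use the norm equivalence~\eqref{EqBAnH00} to decompose each of the three norms appearing in~\eqref{EqBAnEll00} as an $\ell^2$-sum over the cells $\cD_{p,j}$ from~\eqref{EqBAn00Cells}, with weights $j^{2\alpha}$, $j^{2\alpha}$, and $j^{2(\alpha-1)}=j^{2\alpha}\cdot j^{-2}$ on the LHS, the $Lu$-term, and the $u$-error term, respectively. Modulo commutator contributions treated below, it then suffices to establish the uniform cell estimate
\[
  \|v\|_{H^s(\R^n)}\leq C\bigl(\|\Phi_{p,j}^*L\cdot v\|_{H^{s-k}(\R^n)}+j^{-1}\|v\|_{H^s(\R^n)}\bigr),\qquad v\in\CIc(\cD),
\]
with $C$ independent of $(p,j)$ for $y_p^j:=p/j^2$ in a bounded set and $j\geq j_0$. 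Squaring these estimates and summing over $(p,j)$ with weight $j^{2\alpha}$ yields~\eqref{EqBAnEll00}, the $j^{-1}$ factor being precisely what shifts the error weight from $x^\alpha$ to $x^{\alpha-1}$.

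\textbf{Cell estimate via coefficient freezing.} Since $\Phi_{p,j}^*(x^2 D_x)=-D_w$ and $\Phi_{p,j}^*(x^2 D_y)=(j/(j+w))^2 D_z$, I would split $\Phi_{p,j}^*L=\tilde L_0^{(p,j)}+R_{p,j}$, where
\[
  \tilde L_0^{(p,j)}:=\sum_{j+|\alpha|\leq k} a_{j,\alpha}(0,y_p^j)(-D_w)^j D_z^\alpha
\]
is the constant-coefficient operator on $\R^n$ whose Fourier symbol equals $\ell(0,y_p^j;-\xi,\eta)$, and $R_{p,j}$ accounts for (i) the coefficient discrepancy $a_{j,\alpha}(\Phi_{p,j}(w,z))-a_{j,\alpha}(0,y_p^j)$, which is $O(j^{-1})$ in $\CI(\cD)$ by Taylor expansion in $x=(j+w)^{-1}$ and $y-y_p^j=z/j^2$, and (ii) the factor discrepancy $(j/(j+w))^{2|\alpha|}-1=O(j^{-1})$ in $\CI(\cD)$. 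Standard multiplication estimates then yield the uniform operator bound $\|R_{p,j}\|_{H^s(\R^n)\to H^{s-k}(\R^n)}=O(j^{-1})$. The hypothesis that $\ell(0,y)$ is elliptic and nowhere-vanishing as a polynomial on ${}^{0 0}T^*_{(0,y)}X$ for $|y|<R$, combined with the compactness of the relevant range of centers $y_p^j$, yields the uniform bound $|\ell(0,y_p^j;\xi,\eta)|\asymp(1+|(\xi,\eta)|^2)^{k/2}$ on all of $\R^n$; by Plancherel, $\tilde L_0^{(p,j)}\colon H^s(\R^n)\to H^{s-k}(\R^n)$ is then a uniform isomorphism. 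The cell estimate follows at once from
\[
  \|v\|_{H^s}\leq C\|\tilde L_0^{(p,j)}v\|_{H^{s-k}}\leq C\|\Phi_{p,j}^*L\cdot v\|_{H^{s-k}}+C j^{-1}\|v\|_{H^s}.
\]

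\textbf{Assembly, commutators, and main obstacle.} Applying the cell estimate to $v_{p,j}:=\Phi_{p,j}^*(\chi_{p,j}\chi_1 u)$ and expanding $L(\chi_{p,j}\chi_1 u)=\chi_{p,j}\chi_1\chi_2 L u+[L,\chi_{p,j}\chi_1]u$ (using $\chi_1\chi_2=\chi_1$), the main term contributes $\|\chi_2 L u\|_{x^\alpha H_{0 0}^{s-k}}$ upon $\ell^2$-summation. For the commutator $[L,\chi_1]\in\rho^2\Diff_{0 0}^{k-1}$---a key structural fact following from $Vf\in\rho^2\CI(X)$ whenever $V\in\cV_{0 0}(X)$ and $f\in\CI(X)$---the contribution is bounded by $C\|\chi_2 u\|_{x^{\alpha-2}H_{0 0}^{s-1}}\leq C\|\chi_2 u\|_{x^{\alpha-1}H_{0 0}^s}$ via the elementary cell-by-cell inclusion $x^{\alpha-1}H_{0 0}^s\hookrightarrow x^{\alpha-2}H_{0 0}^{s-1}$; the commutator $[L,\chi_{p,j}]\chi_1$ involving the varying cell cutoff is absorbed together with the $R_{p,j}$-perturbation of the cell estimate. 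The strong-sense form of~\eqref{EqBAnEll00} follows by applying the estimate to $J_\eps u$ for a family of 00-regularizers $J_\eps\to\Id$ and letting $\eps\to 0$. The hard part will be securing the \emph{uniform} $L^2$-isomorphism of $\tilde L_0^{(p,j)}$ on all of $\R^n$: this requires non-vanishing of the \emph{full} polynomial $\ell(0,y;\xi,\eta)$---including near the fiber origin---rather than only its top-order part, and it is exactly this input that upgrades the classical elliptic error (as in Lemma~\ref{LemmaBAnEll}, with a loss of differential regularity) to the purely weight-loss error in~\eqref{EqBAnEll00}.
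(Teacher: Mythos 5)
The cell estimate and its proof are sound: you correctly identify that the key input is the non-vanishing of the \emph{full} boundary symbol $\ell(0,y)$ at all $(\xi,\eta)$ including the fiber origin, which, by compactness in $y$, gives the uniform bound $|\ell(0,y_p^j;\xi,\eta)|\gtrsim\la(\xi,\eta)\ra^k$ and hence uniform invertibility of the frozen constant-coefficient model $\tilde L_0^{(p,j)}\colon H^s(\R^n)\to H^{s-k}(\R^n)$. The commutator $[L,\chi_1]\in x^2\Diff_{0 0}^{k-1}$ is also handled correctly, using that $\chi_1$ is smooth on $X$ down to the boundary.

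The gap is in the assembly, precisely at the sentence claiming that $[L,\chi_{p,j}]\chi_1$ ``is absorbed together with the $R_{p,j}$-perturbation''. Your structural fact $V f\in\rho^2\CI(X)$ for $V\in\cV_{0 0}(X)$ and $f\in\CI(X)$ applies to $\chi_1$, which is a fixed smooth function, but \emph{not} to the cell cutoffs $\chi_{p,j}=(\Phi_{p,j})_*\chi$: these are not uniformly bounded in $\CI(X)$, since $\partial_x\chi_{p,j}\sim x^{-2}$ and $\partial_y\chi_{p,j}\sim j^2\sim x^{-2}$. Indeed, from~\eqref{EqBAn00Cells} one computes directly that $x^2 D_x\chi_{p,j}=-(D_w\chi)\circ\Phi_{p,j}^{-1}$ and $x^2 D_y\chi_{p,j}=(x j)^2(D_z\chi)\circ\Phi_{p,j}^{-1}$ are $O(1)$, with no gain. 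Hence $[L,\chi_{p,j}]\in\Diff_{0 0}^{k-1}(X)$ uniformly in $(p,j)$ but \emph{without} a factor of $x^2$, and certainly not $O(j^{-1})$. The $\ell^2$-sum over cells of $\|\Phi_{p,j}^*([L,\chi_{p,j}]\chi_1 u)\|_{H^{s-k}}^2$ with weight $j^{2\alpha}$ produces an error $\|\chi_2 u\|_{x^{\alpha}H_{0 0}^{s-1}}^2$ — a loss of one differential order at the \emph{same} weight. This is not controlled by $\|\chi_2 u\|_{x^{\alpha-1}H_{0 0}^s}$ (which is smaller in $\alpha$ and larger in $s$), nor can it be absorbed into the left-hand side. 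In effect the cell-by-cell pasting, with cutoffs at the 00-scale $\sim x^2$, can only reproduce the b-type estimate of Lemma~\ref{LemmaBAnEll} (with relatively compact error in a weaker Sobolev norm), not the strictly stronger weight-loss estimate~\eqref{EqBAnEll00}.

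The paper sidesteps this by building a \emph{single} parametrix $Q$ as an oscillatory integral (a genuine 00-quantization of $\chi_2\ell(0,y;\cdot)^{-1}$), whose Schwartz kernel is cut off by $\phi(\frac{x-x'}{x'})\phi(\frac{|y-y'|}{x'})$ — localization at the 0-scale $\sim x$, not the 00-scale $\sim x^2$. At the 0-scale, differentiating the cutoffs \emph{does} gain a power of $x$: on $\supp\dd(\phi\phi)$ the 00-distance $|(\frac{x-x'}{x^2},\frac{y-y'}{x^2})|$ is $\gtrsim x^{-1}$, so one integration by parts in the momentum variables reduces the symbol order and simultaneously produces a factor of $x$. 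This is exactly the extra room that converts a derivative loss into a weight loss; a parametrix whose kernel is supported on finitely many 00-cells cannot exploit it. To save your approach one would need to retain the full constant-coefficient inverses (with their rapidly decaying, non-compactly-supported kernels) and run a Schur-type estimate on the resulting cell-to-cell matrix elements — which is essentially the proof in the paper, packaged differently.
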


Note that conjugation of $L$ by an exponential weight may destroy ellipticity of the boundary principal symbol; this is the reason for only using polynomially weighted spaces.

\begin{proof}[Proof of Proposition~\usref{PropBAnEll00}]
  Upon conjugating $L$ by $x^{-\alpha}$, which preserves the ellipticity condition, it suffices to consider the case $\alpha=0$.

  Let $\delta\in(0,\frac18)$ and $R_0<R_1<R_2<R$. It suffices to prove~\eqref{EqBAnEll00} in the case that $\chi_j(x,y)=\psi_j(x)\omega_j(y)$, $j=1,2$, where $\psi_j\in\CIc([0,j\delta))$ is identically $1$ on $[0,(j-\frac12)\delta]$, and $\omega_j\in\CIc(B(0,R_j))$ is identically $1$ on $B(0,R_{j-1})$. Fix moreover $\chi_3$ of the same product form, which is in between $\chi_1$ and $\chi_2$ in the sense that $\chi_3\equiv 1$ near $\supp\chi_1$, and $\chi_2\equiv 1$ near $\supp\chi_3$.

  Writing 00-covectors as $\xi\frac{\dd x}{x^2}+\eta\frac{\dd y}{x^2}$, the ellipticity of $\ell(0,y;\xi,\eta)$ implies an estimate $|\ell(0,y;\xi,\eta)|\geq c(1+|\xi|+|\eta|)^k$ for some $c>0$. Set
  \[
    q(x,y;\xi,\eta):=\chi_2(x,y)\ell(0,y;\xi,\eta)^{-1},
  \]
  which is thus a symbol of order $-k$ in $(\xi,\eta)$. Let $\phi\in\CIc((-\frac12,\frac12))$ be identically $1$ near $0$. We then define the oscillatory integral operator
  \begin{equation}
  \label{EqBAnEll00Quant}
  \begin{split}
    (Q f)(x,y) &:= (2\pi)^{-n} \iiiint e^{i[(x-x')\hat\xi+(y-y')\hat\eta]} \phi\Bigl(\frac{x-x'}{x'}\Bigr)\phi\Bigl(\frac{|y-y'|}{x'}\Bigr) \\
      &\quad\hspace{9em} \times q(x,y;x^2\hat\xi,x^2\hat\eta)f(x',y')\,\dd\hat\xi\,\dd\hat\eta\,\dd x'\,\dd y' \\
        &= (2\pi)^{-n} \iiiint \exp\Bigl(i\Bigl[\frac{x-x'}{x^2}\xi+\frac{y-y'}{x^2}\eta\Bigr]\Bigr) \phi\Bigl(\frac{x-x'}{x'}\Bigr)\phi\Bigl(\frac{|y-y'|}{x'}\Bigr) \\
      &\quad\hspace{9em} \times q(x,y;\xi,\eta)f(x',y')\,\dd\xi\,\dd\eta\,\frac{\dd x'}{x^2}\,\frac{\dd y'}{x^{2(n-1)}} \\
  \end{split}
  \end{equation}
  A few technical comments about this definition are in order. Firstly, the two cutoffs involving $\phi$ ensure that the Schwartz kernel of $Q$, given by the inverse Fourier transform of $q$ (i.e.\ dropping $f$ and the $(x',y')$-integration), is supported in $\frac{x}{x'}\simeq 1$ and $|y-y'|\lesssim x,x'$ (and thus will be shown to preserve whatever weight the input $f$ has on the scale of weighted 00-Sobolev spaces). Carefully note moreover that the cutoffs do \emph{not} localize $(x,y)$ and $(x',y')$ in the same unit cells for 00-geometry, but rather in the same unit cells for b-geometry: they cut the Schwartz kernel off in the region where the Fourier transform of $q$ already vanishes to infinite order in the limit $x+x'\searrow 0$.\footnote{The definition~\eqref{EqBAnEll00Quant}, for a general symbol $q=q(x,y;x^2\hat\xi,x^2\hat\eta)$ (with compact support in $(x,y)$) of order $k$, is a quantization map $\Op_{0 0}(q)$ for 00-pseudodifferential operators of order $k$; adding to the space of all such quantizations the space of residual operators---those having smooth Schwartz kernels on $X\times X$ which vanish to infinite order at $\pa X\times X$ and $X\times\pa X$---gives the full space $\Psi_{0 0}^k(X)$. Starting from the perspective of \cite{MazzeoMelroseHyp}, Schwartz kernels of elements of $\Psi_{0 0}^k(X)$ are conormal distributions on the `00-double space' $X^2_{0 0}:=[X^2_0;\pa\diag_0]$, where $X^2_0=[X^2;\pa\diag_X]$ is the 0-double space, with $\diag_X\subset X^2$ the diagonal, and $\diag_0\subset X^2_0$ is the lift of $\diag_X$. These conormal distributions are required to vanish to infinite order at all boundary hypersurfaces of $X^2_{0 0}$ except the front face of $X^2_{0 0}$. From this perspective, the necessity of localizing in the weak manner provided by the factors of $\phi$ is easily seen: the supports of the differentials of these cutoffs are disjoint from the 00-front face (and indeed instead intersect the 0-front face away from the 00-front face and the left and right boundaries).}

  We shall prove:
  \begin{align}
  \label{EqBAnEll00QBdd}
    \| Q(\chi_2 f)\|_{H_{0 0}^s} &\leq C \|\chi_2 f\|_{H_{0 0}^{s-k}}, \\
  \label{EqBAnEll00QL}
    Q L(\chi_1 u) &= \chi_1 u + R\chi_1 u,\quad \|R\chi_1 u\|_{H_{0 0}^s}\leq C\|x\chi_1 u\|_{H_{0 0}^s}=C\|\chi_1 u\|_{x^{-1}H_{0 0}^s}.
  \end{align}
  Granted these estimates, we have $\chi_1 u=Q L(\chi_1 u)-R\chi_1 u=Q(\chi_2 L\chi_1 u)-R\chi_1 u$; but since $\chi_2 L\chi_1 u=\chi_2 L u-\chi_2[L,\chi_1]u$, with $\|\chi_2[L,\chi_1]u\|_{H_{0 0}^{s-k}}\leq C\|\chi_2 u\|_{x^{-1}H_{0 0}^{s-1}}$, this gives~\eqref{EqBAnEll00}.

  In order to prove~\eqref{EqBAnEll00QBdd}, consider a unit cell $\cD_{p,j}$ (see~\eqref{EqBAn00Cells}) with nonempty intersection with $\supp\chi_2$, and consider points $(x,y)\in\cD_{p,j}$. If $(x,x',y,y')$ lies in the support of the integrand of~\eqref{EqBAnEll00Quant}, then $|\frac{x'}{x}|\in(\frac12,2)$ and $|y'-y|\in B(0,\frac12 x')$, and therefore $(x',y')\in\cD_{p',j'}$ where
  \begin{equation}
  \label{EqBAnEll00Indices}
    \frac{1}{C_1}j\leq j'\leq C_1 j,\qquad
    |p'-p|\leq C_1 j
  \end{equation}
  for some constant $C_1>1$ independent of $p,j$. Choosing the localization constant $\delta>0$ in the definition of $\chi_2$ small enough, we have $j\geq 10 C_1$ and thus $j'\geq 10$. We shall now prove the following estimate for the `matrix elements' of $Q$:
  \begin{equation}
  \label{EqBAnEll00QMxEl}
    \bigl\|\Phi_{p,j}^*\bigl(\chi_{p,j}Q(\tilde\chi_{p',j'} f)\bigr)\bigr\|_{H^s} \leq C_N(1+|j-j'|+|p-p'|)^{-N} \| \Phi_{p',j'}^*(\chi_{p',j'}f)\|_{H^{s-k}}.
  \end{equation}
  Here, $\tilde\chi_{p',j'}=\frac{\chi_{p',j'}}{S}$ where $S=\sum\chi_{p'',j''}$ is positive on $\supp\chi_2$; the pullbacks $\Phi_{p,j}^*S$, $\Phi_{p,j}^*S^{-1}$, and thus also $\Phi_{p,j}^*\tilde\chi_{p',j'}$ are uniformly bounded in $\CI(\cD)$, and $\sum_{p',j'}\tilde\chi_{p',j'}=1$ on $\supp\chi_2$. Once~\eqref{EqBAnEll00QMxEl} is proved, summing over $p',j'$ subject to~\eqref{EqBAnEll00Indices} gives a square-integrable sequence in $(p,j)$ provided $\|\Phi_{p',j'}^*(\chi_{p',j'}f)\|_{H^{s-k}}$ is square-integrable; indeed, note that the sum of~\eqref{EqBAnEll00QMxEl} over $p',j'$ is of convolution type, and $(1+|j|+|p|)^{-N}$ is summable for $N>n$. (That is, we use the fact that $\ell_1*\ell_2\subset\ell_2$.) In view of~\eqref{EqBAnH00}, this establishes~\eqref{EqBAnEll00QBdd} upon using $\chi_2 f$ instead of $f$ in~\eqref{EqBAnEll00QMxEl}.

  For the proof of~\eqref{EqBAnEll00QMxEl} then, consider first the case $(p',j')=(p,j)$. Inserting $(x,y)=\Phi_{p,j}(w,z)$ and $(x',y')=\Phi_{p',j'}(w,z)$ in~\eqref{EqBAnEll00Quant}, the operator $\chi_{p,j}Q\tilde\chi_{p',j'}$ becomes an operator on $\R^n$ with Schwartz kernel
  \begin{align*}
    &(2\pi)^{-n}\iint \exp\Bigl(i\Bigl[\frac{w'-w}{(j+w)/(j-w)}\xi+\frac{z-z'}{(j/(j+w))^2}\eta\Bigr]\Bigr)\phi_2(w,z,w',z') \\
      &\quad \times\chi(w,z)q(\Phi_{p,j}(w,z);\xi,\eta)(\Phi_{p,j}^*\tilde\chi_{p,j})(w',z')\,\dd\xi\,\dd\eta\,\Bigl|\frac{\dd w'}{((j+w)/(j-w))^2}\frac{\dd z'}{(j/(j+w))^{2(n-1)}}\Bigr|,
  \end{align*}
  where $\phi_2$ is the product of the two $\phi$ factors in~\eqref{EqBAnEll00Quant}. Changing variables to $\frac{\xi}{(j+w)/(j-w)}$ and $\frac{\eta}{(j/(j+w))^2}$ gives a pseudodifferential operator on $\R^n$ which obeys uniform (in $p,j$) bounds as a map $H^{s-k}(\R^n_{(w',z')})\to H^s(\R^n_{(w,z)})$.

  Consider next the case that the supports of $\chi_{p,j}$ and $\chi_{p',j'}$ are disjoint; in this case, we can integrate by parts in~\eqref{EqBAnEll00Quant} using that $|(\frac{x-x'}{x^2},\frac{y-y'}{x^2})|^{-2}(\frac{x-x'}{x^2},\frac{y-y'}{x^2})\cdot\nabla_{(\xi,\eta)}$ preserves the exponential, and using that $\nabla_{(\xi,\eta)}$ reduces the symbolic order of $q$ by $1$. The operator $\chi_{p,j}Q\tilde\chi_{p',j'}$ thus has a smooth Schwartz kernel which is rapidly decaying as $|(\frac{x-x'}{x^2},\frac{y-y'}{x^2})|\to\infty$, which upon passing to $(w,z,w',z')$-coordinates as above implies the bound~\eqref{EqBAnEll00QMxEl} (where one can in fact replace the Sobolev orders on both sides by arbitrary but fixed numbers).

  Finally, for those $(p',j')$ for which $\chi_{p,j}$ and $\chi_{p',j'}$ have intersecting supports---recall that the number of such $(p',j')$ is bounded independently of $(p,j)$---we can split $\tilde\chi_{p',j'}$ in~\eqref{EqBAnEll00QMxEl} into $\tilde\chi_{p',j'}\chi^\sharp_{p,j}+\tilde\chi_{p',j'}(1-\chi^\sharp_{p,j})$, where $\chi^\sharp_{p,j}=\Phi_{p,j}^*\chi^\sharp$ with $\chi^\sharp\in\CIc(\cD)$ identically $1$ near $\supp\chi$. In the first summand, we pass to $\Phi_{p,j}$-coordinates for both $x,y$ and $x',y'$, obtaining a ps.d.o.\ on $\R^n$ as before; the second summand gives a smoothing operator using the above integration by parts argument, and~\eqref{EqBAnEll00QMxEl} holds (with $|j-j'|$ and $|p-p'|$ bounded independently of $(p,j)$ for the values of $(p',j')$ we are currently considering). This finishes the proof of~\eqref{EqBAnEll00QMxEl}, and thus of~\eqref{EqBAnEll00QBdd}.

  We next turn to the proof of~\eqref{EqBAnEll00QL}. Plugging $f=L(\chi_1 u)$ into~\eqref{EqBAnEll00Quant}, with $L$ (in primed coordinates) a sum of terms $a(x',y')(x'{}^2 D_{x'})^j(x'{}^2 D_{y'})^\alpha$, we integrate by parts in $(x',y')$. When $-D_{x'}x'{}^2=i x'{}^2\pa_{x'}+2 i x'$ falls on the exponential, we obtain a factor of
  \[
    \Bigl(\frac{x'}{x}\Bigr)^2\xi+2 i x'=\xi-\frac{x-x'}{x^2}(x+x')\xi+2 i x'.
  \]
  We regard the first term as the main term; it has the same symbol as $x^2 D_x$. Note that the third summand has a factor of $x'$; and in the second summand, we can rewrite $\frac{x-x'}{x^2}$ as the $D_\xi$-derivative of the exponential followed by integration by parts in $\xi$, with the $\xi$-derivative of $q$ dropping one symbolic order. Another possibility is that $-x'{}^2 D_{x'}$ falls on one of the cutoffs $\phi$ in~\eqref{EqBAnEll00Quant}, resulting in a term with an extra factor of $x'$ and a differentiated cutoff which is thus equal to $0$ near $(\frac{x-x'}{x'},\frac{|y-y'|}{x'})=0$. Derivatives along $-D_{y'}x'{}^2$ are rewritten in a similar manner.

  Altogether then, $Q L(\chi_1 u)$ is the sum of three types of terms: one term which is a quantization as in~\eqref{EqBAnEll00Quant} with symbol $q(x,y;\xi,\eta)\ell(x,y;\xi,\eta)=\chi_2(x,y)+x r(x,y;\xi,\eta)$ where $r$ is a symbol of order $0$; secondly, terms with undifferentiated cutoffs $\phi$, with symbolic order $0$, but with extra factors of $x$ or $x'$; and thirdly, terms in which at least one of the cutoffs $\phi$ is differentiated at least once. It then suffices to note that the quantization of $x r$ as well as terms of the second type are bounded from $x^{-1}H_{0 0}^s\to H_{0 0}^s$ (as follows from the mapping properties we showed above for $Q$---which did not rely on the specific form of $Q$). Moreover, terms of the third type, due to the localization away from the diagonal, are bounded between any two polynomially weighted 00-Sobolev spaces; for present purposes, it suffices to note the weaker statement that upon performing one integration by parts in the momentum variables, one obtains an operator which is a 00-quantization (with slightly enlarged cutoffs $\phi$) whose symbol gains a power of $|(\frac{x-x'}{x^2},\frac{y-y'}{x^2})|^{-1}=x|(\frac{x-x'}{x},\frac{y-y'}{x})|^{-1}$, and thus a power of $x$ since $|(\frac{x-x'}{x},\frac{y-y'}{x})|$ is bounded from below by a positive constant on $\supp\dd(\phi(\frac{x-x'}{x})\phi(\frac{|y-y'|}{x}))$.
\end{proof}

\begin{example}[Exterior derivative]
\label{ExBAnExt}
  Slightly modifying Example~\ref{ExB00Ext}, let $\Omega\subset\R^n$ be a bounded domain with smooth boundary, and let $\rho\in\CI(\ol\Omega)$ be a boundary defining function. For $\beta\in\R$, we consider
  \[
    L:=e^{-\beta/\rho}\rho^2\,\dd\,e^{\beta/\rho}\in\Diff_{0 0}^1(\ol\Omega;\ul\R,T^*\R^n),
  \]
  with principal symbol ${}^{0 0}\upsigma^1(L)(z,\zeta)=i\rho^2\zeta-\beta\,\dd\rho\colon\R\to T^*_z\R^n$ where $\zeta\in{}^{0 0}T^*\ol\Omega$; recall also that $\rho^2\colon{}^{0 0}T^*\ol\Omega\to T^*_{\ol\Omega}\R^n$ is an isomorphism). For $\beta\neq 0$, this is injective. Taking adjoints with respect to the standard volume density and fiber inner products on $\R^n$, we have $L^*=e^{\beta/\rho}\delta_\fe\,\rho^2 e^{-\beta/\rho}$ where $\delta_\fe$ is the (negative) divergence on 1-forms. The operator $L^*L\in\Diff_{0 0}^2(\ol\Omega)$ has an elliptic principal symbol (both at fiber infinity and at $\pa\Omega$). The elliptic estimates proved above therefore give
  \begin{equation}
  \label{EqBAnExtEst}
    \|u\|_{H_{0 0}^s(\ol\Omega)} \leq C\Bigl(\|L^*L u\|_{H_{0 0}^{s-2}(\ol\Omega)} + \|u\|_{x^{-1}H_{0 0}^{s-1}(\ol\Omega)}\Bigr).
  \end{equation}
  Note then that the inclusion $H_{0 0}^s(\ol\Omega)\to x^{-1}H_{0 0}^{s-1}(\ol\Omega)$ is compact; and any $u\in H_{0 0}^s$ with $L^*L u=0$ automatically lies in $\rho^\infty H_{0 0}^\infty(\ol\Omega)$, and therefore (via pairing with $u$ and integrating by parts) satisfies $L u=0$, so $u=c e^{-\beta/\rho}$ for some constant $c\in\C$. For $\beta>0$, we have $e^{-\beta/\rho}\in H_{0 0}^s$ indeed, and we conclude that $L^*L\colon H_{0 0}^s(\ol\Omega)\to H_{0 0}^{s-2}(\ol\Omega)$ has closed range with 1-dimensional kernel and cokernel spanned by $e^{-\beta/\rho}$. Let now $f\in\CIc(\Omega)$ with $\int f(x)\,\dd x=0$; then $e^{\beta/\rho}f\in\ran L^*L$, so there exists $u\in H_{0 0}^\infty(\ol\Omega)$ with $L^*(L u)=e^{\beta/\rho}f$, or equivalently
  \[
    \delta_\fe\omega = f,\qquad \omega=\rho^2 e^{-2\beta/\rho}\rho^2\,\dd(e^{\beta/\rho}u) \in e^{-\beta/\rho}\rho^2 H_{0 0}^\infty(\ol\Omega)\subset\CIdot(\ol\Omega),
  \]
  where we used Corollary~\ref{CorBAn00Sob}. Thus, we have solved the divergence equation with control on the support $\omega$.
\end{example}

\section{Geometry and analysis on the total gluing space}
\label{SG}

We now turn to the gluing problem in earnest. Fix an $n$-dimensional manifold $X$ (without boundary) and a point $\fp\in X$; here $n\geq 3$. We then consider a fibration
\[
  X - \wt X' \to [0,1)
\]
together with a choice of identification of the fiber over $0$ with $X$. For the purpose of doing analysis, it is convenient to immediately fix an identification of the other fibers with $X$ as well; that is, we fix a trivialization
\[
  \wt X' = [0,1) \times X.
\]
We denote the coordinate in the first factor by $\eps$.

\begin{definition}[Total gluing space; tangent bundle]
\label{DefGTot}
  In the above notation, we define
  \[
    \wt X := [\wt X'; \{0\}\times\{\fp\}]
  \]
  as the real blow-up of $\wt X'$ at the point $\fp\in X$ in the fiber over $\eps=0$; we write $\wt\upbeta\colon\wt X\to\wt X'$ for the blow-down map. We write
  \[
    \hat X=\wt\upbeta^*\{(0,\fp)\},\qquad
    X_\circ=\wt\upbeta^*\eps^{-1}(0)
  \]
  for the front face and the side face (the lift of the original boundary), respectively; the restriction of $\wt\upbeta$ to $X_\circ$ is denoted $\upbeta_\circ\colon X_\circ\to X$. For $\eps\in(0,1)$ we define the level set
  \[
    \wt X_\eps := \{\eps\}\times X \subset \wt X,
  \]
  which we also regard as a submanifold of $\wt X'$. We moreover denote by $\wt T\wt X'\to\wt X'$ the vertical tangent bundle, i.e.\ the bundle of tangent vectors which are tangent to the fibers, and by $\wt T\wt X\to\wt X$ the pullback of $\wt T\wt X'$ along $\wt\upbeta$. We write $\wt\cV(\wt X):=\CI(\wt X;\wt T\wt X)$.
\end{definition}

We denote by $\hat\rho$ and $\rho_\circ\in\CI(\wt X)$ defining functions of $\hat X$ and $X_\circ$, respectively; when working in subsets of $\wt X$, we use the same notation for local defining functions (i.e.\ defining functions of $\hat X$ and $X_\circ$ inside the submanifold). 

Concretely, in local coordinates $x=(x^1,\ldots,x^n)\in\R^n$ on $X$, valid for $|x|<r_0$ and with $\fp$ given by $x=0$, a neighborhood of $\hat X\subset\wt X$ is covered by the two coordinate charts
\begin{alignat*}{3}
  &(\eps,\hat x),&\qquad \hat x&:=\frac{x}{\eps}&\qquad&\text{for $\eps\geq 0$, $|\hat x|\lesssim 1$}, \\
  &(\hat\rho,\rho_\circ,\omega),&\qquad \hat\rho&:=|x|,\ \rho_\circ:=\frac{\eps}{|x|},\ \omega:=\frac{x}{|x|}\in\Sph^{n-1} &\qquad&\text{for $\hat\rho\geq 0$, $0\leq\rho_\circ\lesssim 1$}.
\end{alignat*}
We shall see in Lemma~\ref{LemmaGTot}\eqref{ItGTothatX} below that $\hat x$ is a linear coordinate on the interior $\hat X^\circ\cong T_\fp X$ of $\hat X$. See Figure~\ref{FigGTot}.

For any $\eps\in(0,1)$, we have $\wt X_\eps\cong X$ via $(\eps,p)\mapsto p$, and $\wt T_{\wt X_\eps}\wt X=\wt T_{\wt X_\eps}\wt X'=T X$. In $\eps>0$ then, elements $V\in\wt\cV(\wt X)$ are smooth families (in $\eps$) of smooth vector fields $V|_{\wt X_\eps}$ on $\wt X_\eps=X$ which become singular in a specific fashion as $\eps\searrow 0$: in local coordinates $x$ on $X$, they are linear combinations of $\pa_{x^i}$ ($i=1,\ldots,n$) with coefficients which are smooth functions on $\wt X$, i.e.\ smooth functions of $(\eps,\hat x)$, resp.\ $(\hat\rho,\rho_\circ,\omega)$. (Away from $\hat X$, the notions of smoothness on $\wt X$ and $\wt X'$ agree since $\wt X\setminus\wh X=\wt X'\setminus\{(0,\fp)\}$.)

\begin{figure}[!ht]
\centering
\includegraphics{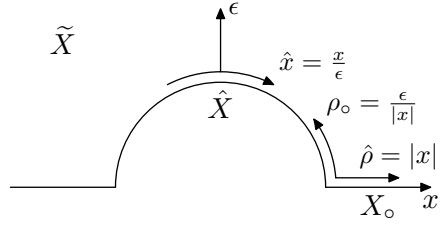}
\caption{Illustration of $\wt X$ and local coordinates $(\eps,\hat x)$ near $\hat X^\circ$ and $(\rho_\circ,\hat\rho,\omega)$ near $\hat X\cap X_\circ$ (with $\omega$ not shown for artistic reasons).}
\label{FigGTot}
\end{figure}

\begin{lemma}[Basic properties of $\wt X$]
\fakephantomsection
\label{LemmaGTot}
  \begin{enumerate}
  \item\label{ItGTotXcirc} We have $\wt T_{X_\circ}\wt X=\upbeta_\circ^*(T X)$. For all $q\in\hat X$, we have $\wt T_q\wt X=T_\fp X$.
  \item\label{ItGTothatX} We have $\hat X\cong\ol{T_\fp}X$, with the isomorphism given by continuous extension of $T_\fp X\ni V\mapsto\lim_{s\searrow 0}(s,\gamma(s))$ where $\gamma$ is a curve in $X$ with $\gamma(0)=\fp$ and $\gamma'(0)=V$. In particular, $\hat X^\circ\cong T_\fp X$ is a vector space.
  \item\label{ItGTotScale} Let $q\in\hat X^\circ$. Then the map assigning to $V\in\wt T_q\wt X$ the restriction $(\eps\wt V)|_q$, where $\wt V\in\wt\cV(\wt X)$ satisfies $\wt V(q)=V$, defines an isomorphism $\wt T_q\wt X\xra{\cong} T_q\hat X$ which extends to an isomorphism $\wt T_{\hat X}\wt X\xra{\cong}\Tsc\hat X$.
  \item\label{ItGTotb} We have $\wt\cV(\wt X)\hra\hat\rho^{-1}\Vb(\wt X)$.
  \end{enumerate}
\end{lemma}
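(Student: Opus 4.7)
My plan is to reduce each statement to an explicit computation in the two coordinate charts near $\hat X$ described in the excerpt: the projective chart $(\eps,\hat x)$ with $\hat x=x/\eps$, which covers a neighborhood of $\hat X^\circ$, and the chart $(\hat\rho,\rho_\circ,\omega)$ with $\hat\rho=|x|$, $\rho_\circ=\eps/|x|$, which covers a neighborhood of $X_\circ$ and includes the corner $\hat X\cap X_\circ$. Away from $\hat X$ there is nothing to prove, since $\wt X\setminus\hat X=\wt X'\setminus\{(0,\fp)\}$ inherits all structures directly from the trivialization $\wt X'=[0,1)\times X$.

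For parts (1) and (2), because $\wt X'=[0,1)\times X$ is trivialized, the vertical tangent bundle $\wt T\wt X'$ is the pullback of $T X$ along $\pi_X\colon\wt X'\to X$; pulling back once more along $\wt\upbeta$ gives $\wt T\wt X=(\pi_X\circ\wt\upbeta)^*T X$, from which both identifications in (1) follow upon restricting to $X_\circ$ (where $\pi_X\circ\wt\upbeta=\upbeta_\circ$) and to $q\in\hat X$ (which lies above $(0,\fp)$). For (2) I will apply the definition of real blow-up from \S\ref{SsBb}: the front face is $\hat X=S N^+\{(0,\fp)\}=(N^+_{(0,\fp)}\wt X'\setminus o)/\R^+$ with $N^+_{(0,\fp)}\wt X'=[0,\infty)\pa_\eps\oplus T_\fp X$; normalizing each nonzero ray to have $\pa_\eps$-component equal to $1$ (when nonzero) parametrizes the interior of the closed hemisphere by $V\in T_\fp X$, while the boundary (rays with vanishing $\eps$-component) is $\Sph^{n-1}\subset T_\fp X$, giving exactly the radial compactification $\ol{T_\fp X}$. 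The characterization via a curve $\gamma$ with $\gamma(0)=\fp$, $\gamma'(0)=V$ then follows by writing $(s,\gamma(s))$ in the $(\eps,\hat x)$-chart as $(s,V+O(s))$, whose limit as $s\searrow 0$ is the point $\hat x=V$ in $\hat X^\circ\cong T_\fp X$.

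For part (3), in the chart $(\eps,\hat x)$ every $\wt V\in\wt\cV(\wt X)$ has the form $a^i\pa_{x^i}=a^i\eps^{-1}\pa_{\hat x^i}$ with $a^i\in\CI(\wt X)$, so $\eps\wt V=a^i\pa_{\hat x^i}$ is a smooth vector field on $\wt X$ tangent to $\hat X=\{\eps=0\}$, and its value at $q\in\hat X^\circ$ depends only on $\wt V(q)$. The map $V\mapsto(\eps\wt V)|_q$ is therefore well-defined and sends the basis $\pa_{x^i}|_q$ of $\wt T_q\wt X$ to the basis $\pa_{\hat x^i}|_q$ of $T_q\hat X$, so it is a linear isomorphism. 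To extend to the corner, I will check that under the identification $\hat X\cong\ol{T_\fp X}$ from (2) the coordinates $(\rho_\circ,\omega)$ on $\hat X$ near $\pa\hat X$ equal $(1/|\hat x|,\hat x/|\hat x|)$, which are the standard polar-at-infinity coordinates on $\ol{T_\fp X}$; then $\pa_{\hat x^1},\ldots,\pa_{\hat x^n}$ frame $\Tsc\hat X$ all the way up to $\pa\hat X$ (see \S\ref{SsBb}), and the pointwise isomorphism extends to a bundle isomorphism $\wt T_{\hat X}\wt X\to\Tsc\hat X$. This coordinate matching is the main bookkeeping point of the lemma, but it is a direct consequence of the definitions.

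For part (4), since $\wt\cV(\wt X)$ is locally spanned over $\CI(\wt X)$ by $\pa_{x^1},\ldots,\pa_{x^n}$ and any two defining functions of $\hat X$ differ by a positive smooth factor, it suffices to verify $\hat\rho\,\pa_{x^i}\in\Vb(\wt X)$ for one choice of local $\hat\rho$ in each chart. In the chart $(\eps,\hat x)$ take $\hat\rho=\eps$: then $\eps\,\pa_{x^i}=\pa_{\hat x^i}$ is smooth and tangent to $\hat X$. In the chart $(\hat\rho,\rho_\circ,\omega)$ take $\hat\rho=|x|$: a direct chain-rule calculation from $\hat\rho=|x|$, $\rho_\circ=\eps/|x|$, $\omega=x/|x|$ yields
\[
\hat\rho\,\pa_{x^i}=\hat\rho\,\omega^i\pa_{\hat\rho}-\rho_\circ\omega^i\pa_{\rho_\circ}+(\delta^{i j}-\omega^i\omega^j)\pa_{\omega^j},
\]
in which the last term is smooth and tangent to $\Sph^{n-1}$; each summand is tangent to both $\hat X=\{\hat\rho=0\}$ and $X_\circ=\{\rho_\circ=0\}$, so $\hat\rho\,\pa_{x^i}\in\Vb(\wt X)$.
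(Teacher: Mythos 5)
Your proof is correct and follows essentially the same route as the paper's: verify each statement by explicit computation in the two coordinate charts $(\eps,\hat x)$ and $(\hat\rho,\rho_\circ,\omega)$ covering a neighborhood of $\hat X$. The only cosmetic difference is in part (4), where you expand $\hat\rho\,\pa_{x^i}$ by a direct chain-rule computation in the corner chart, while the paper first observes that $|x|\pa_{x^i}$ is a smooth b-vector field on $X_\circ$ and then examines its ($\eps$-constant) lift to $\wt X$ using the product structure near the corner; both yield the same formula and neither is shorter or more general than the other here.
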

\begin{proof}
  The first part follows directly from the definitions. For the second part, write $V=V^i\pa_{x^i}$ in local coordinates centered at $\fp$; then $\gamma(s)=(s V^1,\ldots,s V^n)+\cO(s^2)$, and therefore $s\mapsto(s,\gamma(s))$ in $(\eps,\hat x)$-coordinates is given by $s\mapsto(\eps,\hat x)=(s,(V^1,\ldots,V^n)+\cO(s))$. This establishes an isomorphism $T_\fp X\cong\hat X^\circ$. Passing to inverse polar coordinates $(\rho_T,\omega_T):=|(V^1,\ldots,V^n)|^{-1}(1,(V^1,\ldots,V^n))$ and $(\rho_\circ,\omega)$, $\rho_\circ=|\hat x|^{-1}$, near $\pa\ol{T_\fp}X$ and $\pa\hat X$, respectively, this map is given by $(\rho_T,\omega_T)\mapsto(\rho_\circ,\omega)=(\rho_T,\omega_T)$ and thus extends to a diffeomorphism as claimed.

  In part~\eqref{ItGTotScale}, one assigns to $V^i\pa_{x^i}=\eps V^i\pa_{\hat x^i}$ the tangent vector $V^i\pa_{\hat x^i}$. It then remains to note that $\pa_{\hat x^i}$ is a frame of $\Tsc\hat X$. Part~\eqref{ItGTotb} finally follows from the above away from the corner $X_\circ\cap\hat X$ of $\wt X$; near the corner on the other hand, note that $V=|x|\pa_{x^i}=\hat\rho\pa_{x^i}$ is a smooth b-vector field on $X_\circ$, thus its lift to $\wt X$ is a linear combination of $\hat\rho\pa_{\hat\rho}$, $\rho_\circ\pa_{\rho_\circ}$, $\pa_\omega$ with $\CI([0,1)_{\hat\rho}\times\Sph^{n-1}_\omega)$-coefficients. Therefore, $\pa_{x^i}=\hat\rho^{-1}V\in\hat\rho^{-1}\Vb(\wt X)$, as claimed.
\end{proof}

\begin{definition}[Scaling]
\label{DefGTotScale}
  For $p,q\in\N_0$, we write $\sfs\colon\wt T_{\wt X^\circ}^{p,q}\wt X\to\wt T_{\wt X^\circ}^{p,q}\wt X$ for multiplication by $\eps^{q-p}$. By an abuse of notation, we denote the direct sum of several such maps (for various values of $(p,q)$) by $\sfs$ as well. We write $\hat\sfs\colon\Tsc^{p,q}\hat X\to\wt T_{\hat X}\wt X$ for the restriction of $\sfs$ to $\hat X$.
\end{definition}

The map $\hat\sfs$ is well-defined and indeed a bundle isomorphism by Lemma~\ref{LemmaGTot}\eqref{ItGTotScale}. For example, $\sfs$ is division by $\eps$ on tangent vectors (mapping $\pa_{\hat x^i}\mapsto\pa_{x^i}$ in the above local coordinates), and multiplication by $\eps^2$ on symmetric 2-tensors (mapping $\dd\hat x^i\,\dd\hat x^j\mapsto\dd x^i\,\dd x^j$).

When relating objects (such as functions or differential operators) on $\wt X$ to their restrictions to $\hat X$ or $X_\circ$, the following geometric result will be useful; we work with local coordinates $x\in\R^n$ on $X$ near $\fp$, and with $\hat x=\frac{x}{\eps}$.

\begin{lemma}[Relationships between parameterized spaces]
\label{LemmaGTotRel}
  The identity map on $\wt X'$ lifts to a diffeomorphism
  \[
    [\wt X; [0,1)_\eps\times\{\fp\} ] \xra{\cong} \bigl[ [0,1)_\eps\times X_\circ; \{0\}\times\pa X_\circ \bigr],
  \]
  and also, near $\{0\}\times\{\fp\}$, to a diffeomorphism
  \begin{equation}
  \label{EqGTotRelHat}
    \wt X \cap \{ |x|<r_0 \} \xra{\cong} \bigl[ [0,1)_\eps\times\hat X; \{0\}\times\pa\hat X \bigr] \cap \{ \eps|\hat x|<r_0 \}.
  \end{equation}
\end{lemma}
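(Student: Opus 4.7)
The plan is to verify both identifications in local coordinates near the singular locus, using that outside a neighborhood of $[0,1)_\eps\times\{\fp\}$ (resp.\ of $(0,\fp)$) the identity map on the smooth part of $\wt X'$ trivially induces the claimed identification. So in both cases the task reduces to matching up coordinate charts on the two sides near the new boundary faces produced by the outer blow-ups.

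For the first identity, I would work in local coordinates $(\eps,x)\in[0,1)\times\R^n$ on $\wt X'$ near $(0,\fp)$, with $\fp$ at $x=0$. On the left-hand side, $\wt X=[\wt X';\{(0,\fp)\}]$ is covered near $\hat X$ by the two standard charts $(\eps,\hat x)$ with $\hat x=x/\eps$ and $(\hat\rho,\rho_\circ,\omega)$ with $\hat\rho=|x|$, $\rho_\circ=\eps/|x|$, $\omega=x/|x|$. The lift of $[0,1)_\eps\times\{\fp\}$ to $\wt X$ appears in the first chart as the closed half-line $\{\hat x=0\}$, meeting $\hat X$ at its interior point $\hat x=0$. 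Blowing this curve up introduces polar coordinates in $\hat x$, yielding a chart $(\eps,r,\omega)$ with $r=|\hat x|$. On the right-hand side, $X_\circ=[X;\{\fp\}]$ is covered near $\pa X_\circ$ by the polar chart $(\hat\rho,\omega)$, so $[0,1)_\eps\times X_\circ$ has coordinates $(\eps,\hat\rho,\omega)$ near $\{0\}\times\pa X_\circ$, and blowing up the corner $\{\eps=\hat\rho=0\}$ produces two polar charts, $(\hat\rho,\rho_\circ=\eps/\hat\rho,\omega)$ and $(\eps,r=\hat\rho/\eps,\omega)$. Using $\hat\rho/\eps=|x|/\eps=|\hat x|$, these charts literally agree with the two left-hand side charts above, and the identity on the common interior extends to a smooth diffeomorphism.

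The second identity is proved analogously, now restricted to $|x|<r_0$. The space $\hat X\cong\ol{T_\fp X}$ is covered by the interior chart $\hat x\in\R^n$ and the collar chart $(\rho_\infty,\omega)$ at $\pa\hat X$ with $\rho_\infty=1/|\hat x|$; hence $[0,1)_\eps\times\hat X$ has corner chart $(\eps,\rho_\infty,\omega)$, and the blow-up of $\{\eps=\rho_\infty=0\}$ admits the polar chart $(\rho_\infty,\eps/\rho_\infty,\omega)$ in the regime $\eps\leq\rho_\infty$. Since $\rho_\infty=\eps/|x|=\rho_\circ$ and $\eps/\rho_\infty=|x|=\hat\rho$, this chart equals the corner chart $(\rho_\circ,\hat\rho,\omega)$ of $\wt X$; meanwhile the interior chart $(\eps,\hat x)$ of $[0,1)_\eps\times\hat X$ matches the corresponding chart of $\wt X$ near $\hat X^\circ$, and the constraint $\eps|\hat x|<r_0$ becomes $|x|<r_0$ under these identifications.

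In each case the only technical issue is pairwise consistency of the coordinate identifications on chart overlaps, which is a routine check since the transition maps between the various polar and Cartesian charts are smooth diffeomorphisms by construction of the blow-ups. Alternatively, the first identity can be regarded as an instance of the commutation of iterated blow-ups for the nested pair of p-submanifolds $\{(0,\fp)\}\subset[0,1)_\eps\times\{\fp\}$ of $\wt X'$, and the second identity then follows from the first by passing to the local picture of $X$ near $\fp$ after identifying $X_\circ$ locally with a neighborhood of $\pa\hat X$ in $\hat X$ via $(\hat\rho,\omega)\leftrightarrow(\rho_\infty,\omega)$.
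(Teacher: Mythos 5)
Your coordinate-chart computations for both identities are correct, and this is the same approach as the paper (which leaves the explicit coordinate matching to a figure but cites the commutation result \cite[Proposition~5.8]{MelroseDiffOnMwc} only for the first identity, just as you mention). One minor caveat: your closing claim that the second identity ``follows from the first'' by locally identifying $X_\circ$ with a collar of $\pa\hat X$ does not quite work as stated, since the left-hand side of the first identity carries the extra blow-up of the lift of $[0,1)_\eps\times\{\fp\}$, which is absent from $\wt X$ in the second identity; but this remark is not load-bearing, as you have already given a direct and correct coordinate proof of~\eqref{EqGTotRelHat}.
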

\begin{proof}
  This can be checked directly by covering the spaces on both sides with coordinate charts and writing down the identity map on $\wt X'$ with respect to those; see Figure~\ref{FigGTotRelHat} for the diffeomorphism~\eqref{EqGTotRelHat}. Alternatively, the first diffeomorphism arises from the commutation result \cite[Proposition~5.8]{MelroseDiffOnMwc} for iterated blow-ups, which gives
  \begin{align*}
    [\wt X;[0,1)\times\{\fp\}] &= [ [0,1)\times X; \{0\}\times\{\fp\}; [0,1)\times\{\fp\} ] \\
      &\cong [ [0,1)\times X; [0,1)\times\{\fp\}; \{0\}\times\{\fp\}] \\
      & = [[0,1)\times[X;\{\fp\}]; \{0\}\times\pa X_\circ ],
  \end{align*}
  where we used in the last line that the lift of $\{0\}\times\{\fp\}$ to $[0,1)\times[X;\{\fp\}]=[0,1)\times X_\circ$ is $\{0\}\times\pa X_\circ$.
\end{proof}

\begin{figure}[!ht]
\centering
\includegraphics{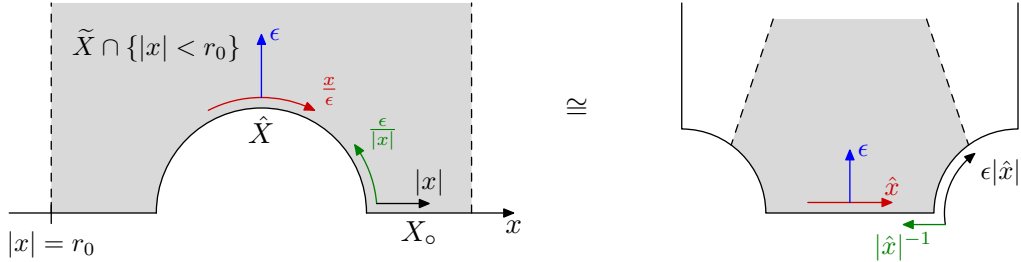}
\caption{The diffeomorphism~\eqref{EqGTotRelHat} in local coordinates. Matching coordinates are indicated with matching colors.}
\label{FigGTotRelHat}
\end{figure}

Let now
\begin{equation}
\label{EqGTotCutoffs}
  \hat\chi,\ \chi_\circ\in\CI(\wt X)
\end{equation}
be identically $1$ near, and supported in, a small collar neighborhood of $\hat X$ and $X_\circ$, respectively. Then via Lemma~\ref{LemmaGTotRel}, we can extend any given function $f\in\CI(X_\circ)$ to an $\eps$-independent function on $[0,1)\times X_\circ$, lift it to the resolution, and upon localizing with $\chi_\circ$ finally define $\chi_\circ f$ as a smooth function on $\wt X$ (given that $\supp\chi_\circ$ is disjoint from the lift of $x=0$), with support near $X_\circ$; similarly for the extension of functions from $\hat X$. In the same manner, a differential operator such as $a(x)D_x^\alpha$ on $X_\circ$, resp.\ $a(\hat x)D_{\hat x}^\alpha$ on $\hat X$, lifts upon localization to a differential operator $\chi_\circ a(x)D_x^\alpha$, resp.\ $\hat\chi a(x/\eps)(\eps D_x)^\alpha$ on $\wt X$ (using coordinates $\eps>0$ and $x\in\R^n$).

\subsection{Analysis on the total gluing space}
\label{SsGq}

As we shall see in~\S\ref{SsCETot}, the degeneration of the linearized constraints maps associated with a metric and second fundamental form which are (weighted) sections of $S^2\wt T^*\wt X$ can be captured precisely using the terminology of \emph{q-analysis}, which was introduced in \cite[\S2.1]{HintzKdSMS} as a close relative of the analytic surgery calculus of McDonald \cite{McDonaldThesis} and Mazzeo--Melrose \cite{MazzeoMelroseSurgery}. Using the notation of Definition~\ref{DefGTot}, we recall the Lie algebra of \emph{q-vector fields}
\[
  \Vq(\wt X) = \{ V\in\Vb(\wt X) \colon V\eps=0 \},
\]
and the associated space $\Diffq^m(\wt X)$ of $m$-th order q-differential operators; thus, elements of $\Vq(\wt X)$ are families of vector fields on $X$ which depend smoothly on $\eps\in(0,1)$ and degenerate in a specific manner as $\eps\searrow 0$. The restriction of q-vector fields to the boundary hypersurfaces of $\wt X$ induces surjective normal operator maps
\begin{equation}
\label{EqGqNorm}
  N_{\hat X} \colon \Vq(\wt X) \to \Vb(\hat X), \qquad
  N_{X_\circ} \colon \Vq(\wt X) \to \Vb(X_\circ),
\end{equation}
with kernels $\hat\rho\Vq(\wt X)$ and $\rho_\circ\Vq(\wt X)$, respectively. These extend multiplicatively to normal operator homomorphisms
\[
  N_{\hat X} \colon \Diffq(\wt X;E) \to \Diffb(\hat X;E|_{\hat X}),\qquad
  N_{X_\circ} \colon \Diffq(\wt X;E) \to \Diffb(X_\circ;E|_{X_\circ}),
\]
when $E\to\wt X$ is a vector bundle. We remark that if $E=\wt T^{p,q}\wt X$ and $A\in\Diffq(\wt X;E)$, then $\hat\sfs^{-1}\circ N_{\hat X}(A)\circ\hat\sfs$, resp.\ $N_{X_\circ}(A)$ is an element of $\Diffb(\hat X;\Tsc^{p,q}\hat X)$, resp.\ $\Diffb(X_\circ;\upbeta_\circ^* T^{p,q}X)$; cf.\ Lemma~\ref{LemmaGTot}\eqref{ItGTotXcirc}, \eqref{ItGTotScale}. We furthermore note that Lemma~\ref{LemmaGTot}\eqref{ItGTotb} gives $\wt\upbeta^*\cV(X)\hra\hat\rho^{-1}\Vq(\wt X)$, where on the left we regard vector fields on $X$ as $\eps$-independent vector fields on $\wt X'$.

In local coordinates $x\in\R^n$ on $X$, with $x=0$ at $\fp$, we can take $\hat\rho=(\eps^2+|x|^2)^{1/2}=\eps\la\hat x\ra$ where $\hat x=\frac{x}{\eps}$, and the space $\Vq(\wt X)$ is then spanned over $\CI(\wt X)$ by the vector fields
\begin{equation}
\label{EqGqVFs}
  \hat\rho\pa_{x^j}=\la\hat x\ra\pa_{\hat x^j}\qquad (j=1,\ldots,n),
\end{equation}
whose $\hat X$-, resp.\ $X_\circ$-normal operators are given by $\la\hat x\ra\pa_{\hat x^j}$ (cf.\ Lemma~\ref{LemmaGTot}\eqref{ItGTotScale}), resp.\ $|x|\pa_{x^j}$. (This can be used to prove the stated properties of the maps~\eqref{EqGqNorm}.) The vector fields~\eqref{EqGqVFs} form a frame of the \emph{q-tangent bundle} $\Tq\wt X\to\wt X$, the smooth sections of which are exactly the q-vector fields. The principal symbol of $A\in\Diffq^k(\wt X)$ is then an element
\[
  \sigmaq^k(A) \in P^k_{\rm hom}(\Tq^*\wt X),
\]
defined analogously to the b-principal symbol in~\S\ref{SsBb}.

Turning to function spaces, suppose that $X$ is compact without boundary, and $\wt X=[[0,1)\times X;\{0\}\times\{\fp\}]$ as usual. Fix a smooth positive q-density $\mu$ on $\wt X$, which in local coordinates as above is thus a smooth positive multiple of $(\eps^2+|x|^2)^{-n/2}|\dd x|=\la\hat x\ra^{-n}|\dd\hat x|$. For $\eps\in(0,1)$, we then define $H_{\qop,\eps}^0(X)$ as the $L^2$-space on $X$ with respect to the smooth density $\mu|_{\wt X_\eps}$ on $\wt X_\eps\cong X$. Fixing moreover a collection $\{V_1,\ldots,V_N\}\subset\Vq(\wt X)$ of q-vector fields which spans $\Vq(\wt X)$ over $\CI(\wt X)$, we define for $s\in\N_0$ and $\eps\in(0,1)$
\begin{equation}
\label{EqGqHqNorm}
  \|u\|_{H_{\qop,\eps}^s(X)}^2 := \sum_{\genfrac{}{}{0pt}{}{\alpha\in\N_0^N}{|\alpha|\leq s}} \|(V_1,\ldots,V_N)^\alpha u\|_{H_{\qop,\eps}^0(X)}^2.
\end{equation}
This is equivalent to the norm on $H^s(X)$ for any fixed $\eps>0$, but not uniformly equivalent when $\eps\searrow 0$. Thus, we define $H_{\qop,\eps}^s(X)=H^s(X)$ with the norm~\eqref{EqGqHqNorm}. Via interpolation and duality, we can define $H_{\qop,\eps}^s(X)$ as a Hilbert space for $s\in\R$; we shall also consider weighted versions $\hat\rho^\alpha\rho_\circ^\beta H_{\qop,\eps}^s(X)$, where $\alpha,\beta\in\R$. Any fixed element $A\in\Diffq^m(\wt X)$ defines a uniformly (in $\eps$) bounded linear map $H_{\qop,\eps}^s(X)\to H_{\qop,\eps}^{s-m}(X)$, similarly for weighted spaces.

In a collar neighborhood of $\hat X$, resp.\ $X_\circ$, we can relate q-Sobolev spaces to b-Sobolev spaces (defined with respect to positive smooth b-densities on $\hat X$ and $X_\circ$); to wit, in terms of the maps
\begin{equation}
\label{EqGqPhis}
\begin{alignedat}{2}
  \hat\phi(\eps,\hat x)&:=(\eps,\eps\hat x),&\qquad \hat x&\in\hat X^\circ=\R^n, \\
  \phi_\circ(\eps,x)&:=(\eps,x),& \qquad x&\in(X_\circ)^\circ=X\setminus\{\fp\},
\end{alignedat}
\end{equation}
and recalling the cutoffs $\hat\chi$, $\chi_\circ$ from~\eqref{EqGTotCutoffs}, we have uniform (in $\eps\in(0,1)$) norm equivalences
\begin{equation}
\label{EqGqHqNormEquivPre}
\begin{split}
  \|\hat\chi u\|_{\hat\rho^\alpha\rho_\circ^\beta H_{\qop,\eps}^s(X)} &\sim \eps^{-\alpha}\|\hat\phi^*(\hat\chi u)\|_{\rho_\circ^{\beta-\alpha}\Hb^s(\hat X)}, \\
  \|\chi_\circ u\|_{\hat\rho^\alpha\rho_\circ^\beta H_{\qop,\eps}^s(X)} &\sim \eps^{-\beta}\|\phi_\circ^*(\chi_\circ u)\|_{\hat\rho^{\alpha-\beta}\Hb^s(X_\circ)},
\end{split}
\end{equation}
i.e.\ the left hand side is bounded by a constant (independent of $\eps$ and $u$) times the right hand side, and vice versa. See \cite[Proposition~2.13]{HintzKdSMS} for a pseudodifferential proof (though using differently weighted densities); alternatively, for $s\in\N_0$, the equivalences~\eqref{EqGqHqNormEquivPre} follow from the fact that lifts of b-vector fields on $\hat X$, resp.\ $X_\circ$, span the space of q-vector fields near $\supp\hat\chi$, resp.\ $\supp\chi_\circ$, cf.\ the local coordinate descriptions above. For general $s$ one then uses interpolation and duality. We can combine the two equivalences in~\eqref{EqGqHqNormEquivPre} to the equivalence
\begin{equation}
\label{EqGqHqNormEquiv}
  \|u\|_{\hat\rho^\alpha\rho_\circ^\beta H_{\qop,\eps}^s(X)} \sim \eps^{-\alpha}\|\hat\phi^*(\hat\chi u)\|_{\rho_\circ^{\beta-\alpha}\Hb^s(\hat X)} + \eps^{-\beta}\|\phi_\circ^*((1-\hat\chi)u)\|_{\hat\rho^{\alpha-\beta}\Hb^s(X_\circ)}
\end{equation}
upon (as we may) choosing $\hat\chi,\chi_\circ$ such that $\chi_\circ=1$ on $\supp(1-\hat\chi)$. (In the second term, the power of $\hat\rho$ is arbitrary since $1-\hat\chi=0$ near $\hat\rho=0$. One has an analogous norm equivalence upon replacing $\hat\chi$, $1-\hat\chi$ by $1-\chi_\circ$, $\chi_\circ$, respectively; in this case the power of $\rho_\circ$ in the first term is arbitrary.) To each of the two summands, the analysis of~\S\ref{SsBAn} applies; thus we obtain a bounded geometry perspective on q-analysis. This immediately gives (elliptic) estimates for q-differential operators using the arguments in~\S\ref{SsBAn}.

\section{The constraints map and its linearization}
\label{SCE}

Let $\Lambda\in\R$. We fix an $n$-dimensional manifold $X$, $n\geq 3$, and define, for a $\cC^2$ Riemannian metric $\gamma$ and a $\cC^1$ symmetric 2-tensor $k$ on $X$, the tensor
\begin{equation}
\label{EqCEOp}
\begin{alignedat}{2}
  P(\gamma,k;\Lambda) &:= \bigl(P_1(\gamma,k;\Lambda),&\ &P_2(\gamma,k)\bigr) \\
    &:= \bigl(R_\gamma-|k|_\gamma^2+(\tr_\gamma k)^2-2\Lambda,&\ &\delta_\gamma k+\dd(\tr_\gamma k) \bigr) \in \cC^0(X;\ul\R\oplus T^*X),
\end{alignedat}
\end{equation}
where $\ul\R=X\times\R\to X$ is the trivial bundle. The operator $P$ is called the \emph{constraints map}, and the constraint equations~\eqref{EqICE} read $P(\gamma,k;\Lambda)=0$.

\begin{lemma}[Linearized constraints map]
\label{LemmaCELin}
  The linearization of $P$ around $(\gamma,k)$, defined by
  \[
    L_{\gamma,k}(h,q) := D_{(\gamma,k)}P(h,q;\Lambda) = \frac{\dd}{\dd s}P(\gamma+s h,k+s q;\Lambda)|_{s=0},
  \]
  takes the block form
  \begin{equation}
  \label{EqCELin}
     L_{\gamma,k}=\openbigpmatrix{6pt}
          \Delta_\gamma\tr_\gamma{+}\delta_\gamma\delta_\gamma{+}\la {-}\Ric(\gamma){+}2 k\circ k{-}2(\tr_\gamma k)k,-\ra & -2\la k,-\ra {+} 2(\tr_\gamma k)\tr_\gamma \\
          \la\nabla k,-\ra_{2 3}{-}\la\nabla k,-\ra_{1 2}{-}\frac12\la k,\nabla(-)\ra_{1 2}{-}\la k,(\delta_\gamma{+}\frac12\dd\tr_\gamma)(-)\ra_1 & \delta_\gamma{+}\dd\tr_\gamma
        \closebigpmatrix
  \end{equation}
  where we write $A\circ B$ (for $A,B$ sections of $T^*X\otimes T^*X$) for the section of $T^*X\otimes T^*X$ given by $(A\circ B)_{i j}=A_{i\ell}B^\ell{}_j$; moreover, for tensors $T\in(T^*X)^p$ and $S\in(T^*X)^q$ with $p<q$, and for $1\leq i_1<\ldots<i_p\leq q$, we write $\la T,S\ra_{i_1\ldots i_p}=\la S,T\ra_{i_1\ldots i_p}\in(T^*X)^{q-p}$ for the contraction of the $(i_1,\ldots,i_p)$-components of $S$ with $T$. All inner products and covariant derivatives are with respect to $\gamma$, and we write $(\nabla k)_{i j \ell}=k_{i j;\ell}$. Finally, $\Delta_\gamma=-\tr_\gamma\nabla^2$. The adjoint is
  \begin{align*}
    L_{\gamma,k}^*&=
      \left(
      \begin{matrix}
        \gamma\Delta_\gamma{+}\delta_\gamma^*\dd{+}({-}\Ric(\gamma){+}2 k\circ k{-}2(\tr_\gamma k)k) \\
        {-}2 k{+}2\gamma(\tr_\gamma k)
      \end{matrix}
      \right. \quad \cdots \\
      &\quad\hspace{6em} \cdots\quad \left.
        \begin{matrix}
          {-}\frac12\la\nabla k,-\ra_3 {-} \frac12 k\delta_\gamma {-} k\circ\nabla(-){-}\frac12\gamma\la\delta_\gamma k,-\ra{+}\frac12\gamma\la k,\nabla(-)\ra \\
          \delta_\gamma^*{+}\gamma\delta_\gamma
        \end{matrix}
        \right).
  \end{align*}
\end{lemma}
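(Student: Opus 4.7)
\emph{Overall strategy.} The lemma is a direct, if lengthy, calculation: one linearizes each of the constituent geometric quantities appearing in the definition~\eqref{EqCEOp} of $P$---namely $R_\gamma$, $|k|_\gamma^2$, $(\tr_\gamma k)^2$, $\delta_\gamma k$, and $\dd(\tr_\gamma k)$---and assembles the pieces into the matrix~\eqref{EqCELin}. The adjoint is then obtained block by block by formal integration by parts against the volume density of $\gamma$.

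\emph{Linearization of $P_1$.} The plan is to recall the standard identities $D_\gamma(\gamma^{ij})\cdot h=-h^{ij}$, $D(\tr_\gamma k)\cdot(h,q)=-\la h,k\ra+\tr_\gamma q$, and the classical formula for the variation of scalar curvature,
\[
  DR_\gamma\cdot h = \Delta_\gamma\tr_\gamma h + \delta_\gamma\delta_\gamma h - \la\Ric(\gamma),h\ra.
\]
Using $D|k|_\gamma^2\cdot(h,q)=2\la q,k\ra-2\la h,k\circ k\ra$ (the second term coming from the two raised indices of $k$) and $D((\tr_\gamma k)^2)=2(\tr_\gamma k)D(\tr_\gamma k)$, I would then combine the contributions to read off the first row of $L_{\gamma,k}$.

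\emph{Linearization of $P_2$.} Writing $(\delta_\gamma k)_i=-\gamma^{jm}\nabla_m k_{j i}$, the variation splits into three contributions: (i) varying the inverse metric $\gamma^{jm}$ produces $h^{jm}\nabla_m k_{ji}$, giving a $\la\nabla k,h\ra_{23}$-type term; (ii) varying the covariant derivative via $D\Gamma^\ell_{mj}\cdot h=\tfrac12\gamma^{\ell p}(\nabla_m h_{jp}+\nabla_j h_{mp}-\nabla_p h_{mj})$ produces several $k$-contracted-with-$\nabla h$ terms which, after symmetrizing in the indices of $k$, assemble into the $-\tfrac12\la k,\nabla(-)\ra_{12}$ and $-\la k,(\delta_\gamma+\tfrac12\dd\tr_\gamma)(-)\ra_1$ entries; and (iii) varying $k$ gives $\delta_\gamma q$. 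The term $\dd(\tr_\gamma k)$ linearizes to $\dd(-\la h,k\ra)+\dd\tr_\gamma q$, and expanding the first summand using the Leibniz rule splits into a $\la\nabla k,h\ra_{12}$-type piece and a piece absorbed into the $\la k,\nabla(-)\ra_{12}$ term. The main (though purely mechanical) obstacle is this index bookkeeping: distinguishing the $\la\nabla k,h\ra_{23}$ contribution from $\la\nabla k,h\ra_{12}$, and correctly grouping the three Christoffel terms so that the symmetries of $k$ and $h$ produce the coefficients stated in~\eqref{EqCELin}.

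\emph{Adjoint.} Once $L_{\gamma,k}$ is established, its formal $L^2$-adjoint with respect to the Riemannian density of $\gamma$ and induced fiber inner products is computed entry by entry using: $(\Delta_\gamma)^*=\Delta_\gamma$ on functions; the dual pair $(\delta_\gamma,\dd)$ between functions and $1$-forms; the symmetric gradient $\delta_\gamma^*$ as adjoint of $\delta_\gamma$ on symmetric $2$-tensors; $(\tr_\gamma)^*f=f\gamma$; and the elementary identity that the adjoint of $h\mapsto\la A,h\ra$ is $f\mapsto f A$ for any symmetric $2$-tensor $A$. The two zeroth-order off-diagonal entries and the diagonal $\delta_\gamma+\dd\tr_\gamma$ yield their stated adjoints at once; for the first-order entries of $L_{21}$ one integrates by parts the combinations $\int\la\nabla k,h\ra\cdot\omega\,\dd V_\gamma$ and $\int\la k,\nabla h\ra\cdot\omega\,\dd V_\gamma$, which produces precisely the $-\tfrac12\la\nabla k,-\ra_3$, $-\tfrac12 k\delta_\gamma$, $-k\circ\nabla(-)$, and the two trace-type $\tfrac12\gamma\la\,\cdot\,,\,\cdot\,\ra$ terms in $L^*_{\gamma,k}$.
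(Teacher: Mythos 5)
Your proposal is correct and outlines the standard computation: linearize each of $R_\gamma$, $|k|_\gamma^2$, $(\tr_\gamma k)^2$, $\delta_\gamma k$, and $\dd\tr_\gamma k$ term by term (using $D\gamma^{ij}\cdot h=-h^{ij}$, the Koszul formula for $D\Gamma$, and the first-variation formula for scalar curvature), then obtain the adjoint block by block via formal integration by parts against the volume density of $\gamma$. The paper's own proof simply cites Fischer--Marsden and Chru\'sciel--Delay for the linearization formula and calls the adjoint a ``short calculation,'' so you are carrying out precisely the computation the paper delegates to those references.
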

\begin{proof}
  For the formula for $L_{\gamma,k}$, see \cite{FischerMarsdenLinStabEinstein} (which uses the momentum tensor $\pi=(\tr_\gamma k)\gamma-k$ instead of $k$), or \cite[Equation~(2.2)]{ChruscielDelayMapping} (where the opposite sign convention for the Laplacian is used). The expression for the adjoint follows from a short calculation.
\end{proof}

As observed in~\cite{ChruscielDelayMapping}, $L_{\gamma,k}$ is a Douglis--Nirenberg system of operators \cite{DouglisNirenbergElliptic}, with
\begin{equation}
\label{EqCEDNOrders}
  (L_{\gamma,k})_{i j}\in\Diff^{t_j+s_i},\qquad t_1=0,\ t_2=0,\ \ s_1=2,\ s_2=1,
\end{equation}
and thus $(L_{\gamma,k}^*)_{i j}\in\Diff^{s_j+t_i}$; and indeed
\begin{equation}
\label{EqCEDNOrders2}
  L_{\gamma,k} \in \begin{pmatrix} \Diff^2 & \Diff^0 \\ \Diff^1 & \Diff^1 \end{pmatrix} \subset \begin{pmatrix} \Diff^2 & \Diff^2 \\ \Diff^1 & \Diff^1 \end{pmatrix},\qquad
  L_{\gamma,k}^* \in \begin{pmatrix} \Diff^2 & \Diff^1 \\ \Diff^0 & \Diff^1 \end{pmatrix}.
\end{equation}
At $\xi\in T_x^*X$, the principal symbol of $L_{\gamma,k}$, which is a map $S^2 T_x^*X\oplus T_x^*X\to\R\oplus T_x^*X$ in the Douglis--Nirenberg sense, is
\[
  \upsigma(L_{\gamma,k})(x,\xi) = \begin{pmatrix} |\xi|_{\gamma^{-1}}^2\tr_\gamma-\,\iota_{\xi^\sharp}\iota_{\xi^\sharp} & 0 \\ [\cdots] & -i\iota_{\xi^\sharp}+i\xi\tr_\gamma \end{pmatrix},
\]
where $\iota_{\xi^\sharp}$ is the contraction with $\xi^\sharp=\gamma^{-1}(\xi,-)$, and we do not write out the off-diagonal term explicitly. The principal symbol of the adjoint is
\begin{equation}
\label{EqCELstarSymb}
  \upsigma(L_{\gamma,k}^*)(x,\xi) = \begin{pmatrix} \gamma|\xi|_{\gamma^{-1}}^2-\xi\otimes\xi & [\cdots] \\ 0 & i\xi\otimes_s(-)-i\gamma\iota_{\xi^\sharp} \end{pmatrix}.
\end{equation}
An important feature of the order convention~\eqref{EqCEDNOrders} (specifically, the fact that all $t_j$ are equal) is that the Douglis--Nirenberg principal symbol of
\begin{equation}
\label{EqCELLstar}
  L_{\gamma,k}L_{\gamma,k}^* \in \begin{pmatrix} \Diff^4 & \Diff^3 \\ \Diff^3 & \Diff^2 \end{pmatrix} = (\Diff^{s_j+s_i})_{i,j=1,2},
\end{equation}
i.e.\ the matrix of principal symbols, is equal to the product $\upsigma(L_{\gamma,k})\upsigma(L_{\gamma,k}^*)$. The following result thus shows that~\eqref{EqCELLstar} is elliptic:

\begin{lemma}[Left ellipticity of the adjoint]
\label{LemmaCELstarInj}
  For $\xi\in T^*_x X\setminus o$, the linear map
  \[
    \upsigma(L_{\gamma,k}^*)(x,\xi)\colon\R\oplus T_x^*X\to S^2 T^*_x X\oplus T^*_x X
  \]
  is injective.
\end{lemma}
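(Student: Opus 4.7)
The key structural observation is that the matrix for $\upsigma(L_{\gamma,k}^*)(x,\xi)$ is lower block triangular: the $(2,1)$ entry vanishes. Consequently, given $(u,\omega)\in\R\oplus T_x^*X$ in the kernel, the bottom component of the equation $\upsigma(L_{\gamma,k}^*)(x,\xi)(u,\omega)=0$ depends only on $\omega$. My plan is therefore to first extract $\omega=0$ from the bottom row, and then use this to conclude $u=0$ from the top row.

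\textbf{Step 1: bottom row gives $\omega=0$.} The bottom equation reads
\[
  i\,\xi\otimes_s\omega - i\gamma\,\gamma^{-1}(\xi,\omega)=0
\]
in $S^2 T_x^*X$. I would take the trace with respect to $\gamma^{-1}$: since $\tr_\gamma(\xi\otimes_s\omega)=\gamma^{-1}(\xi,\omega)$ and $\tr_\gamma\gamma=n$, this yields $(1-n)\gamma^{-1}(\xi,\omega)=0$, hence $\gamma^{-1}(\xi,\omega)=0$ (using $n\geq 3$, though $n\geq 2$ suffices). Plugging back, $\xi\otimes_s\omega=0$, i.e.\ $\xi\otimes\omega+\omega\otimes\xi=0$. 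Since $\xi\neq 0$, a direct check (e.g.\ split $\omega=c\xi+\omega^\perp$ with $\gamma^{-1}(\xi,\omega^\perp)=0$; the relation then forces $c=0$ and $\xi\otimes_s\omega^\perp=0$, whence $\omega^\perp=0$) gives $\omega=0$.

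\textbf{Step 2: top row gives $u=0$.} With $\omega=0$, the top equation reduces to
\[
  \bigl(|\xi|_{\gamma^{-1}}^2\,\gamma-\xi\otimes\xi\bigr)u=0.
\]
Taking the $\gamma$-trace of the coefficient tensor gives $(n-1)|\xi|_{\gamma^{-1}}^2\neq 0$ (again since $\xi\neq 0$ and $n\geq 3$), so the coefficient is a nonzero element of $S^2 T_x^*X$; therefore $u=0$. Combining the two steps yields $(u,\omega)=0$, which is the desired injectivity.

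This argument is essentially linear algebra; the only place where one might worry is the deduction that $\xi\otimes_s\omega=0$ with $\xi\neq 0$ forces $\omega=0$, but this is elementary. The proof uses neither $k$ nor the off-diagonal $[\cdots]$ entry of~\eqref{EqCELstarSymb}, which is consistent with the fact that the principal symbol of $L_{\gamma,k}^*$ at leading (Douglis--Nirenberg) order is entirely controlled by the highest-order parts $\gamma\Delta_\gamma+\delta_\gamma^*\dd$ and $\delta_\gamma^*+\gamma\delta_\gamma$ in the diagonal blocks.
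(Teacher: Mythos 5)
Your proof is correct and follows essentially the same route as the paper's (which cites Lemma~2.3 of Chru\'sciel--Delay): use the lower-triangular structure, take the $\gamma$-trace of the bottom row to get $\langle\xi,j^*\rangle=0$ and hence $j^*=0$, then use $\gamma|\xi|_{\gamma^{-1}}^2-\xi\otimes\xi\neq 0$ to conclude $f^*=0$. The only cosmetic difference is in Step~2: the paper shows $\gamma|\xi|_{\gamma^{-1}}^2-\xi\otimes\xi$ is nonzero via a rank argument (rank $n$ minus rank $1$ has rank $\geq n-1>0$), whereas you take the $\gamma$-trace to get $(n-1)|\xi|_{\gamma^{-1}}^2\neq 0$; both are one-line verifications.
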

\begin{proof}
  This is proved in \cite[Lemma~2.3]{ChruscielDelayMapping}: if $(f^*,j^*)\in\R\oplus T^*_x X$ lies in the kernel of $\upsigma(L_{\gamma,k}^*)(x,\xi)$, then $\xi\otimes_s j^*-\gamma\la\xi,j^*\ra=0$, so upon taking traces $(1-n)\la\xi,j^*\ra=0$; this then gives $\xi\otimes_s j^*=0$, and therefore $j^*=0$. Moreover, the fact that $\gamma|\xi|_{\gamma^{-1}}^2-\xi\otimes\xi\neq 0$ for $\xi\neq 0$ (this being the difference of a rank $n$ and a rank $1$ operator, which thus has rank $\geq n-1>0$) forces $f^*=0$.
\end{proof}

\begin{lemma}[Prolongation of $L_{\gamma,k}^*$]
\label{LemmaCEProlong}
  Fix a Riemannian metric $\gamma\in\cC^2(X;S^2 T^*X)$ and a symmetric 2-tensor $k\in\cC^1(X;T^*X)$. Let $\alpha\colon[0,1]\to X$ be a $\cC^2$ path. Then there exists a $\cC^0$ bundle endomorphism $F$ on $(\,(\ul\R\oplus T^*X)\oplus(T^*X\oplus(T^*X\otimes T^*X))\oplus(S^2 T^*X\oplus (S^2 T^*X\oplus(T^*X\otimes S^2 T^*X)))\,)|_{\alpha([0,1])}$ with the following property: for all $f^*\in\cC^2(X)$ and $j^*\in\cC^2(X;T^*X)$, and writing $(h^*,q^*)=L_{\gamma,k}^*(f^*,j^*)\in\cC^0(X;S^2 T^*X)\oplus\cC^0(X;S^2 T^*X)$, the tensor $Z(t):=(f^*,\nabla f^*,j^*,\nabla j^*)|_{\alpha(t)}$ satisfies the ODE
  \[
    \frac{D Z}{\dd t}=F\bigl(Z,(h^*,(q^*,\nabla q^*))\bigr).
  \]
  In particular, if $Z(0)=0$ and if $h^*,q^*,\nabla q^*$ vanish on $\alpha([0,1])$, then $Z(t)=0$ for $t\in[0,1]$.

  Similarly, increasing the regularity requirements on $\gamma,k,\alpha$ by one order, the tensor $\tilde Z(t):=(f^*,\nabla f^*,\nabla^2 f^*,j^*,\nabla j^*)|_{\alpha(t)}$ satisfies an ODE of the form
  \[
    \frac{D\tilde Z}{\dd t}=\tilde F\bigl(\tilde Z,(h^*,\nabla h^*,q^*,\nabla q^*)\bigr).
  \]
\end{lemma}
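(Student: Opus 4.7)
The plan is to express $\frac{D Z}{\dd t}$ algebraically in $Z(t)$ and the values of $(h^*,q^*,\nabla q^*)$ at $\alpha(t)$; the endomorphism $F$ is then the linear map realizing this expression, extended arbitrarily on the complementary summands of the ambient bundle. Since this expression is linear with coefficients built from $\gamma$, $k$, $\nabla k$, the Riemann tensor of $\gamma$, and $\alpha'(t)$, it is continuous provided $\gamma\in\cC^2$, $k\in\cC^1$, $\alpha\in\cC^2$. The ``in particular'' clause then follows from $F(0,0)=0$ combined with standard uniqueness for linear ODEs.

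The entries $\frac{D}{\dd t}f^*=(\nabla f^*)(\alpha')$ and $\frac{D}{\dd t}j^*=\nabla_{\alpha'}j^*$ are already components of $Z$ contracted with $\alpha'$. For $\frac{D}{\dd t}(\nabla f^*)=(\nabla^2 f^*)(\cdot,\alpha')$, the first row of $L_{\gamma,k}^*(f^*,j^*)=(h^*,q^*)$ reads $h^*=\gamma\,\Delta_\gamma f^*+\nabla^2 f^*+(\text{linear in }Z)$, where the linear-in-$Z$ part collects the zeroth-order contribution $(-\Ric(\gamma)+2k\circ k-2(\tr_\gamma k)k)f^*$ and the $(1,2)$-block of $L_{\gamma,k}^*$ acting on $j^*,\nabla j^*$. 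Taking the $\gamma$-trace gives $\tr_\gamma h^*=(n-1)\Delta_\gamma f^*+(\text{linear in }Z)$; for $n\geq 2$ we can then invert algebraically to obtain $\nabla^2 f^*=h^*-\tfrac{1}{n-1}\gamma\,\tr_\gamma h^*+(\text{linear in }Z)$, from which $\frac{D}{\dd t}(\nabla f^*)$ is read off.

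The delicate step is the recovery of $M:=\nabla_{\alpha'}(\nabla j^*)$, where the obstruction is its antisymmetric part. The second row reads $q^*=\delta^*_\gamma j^*+\gamma\,\delta_\gamma j^*+B_1 f^*$ with $B_1:=-2k+2(\tr_\gamma k)\gamma$; taking the trace yields $\delta_\gamma j^*=\tfrac{1}{n-1}(\tr_\gamma q^*-\tr_\gamma(B_1)f^*)$, so $\nabla\delta_\gamma j^*$ is algebraic in $(\nabla q^*,Z)$. Differentiating $q^*$ and contracting with $\alpha'$ in the new derivative slot recovers the symmetric part $M_{\mathrm{sym}}$ (and $\tr_\gamma M$) in terms of $\nabla_{\alpha'}q^*$ and $Z$. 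For the antisymmetric part, one considers the skew combination $q^*_{bc;a}-q^*_{ac;b}$: applying the Ricci identity to exchange outermost covariant derivatives reduces $j^*_{b;c;a}-j^*_{a;c;b}$ to $-\omega_{ab;c}$ modulo curvature-times-$j^*$ (where $\omega:=\dd j^*$, so $\omega_{ab}=j^*_{a;b}-j^*_{b;a}$), while $j^*_{c;b;a}-j^*_{c;a;b}$ is itself a Ricci commutator and hence directly a curvature contraction with $j^*$. The resulting identity expresses $\omega_{ab;c}$ algebraically in $(\nabla q^*,Z)$; contracting in the last slot with $\alpha'$ then yields $M_{\mathrm{asym}}$. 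This is precisely the step where the full tensor $\nabla q^*$, not merely $\nabla_{\alpha'}q^*$, is needed, and it dictates the domain of $F$.

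The second claim, for $\tilde Z$, follows the same pattern; the only additional step is $\frac{D}{\dd t}(\nabla^2 f^*)=(\nabla^3 f^*)(\cdot,\cdot,\alpha')$, obtained from $\nabla h^*$ by differentiating the first row of $L_{\gamma,k}^*(f^*,j^*)=(h^*,q^*)$ once and applying the same trace-inversion argument to express $\nabla^3 f^*$ algebraically in $\tilde Z$ and $\nabla h^*$. This accounts for the extra derivative required on $\gamma,k,\alpha$. The principal obstacle throughout is the recovery of the antisymmetric part of $\nabla_{\alpha'}(\nabla j^*)$; the essential device is the interplay of the Ricci identity with the skew combination of components of $\nabla q^*$ just described.
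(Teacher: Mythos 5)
Your argument is correct and mostly parallel to the paper's. The recovery of $\nabla^2 f^*$ by taking the $\gamma$-trace to invert $(1-\gamma\tr_\gamma)$ is the same computation as in the paper, as is the passage from the closed first-order system to the ODE and the extension to $\tilde Z$ via one more differentiation. Where you diverge genuinely is the key step, the recovery of $\nabla_{\alpha'}(\nabla j^*)$: the paper uses the classical cyclic trick for Killing-field prolongation (three successive applications of the symmetrization identity $\delta_\gamma^* j^*=\tfrac12\tilde F_2(f^*,q^*)$ and the Ricci identity bring $\langle\nabla_V\nabla_W j^*,Y\rangle$ back to minus itself modulo known terms, whence it can be solved for), whereas you decompose $\nabla j^*$ into its symmetric part $\delta_\gamma^* j^*$ --- algebraic in $(f^*,q^*)$ --- and its antisymmetric part $\omega=\dd j^*$, and recover $\nabla\omega$ from the skew combination $q^*_{bc;a}-q^*_{ac;b}$ together with the Ricci identity. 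The two derivations yield the same alternating combination of $\nabla q^*$ plus curvature-times-$j^*$ terms, and both require the full gradient $\nabla q^*$ rather than only its $\alpha'$-component, for the same underlying reason: the alternating combination involves derivatives of $q^*$ in all slots. Your split into symmetric and antisymmetric parts has the mild expository benefit of pinpointing exactly where the obstruction lives; the paper's cycling argument is slightly more compact. There is no gap.
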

\begin{proof}
  This is an extension of analogous results for the adjoint of the linearization of the scalar curvature operator (see \cite[Proposition~2.3 and Corollary~2.4]{CorvinoScalar}, following \cite[Proof of Theorem~1]{FischerMarsdenDeformation}, i.e.\ the special case $k=0$, $j^*=0$, to the adjoint of the full linearized constraints map. We give an ad hoc argument, inspired by the setup of \cite{DelayCompact}; see \cite{BransonCapEastwoodGoverProlongations} for a systematic approach for general systems of overdetermined (semi)linear equations.

  Let us write the $(i,j)$ entry of $L_{\gamma,k}^*$ as $L_{i j}^*$. First, note that $\gamma\Delta_\gamma+\delta_\gamma^*\dd=(1-\gamma\tr_\gamma)\nabla^2=(1-\frac{\gamma}{n-1}\tr_\gamma)^{-1}\nabla^2$. The equation $L_{1 1}^*f^*=h^*-L_{1 2}^*j^*$ thus implies, upon applying $(1-\frac{\gamma}{n-1}\tr_\gamma)$ to both sides and putting the terms arising from $(-\Ric(\gamma)+2 k\circ k-2(\tr_\gamma k)k)f^*$ (the coefficient of $f^*$ lying in $\cC^0$) on the right hand side, an equation
  \[
    \nabla^2 f^*=F_1(f^*,j^*,\nabla j^*,h^*),
  \]
  where $F_1$ is linear and only depends on $\gamma,k$; note here that $L_{1 2}^*$ is a first order operator (with $\cC^0$ coefficients) acting on $j^*$.

  Consider next the equation $L_{2 2}^*j^*=q^*-L_{2 1}^*f^*$, and note that $L_{2 1}^*$ is an operator of order zero (i.e.\ a bundle map) and of class $\cC^1$. Since $L_{2 2}^*=\delta_\gamma^*+\gamma\delta_\gamma=(1-\gamma\tr_\gamma)\delta_\gamma^*$, this can be written in the form
  \[
    2\delta_\gamma^*j^* = 2\Bigl(1-\frac{\gamma}{n-1}\tr_\gamma\Bigr)(q^*-L_{2 1}^*f^*) =: \tilde F_2(f^*,q^*),
  \]
  with $\tilde F_2$ of class $\cC^1$. We now claim that we can express $\nabla^2 j^*$ in terms of $j^*$, $f^*$, $q^*$ (through $\tilde F_2(f^*,q^*)$), and $\nabla f^*$, $\nabla q^*$. To this end, let $t\in[0,1]$ and take $V,W,Y$ to be coordinate vector fields in normal coordinates on $X$ centered at $\alpha(t)$; thus, $V,W,Y$ are $\cC^1$. We then compute at $\alpha(t)$:
  \begin{align*}
    \la\nabla_V\nabla_W j^*,Y\ra &= V\la\nabla_W j^*,Y\ra \\
      &= V\bigl( 2 (\delta_\gamma^*j^*)(W,Y) - \la W,\nabla_Y j^*\ra \bigr) \\
      &= V\bigl(\tilde F_2(f^*,q^*)(W,Y)\bigr) - \la\nabla_V\nabla_Y j^*,W\ra \\
      &= -\la \nabla_Y\nabla_V j^*,W\ra + \la[\nabla_Y,\nabla_V]j^*,W\ra + V\bigl(\tilde F_2(f^*,q^*)(W,Y)\bigr).
  \end{align*}
  The second and third terms are lower order terms in the following sense: the second summand on the right can be written in terms of the Riemann curvature tensor of $\gamma$ and is thus tensorial in $j^*$; the third term only involves up to first derivatives of $f^*,q^*$. We can then repeat the same procedure for the first term, which is thus equal to $\la\nabla_W\nabla_Y j^*,V\ra$ modulo lower order terms, and this in turn is equal to $-\la\nabla_V\nabla_W j^*,Y\ra$ modulo lower order terms. Rearranging the resulting equation $\la\nabla_V\nabla_W j^*,Y\ra=-\la\nabla_V\nabla_W j^*,Y\ra+\text{(l.o.t)}$ expresses $\la(\nabla^2 j^*)(V,W),Y\ra$ in terms of $j^*$, $f^*$, $q^*$, $\nabla f^*$, $\nabla q^*$, as claimed.

  In summary, we now have a closed system
  \begin{equation}
  \label{EqCEProlongPf}
    \nabla^2 f^*=F_1(f^*,j^*,\nabla j^*,h^*),\qquad
    \nabla^2 j^*=F_2(f^*,\nabla f^*,j^*,q^*,\nabla q^*),
  \end{equation}
  so a fortiori $(\nabla^2 f^*,\nabla^2 j^*)=\tilde F(f^*,\nabla f^*,j^*,\nabla j^*,h^*,q^*,\nabla q^*)$, with $\tilde F$ only depending on $\gamma,\pa\gamma,\pa^2\gamma,k,\pa k$ (and $\alpha$). Since
  \[
    \frac{D}{\dd t}(f^*,j^*)=(\nabla f^*,\nabla j^*)(\gamma'(t)),\quad
    \frac{D}{\dd t}(\nabla f^*,\nabla j^*)=\tilde F(f^*,\nabla f^*,j^*,\nabla j^*,h^*,q^*,\nabla q^*)(\gamma'(t)),
  \]
  this implies the first part of the Lemma.

  The final statement follows by taking the covariant derivative of the first equation in~\eqref{EqCEProlongPf}; this expresses $\nabla^3 f^*$ in terms of $f^*,\nabla f^*,j^*,\nabla j^*,\nabla^2 j^*,\nabla h^*$, with $\nabla^2 j^*$ itself expressable via the second equation in~\eqref{EqCEProlongPf}. This implies the desired ODE for $\tilde Z(t)$.
\end{proof}

In preparation for the analysis of $P$ on the total gluing space $\wt X$, we develop some preliminary material on the (linearized) constraint equations on the two model spaces $\hat X$ and $X_\circ$ in~\S\ref{SsCEAf} and \S\ref{SsCEPct}, respectively. In~\S\ref{SsCETot}, we study $P$, its linearization, and its model operators on $\wt X$.

\subsection{Asymptotically flat initial data}
\label{SsCEAf}

Let\footnote{The somewhat cumbersome notation involving hats and circles is used here for the sake of consistency with later sections.} $\hat K\subset\R^n$ be compact (possibly empty), smoothly bounded, and having connected complement; set
\[
  \hat X := \ol{\R^n}\setminus\hat K^\circ.
\]
All tensors and coefficients below will be required to be smooth down to $\pa\hat K\subset\hat X$; polyhomogeneity, or b- or scattering behavior, always refer to behavior at $\pa\ol{\R^n}\subset\hat X$. For example, we write $\Tsc\hat X=\Tsc_{\hat X}\ol{\R^n}$. We write $\hat x=(\hat x^1,\ldots,\hat x^n)$ for the standard coordinates on $\R^n$, and let $\hat r=|\hat x|$. Let $\rho_\circ\in\CI(\hat X)$ denote a function which equals $\ubar\rho_\circ:=\hat r^{-1}$ for $\hat r>1$.

\begin{definition}[Asymptotic flatness]
\label{DefCEAf}
  Let $\cE\subset\C\times\N_0$ be a nonlinearly closed index set with $\Re\cE>0$. We then call a pair $(\hat\gamma,\hat k)$ of symmetric 2-tensors on $\R^n\setminus\hat K^\circ$ \emph{$\cE$-asymptotically flat} if
  \[
    \hat\gamma-\hat\fe\in\cA_\phg^\cE(\hat X\setminus\hat K^\circ;S^2\,\Tsc^*\hat X),\qquad
    \hat k\in\cA_\phg^{\cE+1}(\hat X\setminus\hat K^\circ;S^2\,\Tsc^*\hat X),
  \]
  where $\hat\fe=\sum_{j=1}^n(\dd\hat x^j)^2$ is the Euclidean metric.
\end{definition}

In particular, $\hat\gamma=\hat\fe+\cO(\hat r^{-\eps})$ and $\hat k=\cO(\hat r^{-1-\eps})$ for any $\eps<\Re\cE$ as $\hat r\to\infty$. Since nothing is required of the index set $\cE$ except for $\Re\cE>0$, the data $(\hat\gamma,\hat k)$, apart from having strong regularity near infinity, may have rather weak decay. A more common notion of asymptotic flatness requires $\cO(\hat r^{-q})$ decay for $\hat\gamma-\hat\fe$ (and up to 2 b-derivatives thereof), $\cO(\hat r^{-q-1})$ decay for $\hat k$ (and its first b-derivative), and $\cO(\hat r^{-q_0})$ decay for the mass and current densities $P_1(\hat\gamma,\hat k;0)$ and $P_2(\hat\gamma,\hat k)$, where $q>\frac{n-2}{2}$ and $q>n$, cf.\ \cite[Definition~B.7]{CarlottoConstraints}; this ensures the finiteness of the ADM mass and momentum. See also Remark~\ref{RmkGlTPMT}.

We furthermore remark that whether or not $(\hat\gamma,\hat k)$ satisfies the constraint equations is immaterial for the results in this section. (By Lemma~\ref{LemmaCEAfMap}\eqref{ItCEAfMapNonlin} below, $(\hat\gamma,\hat k)$ can satisfy the constraint equations only for $\Lambda=0$.)

\begin{definition}[Weights]
\label{DefCEAfWeight}
  We define
  \[
    \hat w := \begin{pmatrix} \rho_\circ^{-2} & 0 \\ 0 & \rho_\circ^{-1} \end{pmatrix} \in \CI\bigl(\hat X^\circ;\End(S^2\,\Tsc^*\hat X\oplus S^2\,\Tsc^*\hat X)\bigr),
  \]
  and similarly $\hat{\ubar w}=\diag(\ubar\rho_\circ^{-2},\ubar\rho_\circ^{-1})$ on $[0,\infty)_{\ubar\rho_\circ}\times\Sph^{n-1}=\ol{\R^n}\setminus\{0\}$.
\end{definition}

\begin{lemma}[Constraints map in the asymptotically flat case]
\label{LemmaCEAfMap}
  Let $(\hat\gamma,\hat k)$ be $\cE$-as\-ymp\-tot\-i\-cal\-ly flat. Let $(\hat f,\hat j)=P(\hat\gamma,\hat k;0)$. Then:
  \begin{enumerate}
  \item\label{ItCEAfMapNonlin} We have $(\hat f,\hat j)\in\cA_\phg^{\cE+2}(\hat X\setminus\hat K^\circ;\ul\R\oplus\Tsc^*\hat X)$.
  \item\label{ItCEAfMapLin} The operator $L_{\hat\gamma,\hat k}\hat w$ is an element of
    \[
      \begin{pmatrix} (\CI{+}\cA_\phg^\cE)\Diffb^2(\hat X\setminus\hat K^\circ;S^2\,\Tsc^*\hat X,\ul\R) & \cA_\phg^\cE(\hat X\setminus\hat K^\circ;\Hom(S^2\,\Tsc^*\hat X,\ul\R)) \\ \cA_\phg^\cE\Diffb^1(\hat X\setminus\hat K^\circ;S^2\,\Tsc^*\hat X,\Tsc^*\hat X) & (\CI{+}\cA_\phg^\cE)\Diffb^1(\hat X\setminus\hat K^\circ;S^2\,\Tsc^*\hat X,\Tsc^*\hat X) \end{pmatrix}.
    \]
    The normal operator of $L_{\hat\gamma,\hat k}\hat w$ at $\pa\hat X$ is $\hat{\ubar L}\hat{\ubar w}$ where $\hat{\ubar L}:=L_{\hat\fe,0}$, i.e.
    \begin{equation}
    \label{EqCEAfMapLot}
      \hat{\ubar L} = \begin{pmatrix} \hat{\ubar L}_1 & 0 \\ 0 & \hat{\ubar L}_2 \end{pmatrix},\qquad \hat{\ubar L}_1:=\Delta_{\hat\fe}\tr_{\hat\fe}+\,\delta_{\hat\fe}\delta_{\hat\fe},\quad \hat{\ubar L}_2 := \delta_{\hat\fe}+\dd\tr_{\hat\fe},
    \end{equation}
    in the sense that $L_{\hat\gamma,\hat k}\hat w-\chi_\circ\hat{\ubar L}\hat w\in\cA_\phg^\cE\Diffb$ when $\chi_\circ\in\CI(\hat X)$ is equal to $1$ in a collar neighborhood of $\pa\hat X$ and has support in any larger neighborhood.
  \end{enumerate}
\end{lemma}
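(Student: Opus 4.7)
The plan rests on a single structural observation: on $\hat X = \ol{\R^n}\setminus\hat K^\circ$, each coordinate derivative $\pa_{\hat x^i}$ is a smooth scattering vector field, hence an element of $\rho_\circ\Vb(\hat X)$; equivalently, $\pa_{\hat x^i}$ shifts polyhomogeneous index sets by $+1$, so that $u\in\cA_\phg^\cF(\hat X)$ implies $\pa_{\hat x^i}u\in\cA_\phg^{\cF+1}(\hat X)$. Writing $\hat\gamma = \hat\fe + e$ with $e\in\cA_\phg^\cE(\hat X\setminus\hat K^\circ;S^2\,\Tsc^*\hat X)$, and using that $e$ vanishes at $\pa\hat X$ so $\hat\gamma^{-1}$ is smooth down to $\pa\hat X$ with $\hat\gamma^{-1}-\hat\fe^{-1}\in\cA_\phg^\cE$, I would first establish the following uniform bookkeeping in the scattering frame $\{\pa_{\hat x^i}\}$: the Christoffel symbols of $\hat\gamma$ lie in $\cA_\phg^{\cE+1}$; the Riemann and Ricci tensors (as covariant 2- and 4-tensors in $\Tsc$-coefficients) lie in $\cA_\phg^{\cE+2}$; and the scalar curvature $R_{\hat\gamma}\in\cA_\phg^{\cE+2}$. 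These inclusions all rely on the nonlinear closure of $\cE$, which ensures that products of $\cA_\phg^\cE$-functions remain in $\cA_\phg^\cE$.

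Part~(1) is then a direct computation. The three summands in $P_1 = R_{\hat\gamma} - |\hat k|_{\hat\gamma}^2 + (\tr_{\hat\gamma}\hat k)^2$ lie respectively in $\cA_\phg^{\cE+2}$ (by the above) and in $\cA_\phg^{2(\cE+1)}\subset\cA_\phg^{\cE+2}$ (being quadratic in $\hat k\in\cA_\phg^{\cE+1}$). For $P_2 = \delta_{\hat\gamma}\hat k + \dd\tr_{\hat\gamma}\hat k$, expressing the operators in the scattering frame yields a scattering first-order operator applied to $\hat k\in\cA_\phg^{\cE+1}$ (contributing $\cA_\phg^{\cE+2}$) plus Christoffel contractions of size $\cA_\phg^{\cE+1}\cdot\cA_\phg^{\cE+1}\subset\cA_\phg^{\cE+2}$. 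Hence $P_2$ is a scattering 1-form with coefficients in $\cA_\phg^{\cE+2}$, as required.

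For part~(2), I would examine $L_{\hat\gamma,\hat k}$ block by block using the formula in Lemma~\ref{LemmaCELin} and then compose with $\hat w = \diag(\rho_\circ^{-2},\rho_\circ^{-1})$ on the right. The $(1,1)$ entry $\Delta_{\hat\gamma}\tr_{\hat\gamma}+\delta_{\hat\gamma}\delta_{\hat\gamma}$ lies in $\rho_\circ^2\Diffb^2$ at Euclidean leading order, with a $\cA_\phg^\cE\cdot\rho_\circ^2\Diffb^2$ correction from $\hat\gamma\neq\hat\fe$, so composition with $\rho_\circ^{-2}$ produces $(\CI + \cA_\phg^\cE)\Diffb^2$; the accompanying zeroth-order piece $\la-\Ric(\hat\gamma)+2\hat k\circ\hat k-2(\tr_{\hat\gamma}\hat k)\hat k,-\ra$ has coefficient in $\cA_\phg^{\cE+2}$, which yields $\cA_\phg^\cE\Diffb^2$ after the $\rho_\circ^{-2}$ weighting. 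The $(1,2)$ entry is a bundle map with coefficient in $\cA_\phg^{\cE+1}$, so after multiplication by $\rho_\circ^{-1}$ lands in $\cA_\phg^\cE\cdot\Hom(S^2\,\Tsc^*\hat X,\ul\R)$. The $(2,1)$ entry contains a zeroth-order piece with coefficient $\nabla\hat k\in\cA_\phg^{\cE+2}$ and a first-order piece obtained by pairing $\hat k\in\cA_\phg^{\cE+1}$ with a scattering derivative in $\rho_\circ\Diffb^1$; both contribute at the level of $\cA_\phg^{\cE+2}\Diffb^1$, and composition with $\rho_\circ^{-2}$ yields the claimed $\cA_\phg^\cE\Diffb^1$. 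The $(2,2)$ entry $\delta_{\hat\gamma} + \dd\tr_{\hat\gamma}$ lies in $\rho_\circ\Diffb^1$ with the same $(\CI + \cA_\phg^\cE)$ coefficient structure as the $(1,1)$ entry, and multiplying by $\rho_\circ^{-1}$ gives $(\CI+\cA_\phg^\cE)\Diffb^1$.

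Finally, for the normal operator identification, note that in a collar of $\pa\hat X$ one has $\hat w = \hat{\ubar w}$, and all $\cA_\phg^\cE$-valued coefficients above restrict to zero at $\pa\hat X$ since $\Re\cE > 0$. What remains is the block-diagonal Euclidean piece $\hat{\ubar L}\hat{\ubar w}$ of~\eqref{EqCEAfMapLot}, which is translation- hence dilation-invariant in $\hat r$ when expressed in the scattering frame, so it coincides with its own b-normal operator on the model collar $[0,\infty)_{\ubar\rho_\circ}\times\Sph^{n-1}$. The main (if still routine) obstacle in the whole argument is the scattering-frame bookkeeping for the connection and curvature tensors of $\hat\gamma$ summarized in the first paragraph; once these inclusions are established, the rest is inspection of the formula in Lemma~\ref{LemmaCELin}.
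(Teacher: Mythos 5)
Your proof is correct and follows essentially the same route as the paper's: both rest on $\pa_{\hat x^i}\in\rho_\circ\Vb(\hat X)$ and the resulting memberships $\Gamma(\hat\gamma)\in\cA_\phg^{\cE+1}$, $\Ric(\hat\gamma),R_{\hat\gamma}\in\cA_\phg^{\cE+2}$ (with nonlinear closure of $\cE$ handling all products), and then read off the claims from the formula in Lemma~\ref{LemmaCELin}. Your part~(2) is more explicit about the block-by-block bookkeeping than the paper, which gives one worked example and leaves the rest implicit, but this is a matter of detail rather than a different argument.
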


Since $L_{\hat\gamma,\hat k}\hat w$, being an unweighted b-operator, naturally acts on pairs of conormal sections of $S^2\,\Tsc^*\hat X$ which have the same weight at $\hat X$, the weight $\hat w$ encodes the relative weighting of linearized metrics $\hat h$ and linearized second fundamental forms $\hat q$, namely $\hat q$ decays faster than $\hat h$ by one order.\footnote{The fact that $L_{\hat\gamma,\hat k}\hat w$ is unweighted can be re-interpreted as the statement that $L_{\hat\gamma,\hat k}$ is a Douglis--Nirenberg system of weighted b-differential operators not only in the differential order sense, but also in the sense of weights, in that the $(i,j)$-block has weight $\rho_\circ^{a_j+b_i}$ where $a_1=2$, $a_2=1$, $b_1=0$, $b_2=0$. We opt for the explicit weight $\hat w$ for clarity here, however.}

\begin{proof}[Proof of Lemma~\usref{LemmaCEAfMap}]
   We have $\hat\gamma^{-1}-\hat\fe^{-1}=\sum_{j=1}^\infty(-1)^j(\hat\gamma-\hat\fe)^j\in\cA_\phg^\cE(\hat X;S^2\,\Tsc\hat X)$ since $\cE\supset j\cE$ for all $j\in\N$, and therefore $\hat\gamma^{-1}\in\cA_\phg^{\N_0\cup\cE}$. The Christoffel symbols of $\hat\gamma$, in the coordinate system $\hat x=(\hat x^1,\ldots,\hat x^n)$, therefore satisfy
   \[
     \Gamma(\hat\gamma)_{l i j} = \frac12(\pa_i\hat\gamma_{j l}+\pa_j\hat\gamma_{i l}-\pa_l\hat\gamma_{i j}) \in \cA_\phg^{\cE+1}(\hat X),\qquad
     \Gamma(\hat\gamma)^k_{i j}=\hat\gamma^{k l}\Gamma(\hat\gamma)_{l i j} \in \cA_\phg^{\cE+1}(\hat X),
   \]
   where we use that $\pa_i\in\rho\Vb(\hat X)$. Therefore,
   \[
     \Ric(\hat\gamma)_{j l} = \pa_i\Gamma(\hat\gamma)^i_{j l} - \pa_j\Gamma(\hat\gamma)^i_{i l} + \Gamma(\hat\gamma)^i_{i k}\Gamma(\hat\gamma)^k_{j l} - \Gamma(\hat\gamma)^i_{j k}\Gamma(\hat\gamma)_{i l}^k \in \cA_\phg^{\cE+2}(\hat X),
   \]
   and thus also $R_{\hat\gamma}=\hat\gamma^{j l}\Ric(\hat\gamma)_{j l}\in\cA_\phg^{\cE+2}(\hat X)$.

   We furthermore have $|\hat k|_{\hat\gamma}^2=\hat\gamma^{i i'}\hat\gamma^{j j'}\hat k_{i j}\hat k_{i'j'}$, $(\tr_{\hat\gamma}\hat k)^2=\hat\gamma^{i i'}\hat\gamma^{j j'}\hat k_{i i'}\hat k_{j j'}\in\cA_\phg^{\cE+2}$. Thus, $\hat f\in\cA_\phg^{\cE+2}(\hat X)$. Similarly, $\delta_{\hat\gamma}\hat k$ is of the schematic form $\hat\gamma^{-1}(\pa\hat k+\Gamma(\hat\gamma)\hat k)$ and thus lies in $\cA_\phg^{\cE+2}(\hat X;\Tsc^*\hat X)$; and also $\dd(\tr_{\hat\gamma}\hat k)$ (schematically: $\pa(\hat\gamma^{-1}\hat k)$) lies in $\cA_\phg^{\cE+2}$. This proves part~\eqref{ItCEAfMapNonlin}.

   For part~\eqref{ItCEAfMapLin}, note that the only terms of $L_{\hat\gamma,\hat k}$ which are of leading order at $\pa\hat X$ (in the sense that they contribute to the normal operator) are those in which $\hat k$ does not appear; for example, the $\la\nabla\hat k,-\ra_{2 3}$ term of $L_{\hat\gamma,\hat k}$ has the schematic form $\hat\gamma^{-1}\hat\gamma^{-1}(\pa\hat k+\Gamma(\hat\gamma)\hat k)$ and is thus polyhomogeneous with index set $\cE+2$. In the terms of $L_{\hat\gamma,\hat k}$ then in which only $\hat\gamma$ appears, only the leading part $\hat\fe$ of $\hat\gamma$ contributes to the normal operator. This gives~\eqref{EqCEAfMapLot}.
\end{proof}

We proceed to study the solvability properties of $L_{\hat\gamma,\hat k}$. Using a geometric singular analysis point of view, rather than more frequently used direct coercivity estimates (as for example in \cite[\S3]{CorvinoScalar} or \cite[\S4]{CarlottoSchoenData} in related contexts), we provide a technically novel perspective on this. We begin by studying the symbolic properties of the adjoint of $L_{\hat\gamma,\hat k}\hat w$.

\begin{lemma}[Injective b-principal symbol]
\label{LemmaCEAfbEll}
  In the notation of Lemma~\usref{LemmaCEAfMap}\eqref{ItCEAfMapLin}, the operator
  \[
    \hat w L_{\hat\gamma,\hat k}^* \in \begin{pmatrix} (\CI+\cA_\phg^\cE)\Diffb^2 & \cA_\phg^\cE\Diffb^1 \\ \cA_\phg^\cE & (\CI+\cA_\phg^\cE)\Diffb^1 \end{pmatrix}
  \]
  has an injective principal symbol as a b-differential operator (in the sense of Douglis--Nirenberg).
\end{lemma}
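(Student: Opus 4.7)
The plan is to check fiberwise over $\Tb^*\hat X\setminus o$ that the Douglis--Nirenberg b-principal symbol of $\hat w L^*_{\hat\gamma,\hat k}$ is injective, and to reduce this to the algebraic injectivity statement of Lemma~\usref{LemmaCELstarInj}. The two regimes---points of the interior $\hat X^\circ$ and points of the boundary $\pa\hat X$---require slightly different treatments, which I would handle in succession.

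Over the interior, the b-cotangent bundle $\Tb^*\hat X$ coincides with the ordinary cotangent bundle $T^*\hat X$, so the b-principal symbol in the Douglis--Nirenberg sense is just the standard principal symbol. Since $\hat w$ is a bundle automorphism over $\hat X^\circ$ (multiplication by positive smooth functions $\rho_\circ^{-2}$ and $\rho_\circ^{-1}$ on the two summands), injectivity of $\sigmab(\hat w L^*_{\hat\gamma,\hat k})$ reduces to injectivity of $\upsigma(L^*_{\hat\gamma,\hat k})$, which is Lemma~\usref{LemmaCELstarInj}.

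At boundary points $p\in\pa\hat X$, I would invoke Lemma~\usref{LemmaCEAfMap}\eqref{ItCEAfMapLin}: modulo $\cA_\phg^\cE\Diffb$ with $\Re\cE>0$, the operator $L_{\hat\gamma,\hat k}\hat w$ agrees with the model $\hat{\ubar L}\hat{\ubar w}$, where $\hat{\ubar L}=L_{\hat\fe,0}$ is the linearized constraints map of flat Euclidean data. Taking formal adjoints preserves the b-principal symbol at $p$ (up to complex-conjugate transpose, which is irrelevant for injectivity), so it suffices to verify injectivity of the b-principal symbol of $\hat{\ubar w}\hat{\ubar L}^*$ at any nonzero $\zeta\in\Tb^*_p\hat X$. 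Since $\hat k=0$ kills every off-diagonal entry in the formula for $L^*_{\hat\gamma,\hat k}$ (Lemma~\usref{LemmaCELin}), the operator $\hat{\ubar L}^*$ is block-diagonal, with entries $\hat\fe\Delta_{\hat\fe}+\delta_{\hat\fe}^*\dd$ and $\delta_{\hat\fe}^*+\hat\fe\delta_{\hat\fe}$.

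The boundary b-symbol computation is the main technical step. Writing the coordinate vector fields $\pa_{\hat x^j}\in\Vsc(\hat X)\subset\rho_\circ\Vb(\hat X)$ in local b-coordinates near $\pa\hat X$, every second derivative $\pa_{\hat x^i}\pa_{\hat x^j}$ in $\hat{\ubar L}^*$ carries a prefactor of $\rho_\circ^2$ (modulo b-operators of lower b-order), and each first derivative in $\hat{\ubar L}_2^*$ carries a prefactor of $\rho_\circ$. These decay factors are precisely compensated by the $\rho_\circ^{-2}$ and $\rho_\circ^{-1}$ in $\hat{\ubar w}$ together with the bundle identifications $S^2\Tsc^*\hat X=\rho_\circ^{-2}S^2\Tb^*\hat X$ and $\Tsc^*\hat X=\rho_\circ^{-1}\Tb^*\hat X$; unwinding these identifications, the b-principal symbol of $\hat{\ubar w}\hat{\ubar L}^*$ at a nonzero $\zeta\in\Tb^*_p\hat X$ coincides, via the natural isomorphism between b- and scattering fibers over $\pa\hat X$, with the standard Euclidean principal symbol of $\hat{\ubar L}^*$ evaluated at the corresponding nonzero covector in $T^*_p\R^n$. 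Injectivity then follows from Lemma~\usref{LemmaCELstarInj} one more time. The main obstacle is bookkeeping these bundle conventions cleanly enough to see that no weight mismatch occurs at $\pa\hat X$---this is in fact the whole purpose of the explicit choice of the weight $\hat w$ in Definition~\usref{DefCEAfWeight}.
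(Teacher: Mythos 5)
Your argument is correct and mirrors the paper's proof in its essential step: at a boundary point $p\in\pa\hat X$, a nonzero b-covector $\zeta\in\Tb^*_p\hat X$ pairs against the b-frame $\rho_\circ^{-1}\pa_{\hat x^j}$ to produce a nonzero Euclidean covector $\xi$, to which the algebraic injectivity of Lemma~\ref{LemmaCELstarInj} then applies. Two minor remarks: first, the intermediate reduction to the normal operator $\hat{\ubar w}\hat{\ubar L}^*$ via Lemma~\ref{LemmaCEAfMap}\eqref{ItCEAfMapLin} is valid (the $\cA_\phg^\cE\Diffb$ difference has vanishing b-symbol at $\pa\hat X$ since $\Re\cE>0$) but unnecessary, since the coefficients of $\hat w L^*_{\hat\gamma,\hat k}$ extend continuously to $\pa\hat X$ and the fiberwise argument of Lemma~\ref{LemmaCELstarInj} applies directly to the full operator; second, the phrase about compensation by $\hat{\ubar w}$ ``together with the bundle identifications'' is slightly ambiguous and risks double-counting powers of $\rho_\circ$ --- the weight $\hat{\ubar w}$ alone turns the $\rho_\circ^2$- (resp.\ $\rho_\circ$-) weighted b-operator $\hat{\ubar L}^*_1$ (resp.\ $\hat{\ubar L}^*_2$), with coefficients taken in the scattering frame $\dd\hat x^i\,\dd\hat x^j$, into an unweighted b-operator, and the injectivity of the resulting symbol as a homomorphism is independent of any choice of frame on the domain and codomain bundles.
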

\begin{proof}
  This was verified in Lemma~\ref{LemmaCELstarInj} in $\hat X\setminus\pa\ol{\R^n}$; near $\pa\ol{\R^n}$, the same argument applies upon noting that the principal symbol of $\ubar\rho_\circ^{-1}\pa_{x^j}$ is a nonvanishing linear form on $\Tb^*\ol{\R^n}\setminus o$.
\end{proof}

We next turn to the normal operator of the adjoint $L_{\hat\gamma,\hat k}^*$ (with respect to the volume density and bundle inner products induced by $\hat\gamma$), which is the diagonal operator
\[
  \hat{\ubar L}^* = \begin{pmatrix} \hat{\ubar L}_1^* & 0 \\ 0 & \hat{\ubar L}_2^* \end{pmatrix},\qquad
  \hat{\ubar L}_1^* = \hat\fe\Delta_{\hat\fe} + \delta_{\hat\fe}^*\dd, \quad
  \hat{\ubar L}_2^* = \delta_{\hat\fe}^* + \hat\fe\delta_{\hat\fe},
\]
i.e.\ the formal adjoint of $\hat{\ubar L}$ with respect to $\hat\fe$; the operator $\hat{\ubar L}_1^*$, resp.\ $\hat{\ubar L}_2^*$ maps real-valued functions, resp.\ sections of $\Tsc^*\ol{\R^n}$ to sections of $S^2\,\Tsc^*\ol{\R^n}$. We work in polar coordinates $\hat r\geq 0$, $\omega\in\Sph^{n-1}$ on $\R^n\setminus\{0\}$. By trivializing $\Tsc^*\ol{\R^n}$ by means of the standard coordinate differentials, we can extend sections of $\Tsc^*\ol{\R^n}$ (and its tensor powers) over $\Sph^{n-1}=\pa\ol{\R^n}$ to sections over $\ol{\R^n}\setminus\{0\}$ by degree $0$ homogeneity with respect to dilations $(\ubar\rho_\circ,\omega)\mapsto(s\ubar\rho_\circ,\omega)$, $s>0$. Consider then the Mellin-transformed normal operator
\[
  N(\hat{\ubar w}\hat{\ubar L}^*,\lambda)=\ubar\rho_\circ^{-i\lambda}\hat{\ubar w}\hat{\ubar L}^*\ubar\rho_\circ^{i\lambda} \in \begin{pmatrix} \Diff^2(\pa\ol{\R^n};\ubar\R,S^2\,\Tsc^*\ol{\R^n}) & 0 \\ 0 & \Diff^1(\pa\ol{\R^n};\Tsc^*\ol{\R^n},S^2\,\Tsc^*\ol{\R^n}) \end{pmatrix}.
\]

\begin{lemma}[Kernel and boundary spectrum of $\ubar L^*$]
\label{LemmaCEAfSpecb}
  (See also \cite[Lemma~2.5]{CorvinoSchoenAsymptotics}.) For any connected open subset $\Omega\subset\R^n$, the kernel of $\ubar L^*$ on $\sD'(\Omega;\ul\R\oplus T^*\R^N)$ is
  \begin{align*}
    \ker\hat{\ubar L}^* &= \mathspan\{1, \hat x^i\ (i=1,\ldots,n)\} \\
      &\qquad \oplus \mathspan\{\dd\hat x^i\ (i=1,\ldots,n),\ \hat x^i\,\dd\hat x^j-\hat x^j\,\dd\hat x^i\ (1\leq i<j\leq n) \}.
  \end{align*}
  In particular, the operator $N(\hat w\hat{\ubar L}^*,\lambda)$ is injective for $\lambda\in\C\setminus\{0,i\}$. For any compact set $\Lambda\subset\C\setminus\{0,i\}$, and for any $s,s'\in\R$, there exists a constant $C$ so that, for $\lambda\in\Lambda$,
  \begin{equation}
  \label{EqCEAfSpecbEst}
  \begin{split}
    &\|(\hat f^*,\hat j^*)\|_{H^s(\pa\ol{\R^n})\oplus H^{s'}(\pa\ol{\R^n};\Tsc^*\ol{\R^n})} \\
    &\qquad\leq C\|N(\hat{\ubar w}\hat{\ubar L}^*,\lambda)(\hat f^*,\hat j^*)\|_{H^{s-2}(\pa\ol{\R^n};S^2\,\Tsc^*\ol{\R^n})\oplus H^{s'-1}(\pa\ol{\R^n};S^2\,\Tsc^*\ol{\R^n})}.
  \end{split}
  \end{equation}
\end{lemma}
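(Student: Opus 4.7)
The plan is to compute the kernel globally on $\R^n$, read off the boundary spectrum by matching homogeneity degrees in $\hat r$, and obtain the quantitative estimate~\eqref{EqCEAfSpecbEst} from Douglis--Nirenberg ellipticity of the Mellin-transformed operator together with a compactness--contradiction argument.

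For the kernel I would exploit the block-diagonal structure $\hat{\ubar L}^*=\diag(\hat{\ubar L}_1^*,\hat{\ubar L}_2^*)$ and decouple each block by a pointwise trace. Namely, with the convention $\Delta_\gamma=-\tr\nabla^2$, one has $\tr_{\hat\fe}\delta^*_{\hat\fe}\dd f^*=\tr\nabla^2 f^*=-\Delta_{\hat\fe}f^*$, whence
\[
  \tr_{\hat\fe}(\hat{\ubar L}_1^* f^*)=n\Delta_{\hat\fe}f^*-\Delta_{\hat\fe}f^*=(n-1)\Delta_{\hat\fe}f^*.
\]
Thus $\Delta_{\hat\fe}f^*=0$, and substituting back reduces the block equation to $\nabla^2 f^*=0$, forcing $f^*$ to be affine. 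Similarly $\tr_{\hat\fe}(\hat{\ubar L}_2^* j^*)=(n-1)\delta_{\hat\fe}j^*$, so $\delta_{\hat\fe}j^*=0$ and then $\delta^*_{\hat\fe}j^*=0$, i.e., $j^*$ is a Killing $1$-form of $\hat\fe$. The Killing algebra of flat $\R^n$ is $\R^n\rtimes\so(n)$, with dual forms spanned by the translations $\dd\hat x^i$ and the rotations $\hat x^i\dd\hat x^j-\hat x^j\dd\hat x^i$, pinning down the kernel. On a general connected open $\Omega$, elliptic regularity for the overdetermined-elliptic operator $\hat{\ubar L}^*$ (Lemma~\ref{LemmaCELstarInj}) forces any distributional element to be smooth, and since affine functions and Killing forms on $\Omega$ extend uniquely to all of $\R^n$ by prolongation (cf.\ Lemma~\ref{LemmaCEProlong}), the kernel on $\Omega$ is exactly the restriction of the global span.

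The boundary-spectrum claim then follows directly: since $\hat{\ubar w}$ is a pointwise isomorphism, $N(\hat{\ubar w}\hat{\ubar L}^*,\lambda)(\hat f^*,\hat j^*)=0$ is equivalent to $\hat{\ubar L}^*(\ubar\rho_\circ^{i\lambda}\hat f^*(\omega),\ubar\rho_\circ^{i\lambda}\hat j^*(\omega))=0$ on $\R^n\setminus\{0\}$. Writing $\ubar\rho_\circ^{i\lambda}=\hat r^{-i\lambda}$ and noting that the kernel elements above have only degrees $0$ and $1$ in $\hat r$, a homogeneous kernel element of degree $-i\lambda$ can be nonzero only for $-i\lambda\in\{0,1\}$, i.e., $\lambda\in\{0,i\}$. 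Hence $N(\hat{\ubar w}\hat{\ubar L}^*,\lambda)$ is injective on $\sD'(\Sph^{n-1})$ for all $\lambda\in\C\setminus\{0,i\}$.

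For the estimate on a compact set $\Lambda\Subset\C\setminus\{0,i\}$, I would use that by Lemma~\ref{LemmaCEAfbEll} the operator $\hat{\ubar w}\hat{\ubar L}^*$ has injective b-principal symbol, so the Mellin-transformed family $N(\hat{\ubar w}\hat{\ubar L}^*,\lambda)$ is a Douglis--Nirenberg elliptic family on the compact manifold $\pa\ol{\R^n}\cong\Sph^{n-1}$. Standard elliptic theory then yields~\eqref{EqCEAfSpecbEst} with an added error term proportional to $\|(\hat f^*,\hat j^*)\|_{H^{s-1}(\Sph^{n-1})\oplus H^{s'-1}(\Sph^{n-1};\Tsc^*\ol{\R^n})}$. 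To remove this term and to obtain uniformity in $\lambda\in\Lambda$, I would argue by contradiction: negating~\eqref{EqCEAfSpecbEst}, extract a sequence $(\hat f_k^*,\hat j_k^*)$ of unit norm in $H^s\oplus H^{s'}$ with $N(\hat{\ubar w}\hat{\ubar L}^*,\lambda_k)(\hat f_k^*,\hat j_k^*)\to 0$ in $H^{s-2}\oplus H^{s'-1}$ and $\lambda_k\to\lambda_\infty\in\Lambda$. Rellich compactness on $\Sph^{n-1}$ extracts a strong limit in $H^{s-1}\oplus H^{s'-1}$, and the elliptic estimate with error term (at $\lambda_\infty$) then upgrades convergence to $H^s\oplus H^{s'}$ and transfers the unit-norm lower bound to the limit, producing a nontrivial element of $\ker N(\hat{\ubar w}\hat{\ubar L}^*,\lambda_\infty)$ and contradicting the injectivity just established. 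The main obstacle I anticipate is the careful bookkeeping of the Douglis--Nirenberg orders in the compactness step, ensuring that the strong limit inherits a genuine positive lower bound in the top space $H^s\oplus H^{s'}$ rather than only in a weaker norm.
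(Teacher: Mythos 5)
Your kernel computation and the boundary-spectrum/homogeneity argument are correct and match the paper's proof exactly (trace the block equations, reduce to $\nabla^2 f^*=0$ and $\delta^*_{\hat\fe}j^*=0$, then read off $\lambda\in\{0,i\}$ from the degree-$0$ and degree-$1$ homogeneity of the kernel elements). The one place your argument as written falls short of the stated claim is precisely the spot you flag as a worry: treating $N(\hat{\ubar w}\hat{\ubar L}^*,\lambda)$ as a genuine Douglis--Nirenberg system (with $t_1=t_2=0$, $s_1=2$, $s_2=1$) gives the elliptic-estimate-with-compact-error only for Sobolev orders $(s,s')$ that are coupled as $s'=s-1$, so a single compactness--contradiction argument in that framework cannot produce~\eqref{EqCEAfSpecbEst} for the \emph{arbitrary independent} $s,s'\in\R$ that the lemma asserts. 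The paper avoids this by exploiting that $\hat{\ubar L}^*=\diag(\hat{\ubar L}_1^*,\hat{\ubar L}_2^*)$ is block-diagonal: it proves the estimate as the sum of two scalar estimates, one for $\hat f^*$ alone (via ellipticity of $N(\ubar\rho_\circ^{-2}\hat{\ubar L}_1^*,\lambda)^*N(\ubar\rho_\circ^{-2}\hat{\ubar L}_1^*,\lambda)$ and triviality of $\ker N(\ubar\rho_\circ^{-2}\hat{\ubar L}_1^*,\lambda)$) and one for $\hat j^*$ alone. This decoupling both makes $s$ and $s'$ truly independent and dissolves the DN bookkeeping concern entirely, since each block is a single elliptic operator of a single order. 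If you replace your DN compactness argument by two separate scalar arguments, the rest of your proof goes through as written.
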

\begin{proof}
  By taking the trace of $\hat{\ubar L}_1^*\hat f^*=0$, we obtain $(n-1)\Delta_{\hat\fe}\hat f^*=0$; this then implies $0=\hat{\ubar L}_1^*\hat f^*=\delta_{\hat\fe}^*\dd\hat f^*=(\pa_{\hat x^i}\pa_{\hat x^j}\hat f^*)_{i,j=1,\ldots,n}$, so $\hat f^*$ is linear. Conversely, for linear $\hat f^*$ we do have $\hat{\ubar L}_1^*\hat f^*=0$.

  We similarly note that taking the trace of $\hat{\ubar L}_2^*\hat j^*=0$ gives $(n-1)\delta_{\hat\fe}\hat j^*=0$ and thus $\delta_{\hat\fe}^*\hat j^*=0$. Conversely, if $\delta_{\hat\fe}^*\hat j^*=0$, then by taking the trace we have $-\delta_{\hat\fe}\hat j^*=0$, and therefore $\hat{\ubar L}_2^*\hat j^*=0$. Therefore, $\ker\hat{\ubar L}_2^*=\ker\delta_{\hat\fe}^*$ consists of all Killing 1-forms on Euclidean space; these are the 1-forms dual to translations and rotations.

  The estimate in~\eqref{EqCEAfSpecbEst} is the sum of two estimates, one for $\hat f^*$ (with $\hat j^*=0$) and one for $\hat j^*$ (with $\hat f^*=0$). The estimate for $\hat f^*$ follows from the ellipticity of $N(\ubar\rho_\circ^{-2}\hat{\ubar L}_1^*,\lambda)^*N(\ubar\rho_\circ^{-2}\hat{\ubar L}_1^*,\lambda)\in\Diff^4(\pa\ol{\R^n})$ (which gives the desired estimate except for an additional, relatively compact, error term $C\|\hat f^*\|_{H^{-N}(\pa\ol{\R^n})}$ on the right hand side) and the triviality of $\ker N(\ubar\rho_\circ^{-2}\hat{\ubar L}_1^*,\lambda)$ (which allows one to drop this relatively compact term, upon increasing the constant $C$, by a standard functional analytic argument). The proof of the estimate for $\hat j^*$ is completely analogous.
\end{proof}

\begin{lemma}[Cokernel of $L_{\hat\gamma,\hat k}$]
\label{LemmaCEAfCoker}
  Let $(\hat\gamma,\hat k)$ be $\cE$-asymptotically flat. Let $\alpha^*>0$, and suppose\footnote{Recall that we are using b-densities throughout. With respect to the Euclidean volume density, the assumption would read $(\hat f^*,\hat j^*)\in\Hb^{-\infty,\frac{n}{2}+\alpha^*}$, with $\frac{n}{2}+\alpha^*>\frac{n}{2}$.} $(\hat f^*,\hat j^*)\in\Hb^{-\infty,\alpha^*}(\ol{\R^n};\ul\R\oplus S^2\,\Tsc^*\ol{\R^n})$ satisfies $L_{\hat\gamma,\hat k}^*(\hat f^*,\hat j^*)=0$ in a connected open subset $\Omega\subset\ol{\R^n}\setminus K$ which contains a neighborhood of $\pa\ol{\R^n}$. Then $(\hat f^*,\hat j^*)=0$ on $\Omega$.
\end{lemma}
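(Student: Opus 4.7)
My plan is to combine elliptic regularity for the adjoint, a normal-operator argument to force infinite-order vanishing at $\pa\ol{\R^n}$, and propagation into $\Omega$ via the prolongation ODE of Lemma~\ref{LemmaCEProlong}.

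Since $\hat w L_{\hat\gamma,\hat k}^*$ has injective b-principal symbol by Lemma~\ref{LemmaCEAfbEll}, the composition $(\hat w L_{\hat\gamma,\hat k}^*)^*(\hat w L_{\hat\gamma,\hat k}^*)$ is b-elliptic, and Lemma~\ref{LemmaBAnEll} upgrades $(\hat f^*,\hat j^*)\in\Hb^{-\infty,\alpha^*}_\loc(\Omega)$ to $\Hb^{\infty,\alpha^*}_\loc(\Omega)$. To improve the decay at $\pa\ol{\R^n}$, I localize with $\chi\in\CIc(\Omega)$ equal to $1$ near $\pa\ol{\R^n}$. Using the expansion $\hat w L_{\hat\gamma,\hat k}^* = \hat w\hat{\ubar L}^*+R$ with $R\in\cA_\phg^\cE\Diffb^{\leq}$ from Lemma~\ref{LemmaCEAfMap}\eqref{ItCEAfMapLin}, the equation for $(\chi\hat f^*,\chi\hat j^*)$ becomes
\[
  \hat w\hat{\ubar L}^*(\chi\hat f^*,\chi\hat j^*) = -R(\chi\hat f^*,\chi\hat j^*) + [\hat w L_{\hat\gamma,\hat k}^*,\chi](\hat f^*,\hat j^*),
\]
whose right-hand side is the sum of an $\Hb^{\infty,\alpha^*+\Re\cE}(\ol{\R^n})$-term and a compactly supported error. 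By Lemma~\ref{LemmaCEAfSpecb}, $\specb(\hat w\hat{\ubar L}^*)$ consists of the two exponents $z\in\{0,-1\}$, both strictly below $\alpha^*>0$. Iterating the Mellin inversion on horizontal lines $\{\Im\lambda=-(\alpha^*+k\Re\cE)\}$ for $k=1,2,\dots$---none of which crosses $\specb$, thanks to the estimate~\eqref{EqCEAfSpecbEst}---and using injectivity of $\hat w\hat{\ubar L}^*$ on the relevant weighted spaces to identify the Mellin-constructed solution with $(\chi\hat f^*,\chi\hat j^*)$, I conclude $(\chi\hat f^*,\chi\hat j^*)\in\Hb^{\infty,N}(\ol{\R^n})$ for every $N$. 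By Corollary~\ref{CorBAnSob}, $(\hat f^*,\hat j^*)$ and all its Euclidean partial derivatives therefore decay faster than any polynomial in $|\hat x|$ near $\pa\ol{\R^n}$.

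For the propagation step I apply Lemma~\ref{LemmaCEProlong} in the form involving two derivatives of $\hat f^*$. Fix $q\in\Omega$; for each large $R$ choose a $\cC^2$ outward-radial path $\alpha_R\colon[0,1]\to\Omega$ of Euclidean length $\sim R$ with $\alpha_R(1)=q$, $|\alpha_R(0)|=R$, and $|\alpha_R(t)|\gtrsim(1-t)R+|q|$. Since $L_{\hat\gamma,\hat k}^*(\hat f^*,\hat j^*)=0$, the forcing vanishes and the lemma produces a homogeneous linear ODE $\frac{D\tilde Z}{\dd t}=\tilde F\tilde Z$ along $\alpha_R$, where $\tilde Z=(\hat f^*,\nabla\hat f^*,\nabla^2\hat f^*,\hat j^*,\nabla\hat j^*)|_{\alpha_R(t)}$. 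The crucial observation is that $\tilde F$, assembled from $\hat\gamma,\nabla\hat\gamma,\nabla^2\hat\gamma,\hat k,\nabla\hat k$, vanishes identically at the flat model $(\hat\fe,0)$---as $\Ric(\hat\fe)$, $\hat k$, $\nabla\hat k$, and the Euclidean Christoffel symbols all vanish---so asymptotic flatness gives $|\tilde F(\hat x)|\leq C|\hat x|^{-1-\eps}$ with $\eps=\Re\cE>0$. Consequently $\int_0^1|\tilde F|_{\alpha_R(t)}\cdot|\alpha_R'(t)|\,\dd t$ is uniformly bounded in $R$, and Gronwall yields $|\tilde Z(q)|\leq C|\tilde Z(\alpha_R(0))|$. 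The infinite-order decay from the previous paragraph sends the right-hand side to $0$ as $R\to\infty$, so $(\hat f^*,\hat j^*)(q)=0$ for all $q\in\Omega$.

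The main obstacle is the normal-operator iteration: one must carefully localize the equation (given only on $\Omega$, not globally) and verify at each stage that the weight being gained avoids the boundary-spectrum points. The propagation is then a clean Gronwall argument relying on the cancellation of $\tilde F$ at the flat model, which ensures that the ODE coefficients are integrable along outgoing rays despite the path lengths tending to infinity.
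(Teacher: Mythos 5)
Your approach is genuinely different from the paper's. The paper rescales the operator near infinity, proves a uniform semi-Fredholm estimate for the family $\cL_\eps^*$ via the b-normal operator, identifies the limiting kernel with the Euclidean KIDs (which do not decay), and then removes the compact error for small $\eps$; the paper even remarks in a footnote within the proof that a Mellin/rapid-decay route, as in \cite[Proposition~3.1]{CorvinoSchoenAsymptotics}, is also possible. Your first two steps---elliptic regularity plus iterating the Mellin inversion to upgrade the weight from $\alpha^*$ to $\infty$---are that alternative route, and they are correct: since both indicial roots $z\in\{0,-1\}$ lie strictly below $\alpha^*>0$, the iteration by increments of $\Re\cE$ never crosses $\specb(\hat w\hat{\ubar L}^*)$, giving rapid decay of all b-derivatives and hence, after accounting for the decaying Christoffel symbols, rapid decay of $\tilde Z$.

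The propagation step, however, contains a genuine error. You claim that the endomorphism $\tilde F$ in Lemma~\ref{LemmaCEProlong} ``vanishes identically at the flat model $(\hat\fe,0)$'' and therefore decays like $|\hat x|^{-1-\eps}$, so that $\int|\tilde F|\,|\alpha_R'|$ is uniformly bounded and Gronwall gives $|\tilde Z(q)|\le C|\tilde Z(\alpha_R(0))|$ with $C$ independent of $R$. This is false: $\tilde F$ contains the bookkeeping part of the companion-type system, namely $\frac{D}{\dd t}\hat f^*=(\nabla\hat f^*)(\alpha')$, $\frac{D}{\dd t}(\nabla\hat f^*)=(\nabla^2\hat f^*)(\alpha')$, $\frac{D}{\dd t}\hat j^*=(\nabla\hat j^*)(\alpha')$, and this part does \emph{not} vanish at the flat model (indeed, it cannot: the flat ODE must admit the Euclidean KIDs $\hat f^*$ linear, $\hat j^*$ dual to rotations, which grow linearly along outgoing rays). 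Consequently the fundamental solution of the ODE along $\alpha_R$ grows like a polynomial in $R$, not boundedly, and the asserted uniform Gronwall bound fails. Your conclusion is nevertheless salvageable: the bookkeeping block is nilpotent, and the coupling coefficients (built from $\Ric(\hat\gamma)$, $\hat k$, $\nabla\hat k$, the Christoffel symbols, etc.) do lie in $\cA^{\cE+1}$, so the fundamental solution along $\alpha_R$ is bounded by $C R^K$ for some fixed $K$; combined with the $O(R^{-\infty})$ decay of $\tilde Z(\alpha_R(0))$ from the first step, this still forces $\tilde Z(q)=0$. You should replace the incorrect ``$\tilde F$ vanishes at the flat model'' and the uniform $C$ with this polynomial-growth-versus-rapid-decay argument.
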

\begin{proof}
  By elliptic regularity for $L_{\hat\gamma,\hat k}^*$ (cf.\ Lemma~\ref{LemmaCEAfbEll}), we automatically have $(\hat f^*,\hat j^*)\in\Hb^{\infty,\alpha^*}$ on $\Omega$. We first show that $(\hat f^*,\hat j^*)=0$ for large $\hat r$. To this end,\footnote{Using the information about the boundary spectrum of $\hat{\ubar L}^*$, one could improve the conormality of $(\hat f^*,\hat j^*)$ at infinity to rapid vanishing of $(\hat f^*,\hat j^*)$, whence the vanishing of $\hat f^*,\hat j^*$ near infinity would follow from a unique continuation result; this is the path taken in \cite[Proposition~3.1]{CorvinoSchoenAsymptotics}. We present a different argument here which directly exploits elliptic estimates and the overdetermined nature of $L_{\hat\gamma,\hat k}^*$.} we work with inverse polar coordinates $\rho_\circ,\omega$ near $\pa\ol{\R^n}$ and consider the scaling map $s_\eps\colon(\rho_\circ,\omega)\mapsto(\eps\rho_\circ,\omega)$. Using the weight $\hat w$ of Definition~\ref{DefCEAfWeight}, we consider
  \[
    \cL_\eps^* = s_\eps^*(\hat w L_{\hat\gamma,\hat k}^*)
  \]
  for $\eps\in(0,\rho_0)$ as a b-differential operator on $[0,1]\times\Sph^{n-1}$, where $\rho_0\in(0,1]$ is fixed so that $\rho_\circ^{-1}([0,\rho_0))\subset\Omega$ and $\hat r=\rho_\circ^{-1}<\rho_0^{-1}$ on $K$. As such, we have
  \begin{equation}
  \label{EqCEAfCokerLim}
    \cL_\eps^* \xra{\eps\searrow 0} \cL_0^* := \hat w\hat{\ubar L}^*
  \end{equation}
  in the space
  \[
    \begin{pmatrix} \Diffb^2 + \cA_\phg^\cE\Diffb^2 & \cA_\phg^\cE\Diffb^1 \\ \cA_\phg^\cE & \Diffb^1 + \cA_\phg^\cE\Diffb^1 \end{pmatrix};
  \]
  note indeed that the pullback of the polyhomogeneous terms with index set $\cE$ vanish in the limit $\eps\searrow 0$, since, recalling that $\Re\cE>0$, we have $s_\eps^*(\rho_\circ^z(\log\rho_\circ^{-1})^k)=\eps^z\cdot\rho_\circ^z(\log\rho_\circ^{-1}+\log\eps^{-1})^k\to 0$ when $\Re z>0$.

  We now use the uniform b-ellipticity of $(\cL_\eps^*)^*\cL_\eps^*\in(\Diffb^{s_j+s_i})_{i,j=1,2}$ in $\eps\in(0,\rho_0)$, with $s_1=2$, $s_2=1$ (see~\eqref{EqCELLstar}). This gives the uniform elliptic estimate\footnote{Here, the b-Sobolev norm on $\rho_\circ^{-1}([0,\rho_1])$, for $\rho_1>0$, is defined via differentiation along $\rho_\circ\pa_{\rho_\circ}$ and spherical vector fields; that is, we test with \emph{all} smooth vector fields near the `artificial boundary' at $\rho_\circ=\rho_1$. The usage of the notation $\Hbext$ instead of $\Hb$ stresses this \emph{extendible} character of the function space in the terminology of H\"ormander \cite[Appendix~B]{HormanderAnalysisPDE3}.}
  \begin{align*}
    &\|(\hat f^*,\hat j^*)\|_{(\Hbext^{s+2,\alpha^*}\oplus\Hbext^{s+1,\alpha^*})(\rho_\circ^{-1}([0,\frac12]))} \\
    &\qquad \leq C\Bigl( \|(\cL_\eps^*)^*\cL_\eps^*(\hat f^*,\hat j^*)\|_{(\Hbext^{s-2,\alpha^*}\oplus\Hbext^{s-1,\alpha^*})(\rho_\circ^{-1}([0,1]))} + \|(\chi \hat f^*,\chi \hat j^*)\|_{\Hb^{s_0+2,\alpha^*}\oplus\Hb^{s_0+1,\alpha^*}}\Bigr) \\
    &\qquad \leq C'\Bigl( \|\cL_\eps^*(\hat f^*,\hat j^*)\|_{\Hbext^{s,\alpha^*}(\rho_\circ^{-1}([0,1]))} + \|(\chi \hat f^*,\chi \hat j^*)\|_{\Hb^{s_0,\alpha^*}\oplus\Hb^{s_0+1,\alpha^*}}\Bigr)
  \end{align*}
  for any $s_0<s$, where we fix $\chi\in\CIc([0,1)_{\rho_\circ}\times\Sph^{n-1})$ to be $1$ near $\rho_\circ^{-1}([0,\frac12])$. We claim that we can replace the norm on $(\chi \hat f^*,\chi \hat j^*)$ by the $\Hbext^{s_0,\alpha^*-\delta}(\rho_\circ^{-1}([0,1]))$-norm on $(\hat f^*,\hat j^*)$, where $\delta>0$ is such that $\Re\cE>\delta$; we accomplish this via a standard b-normal operator argument \cite[\S6]{VasyMinicourse}. To wit, passing to the Mellin-transform in $\rho_\circ$ and using the material of~\S\ref{SsBFn}, we have an equivalence of norms
  \[
    \|(\chi \hat f^*,\chi \hat j^*)\|_{\Hb^{s_0+2,\alpha^*}\oplus\Hb^{s_0+1,\alpha^*}}^2 \sim \int_{\Im\lambda=-\alpha^*} \|\cM((\chi \hat f^*,\chi \hat j^*))(\lambda)\|_{(H^{s_0+2}_{\la\lambda\ra}\oplus H^{s_0+1}_{\la\lambda\ra})(\pa\ol{\R^n})}^2\,\dd\lambda.
  \]
  Using a parametrix for the Mellin-transformed normal operator of $(\cL_\eps^*)^*\cL_\eps^*$, which thus has an error term which tends to zero in operator norm on $H_{\la\lambda\ra}^{s_0+2}\oplus H_{\la\lambda\ra}^{s_0+1}$ as $|\Re\lambda|\to\infty$ along the contour $\Im\lambda=-\alpha^*$, we can estimate
  \begin{equation}
  \label{EqCEAfCokerEst}
    \|\cM((\chi \hat f^*,\chi \hat j^*))(\lambda)\|_{H_{\la\lambda\ra}^{s_0+2}\oplus H_{\la\lambda\ra}^{s_0+1}} \leq C\|\cM(w\ubar L^*(\chi \hat f^*,\chi \hat j^*))(\lambda)\|_{H_{\la\lambda\ra}^{s_0}\oplus H_{\la\lambda\ra}^{s_0}}
  \end{equation}
  for $\Im\lambda=-\alpha^*$ when $|\Re\lambda|\geq C_0$ (independently of $\eps$). For $|\Re\lambda|\leq C_0$ on the other hand, the large parameter norms are equivalent to standard Sobolev norms, and we can then appeal to~\eqref{EqCEAfSpecbEst} to conclude the estimate~\eqref{EqCEAfCokerEst} for all $\lambda$ with $\Im\lambda=-\alpha^*$. We have thus proved
  \[
    \|(\chi \hat f^*,\chi \hat j^*)\|_{\Hb^{s_0+2,\alpha^*}\oplus\Hb^{s_0+1}} \leq C\|\cL_0^*(\chi \hat f^*,\chi \hat j^*))\|_{\Hb^{s_0,\alpha^*}}.
  \]
  We then commute $\cL_0^*$ through $\chi$, producing an error term $\|(\chi^\flat \hat f^*,\chi^\flat \hat j^*)\|_{\Hb^{s_0+1}\oplus\Hb^{s_0}}$ where $\chi^\flat\in\CIc((0,1)_{\rho_\circ}\times\Sph^{n-1})$, with $\supp\chi^\flat\supset\supp\dd\chi$, is disjoint from $\pa\ol{\R^n}$. We can further write $\chi\cL_0^*(\hat f^*,\hat j^*)=\chi\cL_\eps^*(\hat f^*,\hat j^*)-\chi(\cL_\eps^*-\cL_0^*)(\hat f^*,\hat j^*)$ and estimate the error using the fact that
  \[
    \cL_\eps^*-\cL_0^* \in \cA^\delta \begin{pmatrix} \Diffb^2 & \Diffb^1 \\ \Diffb^0 & \Diffb^1 \end{pmatrix}
  \]
  is uniformly bounded in $\eps$. Altogether, we obtain the uniform estimate
  \begin{equation}
  \label{EqCEAfCokerEst2}
  \begin{split}
    &\|(\hat f^*,\hat j^*)\|_{(\Hbext^{s+2,\alpha^*}\oplus\Hbext^{s+1,\alpha^*})(\rho_\circ^{-1}([0,\frac12]))} \\
    &\qquad \leq C\Bigl( \|\cL_\eps^*(\hat f^*,\hat j^*)\|_{\Hbext^{s,\alpha^*}(\rho_\circ^{-1}([0,1]))} + \|(\hat f^*,\hat j^*)\|_{(\Hbext^{s_0+2,\alpha^*-\delta}\oplus\Hbext^{s_0+1,\alpha^*-\delta})(\rho_\circ^{-1}([0,1]))}\Bigr).
  \end{split}
  \end{equation}
  
  By the final part of Lemma~\ref{LemmaCEProlong}, we can express $(\hat f^*,\hat j^*,\nabla \hat f^*,\nabla \hat j^*,\nabla^2 \hat f^*)(\rho_\circ,\omega)$ for $\rho_\circ\in[\frac12,1]$ in terms of the values of the tensors $\hat f^*,\hat j^*,\nabla \hat f^*,\nabla \hat j^*$ at the point $(\rho_0,\omega)$ and of  $\cL_\eps^*(\hat f^*,\hat j^*),\nabla(\cL_\eps^*(\hat f^*,\hat j^*))$ at all points $(\rho_1,\omega)$, $\rho_1\in[\rho_0,\rho_\circ]$, where $\rho_0\in[\frac14,\frac12]$ is arbitrary, as the solution of an ODE whose coefficients depend only on $\rho_0$, $\omega$, $\eps$, $\gamma$, and $k$. Therefore, upon integrating over the $(n-1)$-sphere and averaging over $\rho_0$,
  \begin{align*}
    &\|(\hat f^*,\hat j^*,\nabla \hat f^*,\nabla \hat j^*,\nabla^2 \hat f^*)(\rho_\circ,-)\|_{L^2(\Sph^{n-1})}^2 \\
    &\qquad \leq C\Biggl(\int_{\frac14}^{\frac12} \|(\hat f^*,\hat j^*,\nabla \hat f^*,\nabla \hat j^*,\nabla^2 \hat f^*)(\rho_0,-)\|_{L^2(\Sph^{n-1})}^2\,\dd\rho_0 \\
    &\qquad\qquad\qquad + \int_{\frac14}^{\frac12}\int_{\Sph^{n-1}} \left\|\int_{\rho_0}^{\rho_\circ} \bigl|(\cL_\eps^*(\hat f^*,\hat j^*),\nabla(\cL_\eps^*(\hat f^*,\hat j^*)))|_{(\rho_1,\omega)}\bigr|\,\dd\rho_1\right\|^2\,\dd\omega\,\dd\rho_0\Biggr),
  \end{align*}
  Applying the Cauchy--Schwarz inequality in the last integral, performing the integral over $\rho_0$, and subsequently integrating over $\rho_\circ\in[\frac12,1]$ gives
  \[
    \|(\hat f^*,\hat j^*)\|_{(H^2\oplus H^1)(\rho_\circ^{-1}([\frac12,1]))} \leq C\Bigl( \|(\hat f^*,\hat j^*)\|_{(H^2\oplus H^1)(\rho_\circ^{-1}([\frac14,\frac12]))} + \|\cL_\eps^*(\hat f^*,\hat j^*)\|_{H^1(\rho_\circ^{-1}([\frac14,1]))} \Bigr).
  \]
  Estimating the first term on the right here by means of~\eqref{EqCEAfCokerEst2} with $s=0$ and $s_0=-1$, and adding the resulting estimate to~\eqref{EqCEAfCokerEst2}, we finally deduce the uniform semi-Fredholm estimate
  \begin{equation}
  \label{EqCEAfCokerEstFinal}
  \begin{split}
    &\|(\hat f^*,\hat j^*)\|_{(\Hbext^{2,\alpha^*}\oplus\Hbext^{1,\alpha^*})(\rho_\circ^{-1}([0,1]))} \\
    &\qquad \leq C\Bigl( \|\cL_\eps^*(\hat f^*,\hat j^*)\|_{\Hbext^{1,\alpha^*}(\rho_\circ^{-1}([0,1]))} + \|(\hat f^*,\hat j^*)\|_{(\Hbext^{1,\alpha^*-\delta}\oplus\Hbext^{0,\alpha^*-\delta})(\rho_\circ^{-1}([0,1]))}\Bigr)
  \end{split}
  \end{equation}
  for all $\eps\in[0,1]$. Now the error term is relatively compact, in that the inclusion $(\Hbext^{2,\alpha^*}\oplus\Hbext^{1,\alpha^*})(\rho_\circ^{-1}([0,1]))\hra(\Hbext^{1,\alpha^*-\delta}\oplus\Hbext^{0,\alpha^*-\delta})(\rho_\circ^{-1}([0,1]))$ is compact.

  We now make use of the limiting behavior~\eqref{EqCEAfCokerLim}. Note that the distributional kernel of $\cL_0^*$ is given by $(\hat f^*,\hat j^*)$ where $\hat f^*\in\ker\ubar L_1^*$ and $\hat j^*\in\ker\ubar L_2^*$ are described in Lemma~\ref{LemmaCEAfSpecb}; but elements of the kernel of $\cL_0^*$ on $(\Hb^{2,\alpha^*}\oplus\Hb^{1,\alpha^*})(\rho_\circ^{-1}([0,1]))$ lie in $\Hb^{\infty,\alpha^*}(\rho_\circ^{-1}([0,1]))$ by elliptic regularity, and thus decay (as scattering tensors, i.e.\ with respect to the frame $\dd x^1,\ldots,\dd x^n$) to zero as $|x|\to\infty$, and hence they must be zero. A standard functional analytic argument allows one to drop the relatively compact error term in~\eqref{EqCEAfCokerEstFinal} for $\eps=0$, and then also for all sufficiently small $\eps\geq 0$. Translated back to $L_{\gamma,k}^*$, this means that for sufficiently small $\eps>0$, we have
  \[
    \|(\hat f^*,\hat j^*)\|_{(\Hbext^{2,\alpha^*}\oplus\Hbext^{1,\alpha^*})(\rho_\circ^{-1}([0,\eps]))} \leq C\|L_{\hat\gamma,\hat k}^*(\hat f^*,\hat j^*)\|_{\Hbext^{0,\alpha^*+2}(\rho_\circ^{-1}([0,\eps]))}.
  \]
  Thus, if $L_{\hat\gamma,\hat k}^*(\hat f^*,\hat j^*)$ vanishes in $\rho_\circ\leq\eps$, then so does $(\hat f^*,\hat j^*)$. By Lemma~\ref{LemmaCEProlong}, this then implies the vanishing of $(\hat f^*,\hat j^*)$ in $\Omega$, completing the proof.
\end{proof}

\begin{prop}[Solvability of the linearized constraints]
\label{PropCEAfSolv}
  Let $(\hat\gamma,\hat k)$ be $\cE$-asymp\-tot\-i\-cal\-ly flat. Let $\hat R_0>0$ be such that $\hat K\subset B(0,\hat R_0)$, and put $\hat\rho_2=\hat R_0^{-1}-\hat r^{-1}$ and $\hat w_2=\diag(\hat\rho_2^4,\hat\rho_2^2)$; recall also $\hat w=\diag(\rho_\circ^{-2},\rho_\circ^{-1})$ from Definition~\usref{DefCEAfWeight}. Let $\hat\Omega=\rho_\circ^{-1}([0,\hat R_0^{-1}])$, and define b-00-Sobolev spaces on $\hat\Omega$ (with b-character at $\rho_\circ^{-1}(0)$ and 00-character at $\hat r^{-1}(\hat R_0)$) using a positive b-00-density on $\hat\Omega$. Fix $\alpha_\circ<n-2$.\footnote{The assumption on $\alpha_\circ$ eliminates the cokernel of $L_{\hat\gamma,\hat k}$; note that for Kerr data sets $(\hat\gamma,\hat k)$, the kernel of $L_{\hat\gamma,\hat k}^*$ is indeed non-trivial \cite{MoncriefLinStabI}. If however $\ker L_{\hat\gamma,\hat k}^*$ \emph{is} trivial, then the Proposition holds for any $\alpha_\circ\in\R$.}
  \begin{enumerate}
  \item\label{ItCEAfSolv1}{\rm (Basic solvability.)} Fix $\beta>0$ and $\alpha_2\in\R$. Put
    \begin{equation}
    \label{EqCEAfSolvOp}
      \cL_{\hat\gamma,\hat k} := e^{\beta/\hat\rho_2}\hat\rho_2^{-\alpha_2}\rho_\circ^{-\tilde\alpha}\hat w_2 L_{\hat\gamma,\hat k}\hat w \rho_\circ^{\tilde\alpha}\hat\rho_2^{\alpha_2} e^{-\beta/\hat\rho_2},\qquad \tilde\alpha:=\alpha_\circ-n+2+\frac{n}{2}.
    \end{equation}
    Define adjoints with respect to the volume density and fiber inner products induced by $\hat\gamma$. Then for all $s\in\R$, the operator $\cL_{\hat\gamma,\hat k}\cL_{\hat\gamma,\hat k}^*$ is invertible as a map
    \begin{equation}
    \label{EqCEAfSolvInv}
      \cL_{\hat\gamma,\hat k}\cL_{\hat\gamma,\hat k}^* \colon \rho_\circ^{\frac{n}{2}}H_{\bop,0 0}^{s+2}(\hat\Omega)\oplus\rho_\circ^{\frac{n}{2}}H_{\bop,0 0}^{s+1}(\hat\Omega;\Tsc^*_{\hat\Omega}\hat X) \to \rho_\circ^{\frac{n}{2}}H_{\bop,0 0}^{s-2}(\hat\Omega)\oplus\rho_\circ^{\frac{n}{2}}H_{\bop,0 0}^{s-1}(\hat\Omega;\Tsc^*_{\hat\Omega}\hat X).
    \end{equation}
  \item\label{ItCEAfSolvPhg}{\rm (Polyhomogeneity.)} There exists\footnote{One can in principle compute the set $\hat\cS$ produced in the proof explicitly (the main ingredient being the computation of the set of poles of the operator $\cN(\alpha_\circ,\lambda)^{-1}$ in~\eqref{EqCEAfSolvNormOp} below), though we shall not present the details here as the result depends on $\alpha_\circ$ in a rather complicated manner. We leave open the interesting problem of finding the smallest possible $\hat\cS$; we conjecture that one can ensure $\hat\cS\subset-i(\N_0+n-2)\times\N_0$. See also Remark~\ref{RmkCEPctIndex}.} an index set $\hat\cS\subset\C\times\N_0$ only depending on $\cE$ and $\alpha_\circ$ and satisfying $\Re\hat\cS>\alpha_\circ$ so that the following holds. If $\cG\subset\C\times\N_0$ is an index set with $\Re\cG>\alpha_\circ$, and if $(\hat f,\hat j)\in\cA_\phg^{\cG+2}(\hat X;\ul\R\oplus\Tsc^*\hat X)$ vanishes in a neighborhood of $\hat r\leq\hat R_0$, then there exists
    \[
      (\hat h,\hat q) \in \cA_\phg^{\cG\extcup\hat\cS}(\hat X;S^2\,\Tsc^*\hat X) \oplus \cA_\phg^{(\cG\extcup\hat\cS)+1}(\hat X;S^2\,\Tsc^*\hat X)
    \]
    so that $L_{\hat\gamma,\hat k}(\hat h,\hat q)=(\hat f,\hat j)$, and so that $(\hat h,\hat q)$ vanishes in a neighborhood of $\hat r\leq\hat R_0$.
  \end{enumerate}
\end{prop}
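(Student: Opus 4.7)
The plan is to treat $\cL_{\hat\gamma,\hat k}$ as a conjugated weighted b-00-differential operator on $\hat\Omega$: b-behavior at the outer boundary $\rho_\circ=0$ where $(\hat\gamma,\hat k)$ is asymptotically flat, and 00-behavior at the inner boundary $\hat\rho_2=0$ where the exponential weight $e^{-\beta/\hat\rho_2}$ lives. The key structural observation is that $\hat w_2=\diag(\hat\rho_2^4,\hat\rho_2^2)$ supplies exactly the powers of $\hat\rho_2$ needed to convert the Douglis–Nirenberg orders $(s_1,s_2)=(2,1)$ of $L_{\hat\gamma,\hat k}^*$ into 00-operators (since $\hat\rho_2^{2 k}\Diff^k\subset\Diff^k_{0 0}$), while $\hat w$ absorbs the co-orders on the other side; thus $\cL^*_{\hat\gamma,\hat k}\in\Diff_{\bop,0 0}$ in the Douglis–Nirenberg sense, with weights entirely accounted for by the explicit prefactors. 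By Lemma~\ref{LemmaCEAfbEll}, its b-principal symbol is injective, and by formula~\eqref{EqB00SymbConj}, its 00-boundary principal symbol at $\hat\rho_2=0$ is $\upsigma(L^*_{\hat\gamma,\hat k})(\hat\rho_2^2\zeta+i\beta\,\dd\hat\rho_2)$, which for $\beta>0$ is injective by Lemma~\ref{LemmaCELstarInj} (cf.\ Example~\ref{ExBAnExt}).

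For part~\eqref{ItCEAfSolv1}, the two-symbol ellipticity of $\cL\cL^*$ yields, by the arguments of Lemma~\ref{LemmaBAnEll} and Proposition~\ref{PropBAnEll00}, a semi-Fredholm estimate modulo a compact error. To obtain true invertibility I would analyze the b-normal operator of $\cL$ at $\rho_\circ=0$: near that face $\hat\rho_2$ is bounded away from $0$, so the normal operator is (a conjugate of) $\hat{\ubar L}\hat{\ubar w}$, and its Mellin transform is invertible on the contour $\Im\lambda=-\frac{n}{2}$ for all $n\geq 3$ because the only points where $N(\hat{\ubar w}\hat{\ubar L}^*,\lambda)$ fails to be injective are $\lambda\in\{0,i\}$ (Lemma~\ref{LemmaCEAfSpecb}). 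The contour placement is dictated by the density weight $\rho_\circ^{n/2}$; the role of $\alpha_\circ<n-2$ is to place the recovered solution $(\hat h,\hat q)=\hat w\rho_\circ^{\tilde\alpha}\hat\rho_2^{\alpha_2}e^{-\beta/\hat\rho_2}\cL^*u$ in weight classes that lie strictly below the exceptional weights dual to translations and rotations in $\ker\hat{\ubar L}^*$. Triviality of the kernel of $\cL^*$ (and hence of $\cL\cL^*$) follows because any element there vanishes to infinite order at $\hat\rho_2=0$ (Corollary~\ref{CorBAn00Sob}), so extends by zero across $\hat\rho_2=0$ to a distributional element of $\ker L^*_{\hat\gamma,\hat k}$ on a neighborhood of $\pa\ol{\R^n}$ in $\hat\Omega\setminus\hat K$, which is then zero by Lemma~\ref{LemmaCEAfCoker} (via prolongation, Lemma~\ref{LemmaCEProlong}). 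This upgrades the semi-Fredholm estimate to invertibility as stated in~\eqref{EqCEAfSolvInv}.

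For part~\eqref{ItCEAfSolvPhg}, the construction of the polyhomogeneous solution proceeds in two stages. First, I produce a formal polyhomogeneous solution $(\hat h',\hat q')$ supported in a neighborhood of $\rho_\circ^{-1}([0,\hat R_0^{-1}))$. This is an iterative procedure at $\rho_\circ=0$: given a term $\rho_\circ^z(\log\rho_\circ)^k\phi(\omega)$ in the generating expansion of $(\hat f,\hat j)$, one reads off the corresponding contribution to $(\hat h,\hat q)$ via the residues of the meromorphic inverse family $N(\hat{\ubar L}\hat{\ubar w},\lambda)^{-1}$ at $\lambda=-i z$; the $\cE$-polyhomogeneous error $L_{\hat\gamma,\hat k}\hat w-\hat{\ubar L}\hat{\ubar w}$ (Lemma~\ref{LemmaCEAfMap}\eqref{ItCEAfMapLin}) cascades to generate further terms. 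The resulting index set $\hat\cS$ is the smallest index set with $\Re\hat\cS>\alpha_\circ$ that is closed under the composition $\cE+(\text{boundary spectrum of the inverse normal operator})$. A Borel summation then gives a genuine conormal section $(\hat h',\hat q')$ with $L_{\hat\gamma,\hat k}(\hat h',\hat q')-(\hat f,\hat j)\in\CIdot(\hat X;\ul\R\oplus\Tsc^*\hat X)$ vanishing to infinite order at both $\rho_\circ=0$ and $\hat r=\hat R_0$. Second, by part~\eqref{ItCEAfSolv1} this rapidly vanishing residual has a correction in $\hat w\rho_\circ^{\tilde\alpha}\hat\rho_2^{\alpha_2}e^{-\beta/\hat\rho_2}H^\infty_{\bop,0 0}$, which by elliptic regularity (Lemma~\ref{LemmaCEAfbEll}) and Corollary~\ref{CorBAn00Sob} actually lies in $\CIdot$ at $\hat\rho_2=0$ and is conormal of rapid decay at $\rho_\circ=0$, and in particular is polyhomogeneous with any required index set.

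The main obstacle is keeping the Mellin-theoretic bookkeeping consistent: one must verify that the exceptional points $\{0,i\}$ of Lemma~\ref{LemmaCEAfSpecb} translate, via the explicit conjugations in~\eqref{EqCEAfSolvOp}, into a contour that avoids poles of $N(\hat{\ubar L}\hat{\ubar w},\lambda)^{-1}$ in the strip between $\Im\lambda=-\frac{n}{2}$ and the critical weight $\Im\lambda=-(n-2)$ beyond which true cokernel obstructions (ADM mass and center of mass) appear. The restriction $\alpha_\circ<n-2$ is precisely what keeps the target weight strictly below this critical value; relaxing it would require modifying $\cL$ by finitely many obstruction terms, which is unnecessary here since the proposition only asserts solvability (with some decay) and not sharp decay rates.
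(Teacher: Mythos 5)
Your strategy for part~(1) mirrors the paper's: b-00 Douglis--Nirenberg ellipticity of $\cL_{\hat\gamma,\hat k}\cL_{\hat\gamma,\hat k}^*$, a b-normal operator argument at $\rho_\circ=0$, and kernel triviality via unique continuation. However, the Mellin bookkeeping in the middle step contains an error, and as a result you misattribute the role of the hypothesis $\alpha_\circ<n-2$. You claim that the Mellin-transformed normal operator is invertible on the contour $\Im\lambda=-\frac{n}{2}$ ``for all $n\geq 3$'' because the exceptional points of $N(\hat{\ubar w}\hat{\ubar L}^*,\cdot)$ are $\{0,i\}$. But the conjugation by $\rho_\circ^{\pm\tilde\alpha}$ built into $\cL_{\hat\gamma,\hat k}$ shifts the argument of the normal operator family: what one actually needs to invert on $\Im\lambda=-\frac n2$ is $\cN(\alpha_\circ,\lambda)=N(\hat{\ubar L}\hat{\ubar w},\lambda-i\tilde\alpha)\,N(\hat{\ubar w}\hat{\ubar L}^*,\lambda+i\tilde\alpha)$, whose second factor is evaluated at $\Im(\lambda+i\tilde\alpha)=\alpha_\circ-n+2$---this avoids the exceptional imaginary parts $\{0,1\}$ \emph{because} $\alpha_\circ<n-2$, not ``for all $n\geq 3$'' regardless of $\alpha_\circ$. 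The same inequality is needed, and your sketch omits it, in the kernel argument: a kernel element lies in $\rho_\circ^{n/2}H_{\bop,0 0}^\infty$, so undoing the conjugation produces a tensor of class $\cA^{n/2-\tilde\alpha}=\cA^{-(\alpha_\circ-n+2)}$, and Lemma~\ref{LemmaCEAfCoker} requires this weight to be strictly positive, i.e.\ $\alpha_\circ<n-2$ once more. Your framing---that $\alpha_\circ<n-2$ places the recovered solution below the exceptional weights dual to translations and rotations---gestures at the right intuition (the solution is allowed to pick up slowly decaying obstruction terms) but does not identify these two concrete steps where the inequality actually enters the proof.

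For part~(2) you take a genuinely different route. The paper solves $\cL_{\hat\gamma,\hat k}\cL_{\hat\gamma,\hat k}^*(h',q')=(f',j')$ by part~(1) and then bootstraps polyhomogeneity of $(h',q')$ through the b-normal operator, extracting the expansion iteratively by Mellin inversion of the square elliptic system. You instead propose building a formal polyhomogeneous solution at $\rho_\circ=0$ by residue calculus, Borel-summing, and correcting the rapidly vanishing residual by part~(1). This formal-plus-correction scheme is a legitimate alternative and produces the same qualitative index-set conclusion, but be aware that $N(\hat{\ubar L}\hat{\ubar w},\lambda)$ is an underdetermined (non-square) operator, so the expression ``$N(\hat{\ubar L}\hat{\ubar w},\lambda)^{-1}$'' is undefined as written; you would have to use a meromorphic right inverse obtained by composing $N(\hat{\ubar w}\hat{\ubar L}^*,\cdot)$ with $\cN(\alpha_\circ,\cdot)^{-1}$ (with the appropriate $\tilde\alpha$-shifts), whose poles then generate the index set $\hat\cS$. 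With that repair your version of part~(2) goes through; the paper's route avoids the separate Borel summation step by deriving polyhomogeneity directly from the a priori $\Hb^\infty$ solution.
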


\begin{rmk}[Right inverse on b-00-Sobolev spaces]
\label{RmkCEAfSolvRightInverse}
  Part~\eqref{ItCEAfSolv1} is formulated in the form needed later on. For the orientation of the reader, we note here that this produces a right inverse $S_{\hat\gamma,\hat k}$ of $L_{\hat\gamma,\hat k}$ which is bounded as a map
  \begin{align*}
    S_{\hat\gamma,\hat k} &\colon \rho_\circ^{\alpha_\circ+2}\hat\rho_2^{\alpha_2-4}e^{-\beta/\hat\rho_2}H_{\bop,00}^{s-2}(\hat\Omega) \oplus \rho_\circ^{\alpha_\circ+2}\hat\rho_2^{\alpha_2-2}e^{-\beta/\hat\rho_2}H_{\bop,00}^{s-1}(\hat\Omega;\Tsc^*_{\hat\Omega}\hat X) \\
      &\quad \to \rho_\circ^{\alpha_\circ}\hat\rho_2^{\alpha_2}e^{-\beta/\hat\rho_2}H_{\bop,00}^s(\hat\Omega;S^2\,\Tsc^*_{\hat\Omega}\hat X)\oplus\rho_\circ^{\alpha_\circ+1}\hat\rho_2^{\alpha_2}e^{-\beta/\hat\rho_2}H_{\bop,0 0}^s(\hat\Omega;S^2\,\Tsc^*_{\hat\Omega}\hat X);
  \end{align*}
  such a right inverse is given by $S_{\hat\gamma,\hat k} = e^{-\beta/\hat\rho_2}\hat\rho_2^{\alpha_2}\rho_\circ^{\tilde\alpha}\hat w\cL_{\hat\gamma,\hat k}^*(\cL_{\hat\gamma,\hat k}\cL_{\hat\gamma,\hat k}^*)^{-1}\hat w_2 \rho_\circ^{-\tilde\alpha}\hat\rho_2^{-\alpha_2}e^{\beta/\hat\rho_2}$.
\end{rmk}

\begin{proof}[Proof of Proposition~\usref{PropCEAfSolv}]
  Using Lemma~\ref{LemmaCEAfMap}\eqref{ItCEAfMapLin}, and as in~\S\ref{SsB00}, we have
  \begin{align*}
    \cL_{\hat\gamma,\hat k} &\in \begin{pmatrix} (\CI{+}\cA_\phg^\cE)\Diff_{\bop,0 0}^2(\hat\Omega;S^2\,\Tsc^*\hat X,\ul\R) & \cA_\phg^\cE(\hat\Omega;\Hom(S^2\,\Tsc^*\hat X,\ul\R)) \\ \cA_\phg^\cE\Diff_{\bop,0 0}^1(\hat\Omega;S^2\,\Tsc^*\hat X,\Tsc^*\hat X) & (\CI{+}\cA_\phg^\cE)\Diff_{\bop,0 0}^1(\hat\Omega;S^2\,\Tsc^*\hat X,\Tsc^*\hat X) \end{pmatrix}, \\
    \cL_{\hat\gamma,\hat k}^*&=e^{-\beta/\hat\rho_2} \hat\rho_2^{\alpha_2}\rho_\circ^{\tilde\alpha} \hat w L_{\hat\gamma,\hat k}^*\hat w_2\rho_\circ^{-\tilde\alpha}\hat\rho_2^{-\alpha_2}e^{\beta/\hat\rho_2} \\
      &\in \begin{pmatrix} (\CI{+}\cA_\phg^\cE)\Diff_{\bop,0 0}^2 & \cA_\phg^\cE\Diff_{\bop,0 0}^1 \\ \cA_\phg^\cE & (\CI{+}\cA_\phg^\cE)\Diff_{\bop,0 0}^1 \end{pmatrix}.
  \end{align*}
  Away from the boundary $\hat\rho_2^{-1}(0)$, the operator $\cL_{\hat\gamma,\hat k}^*$ has an injective b-principal symbol by Lemma~\ref{LemmaCEAfbEll}; near $\hat\rho_2=0$ on the other hand, the Douglis--Nirenberg 00-principal symbol at a point $(\hat x,\xi_{0 0})\in{}^{0 0}T^*\hat\Omega$ in $\rho>0$ is $w$ times
  \[
    \upsigma(L_{\hat\gamma,\hat k}^*)(\hat x,\hat\rho_2^2\xi_{0 0}+i\beta\,\dd\hat\rho_2)
  \]
  by~\eqref{EqB00SymbConj}, where we use the notation~\eqref{EqCELstarSymb} and the normalization~\eqref{EqCEDNOrders}. By the same arguments as in the proof of Lemma~\ref{LemmaCELstarInj} upon setting $\xi=\hat\rho_2^2\xi_{0 0}$, this is injective, including for $\xi_{0 0}=0$, since $\hat\rho_2^2\xi_{0 0}+i\beta\,\dd\hat\rho_2\neq 0$ for all (real) 00-covectors $\xi_{0 0}$. Therefore, the b-00-operator
  \[
    \cL_{\hat\gamma,\hat k}\cL_{\hat\gamma,\hat k}^* \in \begin{pmatrix} (\CI+\cA_\phg^\cE)\Diff_{\bop,0 0}^4 & \cA_\phg^\cE\Diff_{\bop,0 0}^3 \\ \cA_\phg^\cE\Diff_{\bop,0 0}^3 & (\CI+\cA_\phg^\cE)\Diff_{\bop,0 0}^2 \end{pmatrix}
  \]
  is elliptic in the Douglis--Nirenberg sense (including at $\hat\rho_2=0$ as a 00-operator), the $(i,j)$-entry having differential order $s_j+s_i$ where $s_1=2$, $s_2=1$; see also the comments following~\eqref{EqCELLstar}. Given $s_0,s\in\R$ with $s_0<s$, the elliptic estimates of Lemma~\ref{LemmaBAnEll} and Proposition~\ref{PropBAnEll00} thus give
  \begin{equation}
  \label{EqCEAfSolvEstPre}
  \begin{split}
    &\|(\hat f^*,\hat j^*)\|_{\rho_\circ^{\frac{n}{2}}H_{\bop,0 0}^{s+2}\oplus \rho_\circ^{\frac{n}{2}}H_{\bop,0 0}^{s+1}} \\
    &\qquad \leq C\Bigl( \|\cL_{\hat\gamma,\hat k}\cL_{\hat\gamma,\hat k}^*(\hat f^*,\hat j^*)\|_{\rho_\circ^{\frac{n}{2}}H_{\bop,0 0}^{s-2}\oplus \rho_\circ^{\frac{n}{2}}H_{\bop,0 0}^{s-1}} + \|(\hat f^*,\hat j^*)\|_{\rho_\circ^{\frac{n}{2}}\hat\rho_2^{-1}H_{\bop,0 0}^{s_0+2}\oplus\rho_\circ^{\frac{n}{2}}\hat\rho_2^{-1}H_{\bop,0 0}^{s_0+1}}\Bigr).
  \end{split}
  \end{equation}
  While this estimate is valid for any weight, the significance of $\frac{n}{2}$ is that it translates between b-densities (used in the function spaces) and asymptotically Euclidean densities (used in the definition of the adjoint), in that $\rho^{\frac{n}{2}}\Hb^0(\ol{\R^n})=L^2(\R^n;|\dd\gamma|)$ (with equivalent norms).

  As in the proof of Lemma~\ref{LemmaCEAfCoker}, we can weaken the second, error, term in~\eqref{EqCEAfSolvEstPre} further provided that the Mellin-transformed b-normal operator of $\cL_{\hat\gamma,\hat k}\cL_{\hat\gamma,\hat k}^*$ is invertible for all Mellin-dual parameters $\lambda\in\C$ with $\Im\lambda=-\frac{n}{2}$. But the invertibility of
  \[
    N(\cL_{\hat\gamma,\hat k}\cL_{\hat\gamma,\hat k}^*,\lambda) = N(\cL_{\hat\gamma,\hat k},\lambda) N(\cL_{\hat\gamma,\hat k}^*,\lambda) \colon H^{s+2}\oplus H^{s+1}\to H^{s-2}\oplus H^{s-1}
  \]
  is equivalent to that of its constant multiple
  \begin{equation}
  \label{EqCEAfSolvNormOp}
    \cN(\alpha_\circ,\lambda) := N(\hat{\ubar L}\hat{\ubar w},\lambda-i\tilde\alpha) N(\hat{\ubar w}\hat{\ubar L}^*,\lambda+i\tilde\alpha).
  \end{equation}
  Note now that $N(\hat{\ubar L}\hat{\ubar w},\mu)=N(\hat{\ubar w}\hat{\ubar L}^*,\bar\mu-i n)^*$; this follows from the fact that the adjoint of $\ubar\rho_\circ^{-i\mu}\hat{\ubar L}\hat{\ubar w}\ubar\rho_\circ^{i\mu}$ with respect to a dilation-invariant b-density on $[0,\infty)_{\ubar\rho_\circ}\times\pa\hat X$---which is thus $\ubar\rho_\circ^n$ times the Euclidean density---is given by $\ubar\rho_\circ^{-n}\ubar\rho_\circ^{-i\bar\mu}\hat{\ubar w}\hat{\ubar L}^*\ubar\rho_\circ^{i\bar\mu}\ubar\rho_\circ^n$. For $\mu=\lambda+i\tilde\alpha$ with $\Im\lambda=-\frac{n}{2}$, thus $\bar\lambda=\lambda+i n$, we have $\bar\mu-i n=\lambda-i\tilde\alpha$, and thus we can write
  \[
    \cN(\alpha_\circ,\lambda)= N(\hat{\ubar w}\hat{\ubar L}^*,\lambda+i\tilde\alpha)^*N(\hat{\ubar w}\hat{\ubar L}^*,\lambda+i\tilde\alpha),\qquad \Im\lambda=-\frac{n}{2}.
  \]
  But since $\Im(\lambda+i\tilde\alpha)=\alpha_\circ-n+2<0$, Lemma~\ref{LemmaCEAfSpecb} implies the invertibility of $\cN(\alpha_\circ,\lambda)$ for $\Im\lambda=-\frac{n}{2}$. We may thus improve~\eqref{EqCEAfSolvEstPre} to
  \begin{align*}
    &\|(\hat f^*,\hat j^*)\|_{\rho_\circ^{\frac{n}{2}}H_{\bop,0 0}^{s+2}\oplus \rho_\circ^{\frac{n}{2}}H_{\bop,0 0}^{s+1}} \\
    &\qquad \leq C\Bigl( \|\cL_{\hat\gamma,\hat k}\cL_{\hat\gamma,\hat k}^*(\hat f^*,\hat j^*)\|_{\rho^{\frac{n}{2}}H_{\bop,0 0}^{s-2}\oplus \rho^{\frac{n}{2}}H_{\bop,0 0}^{s-1}} + \|(\hat f^*,\hat j^*)\|_{\rho_\circ^{\frac{n}{2}-\delta}\hat\rho_2^{-1}H_{\bop,0 0}^{s_0+2}\oplus\rho_\circ^{\frac{n}{2}-\delta}\hat\rho_2^{-1}H_{\bop,0 0}^{s_0+1}}\Bigr)
  \end{align*}
  where $0<\delta<\Re\cE$; the error term is now relatively compact.

  We next argue that the error term can be dropped altogether, which follows if we prove that $\cL_{\hat\gamma,\hat k}\cL_{\hat\gamma,\hat k}^*(\hat f^*,\hat j^*)=0$ for $(\hat f^*,\hat j^*)\in\rho_\circ^{\frac{n}{2}}H_{\bop,0 0}^{s+2}\oplus\rho_\circ^{\frac{n}{2}}H_{\bop,0 0}^{s+1}$ implies $(\hat f^*,\hat j^*)=0$. First, elliptic regularity implies that $(\hat f^*,\hat j^*)$ has infinite b-00-regularity and vanishes to infinite order at $\hat\rho_2^{-1}(0)$. But by the comment following~\eqref{EqCEAfSolvNormOp}, we may then integrate by parts and deduce $0=\la\cL_{\hat\gamma,\hat k}\cL_{\hat\gamma,\hat k}^*(\hat f^*,\hat j^*),(\hat f^*,\hat j^*)\ra_{L^2}=\|\cL_{\hat\gamma,\hat k}^*(\hat f^*,\hat j^*)\|_{L^2}^2$; therefore, $L_{\hat\gamma,\hat k}^*(\tilde f^*,\tilde j^*)=0$ for $(\tilde f^*,\tilde j^*):=\hat w_2\rho_\circ^{-\tilde\alpha}\hat\rho_2^{-\alpha_2}e^{\beta/\hat\rho_2}(\hat f^*,\hat j^*)$. But since $(\hat f^*,\hat j^*)\in\cA^{\frac{n}{2}}([0,R_0^{-1})\times\pa\hat X)$ by Corollary~\ref{CorBAnSob}, we have $(\tilde f^*,\tilde j^*)\in\cA^{\frac{n}{2}-\tilde\alpha}([0,R_0^{-1})\times\pa\hat X)=\cA^{-(\alpha_\circ-n+2)}$. Lemma~\ref{LemmaCEAfCoker} gives $(\tilde f^*,\tilde j^*)=0$, and therefore $(\hat f^*,\hat j^*)=0$.

  The dual space of $\rho_\circ^{\frac{n}{2}}H_{\bop,0 0}^s(\hat\Omega)$ \emph{with respect to the $L^2$-inner product with volume density} $|\dd\hat\gamma|$ (which we stress was also used in the definition of $\cL_{\hat\gamma,\hat k}^*$) is $\rho_\circ^{\frac{n}{2}}H_{\bop,0 0}^{-s}(\hat\Omega)$, i.e.\ the weight remains unchanged. Thus, $\cL_{\hat\gamma,\hat k}\cL_{\hat\gamma,\hat k}^*$ has trivial cokernel too, and we deduce the invertibility of~\eqref{EqCEAfSolvInv}. This proves part~\eqref{ItCEAfSolv1}.

  For part~\eqref{ItCEAfSolvPhg}, we claim that $(\hat h,\hat q)=e^{-\beta/\hat\rho_2}\hat\rho_2^{\alpha_2}\rho_\circ^{\tilde\alpha}\hat w\cL_{\hat\gamma,\hat k}^*(h',q')$ is the desired polyhomogeneous solution, where
  \[
    \cL_{\hat\gamma,\hat k}\cL_{\hat\gamma,\hat k}^*(h',q')=(f',j'):=\hat w_2\rho_\circ^{-\tilde\alpha}\hat\rho_2^{-\alpha_2}e^{\beta/\hat\rho_2}(\hat f,\hat j) \in \cA^{\cG'}_\phg(\hat X),\qquad \cG':=\cG+2-\tilde\alpha.
  \]
  Note that $(f',j')$ vanishes near $r\leq R_0$, and note that $\Re\cG'>\frac{n}{2}$. The extension by $0$ of $h',q'$ to $\hat r\leq\hat R_0$ satisfies $(h',q')\in\rho^{\frac{n}{2}}\Hb^\infty(\hat X)\subset\cA^{\frac{n}{2}}(\hat X)$. In view of Lemma~\ref{LemmaCEAfMap}\eqref{ItCEAfMapLin}, it thus suffices to show that $(h',q')$ is polyhomogeneous with index set $\cG'\extcup\cS'$, where $\cS'\subset\C\times\N_0$ with $\Re\cS'>\frac{n}{2}$ depends only on $\hat\gamma$, $\hat k$, and $\alpha_\circ$. This is a standard result for elliptic b-differential operators such as $\cL:=\cL_{\hat\gamma,\hat k}\cL_{\hat\gamma,\hat k}^*$, see e.g.\ \cite[Proposition~5.61]{MelroseAPS} in the case that $\cL$ has smooth (rather than smooth plus decaying polyhomogeneous) coefficients. Briefly, one rewrites $\cL(h',q')=(f',j')$ in terms of its b-normal operator $N(\cL)$ at $\pa\hat X$ as
  \begin{equation}
  \label{EqCEAfSolvPhgN}
    N(\cL)(h',q')=(f',j')-(\cL-N(\cL))(h',q')
  \end{equation}
  and extracts the polyhomogeneous expansion of $(h',q')$ at $\pa\ol{\R^n}$ iteratively, noting that the coefficients of $\cL-N(\cL)$ are of class $\cA_\phg^\cE$ (and thus decay). Indeed, having obtained an expansion up to conormal errors with decay rate $N>\frac{n}{2}$, the Mellin-transform of $(h',q')$ is meromorphic in $\Im\lambda>-N$ (see the end of~\S\ref{SsBFn}). One then passes to the Mellin transform and inverts $N(\cL,\lambda)$, or equivalently the operator $\cN(\alpha_\circ,\lambda)$ in~\eqref{EqCEAfSolvNormOp}; the set of poles of $\cN(\alpha_\circ,\lambda)^{-1}$ is finite in any strip of bounded $\Im\lambda$ (cf.\ \eqref{EqCEAfCokerEst}). This demonstrates that the Mellin-transform of $(h',q')$ is meromorphic, with finitely many poles, in the larger half space $\Im\lambda>-N-\delta$ (where $0<\delta<\Re\cE$), since this is true for the right hand side of~\eqref{EqCEAfSolvPhgN}. This shows the polyhomogeneity of $(h',q')$ modulo conormal errors with the faster rate $N+\delta$. This finishes the proof, except $(\hat h,\hat q)$ vanishes only in $\hat r\leq\hat R_0$; to get the vanishing in a neighborhood of $\hat r\leq\hat R_0$, one applies the above arguments with $\hat R_0$ replaced by $\hat R_0+\eta$ for $0<\eta\ll 1$.
\end{proof}

\subsection{Manifolds with punctures}
\label{SsCEPct}

We now turn to the analysis of the linearized constraints map on a manifold $X_\circ=[X;\{\fp\}]$ arising via the blow-up of a smooth $n$-dimensional manifold $X$ (without boundary) at a point; here $n\geq 3$.

First, however, let $\gamma,k\in\CI(X;S^2 T^*X)$, with $\gamma$ Riemannian; they do not need to satisfy the constraint equations (though in our application they do).

\begin{definition}[No KIDs]
\label{DefCEPctKID}
  (See~\cite{MoncriefLinStabI}.) Denote by $L_{\gamma,k}$ the linearization of the constraints map $P(-,-;\Lambda)$ (see Lemma~\usref{LemmaCELin}) at $(\gamma,k)$, and let $\cU^\circ\subset X$ be a smoothly bounded precompact connected open set containing $\fp$. We then say that $(X,\gamma,k)$ \emph{has no KIDs} (`Killing Initial Data') in $\cU^\circ$ if the kernel of $L_{\gamma,k}^*$ on $\CI(\cU;\ul\R\oplus T^*X)$ is trivial, where $\cU=\ol{\cU^\circ}$.
\end{definition}

Beig--Chru\'sciel--Schoen showed in \cite[Theorem~1.3]{BeigChruscielSchoenKIDs} that the subset of initial data sets without KIDs is generic (open and dense) for a variety of classes of initial data sets.

Write now $\upbeta_\circ\colon X_\circ\to X$ for the blow-down map. Denote by $\hat\rho\in\CI(X_\circ)$ a defining function of $\pa X_\circ$. In geodesic normal coordinates $x=(x^1,\ldots,x^n)$, $|x|<r_0$, around $\fp\in X$, we can locally take $\hat\rho=r:=|x|$ (Euclidean norm). Setting $\omega=\frac{x}{|x|}\in\Sph^{n-1}$, we identify a collar neighborhood of $\pa X_\circ$ with $[0,r_0)_r\times\Sph^{n-1}_\omega$; we furthermore fix an identification $\upbeta_\circ^*T^*X\cong T^*_\fp X$ in this collar neighborhood which is the identity over $\pa X_\circ$. Put
\begin{equation}
\label{EqCEPctWeights}
  w_\circ = \begin{pmatrix} \hat\rho^2 & 0 \\ 0 & \hat\rho \end{pmatrix},\qquad
  \ubar w_\circ = \begin{pmatrix} r^2 & 0 \\ 0 & r \end{pmatrix},
\end{equation}
where $\ubar w_\circ$ is an operator on $[0,\infty)_r\times\Sph^{n-1}_\omega$ acting on pairs of sections of the pullback of $S^2 T_\fp^*X$ along $(r,\omega)\mapsto\fp$.

\begin{lemma}[Linearized constraints map]
\label{LemmaCEPctLin}
  Denote the lift of $L_{\gamma,k}$ to\footnote{This lift is the differential operator on $X_\circ$ given by the original operator $L_{\gamma,k}$ acting on smooth compactly supported sections of $\upbeta_\circ^*(S^2 T^*X\oplus S^2 T^*X)$ over the interior $(X_\circ)^\circ$ of $X_\circ$.} $X_\circ$ by $L_{\circ,\gamma,k}$. Then
  \begin{equation}
  \label{EqCEPctLin}
    L_{\circ,\gamma,k}w_\circ \in \begin{pmatrix} \Diffb^2(X_\circ;\upbeta_\circ^*S^2 T^*X,\ul\R) & \hat\rho\CI(X_\circ;\Hom(\upbeta_\circ^*S^2 T^*X,\ul\R)) \\ \hat\rho\Diffb^1(X_\circ;\upbeta_\circ^*S^2 T^*X,\upbeta_\circ^*T^*X) & \Diffb^1(X_\circ;\upbeta_\circ^*S^2 T^*X,\upbeta_\circ^*T^*X) \end{pmatrix}.
  \end{equation}
  Letting $\fe=\sum_{j=1}^n(\dd x^j)^2$ in geodesic normal coordinates, the normal operator of $L_{\circ,\gamma,k}w_\circ$ at $\pa X_\circ$ is given by $\ubar L_\circ\ubar w_\circ$, where $\ubar L_\circ=\diag(\ubar L_{\circ,1},\ubar L_{\circ,2})$ with $\ubar L_{\circ,1}=\Delta_\fe\tr_\fe+\delta_\fe\delta_\fe$ and $\ubar L_{\circ,2}=\delta_\fe+\dd\tr_\fe$ (cf.\ \eqref{EqCEAfMapLot}).
\end{lemma}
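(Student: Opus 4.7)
\emph{Proof plan.} The strategy parallels Lemma~\ref{LemmaCEAfMap}, with the asymptotic flatness at infinity replaced by the Taylor expansion of $\gamma,k$ at $\fp$. First, I set up the geometry near $\pa X_\circ$. In the geodesic normal coordinates $x$ around $\fp$, and their associated polar coordinates $(r,\omega)$ on $X_\circ$, the coordinate vector fields lift as
\[
  \pa_{x^i} = r^{-1}\cdot(r\pa_{x^i}),\qquad r\pa_{x^i}\in\Vb(X_\circ),
\]
so that any differential operator of order $m$ on $X$ with smooth coefficients in $x$ lifts to an element of $r^{-m}\Diffb^m(X_\circ)$ near $\pa X_\circ$. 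Since $\gamma,k\in\CI(X)$, their components in the frames $\dd x^i\,\dd x^j$ and $\dd x^i$ are smooth on $X_\circ$. Moreover, in geodesic normal coordinates $\gamma_{i j}=\delta_{i j}+\cO(r^2)$, so
\[
  \gamma_{i j}-\delta_{i j},\ \gamma^{i j}-\delta^{i j}\in r^2\CI(X_\circ),\qquad \Gamma(\gamma)^k_{i j}\in r\,\CI(X_\circ),
\]
and $k$-dependent quantities are smooth on $X_\circ$ with values that do not degenerate at $\pa X_\circ$.

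Next, I read off the block structure from the explicit formula~\eqref{EqCELin} combined with the Douglis--Nirenberg orders~\eqref{EqCEDNOrders2}. Every second-order term in the $(1,1)$ block lies in $r^{-2}\Diffb^2$; the zeroth-order coefficient $-\Ric(\gamma)+2 k\circ k-2(\tr_\gamma k)k$ is an element of $\CI(X_\circ;\End)$. The $(1,2)$ block $-2\la k,-\ra+2(\tr_\gamma k)\tr_\gamma$ is a smooth bundle map, so lies in $\CI(X_\circ;\Hom)$. The $(2,1)$ block is first order (involving $\nabla k$ and $k\cdot\nabla(\cdot)$), hence in $r^{-1}\Diffb^1(X_\circ)$, and its coefficients vanish whenever $k$ and $\nabla k$ do. The $(2,2)$ block $\delta_\gamma+\dd\tr_\gamma$ is first order, hence in $r^{-1}\Diffb^1(X_\circ)$. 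Composition with $w_\circ=\diag(r^2,r)$ on the right multiplies the $(i,1)$-column by $r^2$ and the $(i,2)$-column by $r$, yielding
\[
  L_{\circ,\gamma,k}w_\circ \in \begin{pmatrix} \Diffb^2 & r\,\CI \\ r\,\Diffb^1 & \Diffb^1 \end{pmatrix},
\]
which is precisely~\eqref{EqCEPctLin}.

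Finally, I compute the b-normal operator at $\pa X_\circ$, which is obtained by freezing coefficients at $r=0$ in each nonvanishing block. The $(1,2)$ and $(2,1)$ blocks lie in $r\CI$ and $r\Diffb^1$ respectively, so both vanish at $r=0$ and contribute $0$ to the normal operator. In the $(1,1)$ block, the zeroth-order contribution $r^2\cdot(-\Ric(\gamma)+2k\circ k-2(\tr_\gamma k)k)$ vanishes at $r=0$. The remaining principal part $r^2(\Delta_\gamma\tr_\gamma+\delta_\gamma\delta_\gamma)$ depends on $\gamma$ only; since $\gamma-\fe\in r^2\CI(X_\circ)$, the freezing gives $r^2(\Delta_\fe\tr_\fe+\delta_\fe\delta_\fe)=\ubar L_{\circ,1}\cdot r^2$. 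Similarly the $(2,2)$ block $r(\delta_\gamma+\dd\tr_\gamma)$ depends only on $\gamma$, and freezing $\gamma\mapsto\fe$ yields $r(\delta_\fe+\dd\tr_\fe)=\ubar L_{\circ,2}\cdot r$. Assembling these gives $\ubar L_\circ\ubar w_\circ$ as claimed. There is no serious obstacle; the only mild bookkeeping point is verifying that all $k$-dependent terms in the $(2,1)$ column indeed acquire the extra power of $r$ needed to push them into the lower-order part $r\Diffb^1$ after multiplication by $w_\circ^{(1,1)}=r^2$.
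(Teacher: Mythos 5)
Your proof is correct and follows essentially the same route as the paper: lift via $\pa_{x^i}\in\hat\rho^{-1}\Vb(X_\circ)$, read off the block degeneration from the Douglis--Nirenberg orders $\Diff^{t_j+s_i}$ of $L_{\gamma,k}$, right-multiply by $w_\circ$, and compute the normal operator by freezing coefficients at $\fp$ (where only the top-order terms survive). Two small comments: the $r^2$-vanishing of $\gamma-\fe$ from geodesic normal coordinates is not actually needed for the normal operator argument (simple $r$-vanishing, which holds for any coordinates centered at $\fp$, already suffices to freeze $\gamma$ to $\fe$ at $r=0$), and your closing worry about $k$-dependent terms in the $(2,1)$ block is unfounded—the factor of $\hat\rho$ there is automatic, coming purely from $\Diff^1(X)\hra\hat\rho^{-1}\Diffb^1(X_\circ)$ composed with the weight $\hat\rho^2$, with no special vanishing of $k$ or $\nabla k$ required.
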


The weight $w_\circ$ at $r=0$ now encodes the \emph{opposite} relative weighting of the linearized metric and the linearized second fundamental form as compared to the situation at infinity in Lemma~\ref{LemmaCEAfMap}: the latter is one order \emph{more} singular (less decaying) than the former.\footnote{This sign switch arises naturally from the total gluing space perspective, where it is an instance of the typical relationship between small ends of cones ($\pa X_\circ$ from the perspective of $X_\circ$) which emanate from the large end of a cone ($\pa\hat X$ from the perspective of $\hat X$).}

\begin{proof}[Proof of Lemma~\usref{LemmaCEPctLin}]
  Note that $L_{\gamma,k}$ is a smooth coefficient operator on $X$; and since in local coordinates on $X$ near $\fp$, the lift of $r\pa_{x^i}$ to $X_\circ$ is a smooth b-vector field, we have $\pa_{x^i}\in r^{-1}\Vb(X_\circ)$. In view of~\eqref{EqCEDNOrders2}, this gives~\eqref{EqCEPctLin}. Moreover, the only terms contributing to the normal operators of the $(1,1)$ and $(2,2)$ components of $L_{\circ,\gamma,k}w_\circ$ are those with the maximal number of derivatives---so $\Delta_\gamma\tr_\gamma+\delta_\gamma\delta_\gamma$ and $\delta_\gamma+\dd\tr_\gamma$---and we may moreover freeze the metric $\gamma$ at the point $\fp$ (since the difference of $\gamma$ and $\gamma(\fp)$ vanishes at $r=0$). This gives $\ubar L_\circ\ubar w_\circ$, as claimed.
\end{proof}

Note now that the Mellin-transformed normal operator family of $\ubar w_\circ\ubar L_\circ^*$ at $r=0$ satisfies
\[
  N(\ubar w_\circ\ubar L_\circ^*,\lambda) = N(\hat{\ubar w}\hat{\ubar L}^*,-\lambda),
\]
where the right hand side was studied in Lemma~\ref{LemmaCEAfSpecb}. Thus, $N(\ubar w_\circ\ubar L_\circ^*,\lambda)$ is injective for $\lambda\in\C\setminus\{-i,0\}$, and the estimate~\eqref{EqCEAfSpecbEst} holds for the operator $\ubar w_\circ\ubar L_\circ^*$ for $\lambda\in\Lambda\Subset\C\setminus\{-i,0\}$.

\begin{prop}[Solvability of the linearized constraints]
\label{PropCEPctSolv}
  In the notation of Definition~\usref{DefCEPctKID}, suppose that $(X,\gamma,k)$ has no KIDs in $\cU^\circ$. Let $\cU_\circ=\upbeta_\circ^{-1}(\cU)$ (where $\cU=\ol{\cU^\circ}$), and denote by $\rho_2\in\CI(\cU_\circ)$ a defining function of $\pa\cU$; put $w_2:=\diag(\rho_2^4,\rho_2^2)$. Define b-00-Sobolev spaces on $\cU_\circ$ (with b-character at $\pa X_\circ=\hat\rho^{-1}(0)$ and 00-character at $\pa\cU$) using a positive b-00-density on $\cU_\circ$. Let $\hat\alpha,\alpha_2,\beta\in\R$, with $\hat\alpha\neq-n+2,-n+1$.
  \begin{enumerate}
  \item\label{ItCEPctSolv1}{\rm (Basic solvability.)} Put
    \begin{equation}
    \label{EqCEPctSolvOp}
      \cL_{\circ,\gamma,k} := e^{\beta/\rho_2}\rho_2^{-\alpha_2}\hat\rho^{-\tilde\alpha}w_2 L_{\circ,\gamma,k}w_\circ\hat\rho^{\tilde\alpha}\rho_2^{\alpha_2}e^{-\beta/\rho_2},\qquad \tilde\alpha:=\hat\alpha+n-2-\frac{n}{2}.
    \end{equation}
    Define adjoints with respect to the volume density and fiber inner products induced by $\gamma$. Then for all $s\in\R$, the operator $\cL_{\circ,\gamma,k}\cL_{\circ,\gamma,k}^*$ is invertible as a map\footnote{Paralleling Remark~\ref{RmkCEAfSolvRightInverse}, this gives a right inverse of $L_{\circ,\gamma,k}$ which is bounded as a map
    \begin{align*}
      &\hat\rho^{\hat\alpha-2}\rho_2^{\alpha_2-4}e^{-\beta/\rho_2}H_{\bop,0 0}^{s-2}(\cU_\circ) \oplus \hat\rho^{\hat\alpha-2}\rho_2^{\alpha_2-2}e^{-\beta/\rho_2}H_{\bop,0 0}^{s-1}(\cU_\circ;\upbeta_\circ^*T^*_\cU X) \\
      &\quad \to \hat\rho^{\hat\alpha}\rho_2^{\alpha_2}e^{-\beta/\rho_2}H_{\bop,0 0}^s(\cU_\circ;\upbeta_\circ^*S^2 T^*_\cU X) \oplus \hat\rho^{\hat\alpha-1}\rho_2^{\alpha_2}e^{-\beta/\rho_2}H_{\bop,0 0}^s(\cU_\circ;\upbeta_\circ^*S^2 T^*_\cU X).
    \end{align*}}
     \begin{equation}
     \label{EqCEPctSolv1Inv}
     \begin{split}
       \cL_{\circ,\gamma,k}\cL_{\circ,\gamma,k}^* &\colon \hat\rho^{-\frac{n}{2}}H_{\bop,0 0}^{s+2}(\cU_\circ)\oplus\hat\rho^{-\frac{n}{2}}H_{\bop,0 0}^{s+1}(\cU_\circ;\upbeta_\circ^*T^*X) \\
         &\quad\qquad \to \hat\rho^{-\frac{n}{2}}H_{\bop,0 0}^{s-2}(\cU_\circ)\oplus\hat\rho^{-\frac{n}{2}}H_{\bop,0 0}^{s-1}(\cU_\circ;\upbeta_\circ^*T^*X).
     \end{split}
     \end{equation}
  \item\label{ItCEPctSolvPhg}{\rm (Polyhomogeneity.)} There exists an index set $\cS_\circ\subset\C\times\N_0$, only depending on $\hat\alpha$ and satisfying $\Re\cS_\circ>\hat\alpha$ so that the following holds. If $\cG\subset\C\times\N_0$ is an index set with $\Re\cG>\hat\alpha$, and if $(f,j)\in\cA_\phg^{\cG-2}(X_\circ;\ul\R\oplus\upbeta_\circ^*T^*X)$ vanishes in a neighborhood of $\pa\cU$, then there exists
  \[
    (h,q)\in\cA_\phg^{\cG\extcup\cS_\circ}(X_\circ;\upbeta_\circ^*S^2 T^*X)\oplus\cA_\phg^{(\cG\extcup\cS_\circ)-1}(X_\circ;\upbeta_\circ^*S^2 T^*X)
  \]
  with $L_{\circ,\gamma,k}(h,q)=(f,j)$, and so that $(h,q)$ vanishes in a neighborhood of $X_\circ\setminus\cU_\circ$.
  \end{enumerate}
\end{prop}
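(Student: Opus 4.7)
The plan is to adapt the proof of Proposition \ref{PropCEAfSolv} to the punctured setting, keeping the general structure (Douglis--Nirenberg elliptic theory at the level of $\cL_{\circ,\gamma,k}\cL_{\circ,\gamma,k}^*$, normal operator inversion at the puncture, triviality of the kernel) but using the KID hypothesis to handle the cokernel.

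First I would verify that $\cL_{\circ,\gamma,k}$ fits into a b-00 Douglis--Nirenberg framework: by Lemma \ref{LemmaCEPctLin}, the conjugations by $\hat\rho^{\tilde\alpha}$, $\rho_2^{\alpha_2}$, and $e^{-\beta/\rho_2}$ only produce smooth lower-order corrections (the b-conjugations preserve $\Diffb$, and the exponential/polynomial conjugations at $\pa\cU$ preserve $\Diff_{0 0}$), so $\cL_{\circ,\gamma,k}$ lies in a Douglis--Nirenberg b-00 algebra with order pattern $s_1=2,\ s_2=1$. The injectivity of the b-principal symbol of $\cL_{\circ,\gamma,k}^*$ away from $\pa\cU$ follows exactly as in Lemma \ref{LemmaCEAfbEll} from Lemma \ref{LemmaCELstarInj}. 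The injectivity of the boundary 00-principal symbol at $\pa\cU$ follows from \eqref{EqB00SymbConj} with $\beta\neq 0$, precisely as in the asymptotically flat case. Hence $\cL_{\circ,\gamma,k}\cL_{\circ,\gamma,k}^*$ is elliptic as a b-00 operator, and Lemma \ref{LemmaBAnEll} combined with Proposition \ref{PropBAnEll00} yields a semi-Fredholm estimate with a b-type error term localized near $\pa X_\circ$.

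Next I would improve the error term by the Mellin normal operator argument at $\pa X_\circ$, exactly as in the proof of Proposition \ref{PropCEAfSolv}. Up to a constant factor, the Mellin-transformed normal operator of $\cL_{\circ,\gamma,k}\cL_{\circ,\gamma,k}^*$ is
\[
  \cN(\hat\alpha,\lambda) := N(\ubar L_\circ\ubar w_\circ,\lambda+i\tilde\alpha)\,N(\ubar w_\circ\ubar L_\circ^*,\lambda-i\tilde\alpha).
\]
The formal self-adjointness identity at $\pa X_\circ$ (with b-densities) reads $N(\ubar L_\circ\ubar w_\circ,\mu)=N(\ubar w_\circ\ubar L_\circ^*,\bar\mu+in)^*$, so for $\Im\lambda=-\tfrac{n}{2}$ we obtain $\cN(\hat\alpha,\lambda)=N(\ubar w_\circ\ubar L_\circ^*,\lambda-i\tilde\alpha)^*N(\ubar w_\circ\ubar L_\circ^*,\lambda-i\tilde\alpha)$. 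Using $N(\ubar w_\circ\ubar L_\circ^*,\mu)=N(\hat{\ubar w}\hat{\ubar L}^*,-\mu)$ and Lemma \ref{LemmaCEAfSpecb}, this is invertible as long as $-\lambda+i\tilde\alpha\notin\{0,i\}$ on the contour $\Im\lambda=-\tfrac{n}{2}$, i.e.\ precisely when $\hat\alpha\neq -n+2,-n+1$.

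The main obstacle, and the place where the present setting departs substantively from \S\ref{SsCEAf}, is showing triviality of $\ker\cL_{\circ,\gamma,k}\cL_{\circ,\gamma,k}^*$: here we must genuinely invoke the KID hypothesis. Given $(f^*,j^*)$ in the kernel, elliptic regularity and the weight $e^{\beta/\rho_2}$ (as in Corollary \ref{CorBAn00Sob}) give that it vanishes to infinite order at $\pa\cU$, so integration by parts yields $\cL_{\circ,\gamma,k}^*(f^*,j^*)=0$, equivalently $L_{\gamma,k}^*(\tilde f^*,\tilde j^*)=0$ on $\cU\setminus\{\fp\}$ for $(\tilde f^*,\tilde j^*):=w_2\hat\rho^{-\tilde\alpha}\rho_2^{-\alpha_2}e^{\beta/\rho_2}(f^*,j^*)$, which vanishes near $\pa\cU$. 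To invoke Definition \ref{DefCEPctKID}, one must show $(\tilde f^*,\tilde j^*)$ extends smoothly across $\fp$. For this I would run the b-normal operator/Mellin expansion argument at $\pa X_\circ$, as in the polyhomogeneity part of the proof of Proposition \ref{PropCEAfSolv}: the poles of $\cN(\hat\alpha,\lambda)^{-1}$ lie at a discrete set, and iteratively peeling off terms produces a polyhomogeneous expansion for $(\tilde f^*,\tilde j^*)$ whose leading behavior is controlled by the kernel elements identified in Lemma \ref{LemmaCEAfSpecb} (which are smooth polynomials $1,x^i$ and the Killing 1-forms). Combined with the prolongation ODE of Lemma \ref{LemmaCEProlong} applied along radial geodesics emanating from $\fp$, this forces $(\tilde f^*,\tilde j^*)$ to extend to an element of $\CI(\cU;\ul\R\oplus T^*X)$; the KID-free hypothesis then gives $(\tilde f^*,\tilde j^*)=0$, hence $(f^*,j^*)=0$. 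Self-adjointness of $\cL_{\circ,\gamma,k}\cL_{\circ,\gamma,k}^*$ with respect to the $L^2$-inner product relative to $|\dd\gamma|$ (where $\hat\rho^{-n/2}H^s_\bop$ is self-dual) yields vanishing of the cokernel as well, completing the proof of invertibility in \eqref{EqCEPctSolv1Inv}.

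Finally, for part \eqref{ItCEPctSolvPhg}, one defines $(h,q):=e^{-\beta/\rho_2}\rho_2^{\alpha_2}\hat\rho^{\tilde\alpha}w_\circ\cL_{\circ,\gamma,k}^*(h',q')$ where $\cL_{\circ,\gamma,k}\cL_{\circ,\gamma,k}^*(h',q')$ equals the appropriate rescaling of $(f,j)$, obtaining $(h',q')\in\hat\rho^{-n/2}H_\bop^\infty(X_\circ)$ (vanishing near $\pa\cU$) from part \eqref{ItCEPctSolv1} with $s=\infty$. Polyhomogeneity of $(h',q')$ with index set $\cG\extcup\cS_\circ'$ follows by the standard b-normal operator iteration as at the end of the proof of Proposition \ref{PropCEAfSolv}: rewriting $N(\cL)(h',q')=(f',j')-(\cL-N(\cL))(h',q')$ and inverting the Mellin normal operator strip by strip, with $\cS_\circ'$ determined by the poles of $\cN(\hat\alpha,\lambda)^{-1}$. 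Translating $(h',q')$ back through the weights produces the claimed $(h,q)$ with the weight shift from $w_\circ$ accounting for the different index sets for the two components.
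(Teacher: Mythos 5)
Your overall architecture matches the paper's proof: b-00 Douglis--Nirenberg ellipticity, Mellin normal operator argument at $\pa X_\circ$ to improve the decay of the error term, and the KID hypothesis to kill the cokernel after showing any kernel element of $\cL_{\circ,\gamma,k}\cL_{\circ,\gamma,k}^*$ pushes down to a smooth solution of $L_{\gamma,k}^*=0$ on $\cU$. However, your Mellin computation has sign errors that matter: the contour associated to the weight $\hat\rho^{-n/2}$ is $\Im\lambda=+\tfrac{n}{2}$ (not $-\tfrac{n}{2}$), and the conjugation $\cL^*_{\circ,\gamma,k}=\hat\rho^{\tilde\alpha}(\cdots)\hat\rho^{-\tilde\alpha}$ shifts the $^*$-factor to $N(\ubar w_\circ\ubar L_\circ^*,\lambda+i\tilde\alpha)$, not $\lambda-i\tilde\alpha$. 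Carried through consistently, your version gives the condition $\hat\alpha+n-2\notin\{0,1\}$, i.e.\ $\hat\alpha\neq -n+2,-n+3$, which is not what the statement asserts; the paper's signs (with contour $\Im\lambda=\tfrac{n}{2}$) give $\Im(\lambda+i\tilde\alpha)=\hat\alpha+n-2\notin\{0,-1\}$, i.e.\ $\hat\alpha\neq -n+2,-n+1$, matching the hypothesis. You should also note that the density-conversion exponent flips sign relative to the AF case (the b-density is $\hat\rho^{-n}$ times the Euclidean one near the puncture, versus $\rho_\circ^n$ near infinity), which is consistent with your $\bar\mu+in$ identity, but this does not rescue the contour sign.

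The more substantive gap is in the kernel-triviality step. After reducing to $(\tilde f^*,\tilde j^*)$ with $L_{\circ,\gamma,k}^*(\tilde f^*,\tilde j^*)=0$ on $\cU\setminus\{\fp\}$ and conormal behavior at $\fp$, you argue that the polyhomogeneous expansion has leading terms given by the $\lambda=0,-i$ kernel elements of the Mellin normal operator (constants, linears, Killing $1$-forms) and that the prolongation ODE along radial geodesics then forces smooth extension across $\fp$. This does not close: the radial ODE propagates a boundary value for each direction $\omega$, but provides no compatibility condition forcing the different directional limits to agree (e.g.\ $\tilde f^*(0^+,\omega)$ to be $\omega$-independent, $\nabla\tilde f^*(0^+,\omega)$ to be the pullback of a single covector at $\fp$, etc.), and the smoothness of the \emph{leading} polyhomogeneous terms says nothing about possible logarithmic or non-integer subleading terms. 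The paper's mechanism is different and is what makes the argument go: the normal operator argument upgrades the conormal order to $\cA^{-\eps}$ for every $\eps>0$, giving $(f^\sharp,j^\sharp)\in H^{n/2-\eps}(\cU)$; hence $L_{\gamma,k}^*(f^\sharp,j^\sharp)\in H^{n/2-2-\eps}(\cU)$ is a distribution supported at $\{\fp\}$, and since $\tfrac{n}{2}-2>-\tfrac{n}{2}$ for $n\geq 3$, no nonzero such distribution exists (derivatives of $\delta_\fp$ have Sobolev regularity at most $-\tfrac{n}{2}-$). This forces $L_{\gamma,k}^*(f^\sharp,j^\sharp)=0$ distributionally across $\fp$, after which smoothness and the KID hypothesis finish the argument. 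You should replace your radial-ODE heuristic with this regularity argument.
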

\begin{proof}
  The proof of part~\eqref{ItCEPctSolv1} is very close to that of Proposition~\ref{PropCEAfSolv}\eqref{ItCEAfSolv1}, except for the analysis of the cokernel. The operator $\cL_{\circ,\gamma,k}$ is a smooth coefficient b-00-differential operator on $\cU_\circ$, and the principal symbol of $\cL_{\circ,\gamma,k}^*$ is injective. For the Douglis--Nirenberg-elliptic operator $\cL_{\circ,\gamma,k}^*\cL_{\circ,\gamma,k}$, we thus have the estimate~\eqref{EqCEAfSolvEstPre} where we replace $\rho_\circ$ and $\frac{n}{2}$ by $\hat\rho$ and $-\frac{n}{2}$, respectively. Improving the error term requires the invertibility of the Mellin-transformed normal operator on the line $\Im\lambda=\frac{n}{2}$, i.e.\ the invertibility of $N(\ubar w_\circ\ubar L_\circ^*,\lambda+i\tilde\alpha)^*N(\ubar w_\circ\ubar L_\circ^*,\lambda+i\tilde\alpha)$, which holds because of $\Im(\lambda+i\tilde\alpha)=\hat\alpha+n-2\notin\{0,-1\}$. This gives the estimate
  \begin{equation}
  \label{EqCEPctSolvOpEst}
  \begin{split}
    \|(f^*,j^*)\|_{\hat\rho^{-\frac{n}{2}}H_{\bop,0 0}^{s+2}\oplus \hat\rho^{-\frac{n}{2}}H_{\bop,0 0}^{s+1}} &\leq C\Bigl(\|\cL_{\circ,\gamma,k}\cL_{\circ,\gamma,k}^*(f^*,j^*)\|_{\hat\rho^{-\frac{n}{2}}H_{\bop,0 0}^{s-2}\oplus \hat\rho^{-\frac{n}{2}}H_{\bop,0 0}^{s-1}} \\
      &\quad\qquad + \|(f^*,j^*)\|_{\hat\rho^{-\frac{n}{2}-1}\rho_2^{-1}H_{\bop,0 0}^{s_0+2}\oplus\hat\rho^{-\frac{n}{2}-1}\rho_2^{-1}H_{\bop,0 0}^{s_0+1}}\Bigr).
  \end{split}
  \end{equation}
  Once we prove the triviality of the kernel of $\cL_{\circ,\gamma,k}^*$, one can drop the relatively compact error term on the right. But if $\cL_{\circ,\gamma,k}^*(f^*,j^*)=0$, then $(f^*,j^*)$ enjoys infinite b-00-regularity and infinite decay at $\rho_2^{-1}(0)$, and in particular it is smooth in $(\cU_\circ)^\circ\setminus\pa X_\circ$ and conormal at $\pa X_\circ$. Moreover, $(\tilde f^*,\tilde j^*):=w_2\hat\rho^{-\tilde\alpha}\rho_2^{-\alpha_2}e^{\beta/\rho_2}(f^*,j^*)$ satisfies $L_{\circ,\gamma,k}^*(\tilde f^*,\tilde j^*)=0$. By Lemma~\ref{LemmaCEProlong}, $(\tilde f^*,\tilde j^*)$ is necessarily smooth down to $\pa\cU$. Near $\pa X_\circ$ on the other hand, where $(\tilde f^*,\tilde j^*)\in\cA^{-\frac{n}{2}-\tilde\alpha}=\cA^{-(\hat\alpha+n-2)}$ is polyhomogeneous (as follows from the same argument, \emph{mutatis mutandis}, that we already used in the proof of Proposition~\ref{PropCEAfSolv}\eqref{ItCEAfSolvPhg}), we write
  \[
    \ubar L_\circ^*(\tilde f^*,\tilde j^*) = -(L_{\circ,\gamma,k}^*-\ubar L_\circ^*)(\tilde f^*,\tilde j^*)
  \]
  and use the injectivity of $N(\ubar w_\circ\ubar L_\circ^*,\lambda)$ for $\lambda\neq -i,0$ to conclude that\footnote{If $-(\hat\alpha+n-2)>1$, we could even conclude that $(\tilde f^*,\tilde j^*)$ vanishes to infinite order.} $(\tilde f^*,\tilde j^*)\in\cA^{-\eps}$ for all $\eps>0$. Thus, $(\tilde f^*,\tilde j^*)$ is the pullback of a distribution $(f^\sharp,j^\sharp)$ on $\cU$ which is smooth away from $\fp$ and satisfies $|D_x^\alpha(f^\sharp,j^\sharp)|\lesssim|x|^{-\eps-|\alpha|}$ near $\fp$ for all $\alpha$. Therefore, $(f^\sharp,j^\sharp)\in H^{\frac{n}{2}-\eps}(\cU)$. But $L_{\gamma,k}^*(f^\sharp,j^\sharp)\in H^{\frac{n}{2}-2-\eps}(\cU)$ vanishes on $\cU\setminus\{\fp\}$ and is therefore a distribution with support in $\{\fp\}$; since $\frac{n}{2}-2>-\frac{n}{2}$, this forces $L_{\gamma,k}^*(f^\sharp,j^\sharp)=0$. Again applying Lemma~\ref{LemmaCEProlong}, this implies that $(f^\sharp,j^\sharp)$ is smooth on $\cU$. Using the absence of KIDs, this finally implies $(f^\sharp,j^\sharp)=0$ and thus $(f^*,j^*)=0$.

  We have now shown the invertibility of~\eqref{EqCEPctSolv1Inv}, completing the proof of part~\eqref{ItCEPctSolv1}. As already remarked above, the proof of part~\eqref{ItCEPctSolvPhg} is the same as that of Proposition~\ref{PropCEAfSolv}\eqref{ItCEAfSolvPhg}.
\end{proof}

\begin{rmk}[Explicit index set of $(h,q)$]
\label{RmkCEPctIndex}
  An alternative argument for Proposition~\ref{PropCEPctSolv}\eqref{ItCEPctSolvPhg} proceeds as follows. One first solves the equation $L_{\circ,\gamma,k}(h,q)=(f,j)$ formally, i.e.\ modulo smooth errors vanishing to infinite order at $\pa X_\circ$, using the fact that $N(\ubar w_\circ\ubar L_\circ,\lambda)$ is surjective for all $\lambda$ except for $\lambda=i(n-1),i n$. (The presence of a cokernel for $\lambda=i(n-1),i n$ necessitates the introduction of logarithmic terms.) To solve away the remaining error, which is the lift of a smooth tensor on $\cU$, one can blow down $\pa X_\circ$, i.e.\ work on $\cU$; the resulting correction to the formal solution is smooth on $\cU$, and thus lifts to be polyhomogeneous with index set $\N_0$ on $X_\circ$. We leave the details to the interested reader.
\end{rmk}

\subsection{Constraints map on the total gluing space}
\label{SsCETot}

We use the notation of~\S\ref{SG}; near $\fp\in X$, we work with geodesic normal coordinates $x=(x^1,\ldots,x^n)$, and we recall the local coordinate charts $(\eps,\hat x)=(\eps,\frac{x}{\eps})$ near $\hat X^\circ$ and $(\hat\rho,\rho_\circ,\omega)=(|x|,\frac{\eps}{|x|},\frac{x}{|x|})$ near $\hat X\cap X_\circ$. We first note:

\begin{lemma}[Scaling]
\label{LemmaCETotScale}
  For $\lambda>0$, we have $P(\lambda^2\gamma,\lambda k;\lambda^{-2}\Lambda)=\diag(\lambda^{-2},\lambda^{-1}) P(\gamma,k;\Lambda)$.
\end{lemma}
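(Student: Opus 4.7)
The plan is to track how each geometric ingredient appearing in the two components of $P$ rescales under $(\gamma,k) \mapsto (\lambda^2\gamma, \lambda k)$ with $\lambda>0$ constant, and then read off the overall powers of $\lambda$ in $P_1$ and $P_2$ separately.

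The crucial input, which I would record first, is that the Levi-Civita connection is invariant under a constant conformal rescaling: $\Gamma^k_{ij}(\lambda^2\gamma) = \Gamma^k_{ij}(\gamma)$, since the factor $\lambda^2$ cancels between the inverse metric $\gamma^{kl}$ and the derivatives of the metric coefficients. Consequently the $(0,4)$-Riemann and $(0,2)$-Ricci tensors are unchanged, and any factor of $\lambda$ arises solely from raising indices via $(\lambda^2\gamma)^{-1} = \lambda^{-2}\gamma^{-1}$.

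From these rules I would assemble $P_1$: one contraction gives $R_{\lambda^2\gamma} = \lambda^{-2} R_\gamma$; two contractions against $\lambda k\otimes\lambda k$ give $|\lambda k|_{\lambda^2\gamma}^2 = \lambda^{-2}|k|_\gamma^2$; and a single contraction gives $\tr_{\lambda^2\gamma}(\lambda k) = \lambda^{-1}\tr_\gamma k$, whose square contributes $\lambda^{-2}(\tr_\gamma k)^2$. With the cosmological constant replaced by $\lambda^{-2}\Lambda$, every term of $P_1$ carries the uniform factor $\lambda^{-2}$, yielding $P_1(\lambda^2\gamma,\lambda k;\lambda^{-2}\Lambda) = \lambda^{-2}P_1(\gamma,k;\Lambda)$. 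For $P_2$ I would next compute that $\delta_{\lambda^2\gamma}(\lambda k) = \lambda^{-1}\delta_\gamma k$ (the inverse metric contributes $\lambda^{-2}$, the tensor $\lambda^{+1}$, and the connection is invariant), and $d(\tr_{\lambda^2\gamma}(\lambda k)) = \lambda^{-1}d(\tr_\gamma k)$ directly, so $P_2(\lambda^2\gamma,\lambda k) = \lambda^{-1}P_2(\gamma,k)$. Combining the two identities produces the diagonal factor $\diag(\lambda^{-2},\lambda^{-1})$ claimed.

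There is no substantive analytic obstacle: the statement is a dimensional identity. The only point that warrants a moment of care is bookkeeping the number of inverse-metric contractions in each term, particularly in $|k|_\gamma^2 = \gamma^{i i'}\gamma^{j j'}k_{i j}k_{i' j'}$ versus $(\tr_\gamma k)^2 = (\gamma^{i j}k_{i j})^2$, to verify that despite their different structural forms both transform with the same weight $\lambda^{-2}$, which is precisely what matches the rescaling of $R_\gamma$ and $\Lambda$.
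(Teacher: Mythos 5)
Your proof is correct and follows essentially the same route as the paper: both arguments reduce to the observation that the Levi-Civita connection (hence the Ricci tensor) is invariant under constant conformal rescaling, and then count powers of $\lambda^{-2}$ from inverse-metric contractions term by term in $P_1$ and $P_2$. Your version is slightly more explicit in making the invariance of $\Gamma^k_{ij}$ the named starting point, but the substance is identical.
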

\begin{proof}
  Since $\Ric(\lambda^2\gamma)=\Ric(\gamma)$, we have $R_{\lambda^2\gamma}=\lambda^{-2}R_\gamma$. Replacing in $|k|_\gamma^2=\gamma^{i i'}\gamma^{j j'}k_{i j}k_{i' j'}$ the term $\gamma^{i i'}$ by $\lambda^{-2}\gamma^{i i'}$ and $k_{i j}$ by $\lambda k_{i j}$, one similarly picks up an overall factor of $\lambda^{-2}$. For the second component of $P(\lambda^2\gamma,\lambda k;\lambda^{-2}\Lambda)$, we note that $\dd(\tr_{\lambda^2\gamma}\lambda k)=\lambda^{-1}\,\dd\tr_\gamma k$, similarly $\delta_{\lambda^2\gamma}(\lambda k)=\lambda^{-1}\delta_\gamma k$.
\end{proof}

In a collar neighborhood of $\hat X$ and recalling the scaling map $\sfs$ from Definition~\ref{DefGTotScale}, we deduce, for $\lambda=\eps^{-1}$, that
\begin{equation}
\label{EqCETotsfs}
  \eps^2 P(\wt\gamma,\wt k;\Lambda) = \sfs\bigl( P (\sfs^{-1}\wt\gamma,\sfs^{-1}(\eps\wt k); \eps^2\Lambda \bigr);
\end{equation}
note here that the right hand side is $\diag(1,\eps)P(\eps^{-2}\wt\gamma,\eps^{-1}\wt k;\eps^2\Lambda)$. Conversely, for $\lambda=\eps$, we find that $P(\hat\gamma,\hat k;0)=0$ implies $P(\sfs\hat\gamma,\eps^{-1}\sfs\hat k;0)=0$; notice the $\eps^{-1}$ scaling of the second fundamental form.

\begin{definition}[Total families]
\label{DefCETotID}
  Let $\hat\cE,\cE\subset\C\times\N_0$ be nonlinearly closed index sets with $\Re\hat\cE>0$ and $\Re\cE>0$. Let $\hat K\Subset\hat X^\circ$ be a compact (possibly empty) subset, smoothly bounded and with connected complement, and set $\wt K:=\{(\eps,\hat x)\colon\hat x\in\hat K\}\subset\wt X$. Then a pair $(\wt\gamma,\wt k)$ of sections of $S^2\wt T^*\wt X\to\wt X\setminus\wt K^\circ$ is called an \emph{$(\hat\cE,\cE)$-smooth total family} if
  \begin{align*}
    \wt\gamma&=\wt\upbeta^*\gamma+\wt\gamma^{(1)}, \\
    \wt k&=\wt\upbeta^*k+\wt k^{(1)},
  \end{align*}
  where, with the index sets referring to $\hat X$ and $X_\circ$ (in this order),
  \begin{alignat*}{2}
    \gamma&\in\CI(X;S^2 T^*X),&\qquad
    \wt\gamma^{(1)}&\in\cA_\phg^{\N_0\cup\hat\cE,\cE}(\wt X\setminus\wt K^\circ;S^2\wt T^*\wt X), \\
    k&\in\CI(X;S^2 T^*X),&\qquad
    \wt k^{(1)}&\in\cA_\phg^{(\N_0\cup\hat\cE)-1,\cE}(\wt X\setminus\wt K^\circ;S^2\wt T^*\wt X),
  \end{alignat*}
  with $\gamma,k$ regarded as $\eps$-independent elements of $\CI(\wt X';S^2\wt T^*\wt X')$, and where $\wt\gamma|_{\hat X}$ and $\gamma$ (and thus $\wt\gamma|_{X^\circ}=\upbeta_\circ^*\gamma$) are positive definite. The \emph{boundary data} of $(\wt\gamma,\wt k)$ are $(\hat\gamma,\hat k)$ and $(\gamma,k)$, where
  \begin{equation}
  \label{EqCETotRestr}
    (\hat\gamma,\hat k) := \bigl(\hat\sfs^{-1}(\wt\gamma|_{\hat X}),\hat\sfs^{-1}(\eps\wt k)|_{\hat X}\bigr), \qquad
    \upbeta_\circ^*(\gamma,k) = (\wt\gamma,\wt k)|_{X_\circ}.
  \end{equation}
\end{definition}

See Figure~\ref{FigCETotID}.

\begin{figure}[!ht]
\centering
\includegraphics{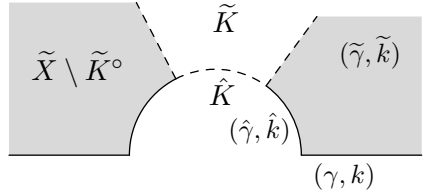}
\caption{Illustration of a total family $(\wt\gamma,\wt k)$ on $\wt X\setminus\wt K^\circ$, with its boundary data $(\gamma,k)$ and $(\hat\gamma,\hat k)$ on $\hat X\setminus\hat K^\circ$.}
\label{FigCETotID}
\end{figure}

Recalling from Lemma~\ref{LemmaGTot}\eqref{ItGTothatX} that $\hat X^\circ$ is a vector space, denote by
\[
  \hat\fe\in\CI(\hat X;S^2\,\Tsc^*\hat X)
\]
the constant (i.e.\ translation-invariant) metric given by $\hat\sfs^{-1}(\wt\upbeta^*\wt\gamma_{(0)}(\fp))$ where $\hat\sfs$ is given in Definition~\ref{DefGTotScale}. Since $\wt\gamma_{(0)}(\fp)=\sum_{j=1}^n(\dd x^j)^2$, we have $\hat\fe=\sum_{j=1}^n(\dd\hat x^j)^2$, so $(\hat X^\circ,\hat\fe)$ is isometric to $\R^n$ with the Euclidean metric. The boundary data of $(\wt\gamma,\wt k)$ at $\hat X$ then satisfy
\[
  (\hat\gamma-\hat\fe,\hat k) \in \cA_\phg^\cE(\hat X\setminus\hat K^\circ;S^2\,\Tsc^*\hat X) \oplus \cA_\phg^{\cE+1}(\hat X\setminus\hat K^\circ;S^2\,\Tsc^*\hat X).
\]
 Thus, $(\hat\gamma,\hat k)$ is $\cE$-asymptotically flat in the sense of Definition~\ref{DefCEAf}. The pair $(\gamma,k)$ on $X$ immediately fits into the setup of~\S\ref{SsCEPct}.

 The goal of this paper is to construct a total family $(\wt\gamma,\wt k)$, with prescribed boundary data $(\hat\gamma,\hat k)$ and $(\gamma,k)$, which satisfies the constraint equations\footnote{The tensor $P(\wt\gamma,\wt k;\Lambda)$ on $\wt X\setminus\wt K^\circ$ is equal to $P(\wt\gamma|_{\wt X_\eps\setminus\wt K^\circ},\wt k|_{\wt X_\eps\setminus\wt K^\circ};\Lambda)$ on $\wt X_\eps\setminus\wt K^\circ$.} $P(\wt\gamma,\wt k;\Lambda)=0$ for small $\eps>0$. Note here that $\wt\gamma$ is automatically positive definite for all sufficiently small $\eps>0$; in the sequel, when we write $P(\wt\gamma,\wt k;\Lambda)$, we shall implicitly always restrict to the parameter range $\eps\leq\eps_0$ where $\eps_0>0$ is so small that $\wt\gamma$ is Riemannian. Our first task is to analyze the constraints map on total families, and the structure and normal operators of its linearization at a total family.

\begin{definition}[Total weight]
\label{DefCETotWeight}
  With $\hat\rho\in\CI(\wt X)$ denoting a defining function of $\hat X$, we set
  \[
    \wt w := \begin{pmatrix} \hat\rho^2 & 0 \\ 0 & \hat\rho \end{pmatrix} \in \CI\bigl(\wt X^\circ;\End(S^2\wt T^*\wt X\oplus S^2\wt T^*\wt X)\bigr).
  \]
\end{definition}

This matches $w_\circ$ in~\eqref{EqCEPctWeights} at $X_\circ$. Since $\hat\rho=\eps\rho_\circ^{-1}$ for a defining function $\rho_\circ\in\CI(\wt X)$ of $X_\circ$, we have $\wt w=\diag(\eps^2\rho_\circ^{-2},\eps\rho_\circ^{-1})=\diag(\eps^2,\eps)\hat w$, which at $\hat X$ thus combines the weight in Definition~\ref{DefCEAfWeight} with the scaling from Lemma~\ref{LemmaCETotScale} (with $\lambda=\eps^{-1}$).

\begin{prop}[Constraints map on the total gluing space]
\label{PropCETot}
  Let $(\wt\gamma,\wt k)$ be a $(\hat\cE,\cE)$-smooth total family, and define $(\hat\gamma,\hat k)$ and $(\gamma,k)$ by~\eqref{EqCETotRestr}.
  \begin{enumerate}
  \item\label{ItCETotPhg}{\rm (Nonlinear constraints.)} We have $P(\wt\gamma,\wt k;\Lambda)=\wt\upbeta^*p + p^{(1)}$, where, in the notation of Definition~\usref{DefCETotID},
    \[
       p:=P(\gamma,k;\Lambda)\in\CI(X;\ul\R\oplus T^*X),\qquad
       p^{(1)}\in\cA_\phg^{(\N_0\cup\hat\cE)-2,\cE}(\wt X\setminus\wt K^\circ;\ul\R\oplus\wt T^*\wt X).
     \]
     Moreover,
     \begin{equation}
     \label{EqCETotPhgLot}
       (\eps^2 P(\wt\gamma,\wt k;\Lambda))|_{\hat X} = (\eps^2 p^{(1)})|_{\hat X} = \hat\sfs\bigl(P(\hat\gamma,\hat k;0)\bigr).
     \end{equation}
  \item\label{ItCETotLin}{\rm (Linearization as a q-differential operator.)} The linearization $L_{\wt\gamma,\wt k}$ of $P(-,-;\Lambda)$ at $(\wt\gamma,\wt k)$ satisfies
  \begin{equation}
  \label{EqCETotLin}
  \begin{split}
    L_{\wt\gamma,\wt k}\wt w &\in(\wt\upbeta^*\CI(X){+}\cA_\phg^{\N_0\cup\hat\cE,\cE}(\wt X\setminus\wt K^\circ)) \\
    &\qquad \times\begin{pmatrix}
      \Diffq^2(\wt X\setminus\wt K^\circ;S^2\wt T^*\wt X,\ul\R) & \CI(\wt X\setminus\wt K^\circ;\Hom(S^2\wt T^*\wt X,\ul\R)) \\
      \Diffq^1(\wt X\setminus\wt K^\circ;S^2\wt T^*\wt X,\wt T^*\wt X) & \Diffq^1(\wt X\setminus\wt K^\circ;S^2\wt T^*\wt X,\wt T^*\wt X)
    \end{pmatrix}.
  \end{split}
  \end{equation}
  Using the notation of~\S\usref{SsGq} and Lemma~\usref{LemmaCEPctLin}, we have
  \begin{equation}
  \label{EqCETotLinNorm}
    \hat\sfs^{-1}\circ N_{\hat X}(L_{\wt\gamma,\wt k}\wt w)\circ\hat\sfs=L_{\hat\gamma,\hat k}\hat w,\qquad
    N_{X_\circ}(L_{\wt\gamma,\wt k}\wt w)=L_{\circ,\gamma,k}w_\circ.
  \end{equation}
  Moreover, omitting the bundles and differential orders,
  \begin{equation}
  \label{EqCETotLinNormDiff}
  \begin{split}
    L_{\wt\gamma,\wt k}\wt w - \hat\chi\sfs\circ L_{\hat\gamma,\hat k}\hat w\circ\sfs^{-1} &\in \cA_\phg^{(\N_0+1)\cup\hat\cE,\N_0\cup\cE}\Diffq(\wt X\setminus\wt K^\circ), \\
    L_{\wt\gamma,\wt k}\wt w-\chi_\circ L_{\circ,\gamma,k}w_\circ &\in \cA_\phg^{\N_0\cup\hat\cE,\cE}\Diffq(\wt X\setminus\wt K^\circ).
  \end{split}
  \end{equation}
  Here, $\hat\chi L_{\hat\gamma,\hat k}$ and $\chi_\circ L_{\circ,\gamma,k}$ are regarded as differential operators on $\wt X$ via Lemma~\usref{LemmaGTotRel}.
  \end{enumerate}
\end{prop}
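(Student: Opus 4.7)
For part~\eqref{ItCETotPhg}, the polyhomogeneity of $p^{(1)}$ reduces to bookkeeping of index sets. The two components of $P(\wt\gamma,\wt k;\Lambda)$ are polynomial expressions in $\wt\gamma$, $\wt\gamma^{-1}$, $\wt k$, and their first coordinate derivatives (plus $-2\Lambda$). The inverse $\wt\gamma^{-1}$ is polyhomogeneous by Taylor expanding around $\wt\upbeta^*\gamma$ and summing the geometric series, which preserves the index sets $\N_0\cup\hat\cE$ at $\hat X$ and $\cE$ at $X_\circ$ since both are nonlinearly closed. By Lemma~\ref{LemmaGTot}\eqref{ItGTotb}, $\pa_{x^i}\in\hat\rho^{-1}\Vq(\wt X)$, and q-vector fields preserve polyhomogeneity (acting tangentially at both boundary faces); each coordinate derivative thus shifts the $\hat X$-index set by $-1$ while leaving the $X_\circ$-index set unchanged. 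Since $\wt k^{(1)}$ already has $\hat X$-index set $(\N_0\cup\hat\cE)-1$, the terms with at most two $\pa_{x^i}$ on $\wt\gamma^{(1)}$ or one $\pa_{x^i}$ on $\wt k^{(1)}$ produce $\hat X$-index set $(\N_0\cup\hat\cE)-2$, as claimed. The subtraction of $\wt\upbeta^*p$ absorbs the purely smooth, $\eps$-independent contribution (the only piece with bounded behavior at $\hat X$ beyond the leading shift). For~\eqref{EqCETotPhgLot}, apply Lemma~\ref{LemmaCETotScale} with $\lambda=\eps$ to obtain
\[
  \eps^2 P(\wt\gamma,\wt k;\Lambda) = \diag(1,\eps)\, P(\eps^{-2}\wt\gamma,\,\eps^{-1}\wt k;\,\eps^2\Lambda).
\]
By Definition~\ref{DefCETotID}, $(\eps^{-2}\wt\gamma,\eps^{-1}\wt k)|_{\hat X}=(\hat\gamma,\hat k)$ under the $\hat\sfs$-identification, the prefactor $\diag(1,\eps)$ is exactly $\hat\sfs$ on $\ul\R\oplus T^*X$, and $\eps^2\Lambda$ vanishes at $\hat X$; this gives~\eqref{EqCETotPhgLot}.

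For part~\eqref{ItCETotLin}, combine Lemma~\ref{LemmaCELin} with the q-perspective. The operator $L_{\wt\gamma,\wt k}$ is a Douglis--Nirenberg system of orders $(s_1,s_2;t_1,t_2)=(2,1;0,0)$ whose coefficients are polynomial in $\wt\gamma,\wt\gamma^{-1},\wt k$ and their first derivatives, hence lie in $\wt\upbeta^*\CI(X)+\cA_\phg^{\N_0\cup\hat\cE,\cE}$ by the reasoning above. Each $\pa_{x^i}$ in an $(i,j)$-block contributes $\hat\rho^{-1}$ times a q-vector field; the factor $\hat\rho^{s_j}$ from the $j$-th column of $\wt w=\diag(\hat\rho^2,\hat\rho)$ precisely compensates $\hat\rho^{-(s_i+t_j)}$ modulo commutators, which are of lower q-order and vanish to positive order at $\hat X$. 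This yields the q-differential structure of~\eqref{EqCETotLin}; the $(1,2)$-block, which involves no derivatives but has $s_1+t_2=2$, becomes a smooth-plus-polyhomogeneous bundle homomorphism once multiplied by $\hat\rho$.

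Finally, for the normal operators: at $X_\circ$, since $(\wt\gamma,\wt k)|_{X_\circ}=\upbeta_\circ^*(\gamma,k)$ and $\wt w|_{X_\circ}=w_\circ$, the identity $N_{X_\circ}(L_{\wt\gamma,\wt k}\wt w)=L_{\circ,\gamma,k}w_\circ$ is immediate from Lemma~\ref{LemmaCEPctLin}. At $\hat X$, linearize~\eqref{EqCETotsfs} in $(\wt\gamma,\wt k)$:
\[
  \eps^2 L_{\wt\gamma,\wt k}(H,K)=\sfs\,L_{\sfs^{-1}\wt\gamma,\,\sfs^{-1}(\eps\wt k)}\bigl(\sfs^{-1}H,\,\sfs^{-1}(\eps K)\bigr),
\]
restrict to $\hat X$ using $(\sfs^{-1}\wt\gamma,\sfs^{-1}(\eps\wt k))|_{\hat X}=(\hat\gamma,\hat k)$, and insert $K=\wt w$-weighted arguments (noting $\eps\wt w|_{\hat X}$ matches $\sfs\circ\hat w$ under $\hat\sfs$) to obtain $\hat\sfs^{-1}\circ N_{\hat X}(L_{\wt\gamma,\wt k}\wt w)\circ\hat\sfs=L_{\hat\gamma,\hat k}\hat w$. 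The difference statements~\eqref{EqCETotLinNormDiff} then follow by subtracting the normal operators extended via the collar identifications of Lemma~\ref{LemmaGTotRel} and noting that the remainder is either one order smaller in the Taylor expansion of the smooth coefficient (index set $\N_0+1$ at $\hat X$, $\N_0$ at $X_\circ$) or comes from the $\hat\cE$-, resp.\ $\cE$-polyhomogeneous parts of $\wt\gamma^{(1)},\wt k^{(1)}$. The main technical obstacle is precisely the interplay between the $\hat\sfs$-identification and the scaling behavior of derivatives, which requires carefully distinguishing the tensor components in the $\dd x^i$-frame (natural on $\wt X$) from those in the $\dd\hat x^i$-frame (natural on $\hat X$); once this bookkeeping is set up, every step reduces to the model-space results of Lemmas~\ref{LemmaCEAfMap} and~\ref{LemmaCEPctLin}.
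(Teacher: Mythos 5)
Your proof proposal follows essentially the same route as the paper: local coordinate computation, decomposition into a smooth $\eps$-independent part (which blows down to $\wt\upbeta^*p$) plus polyhomogeneous corrections, index-set bookkeeping via $\pa_{x^i}\in\hat\rho^{-1}\Vq(\wt X)$, and the scaling identity of Lemma~\ref{LemmaCETotScale} together with its linearization to identify the $\hat X$-normal operator. The $X_\circ$-normal operator is identified by direct restriction, and the differences in~\eqref{EqCETotLinNormDiff} are read off as you describe. The one substantive imprecision to flag: your opening sentence of the proof of part~\eqref{ItCETotPhg} says $P(\wt\gamma,\wt k;\Lambda)$ depends on "first coordinate derivatives" of the arguments, but the scalar constraint $R_{\wt\gamma}$ involves second derivatives of $\wt\gamma$ — you silently correct this later when counting "at most two $\pa_{x^i}$ on $\wt\gamma^{(1)}$," and the index-set accounting is ultimately consistent, but the statement as written is wrong. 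Similarly, the explanation of the $(1,2)$-block is slightly off: the relevant point is not that $\hat\rho$ compensates an order-$(s_1+t_2)$ derivative loss (the block has differential order $0$, not $2$), but rather that $\hat\rho$ compensates the $\hat\rho^{-1}$ blow-up of $\wt k^{(1)}$ at $\hat X$; the conclusion is nonetheless the right one.
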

\begin{proof}
  The arguments for part~\eqref{ItCETotPhg} are similar to those in the proof of Lemma~\ref{LemmaCEAfMap}. Near $\hat X^\circ$, we work in the coordinates $(\eps,\hat x)$. Since $\wt\gamma_{i j}\in\cA_\phg^{\N_0\cup\hat\cE}([0,1)_\eps\times\hat X^\circ)$, we have $\wt\gamma^{i j}\in\cA_\phg^{\N_0\cup\hat\cE}$. Since $\pa_\ell\wt\gamma_{i j}\in\cA_\phg^{(\N_0\cup\hat\cE)-1}$, this gives $\Gamma(\wt\gamma)_{i j}^\ell\in\cA_\phg^{(\N_0\cup\hat\cE)-1}$. We moreover have $\wt k_{i j}\in\cA_\phg^{(\N_0\cup\hat\cE)-1}$ and thus $\pa_\ell\wt k_{i j}\in\cA_\phg^{(\N_0\cup\hat\cE)-2}$; this gives $R(\wt\gamma)$, $|\tr_{\wt\gamma}\wt k|^2$, $|\wt k|_{\wt\gamma}^2$, $\nabla_\ell\wt k_{i j}\in\cA_\phg^{(\N_0\cup\hat\cE)-2}$ (where $\nabla$ is given by $\nabla^{\wt g_\eps}$ on $\wt X_\eps$). Therefore, $P(\wt\gamma,\wt k)\in\cA_\phg^{(\N_0\cup\hat\cE)-2}$. Tracking the leading order terms of $\wt\gamma$, $\wt k$, and their derivatives at $\eps=0$, one finds that the restriction of $\eps^2 P_1(\wt\gamma,\wt k;\Lambda)$, resp.\ $\eps^2 P_2(\wt\gamma,\wt k)_i$ to $\eps=0$ is given by $P_1(\hat\gamma,\hat k;\Lambda)$, resp.\ $P_2(\wt\gamma,\wt k)_{\hat i}$ indeed, where we write $\hat i$ for indices in the $\hat x$-variables; this uses that $\pa_i=\eps^{-1}\pa_{\hat i}$. More conceptually, Lemma~\ref{LemmaCETotScale} gives, in a collar neighborhood of $\hat X$,
  \begin{equation}
  \label{EqCETotPfScale}
  \begin{split}
    \eps^2 P(\wt\gamma,\wt k;\Lambda)&=\diag(1,\eps) P(\eps^{-2}\wt\gamma,\eps^{-1}\wt k;\eps^2\Lambda) \\
      &= \diag(1,\eps)P\bigl(\sfs^{-1}\wt\gamma,\sfs^{-1}(\eps\wt k);\eps^2\Lambda\bigr) \\
      & = \sfs\bigl( P(\sfs^{-1}\wt\gamma,\sfs^{-1}(\eps\wt k);\eps^2\Lambda\bigr)\bigr)
  \end{split}
  \end{equation}
  as sections of $\ul\R\oplus\wt T_{\hat X}^*\wt X$; restriction to $\hat X$ gives~\eqref{EqCETotPhgLot}.

  Near $(X_\circ)^\circ$, note that $\eps\mapsto(\wt\gamma|_{\wt X_\eps},\wt k|_{\wt X_\eps})$, composed with $\wt X_\eps\cong X$, is a equal to an $\eps$-independent smooth symmetric 2-tensor on $X\setminus\{\fp\}$ plus a polyhomogeneous family (with index set $\cE$) of such tensors, and the conclusion follows via arguments much as in the previous paragraph. Near the corner $X_\circ\cap\hat X$ finally, we have a combination of both settings. Indeed,
  \[
    \wt\gamma_{i j}(\hat\rho,\rho_\circ,\omega) = \gamma_{i j}(\hat\rho\omega) + \wt\gamma^{(1)}_{i j}(\hat\rho,\rho_\circ,\omega),\qquad
    \wt k_{i j}(\hat\rho,\rho_\circ,\omega) = k_{i j}(\hat\rho\omega) + \wt k^{(1)}_{i j}(\hat\rho,\rho_\circ,\omega),
  \]
  where $\gamma_{i j}$, $k_{i j}$ are smooth in the blown-down coordinates $x=\hat\rho\omega\in\R^n$, while $\wt\gamma^{(1)}_{i j}\in\cA_\phg^{\N_0\cup\hat\cE,\cE}$ and $\wt k^{(1)}_{i j}\in\cA_\phg^{(\N_0\cup\hat\cE)-1,\cE}$. Thus, $\wt\gamma^{-1}\in\CI(\R^n_x)+\cA_\phg^{\N_0\cup\hat\cE,\cE}(\wt X)$ in this chart. Recall from Lemma~\ref{LemmaGTot}\eqref{ItGTotb} that $\pa_i\in\wt\cV(\wt X)\hra\hat\rho^{-1}\Vb(\wt X)$; therefore, $\pa_\ell\wt\gamma_{i j} \in \CI(\R^n_x) + \cA_\phg^{(\N_0\cup\hat\cE)-1,\cE}(\wt X)$ and $\pa_\ell\wt k_{i j} \in \CI(\R^n_x) + \cA_\phg^{(\N_0\cup\hat\cE)-2,\cE}(\wt X)$. This gives
  \begin{equation}
  \label{EqCETotMem}
    \Gamma(\wt\gamma)_{i j}^\ell\in\CI(\R^n_x)+\cA_\phg^{(\N_0\cup\hat\cE)-1,\cE}(\wt X),\qquad
    \nabla_\ell\wt k_{i j}\in\CI(\R^n_x)+\cA_\phg^{(\N_0\cup\hat\cE)-2,\cE}(\wt X).
  \end{equation}
  We conclude that $P(\wt\gamma,\wt k;\Lambda)\in\CI(\R^n_x)+\cA_\phg^{(\N_0\cup\hat\cE)-2,\cE}(\wt X)$, as claimed.

  For part~\eqref{ItCETotLin}, we use the expression for $L_{\wt\gamma,\wt k}$ given in~\eqref{EqCELin}, in conjunction with~\eqref{EqCETotMem} and $\pa_{x^i}\in\hat\rho^{-1}\Vq(\wt X)$. The normal operators of $L_{\wt\gamma,\wt k}\wt w$ at $\hat X$ and $X_\circ$ can be computed by evaluating the limit of $L_{\wt\gamma,\wt k}\wt w$ at the interiors $\hat X^\circ$ and $(X_\circ)^\circ$, respectively. Since near $(X_\circ)^\circ$, $(\wt\gamma,\wt k)$ is a polyhomogeneous (with index set $\N_0\cup\cE$) family of symmetric 2-tensors (see also Lemma~\ref{LemmaGTotRel}) with leading term $(\gamma,k)$, we immediately obtain the second equation in~\eqref{EqCETotLinNorm}. For the first equation, we use the scaling relation~\eqref{EqCETotPfScale} and $\hat\rho=\eps\rho_\circ^{-1}$; thus,
  \begin{align*}
    L_{\wt\gamma,\wt k}(\hat\rho^2\wt h,\hat\rho\wt q) &= \eps^{-2}\frac{\dd}{\dd t}\eps^2 P(\wt\gamma+t\eps^2\rho_\circ^{-2}\wt h,\wt k+t\eps\rho_\circ^{-1}\wt q;\Lambda)\Big|_{t=0} \\
      &= \frac{\dd}{\dd\tilde t} \Bigl(\sfs P(\sfs^{-1}\wt\gamma+\tilde t\rho_\circ^{-2}\sfs^{-1}\wt h,\sfs^{-1}(\eps\wt k)+\tilde t\rho_\circ^{-1}\sfs^{-1}\wt q;\eps^2\Lambda)\Bigr)\Big|_{\tilde t=0} \\
      &= \sfs\Bigl(L_{\sfs^{-1}\wt\gamma,\sfs^{-1}(\eps\wt k)}\bigl(\hat w(\sfs^{-1}\wt h,\sfs^{-1}\wt q)\bigr)\Bigr),
  \end{align*}
  where in the second line we introduced $\tilde t=t\eps^2$. Since near $\hat X^\circ$ the pair $(\sfs^{-1}\wt\gamma,\sfs^{-1}(\eps\wt k))$ is a polyhomogeneous (with index set $\N_0\cup\hat\cE$) family of symmetric 2-tensors on $\hat X^\circ$ with leading term $(\hat\gamma,\hat k)$, we obtain the first equation in~\eqref{EqCETotLinNorm}.

  Since $\wt\upbeta^*\CI(X)+\cA_\phg^{\N_0\cup\hat\cE,\cE}(\wt X)\subset\cA_\phg^{\N_0\cup\hat\cE,\N_0\cup\cE}(\wt X)$, we immediately obtain the first membership in~\eqref{EqCETotLinNormDiff}. For the second membership, we note that the $\eps$-independent extension of $\hat\chi L_{\circ,\gamma,k}w_\circ$ to an operator on $\wt X\setminus x^{-1}(0)$ has coefficients in $\hat\chi\wt\upbeta^*\CI(X_\circ)$, and it is equal to $L_{\wt\gamma,\wt k}\wt w$ at $\eps=0$; therefore, its deviation from $\hat\chi L_{\wt\gamma,\wt k}\wt w$ is given by the polyhomogeneous term in~\eqref{EqCETotLin}.
\end{proof}

This result will suffice for the construction of a formal solution of the gluing problem. In order to correct a formal solution to a true solution of the nonlinear constraint equations, we need a solvability theory for $L_{\wt\gamma,\wt k}$ on spaces of tensors with finite regularity:

\begin{prop}[Uniform estimates for the linearized constraints map]
\label{PropCETotSolv}
  Let $(\wt\gamma,\wt k)$ be a $(\hat\cE,\cE)$-smooth total family on $\wt X\setminus\wt K^\circ$. Let $\hat R_0>0$ be such that $\hat K\subset B(0,\hat R_0)$, and denote by $\hat\rho_2\in\CI(\wt X)$ a function which vanishes simply at $\hat r^{-1}(\hat R_0)$ and is positive for $\hat r>\hat R_0$ (e.g.\ one can take $\hat\rho_2=\hat r^{-1}-\hat R_0^{-1}$ near $\hat X$). Denote by $\rho_2\in\CI(\wt X)$ a function which vanishes simply at $[0,1)\times\pa\cU$ and is positive on $[0,1)\times\cU^\circ$; put $\tilde\rho_2=\hat\rho_2\rho_2$. Put $\wt w_2:=\diag(\tilde\rho_2^4,\tilde\rho_2^2)$, and recall $\wt w=\diag(\hat\rho^2,\hat\rho)$. Define q-00-Sobolev spaces on $\wt\Omega:=\{\hat\rho_2\geq 0,\ \rho_2\geq 0\}\subset\wt X\setminus\wt K^\circ$ (with 00-behavior at $\hat r=\hat R_0$ and $\pa\cU$) using a positive q-00-density; write $\wt\Omega_\eps=\wt\Omega\cap\wt X_\eps$. Let $\alpha_2\in\R$ and $\beta>0$, and fix $d\in(-\infty,n-2)$. Then there exist $\eps_0>0$ so that for all $s\in\R$ and $\hat\alpha,\alpha_\circ\in\R$ satisfying $\alpha_\circ-\hat\alpha=d$, and setting
  \begin{equation}
  \label{EqCETotSolvConj}
    \cL_{\wt\gamma,\wt k} := e^{\beta/\tilde\rho_2}\tilde\rho_2^{-\alpha_2}\hat\rho^{-\tilde\alpha}\rho_\circ^{-\alpha_\circ}\wt w_2 L_{\wt\gamma,\wt k}\wt w\rho_\circ^{\alpha_\circ}\hat\rho^{\tilde\alpha}\tilde\rho_2^{\alpha_2}e^{-\beta/\tilde\rho_2},\qquad
    \tilde\alpha:=\hat\alpha+n-2-\frac{n}{2},
  \end{equation}
  the map
  \begin{equation}
  \label{EqCETotSolv}
    \cL_{\wt\gamma,\wt k}\cL_{\wt\gamma,\wt k}^* \colon \hat\rho^{-\frac{n}{2}}H_{\qop,0 0,\eps}^{s+2}(\wt\Omega_\eps)\oplus\hat\rho^{-\frac{n}{2}}H_{\qop,0 0,\eps}^{s+1}(\wt\Omega_\eps;\wt T^*\wt X) \to \hat\rho^{-\frac{n}{2}}H_{\qop,0 0,\eps}^{s-2}(\wt\Omega_\eps)\oplus\hat\rho^{-\frac{n}{2}}H_{\qop,0 0,\eps}^{s-1}(\wt\Omega_\eps;\wt T^*\wt X)
  \end{equation}
  is invertible and has a uniformly bounded inverse for $\eps<\eps_1$ for all $\eps_1<\eps_0$.\footnote{This implies the existence of a right inverse of $L_{\wt\gamma,\wt k}$ which is uniformly bounded as a map
  \begin{align*}
    &\hat\rho^{\hat\alpha-2}\rho_\circ^{\alpha_\circ}\tilde\rho_2^{\alpha_2-4}e^{-\beta/\tilde\rho_2}H_{\qop,0 0,\eps}^{s-2}(\wt\Omega_\eps) \oplus \hat\rho^{\hat\alpha-2}\rho_\circ^{\alpha_\circ}\tilde\rho_2^{\alpha_2-2}e^{-\beta/\tilde\rho_2}H_{\qop,0 0,\eps}^{s-1}(\wt\Omega_\eps;\wt T^*\wt X) \\
    & \to \hat\rho^{\hat\alpha}\rho_\circ^{\alpha_\circ}\tilde\rho_2^{\alpha_2}e^{-\beta/\tilde\rho_2}H_{\qop,0 0,\eps}^s(\wt\Omega_\eps;S^2\wt T^*\wt X) \oplus \hat\rho^{\hat\alpha-1}\rho_\circ^{\alpha_\circ}\tilde\rho_2^{\alpha_2}e^{-\beta/\tilde\rho_2}H_{\qop,0 0,\eps}^s(\wt\Omega_\eps;S^2\wt T^*\wt X).
  \end{align*}} Here, adjoints are defined with respect to the volume density and fiber inner products induced by $\wt\gamma$.
\end{prop}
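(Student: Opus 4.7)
The plan is to generalize the proofs of Propositions~\ref{PropCEAfSolv}\eqref{ItCEAfSolv1} and~\ref{PropCEPctSolv}\eqref{ItCEPctSolv1} to the q-00 setting on $\wt X$, combining the three ingredients they share: Douglis--Nirenberg ellipticity of $\cL_{\wt\gamma,\wt k}\cL_{\wt\gamma,\wt k}^*$, invertibility of its normal operators at $\hat X$ and $X_\circ$, and triviality of its $L^2$-kernel on each fiber. The role of the $\hat\rho^{-n/2}$ weight is to translate between the q-density defining the Sobolev spaces and the $\wt\gamma_\eps$-volume density used to form adjoints fiberwise, which is what makes the resulting estimate $\eps$-uniform; the norm equivalence~\eqref{EqGqHqNormEquiv} furnishes the bounded-geometry reformulation that reduces uniform analysis on $\wt X_\eps$ to standard b-00 analysis on the two models.

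First I would establish symbolic ellipticity. By Proposition~\ref{PropCETot}\eqref{ItCETotLin}, $L_{\wt\gamma,\wt k}\wt w$ is a weighted q-differential operator in the Douglis--Nirenberg sense with $t_j=0$, $s_1=2$, $s_2=1$, and its standard principal symbol is injective by Lemma~\ref{LemmaCELstarInj}; the 00-boundary principal symbol at $\tilde\rho_2=0$, obtained via~\eqref{EqB00SymbConj} by shifting the momentum by $i\beta\,\dd\tilde\rho_2$, is injective for \emph{all} real 00-covectors (including the zero covector) by the same algebraic argument as in Lemma~\ref{LemmaCELstarInj}. Hence $\cL_{\wt\gamma,\wt k}\cL_{\wt\gamma,\wt k}^*$ is q-00-elliptic in the Douglis--Nirenberg sense, and the estimates of~\S\ref{SsBAn} applied through~\eqref{EqGqHqNormEquiv} to each of the two collar regions give a uniform (in $\eps$) semi-Fredholm estimate
\begin{equation*}
  \|(f^*,j^*)\|_{\hat\rho^{-n/2}(H_{\qop,0 0,\eps}^{s+2}\oplus H_{\qop,0 0,\eps}^{s+1})}\leq C\Bigl(\|\cL_{\wt\gamma,\wt k}\cL_{\wt\gamma,\wt k}^*(f^*,j^*)\|_{\hat\rho^{-n/2}(H_{\qop,0 0,\eps}^{s-2}\oplus H_{\qop,0 0,\eps}^{s-1})}+\|(f^*,j^*)\|_{\mathrm{err}}\Bigr),
\end{equation*}
with the error term measured in strictly weaker q-Sobolev norms (weights of the form $\hat\rho^{-n/2-\delta}\tilde\rho_2^{-1}$).

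Next I would improve the error term through normal-operator arguments at $\hat X$ and $X_\circ$. By~\eqref{EqCETotLinNorm} and the compatibility of the maps $N_{\hat X}, N_{X_\circ}$ with weight conjugation, the q-normal operators of $\cL_{\wt\gamma,\wt k}\cL_{\wt\gamma,\wt k}^*$ at the two boundary hypersurfaces are, after the rescaling identifications of Lemma~\ref{LemmaGTot}\eqref{ItGTotScale}, precisely the operators $\cL_{\hat\gamma,\hat k}\cL_{\hat\gamma,\hat k}^*$ and $\cL_{\circ,\gamma,k}\cL_{\circ,\gamma,k}^*$ of Propositions~\ref{PropCEAfSolv} and~\ref{PropCEPctSolv}. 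The effective Mellin weights at $\pa\hat X$ and $\pa X_\circ$ are both controlled by $d=\alpha_\circ-\hat\alpha$: near $\pa\hat X$ on $\hat X$ the combined conjugations by $\rho_\circ^{\alpha_\circ}$ and $\hat\rho^{\tilde\alpha}$ (the latter contributing through $\hat\rho=\eps\la\hat x\ra\sim\rho_\circ^{-1}$ at the corner) produce net exponent $d-n+2+n/2$, which lies in the nonresonant region of Lemma~\ref{LemmaCEAfSpecb} precisely when $d<n-2$; near $\pa X_\circ$ on $X_\circ$ only $\hat\rho^{\tilde\alpha}$ shifts, giving the parameter $\hat\alpha$ of Proposition~\ref{PropCEPctSolv}, whose exceptional set $\{-n+2,-n+1\}$ can be avoided by a small perturbation of $\hat\alpha$ keeping $d$ fixed. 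A Mellin-transform/rescaling argument paralleling the proofs of the two model propositions (using~\eqref{EqCETotLinNormDiff}, which ensures $\cL_{\wt\gamma,\wt k}$ converges to each normal operator modulo operators with extra decay at $\hat X$, resp.\ $X_\circ$) strengthens the above estimate to one with relatively compact error. Triviality of the kernel at each fixed small $\eps$ follows verbatim from the model arguments: elliptic regularity plus integration by parts (valid since adjoints are taken with respect to the $\wt\gamma$-volume density and the weighting is self-dual) reduces $\cL\cL^*(f^*,j^*)=0$ to $\cL^*(f^*,j^*)=0$; the rescaled pair is then killed on $\hat X^\circ$ by (the fiber version of) Lemma~\ref{LemmaCEAfCoker} and on $X_\circ^\circ\cap\cU$ by the prolongation Lemma~\ref{LemmaCEProlong} together with the no-KIDs hypothesis, exactly as in the proof of Proposition~\ref{PropCEPctSolv}\eqref{ItCEPctSolv1}. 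Self-adjointness of $\cL\cL^*$ gives triviality of the cokernel, and a soft compactness/contradiction argument upgrades fiberwise invertibility to $\eps$-uniform invertibility for $\eps<\eps_0$.

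The hard part will be the coordination of uniform estimates across the two degenerate boundaries $\hat X$ and $X_\circ$, which enter simultaneously in~\eqref{EqGqHqNormEquiv}. The Mellin weight analysis must be compatible at the shared corner $\hat X\cap X_\circ$, where the two model normal operators both restrict to the common b-normal operator $\hat{\ubar L}\hat{\ubar w}=\ubar L_\circ\ubar w_\circ$; the single weight parameter $d$ must simultaneously avoid the (common) boundary spectrum on two different Mellin contours, and the constraint $d<n-2$ is precisely what achieves this. Once this coordination is in place, the passage from model invertibility to a uniform right inverse on $\wt X$ is largely formal, but carrying out the rescaling argument in a way that is genuinely uniform in $\eps$ (rather than just valid for each $\eps$ separately) is the technical crux.
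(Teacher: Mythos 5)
Your identification of the main ingredients — injective q-00 principal symbol, invertibility of the two normal operators from Propositions~\ref{PropCEAfSolv} and~\ref{PropCEPctSolv}, and the bounded-geometry reformulation via~\eqref{EqGqHqNormEquiv} — is correct and matches the paper. You also correctly flag that the parameter $d=\alpha_\circ-\hat\alpha$ must lie in the nonresonant region of both Mellin families. Where the proposal breaks down is the final step, which is the actual content of the statement.

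Your plan is: establish fiberwise kernel triviality of $\cL_{\wt\gamma,\wt k}\cL_{\wt\gamma,\wt k}^*$ on each $\wt\Omega_\eps$, then use ``a soft compactness/contradiction argument'' to upgrade to $\eps$-uniform invertibility. Both halves of this are problematic. For the first half, the kernel elements live on the annular domain $\wt\Omega_\eps\subset\wt X_\eps\cong X$ — not on $\hat X$ or $X_\circ$ — so neither Lemma~\ref{LemmaCEAfCoker} (which is about $(\hat\gamma,\hat k)$ near infinity) nor the no-KIDs hypothesis (which is about $L_{\gamma,k}^*$ for the \emph{unperturbed} $(\gamma,k)$) applies ``verbatim''; the relevant operator is $L_{\wt\gamma_\eps,\wt k_\eps}^*$ for the $\eps$-deformed data, and showing triviality of its kernel requires precisely the perturbative stability argument you are trying to avoid. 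For the second half, the obstruction is that the compact inclusions and the Fredholm alternative hold fiberwise but with constants that degenerate as $\eps\searrow 0$; a ``soft'' compactness argument cannot rule out $\|u_\eps\|\leq C_\eps\|\cL\cL^*u_\eps\|$ with $C_\eps\to\infty$. Making such a contradiction argument rigorous would again require extracting a limit living on the singular fiber and feeding it into the normal-operator injectivity — i.e., it reproduces the normal-operator argument with extra work.

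The paper avoids all of this via a much cleaner mechanism that you have missed. After the semi-Fredholm estimate and the two normal-operator improvements, the error term acquires \emph{both} an extra power $\hat\rho^{-\eta}$ (relative to $\hat\rho^{-n/2}$) from the $\hat X$-argument \emph{and} an extra power $\rho_\circ^{-\eta}$ from the $X_\circ$-argument. Because $\hat\rho\rho_\circ=\eps$, the combined weight $\hat\rho^{-n/2-\eta}\rho_\circ^{-\eta}=\eps^\eta\hat\rho^{-n/2}$, so the error term in the final estimate is $\eps^\eta$ times the left-hand side and can simply be absorbed for small $\eps$. The resulting uniformly coercive estimate for $\cL\cL^*$, together with the identical estimate on the dual spaces, gives uniform invertibility directly; no fiberwise kernel or cokernel analysis is needed at any point. (A related simplification: the paper first reduces to $\alpha_\circ=0$ by conjugating by $\eps^{\alpha_\circ}=(\hat\rho\rho_\circ)^{\alpha_\circ}$, which commutes with $L_{\wt\gamma,\wt k}$; this is cleaner than your ``small perturbation of $\hat\alpha$ keeping $d$ fixed.'') The absorption step is the technical crux you anticipated, and is not captured by the compactness route you proposed.
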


The definition~\eqref{EqCETotSolvConj} combines the conjugations and re-weightings~\eqref{EqCEAfSolvOp} and \eqref{EqCEPctSolvOp} performed in the analysis of the two normal operators. 

\begin{proof}[Proof of Proposition~\usref{PropCETotSolv}]
  Since $\eps/\rho_\circ$ is a defining function of $\hat X$, we may arrange that $\hat\rho\rho_\circ=\eps$. Since $L_{\wt\gamma,\wt k}$ commutes with multiplication by functions of $\eps=\hat\rho\rho_\circ$, we may therefore replace $(\hat\alpha,\alpha_\circ)$ by $(\hat\alpha-c,\alpha_\circ-c)$ for any $c$; for $c=\alpha_\circ$, we may thus reduce to the case $\alpha_\circ=0$, which we assume henceforth. In particular, $\hat\alpha<n-2$ and thus also $-\hat\alpha\neq -n+2,-n+1$.

  \pfstep{Symbolic estimate.} Note that $\cL_{\wt\gamma,\wt k}$ is a q-00-operator with coefficients in $\cA_\phg^{\N_0\cup\hat\cE,\N_0\cup\cE}(\wt X)$, and the q-00-principal symbol of $\cL_{\wt\gamma,\wt k}^*$ is injective in the Douglis--Nirenberg sense: this follows either by inspection of the form of $\wt w_2 L_{\wt\gamma,\wt k}\wt w$, or for small $\eps>0$ from the injectivity of the b-00-principal symbols of the normal operators of $\cL_{\wt\gamma,\wt k}$. Note that the $X_\circ$-normal operator is
  \[
    N_{X_\circ}(\cL_{\wt\gamma,\wt k})=c^L_\circ\circ\cL_{\circ,\gamma,k}\circ c^R_\circ,
  \]
  where $c_\circ^L$, resp.\ $c_\circ^R$ is a smooth bundle isomorphism on $\ul\R\oplus\upbeta_\circ^*T^*X$, resp.\ $\upbeta_\circ^*S^2 T^*X\oplus\upbeta_\circ^*S^2 T^*X$; here $\cL_{\circ,\gamma,k}$ is given by~\eqref{EqCEPctSolvOp}, and the presence of $c_\circ^{L/R}$ arises from the fact that $\rho_2|_{X_\circ}$ is not necessarily equal to, but certainly a smooth positive multiple of, $\tilde\rho_2|_{X_\circ}$. Similarly, writing $\hat\rho=\eps\rho_\circ^{-1}$ in~\eqref{EqCETotSolvConj}, one finds using~\eqref{EqCETotLinNorm} that the $\hat X$-normal operator satisfies
  \[
    \hat\sfs^{-1}\circ N_{\hat X}(\cL_{\wt\gamma,\wt k})\circ\hat\sfs = \hat c^L\circ\cL_{\hat\gamma,\hat k}\circ\hat c^R,
  \]
  with $\hat c^L$, resp.\ $\hat c^R$ a smooth bundle isomorphism of $\ul\R\oplus\Tsc^*\hat X$, resp.\ $S^2\,\Tsc^*\hat X\oplus S^2\,\Tsc^*\hat X$, and where $\cL_{\hat\gamma,\hat k}$ is defined by~\eqref{EqCEAfSolvOp}, with $\alpha_\circ$ there equal to $-\hat\alpha$ and thus $\tilde\alpha$ there equal to $-\tilde\alpha$ in present notation.

  Elliptic estimates for the Douglis--Nirenberg elliptic operator
  \begin{equation}
  \label{EqCETotSolvLLstar}
    \cL_{\wt\gamma,\wt k}\cL_{\wt\gamma,\wt k}^*\in\cA_\phg^{\N_0\cup\hat\cE,\N_0\cup\cE}\begin{pmatrix} \Diff_{\qop,0 0}^4 & \Diff_{\qop,0 0}^3 \\ \Diff_{\qop,0 0}^3 & \Diff_{\qop,0 0}^2 \end{pmatrix}
  \end{equation}
  give uniform (in $\eps$) estimates
  \begin{equation}
  \label{EqCETotSolvEstPre}
  \begin{split}
    &\|(\tilde f^*,\tilde j^*)\|_{\hat\rho^{-\frac{n}{2}}H_{\qop,0 0,\eps}^{s+2}(\wt\Omega_\eps)\oplus\hat\rho^{-\frac{n}{2}}H_{\qop,0 0,\eps}^{s+1}(\wt\Omega_\eps)} \\
    &\qquad \leq C\Bigl( \|\cL_{\wt\gamma,\wt k}\cL_{\wt\gamma,\wt k}^*(\tilde f^*,\tilde j^*)\|_{\hat\rho^{-\frac{n}{2}}H_{\qop,0 0,\eps}^{s-2}\oplus\hat\rho^{-\frac{n}{2}}H_{\qop,0 0,\eps}^{s-1}} + \|(\tilde f^*,\tilde j^*)\|_{\hat\rho^{-\frac{n}{2}}H_{\qop,0 0,\eps}^{s_0+2}\oplus\hat\rho^{-\frac{n}{2}}H_{\qop,0 0,\eps}^{s_0+1}}\Bigr)
  \end{split}
  \end{equation}
  for any fixed $s_0<s$. (The weights here can be chosen arbitrarily, but with foresight we choose them to match those in~\eqref{EqCEPctSolvOpEst}.) One could weaken the error term by estimating it in spaces with weight $\tilde\rho_2^{-1}(0)$ (due to the 00-ellipticity of $\cL_{\wt\gamma,\wt k}\cL_{\wt\gamma,\wt k}^*$ there), but we shall not need this improvement here.

  \pfstep{Normal operator argument at $\hat X$.} In the second, error, term on the right hand side of~\eqref{EqCETotSolvEstPre}, we insert a partition of unity $1=\hat\chi+(1-\hat\chi)$ and apply the triangle inequality; in the term involving $1-\hat\chi$, the weight at $\hat X$ can then be changed arbitrarily since $\hat\rho>0$ on $\supp(1-\hat\chi)$. In the term involving $\hat\chi$, the first norm equivalence in~\eqref{EqGqHqNormEquivPre} gives the uniform estimate
  \begin{equation}
  \label{EqCETotSolvEsthatXPre}
  \begin{split}
    &\|(\hat\chi\tilde f^*,\hat\chi\tilde j^*)\|_{\hat\rho^{-\frac{n}{2}}H_{\qop,0 0,\eps}^{s_0+2}(\wt\Omega_\eps)\oplus\hat\rho^{-\frac{n}{2}}H_{\qop,0 0,\eps}^{s_0+1}(\wt\Omega_\eps;\wt T^*\wt X)} \\
    &\qquad \leq C\eps^{\frac{n}{2}}\bigl\|\bigl(\sfs^{-1}(\hat\chi\tilde f^*),\sfs^{-1}(\hat\chi\tilde j^*)\bigr)\bigr\|_{\rho_\circ^{\frac{n}{2}}H_{\bop,0 0}^{s_0+2}(\hat X)\oplus\rho_\circ^{\frac{n}{2}}H_{\bop,0 0}^{s_0+1}(\hat X;\Tsc^*\hat X)}.
  \end{split}
  \end{equation}
  Observe now that
  \[
    \hat\sfs\circ N_{\hat X}(\cL_{\wt\gamma,\wt k}\cL_{\wt\gamma,\wt k}^*)\circ\hat\sfs^{-1}=\hat c^L\cL_{\hat\gamma,\hat k}\hat c^R(\hat c^R)^*\cL_{\hat\gamma,\hat k}^*(\hat c^L)^*
  \]
  is invertible as a map~\eqref{EqCEAfSolvInv}. We can thus estimate the right hand side of~\eqref{EqCETotSolvEsthatXPre} by an $\eps$-independent constant times
  \[
    \eps^{\frac{n}{2}}\|\hat\sfs^{-1}N_{\hat X}(\cL_{\wt\gamma,\wt k}\cL_{\wt\gamma,\wt k}^*)(\hat\chi\tilde f^*,\hat\chi\tilde j^*)\|_{\rho_\circ^{\frac{n}{2}}H_{\bop,0 0}^{s_0-2}(\hat X;\Tsc^*\hat X)\oplus\rho_\circ^{\frac{n}{2}}H_{\bop,0 0}^{s_0-1}(\hat X;\Tsc^*\hat X)}.
  \]
  Using~\eqref{EqGqHqNormEquiv} to pass back to q-00-Sobolev spaces on $\wt X$, we may replace the normal operator here by the operator $\cL_{\wt\gamma,\wt k}\cL_{\wt\gamma,\wt k}^*$ itself, and as a consequence of Proposition~\ref{PropCETot}\eqref{ItCETotLin} pick up an error term whose weight at $\hat X$ is reduced by an amount $0<\eta<\min(1,\min\Re\hat\cE)$. This gives
  \begin{align*}
    &\|(\hat\chi\tilde f^*,\hat\chi\tilde j^*)\|_{\hat\rho^{-\frac{n}{2}}H_{\qop,0 0,\eps}^{s_0+2}(\wt\Omega_\eps)\oplus\hat\rho^{-\frac{n}{2}}H_{\qop,0 0,\eps}^{s_0+1}(\wt\Omega_\eps;\wt T^*\wt X)} \\
    &\qquad \leq C\Bigl( \|\cL_{\wt\gamma,\wt k}\cL_{\wt\gamma,\wt k}^*(\hat\chi\tilde f^*,\hat\chi\tilde j^*)\|_{\hat\rho^{-\frac{n}{2}}H_{\qop,0 0,\eps}^{s_0-2}(\wt\Omega_\eps)\oplus\hat\rho^{-\frac{n}{2}}H_{\qop,0 0,\eps}^{s_0-1}(\wt\Omega_\eps;\wt T^*\wt X)} \\
    &\qquad \hspace{4em} + \|(\hat\chi\tilde f^*,\hat\chi\tilde j^*)\|_{\hat\rho^{-\frac{n}{2}-\eta}H_{\qop,0 0,\eps}^{s_0+2}(\wt\Omega_\eps)\oplus\hat\rho^{-\frac{n}{2}-\eta}H_{\qop,0 0,\eps}^{s_0+1}(\wt\Omega_\eps;\wt T^*\wt X)}\Bigr).
  \end{align*}
  We now drop the cutoff in the second term on the right; note that multiplication by $\hat\chi$ is uniformly bounded on every q-00-Sobolev space. Since the coefficients $[\cL_{\wt\gamma,\wt k}\cL_{\wt\gamma,\wt k}^*,\hat\chi]$ are supported away from $\hat X$, we can omit the cutoff $\hat\chi$ in the first term as well since the contribution from the commutator is controlled by a constant times the second term (with $\hat\chi$ dropped).

  As a consequence, we can now strengthen the estimate~\eqref{EqCETotSolvEstPre} by replacing the norm of the error term by a weaker norm at $\hat X$:
  \begin{equation}
  \label{EqCETotSolvEstPre2}
  \begin{split}
    &\|(\tilde f^*,\tilde j^*)\|_{\hat\rho^{-\frac{n}{2}}H_{\qop,0 0,\eps}^{s+2}(\wt\Omega_\eps)\oplus\hat\rho^{-\frac{n}{2}}H_{\qop,0 0,\eps}^{s+1}(\wt\Omega_\eps)} \\
    &\qquad \leq C\Bigl( \|\cL_{\wt\gamma,\wt k}\cL_{\wt\gamma,\wt k}^*(\tilde f^*,\tilde j^*)\|_{\hat\rho^{-\frac{n}{2}}H_{\qop,0 0,\eps}^{s-2}\oplus\hat\rho^{-\frac{n}{2}}H_{\qop,0 0,\eps}^{s-1}} \\
    &\qquad\qquad\quad + \|(\tilde f^*,\tilde j^*)\|_{\hat\rho^{-\frac{n}{2}-\eta}H_{\qop,0 0,\eps}^{s_0+2}(\wt\Omega_\eps)\oplus\hat\rho^{-\frac{n}{2}-\eta}H_{\qop,0 0,\eps}^{s_0+1}(\wt\Omega_\eps;\wt T^*\wt X)}\Bigr).
  \end{split}
  \end{equation}

  \pfstep{Normal operator argument at $X_\circ$.} We improve the estimate~\eqref{EqCETotSolvEstPre2} further by using the $X_\circ$-normal operator. To wit, inserting a partition of unity $1=\chi_\circ+(1-\chi_\circ)$ in the second term on the right and applying the triangle inequality, the weight at $X_\circ$ of the term with $1-\chi_\circ$ can be changed arbitrarily; for the term with $\chi_\circ$ on the other hand, we use the second norm equivalence in~\eqref{EqGqHqNormEquivPre} to get an upper bound by an $\eps$-independent constant times
  \begin{equation}
  \label{EqCETotSolvEstXcircPre}
    \|(\chi_\circ\tilde f^*,\chi_\circ\tilde j^*)\|_{\hat\rho^{-\frac{n}{2}-\eta}H_{\bop,0 0}^{s_0+2}(X_\circ)\oplus\hat\rho^{-\frac{n}{2}-\eta}H_{\bop,0 0}^{s_0+1}(X_\circ;\upbeta_\circ^*T^*X)}.
  \end{equation}
  We now use that, analogously to~\eqref{EqCEPctSolv1Inv}, the normal operator
  \[
    N_{X_\circ}(\cL_{\wt\gamma,\wt k}\cL_{\wt\gamma,\wt k}^*)=c_\circ^L\cL_{\circ,\gamma,k}c_\circ^R(c_\circ^R)^*\cL_{\circ,\gamma,k}^*(c_\circ^L)^*
  \]
  is invertible as a map
  \[
    \hat\rho^{-\frac{n}{2}-\eta}H_{\bop,0 0}^{s_0+2}\oplus\hat\rho^{-\frac{n}{2}-\eta}H_{\bop,0 0}^{s_0+1} \to \hat\rho^{-\frac{n}{2}-\eta}H_{\bop,0 0}^{s_0-2}\oplus\hat\rho^{-\frac{n}{2}-\eta}H_{\bop,0 0}^{s_0-1}
  \]
  if $\eta>0$ is sufficiently small. (Note that~\eqref{EqCEPctSolv1Inv} gives this for $\eta=0$; furthermore, the requirements on the $\hat\rho$-weight $-\frac{n}{2}$ used in the proof of~\eqref{EqCEPctSolv1Inv} are \emph{open} in the weight, and therefore are satisfied for $-\frac{n}{2}-\eta$ as well when $\eta>0$ is small enough.) We can thus estimate~\eqref{EqCETotSolvEstXcircPre} by an $\eps$-independent constant times
  \begin{align*}
    \|N_{X_\circ}(\cL_{\wt\gamma,\wt k}\cL_{\wt\gamma,\wt k}^*)(\chi_\circ\tilde f^*,\chi_\circ\tilde j^*)\|_{\hat\rho^{-\frac{n}{2}-\eta}H_{\bop,0 0}^{s_0-2}(X_\circ;\upbeta_\circ^*T^*X)\oplus\hat\rho^{-\frac{n}{2}-\eta}H_{\bop,0 0}^{s_0-1}(X_\circ;\upbeta_\circ^*T^*X)}.
  \end{align*}
  Passing back to q-00-Sobolev spaces on $\wt X$ via~\eqref{EqGqHqNormEquivPre} and replacing the normal operator by $\cL_{\wt\gamma,\wt k}\cL_{\wt\gamma,\wt k}^*$ produces an error with reduced weight at $X_\circ$.

  The grand total is the uniform estimate
  \begin{align*}
    &\|(\tilde f^*,\tilde j^*)\|_{\hat\rho^{-\frac{n}{2}}H_{\qop,0 0,\eps}^{s+2}(\wt\Omega_\eps)\oplus\hat\rho^{-\frac{n}{2}}H_{\qop,0 0,\eps}^{s+1}(\wt\Omega_\eps)} \\
    &\qquad \leq C\Bigl( \|\cL_{\wt\gamma,\wt k}\cL_{\wt\gamma,\wt k}^*(\tilde f^*,\tilde j^*)\|_{\hat\rho^{-\frac{n}{2}}H_{\qop,0 0,\eps}^{s-2}\oplus\hat\rho^{-\frac{n}{2}}H_{\qop,0 0,\eps}^{s-1}} \\
    &\qquad \hspace{4em} + \|(\tilde f^*,\tilde j^*)\|_{\hat\rho^{-\frac{n}{2}-\eta}\rho_\circ^{-\eta}H_{\qop,0 0,\eps}^{s_0+2}(\wt\Omega_\eps)\oplus\hat\rho^{-\frac{n}{2}-\eta}\rho_\circ^{-\eta}H_{\qop,0 0,\eps}^{s_0+1}(\wt\Omega_\eps;\wt T^*\wt X)}\Bigr)
  \end{align*}
  for some $\eta>0$. But the second term on the right is bounded by $C\eps^\eta$ times the left hand side and thus can be absorbed for sufficiently small $\eps>0$. This gives a uniform coercive estimate for $\cL_{\wt\gamma,\wt k}\cL_{\wt\gamma,\wt k}^*$. The same estimate applies on the dual function spaces as well, and we conclude the uniformly invertibility of~\eqref{EqCETotSolvConj} for $\eps<\eps_0$ when $\eps_0>0$ is sufficiently small (depending only on the choice of $s,s_0$). Fixing $\eps_0$ for the choice $s=0$, $s_0=-1$, say, the uniform invertibility follows for any other choice of $s\in\R$ by elliptic regularity (when $s>0$) and duality (when $s<0$).
\end{proof}

We shall need to understand the regularity properties in $\eps$ of the inverse of~\eqref{EqCETotSolv}. Here, regularity is meant in the following sense:

\begin{definition}[Regularity of families of elements of q-00-Sobolev spaces]
\label{DefCETotReg}
  Define the domain $\wt\Omega$ as in Proposition~\usref{PropCETotSolv}; let $\hat\Omega=\{\hat\rho_2\geq 0\}\subset\hat X$ and $\Omega_\circ=\{\rho_2\geq 0\}\subset X_\circ$ as in Propositions~\usref{PropCEAfSolv} and~\usref{PropCEPctSolv}. Fix cutoffs $\hat\chi,\chi_\circ\in\CI(\wt X)$ as in~\eqref{EqGTotCutoffs} with the additional property that $\hat\rho_2>0$ on $\supp\chi_\circ$, and $\rho_2>0$ on $\supp\hat\chi$. Recall the maps $\hat\phi$, $\phi_\circ$ from~\eqref{EqGqPhis}. Let $\eps_0>0$ and $s\in\R$.
  \begin{enumerate}
  \item{\rm (Continuity.)} The space
    \[
      \cC^0([0,\eps_0),H_{\qop,0 0,\eps}^s(\wt\Omega_\eps))
    \]
    consists of all families $(u_\eps)_{\eps\in(0,\eps_0)}$ such that $\|u_\eps\|_{H_{\qop,0 0,\eps}^s(\wt\Omega_\eps)}$ is uniformly bounded for $\eps\in(0,\eps_1)$ for all $\eps_1<\eps_0$, and so that $\hat\phi^*(\hat\chi u_\eps)\in H_{\bop,0 0}^s(\hat\Omega)$ and $\phi_\circ^*(\chi_\circ u_\eps)\in H_{\bop,0 0}^s(\Omega_\circ)$ depend continuously on $\eps$.
  \item\label{ItCETotRegCon}{\rm (Conormal regularity.)} For $m\in\N_0$, define $\cA_m([0,\eps_0),H_{\qop,0 0,\eps}^s(\wt\Omega_\eps))$ to consist of all $u\in\cC^0([0,\eps_0),H_{\qop,0 0,\eps}^s(\wt\Omega_\eps))$ so that $(\eps\pa_\eps)^j \hat\phi^*(\hat\chi u_\eps)\in H_{\bop,0 0}^{s-j}(\hat\Omega)$ and $(\eps\pa_\eps)^j \phi_\circ^*(\chi_\circ u_\eps)\in H_{\bop,0 0}^{s-j}(\Omega_\circ)$ are continuous and uniformly bounded for all $j\leq m$ and $\eps\in(0,\eps_1)$, $\eps_1<\eps_0$. (In the first expression, $\pa_\eps$ is the coordinate derivative in $(\eps,\hat x)$-coordinates; in the second expression, it is defined as the derivative along the first factor of $[0,1)_\eps\times X_\circ$.)
  \end{enumerate}
  Spaces of families of elements of weighted q-00-Sobolev spaces are defined analogously (cf.\ the relationship~\eqref{EqGqHqNormEquiv}).
\end{definition}

Thus, $\cA_0=\cC^0$. We also note that the derivatives in part~\eqref{ItCETotRegCon} are well-defined as distributional derivatives due to the a priori assumption that $u\in\cC^0$. The regularity condition in part~\eqref{ItCETotRegCon} can be rewritten as follows. Let $\cR\in\Vb(\wt X)$ denote a vector field whose push-forward to $[0,1)_\eps$ is given by $\eps\pa_\eps$ and which is tangent to $\pa\wt\Omega$. One possible choice for $\cR$, in $(\eps,x)$-coordinates, is $\eps\pa_\eps+\hat\chi x\pa_x$; indeed, this equals $\eps\pa_\eps-(1-\hat\chi)\hat x\pa_{\hat x}$ in $(\eps,\hat x)$-coordinates. Then $u\in\cC^0([0,\eps_0),H_{\qop,0 0,\eps}^s(\wt\Omega_\eps))$ lies in $\cA_m$ iff $\cR^j u\in\cC^0([0,\eps_0),H_{\qop,0 0,\eps}^{s-j}(\wt\Omega_\eps))$ for $j=1,\ldots,m$.

\begin{lemma}[Regularity of the inverse]
\label{LemmaCETotReg}
  Let $m\in\N_0$. In the notation of Proposition~\usref{PropCETotSolv}, the inverse of the map~\eqref{EqCETotSolv} is bounded as a map
  \begin{align*}
    (\cL_{\wt\gamma,\wt k}\cL_{\wt\gamma,\wt k}^*)^{-1} &\colon \cA_m\bigl([0,\eps_0),\hat\rho^{-\frac{n}{2}}H_{\qop,0 0,\eps}^{s-2}(\wt\Omega_\eps)\bigr) \oplus \cA_m\bigl([0,\eps_0),\hat\rho^{-\frac{n}{2}}H_{\qop,0 0,\eps}^{s-1}(\wt\Omega_\eps;\wt T^*\wt X)\bigr) \\
      &\qquad \to \cA_m\bigl([0,\eps_0),\hat\rho^{-\frac{n}{2}}H_{\qop,0 0,\eps}^{s+2}(\wt\Omega_\eps)\bigr) \oplus \cA_m\bigl([0,\eps_0),\hat\rho^{-\frac{n}{2}}H_{\qop,0 0,\eps}^{s+1}(\wt\Omega_\eps;\wt T^*\wt X)\bigr).
  \end{align*}
\end{lemma}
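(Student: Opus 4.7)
The argument is an induction on $m$, with the base case $m=0$ being the content of Proposition~\ref{PropCETotSolv} combined with continuity of the inverse. For the latter, it suffices to observe that the pullbacks of $\cL := \cL_{\wt\gamma,\wt k}\cL_{\wt\gamma,\wt k}^*$ to $\hat\Omega$ via $\hat\phi$ (resp.\ to $\Omega_\circ$ via $\phi_\circ$) form a continuous family of uniformly bounded invertible b-00-operators, so that the inverse depends continuously on $\eps$ as well.

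The key structural observation for the induction is the following. In $(\eps,\hat x)$-coordinates near $\hat X^\circ$, the 00-defining function factorizes as $\tilde\rho_2=\hat\rho_2(\hat x)\cdot\rho_2(\eps\hat x)$, where $\rho_2$ is smooth and positive on a neighborhood of $\hat X^\circ$; consequently the exponential weight $e^{\pm\beta/\tilde\rho_2}$ in the definition of $\cL$ becomes $e^{\pm\beta'(\eps,\hat x)/\hat\rho_2(\hat x)}$ with $\beta'=\beta/\rho_2$ smooth in $(\eps,\hat x)$, and $\cL$ appears as a polyhomogeneous family in $\eps\in[0,\eps_0)$ (with index set $\N_0\cup\hat\cE$) of b-00-differential operators on $\hat\Omega$ with smooth coefficients. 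Analogously in $(\eps,x)$-coordinates near $(X_\circ)^\circ$, the operator $\cL$ is a polyhomogeneous family (index set $\N_0\cup\cE$) of b-00-differential operators on $\Omega_\circ$. Since the derivation $\eps\pa_\eps$ (in the respective coordinate charts) differentiates the $\eps$-dependence of the coefficients \emph{only}, it preserves this polyhomogeneous class, and thus $\eps\pa_\eps\cL$ is, in each chart, uniformly bounded in $\eps$ on precisely the same scale of b-00-Sobolev spaces as $\cL$ itself, with exactly the loss of one $\eps$-derivative regularity encoded in the definition of $\cA_m$.

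The induction step is now a commutator argument: assuming the statement for $m$, and given $f$ in the $\cA_{m+1}$-class, we have $u=\cL^{-1}f\in\cA_m(H_{\qop,0 0,\eps}^{s+2})\oplus\cA_m(H_{\qop,0 0,\eps}^{s+1})$ by the inductive hypothesis. Differentiating $\cL u=f$ yields
\[
  \cL(\eps\pa_\eps u)=\eps\pa_\eps f-(\eps\pa_\eps\cL)u.
\]
By the assumption $f\in\cA_{m+1}$ we have $\eps\pa_\eps f\in\cA_m(H_{\qop,0 0,\eps}^{s-3})\oplus\cA_m(H_{\qop,0 0,\eps}^{s-2})$; by the structural observation above, $(\eps\pa_\eps\cL)u\in\cA_m(H_{\qop,0 0,\eps}^{s-2})\oplus\cA_m(H_{\qop,0 0,\eps}^{s-1})$, which embeds into the same space. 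Applying the inductive hypothesis with Sobolev order shifted by $1$, we conclude $\eps\pa_\eps u\in\cA_m(H_{\qop,0 0,\eps}^{s+1})\oplus\cA_m(H_{\qop,0 0,\eps}^s)$. Iterating this relation (or arguing directly by induction on $m$) gives control of $(\eps\pa_\eps)^j u$ for all $j\leq m+1$, hence $u\in\cA_{m+1}$ at the desired orders.

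The main technical subtlety lies in the structural observation: one has to verify carefully that the exponential weight $e^{\pm\beta/\tilde\rho_2}$, which in particular is \emph{not} invariant under $\eps\pa_\eps$, combines with the polynomial weights and the conjugated linearized constraints map $L_{\wt\gamma,\wt k}$ in such a way that the resulting operator and its $\eps\pa_\eps$-derivatives remain in the q-00-differential operator class on which the inversion estimates of Proposition~\ref{PropCETotSolv} apply. The factorization $\tilde\rho_2=\hat\rho_2\cdot\rho_2$ is what makes this work: near each of the 00-faces one factor is bounded below, so that conjugation by the exponential yields singular coefficients of the form $\hat\rho_2^{-2},\rho_2^{-2}$ (and products thereof) in the usual 00-framework at $\{\hat\rho_2=0\}$, resp.\ $\{\rho_2=0\}$, with no cross-contamination, and the $\eps\pa_\eps$-derivatives of the smooth factor $\rho_2(\eps\hat x)$ (resp.\ $\hat\rho_2(|x|/\eps)$) produce only smooth shifts with an additional factor of $\eps$.
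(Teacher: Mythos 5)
Your overall shape—establish continuity for $m=0$, then handle $m\geq 1$ by differentiating the equation and applying the inverse to the resulting commutator identity—matches the paper's. However, there are two genuine gaps.

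First, the base case. You claim that "the pullbacks of $\cL$ to $\hat\Omega$ via $\hat\phi$ (resp.\ to $\Omega_\circ$ via $\phi_\circ$) form a continuous family of uniformly bounded invertible b-00-operators." This is not correct: for each $\eps>0$ the operator $\cL$ is invertible on the full domain $\wt\Omega_\eps$, but its localizations $\hat\chi\cL$ and $\chi_\circ\cL$, pulled back to the two model spaces, are not invertible operators on $\hat\Omega$ or $\Omega_\circ$ individually—they only see a piece of $\wt\Omega_\eps$, and the cutoffs destroy invertibility. One cannot conclude continuity of $\eps\mapsto u(\eps)$ this way. The paper instead argues via weak compactness: from a bounded sequence $u(\eps_i)$ one extracts a weakly convergent subsequence in each chart, verifies the two weak limits patch to a global $u_\infty$ on $\wt\Omega_{\eps_\infty}$ satisfying $\cL u_\infty=f(\eps_\infty)$, and then identifies $u_\infty=u(\eps_\infty)$ from the invertibility of $\cL$.

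Second, the induction step. You differentiate $\cL u=f$ along "$\eps\pa_\eps$ (in the respective coordinate charts)," but these are genuinely different vector fields: $\eps\pa_\eps|_{\hat x}=\hat\rho\pa_{\hat\rho}$ and $\eps\pa_\eps|_{x}=\rho_\circ\pa_{\rho_\circ}$ on $\wt X$, and the resulting chart-by-chart differentiated equations are not a single global equation on $\wt\Omega_\eps$ to which one can apply the global inverse $\cL^{-1}$. The paper resolves this by introducing a single vector field $\cR\in\Vb(\wt X)$ (e.g.\ $\eps\pa_\eps+\hat\chi\, x\pa_x$ in $(\eps,x)$-coordinates), tangent to $\{\eps=0\}$ and to $\pa\wt\Omega$, whose powers can be used to test for $\cA_m$-membership globally; then $\cL(\cR u)=\cR f+[\cL,\cR]u$ holds on all of $\wt\Omega_\eps$, and the crucial point—that $[\cL,\cR]$ lies again in the q-00-class \eqref{EqCETotSolvLLstar}—follows from the tangency of $\cR$ to the 00-faces. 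Your factorization $\tilde\rho_2=\hat\rho_2\cdot\rho_2$ with one factor smooth and positive in each chart is a correct and relevant observation (it explains why the exponential weight does not ruin the commutator), but by itself it does not repair the chart-transition problem; you still need a single globally defined differentiation operator before $\cL^{-1}$ can be invoked.
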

\begin{proof}
  Suppose that $\cL_{\wt\gamma,\wt k}\cL_{\wt\gamma,\wt k}^*u=f$ with $f\in\cA_0=\cC^0$. To prove the continuity of $u=u(\eps)$ at some value $\eps=\eps_\infty\in(0,\eps_0)$, note that the weak compactness of the unit ball in the $H_{\bop,0 0}$-spaces on $\hat\Omega$ and $\Omega_\circ$ implies that for any given sequence $\eps_i\to\eps_\infty$, we can pass to a subsequence so that $\eps^{\frac{n}{2}}\hat\phi^*(\hat\chi u(\eps_i))$ and $\phi_\circ^*(\chi_\circ u(\eps_i))$ converge weakly to some
  \[
    \hat u\in\rho_\circ^{\frac{n}{2}}H_{\bop,0 0}^{s+2}(\hat\Omega)\oplus\rho_\circ^{\frac{n}{2}}H_{\bop,0 0}^{s+1}(\hat\Omega;\hat\phi^*\wt T^*\wt X),\qquad
    u_\circ\in\hat\rho^{-\frac{n}{2}}H_{\bop,0 0}^{s+2}(\Omega_\circ)\oplus\hat\rho^{-\frac{n}{2}}H_{\bop,0 0}^{s+1}(\Omega_\circ;\phi_\circ^*\wt T^*\wt X).
  \]
  Moreover, $\eps^{-\frac{n}{2}}\hat\phi_*(\hat u)=(\phi_\circ)_*(u_\circ)$ on $\hat\chi^{-1}(1)\cap\chi_\circ^{-1}(1)$. Thus, there exists $u_\infty\in(H_{0 0}^{s+2}\oplus H_{0 0}^{s+1})(\wt\Omega\cap\wt X_{\eps_\infty})$ with $\eps^{\frac{n}{2}}\hat\phi^*(\hat\chi u_\infty)=\hat u$ and $\phi_\circ^*(\chi_\circ u_\infty)=u_\circ$. Since the coefficients of $\cL_{\wt\gamma,\wt k}\cL_{\wt\gamma,\wt k}^*$ on $\wt\Omega^\circ\cap\{\eps>0\}$ depend smoothly on $\eps$, we find, upon taking the limit $i\to\infty$, that $\cL_{\wt\gamma,\wt k}\cL_{\wt\gamma,\wt k}^*u_\infty=f(\eps_\infty)$. The invertibility of $\cL_{\wt\gamma,\wt k}\cL_{\wt\gamma,\wt k}^*$ implies that $u_\infty=u(\eps_\infty)$, as desired.

  When $f\in\cA_k$ and $m=1$, we note that
  \[
    \cL_{\wt\gamma,\wt k}\cL_{\wt\gamma,\wt k}^*(\cR u) = \cR f + [\cL_{\wt\gamma,\wt k}\cL_{\wt\gamma,\wt k}^*,\cR]u.
  \]
  The first term lies in $\cA_0$; furthermore, the operator acting on $u$ in the second term is a q-00-operator of class~\eqref{EqCETotSolvLLstar}, and thus the second term lies in $\cA_0$ as well. Inverting $\cL_{\wt\gamma,\wt k}\cL_{\wt\gamma,\wt k}^*$ therefore implies $\cR u\in\cA_0$. The case of $m\geq 2$ follows by induction.
\end{proof}

\begin{rmk}[Relaxing the regularity of $\wt\gamma,\wt k$ in $\eps$]
\label{RmkCETotSolvCont}
  Later, we shall use slight extension of Proposition~\ref{PropCETot}, in which we work with $\wt\gamma+\wt h,\wt k+\wt q$ where $\wt h,\wt q$ are continuous families (in $\eps$) of fiberwise smooth tensors on $\wt X$ which decay rapidly as $\eps\searrow 0$. Lemma~\ref{LemmaCETotReg} similarly remains valid for $m=0$. Note indeed that in the proof of Proposition~\ref{PropCETot}, the regularity of $\wt\gamma,\wt h$ in $\eps$ is not used except when arguing that $\cL_{\wt\gamma,\wt k}\cL_{\wt\gamma,\wt k}^*$ differs from its two normal operators by error terms which decay, as q-00-differential operators, at the respective boundary hypersurface of $\wt X$. This decay is still guaranteed due to the rapid decay of $\wt h,\wt q$ as $\eps\searrow 0$.
\end{rmk}

Since in the gluing problem the boundary data~\eqref{EqCETotRestr} will be given, and satisfy the constraint equations, our task (in view of Proposition~\ref{PropCETot}\eqref{ItCETotPhg}) is to find subleading corrections so as produce a solution of the constraint equations. At the subleading level, the nonlinear constraints map is well approximated by its linearization:

\begin{lemma}[Accuracy of the linearization]
\label{LemmaCETotAcc}
  Let $(\wt\gamma,\wt k)$ be a $(\hat\cE,\cE)$-smooth total family. Put
  \begin{equation}
  \label{EqCETotAccQ}
    Q(\wt\gamma,\wt k;\wt h,\wt q) := P(\wt\gamma+\wt h,\wt k+\wt q;\Lambda) - P(\wt\gamma,\wt k;\Lambda) - L_{\wt\gamma,\wt k}(\wt h,\wt q)
  \end{equation}
  for all $\wt h,\wt q$ for which the right hand side is defined for $\eps<\eps_0$ with some $\eps_0>0$ (depending on $\wt\gamma,\wt k,\wt h,\wt q$).
  \begin{enumerate}
  \item\label{ItCETotAccPhg}{\rm (Polyhomogeneous version.)} Suppose $\hat\cE'\subset\hat\cE$ and $\cE'\subset\cE$ are index sets. Put $\hat\cF'=2\hat\cE'+(\N_0\cup\hat\cE)$ and $\cF'=2\cE'+(\N_0\cup\cE)$. Let $(\tilde h,\tilde q)\in\cA_\phg^{\hat\cE',\cE'}(\wt X\setminus\wt K^\circ;S^2\wt T^*\wt X)\oplus\cA_\phg^{\hat\cE'-1,\cE'}(\wt X\setminus\wt K^\circ;S^2\wt T^*\wt X)$. Then\footnote{For all sufficiently small $\eps>0$, the sum $\wt\gamma+\wt h$ is a positive definite section of $S^2\wt T^*\wt X$, and therefore the left hand side is well-defined.}
    \begin{equation}
    \label{EqCETotAccPhg}
      Q(\wt\gamma,\wt k;\wt h,\wt q) \in \cA_\phg^{\hat\cF'-2,\cF'}(\wt X;\ul\R\oplus\wt T^*\wt X).
    \end{equation}
  \item\label{ItCETotAccq}{\rm (q-00-Sobolev version.)} In the notation of Proposition~\usref{PropCETotSolv}, suppose that $\alpha_0,\hat\alpha>0$ and $s>\frac{n}{2}+2$. Let $\alpha'_2\in\R$. Then we have a uniform estimate
    \begin{equation}
    \label{EqCETotAccq}
    \begin{split}
      &\|Q(\wt\gamma,\wt k;\wt h_1,\wt q_1)-Q(\wt\gamma,\wt k;\wt h_2,\wt q_2)\|_{\hat\rho^{2\hat\alpha-2}\rho_\circ^{2\alpha_\circ}\tilde\rho_2^{\alpha'_2}e^{-\beta/\tilde\rho_2}H_{\qop,0 0,\eps}^{s-2}\oplus\hat\rho^{2\hat\alpha-2}\rho_\circ^{2\alpha_\circ}\tilde\rho_2^{\alpha'_2}e^{-\beta/\tilde\rho_2}H_{\qop,0 0,\eps}^{s-1}} \\
      &\qquad \leq C\|(\wt h_1,\wt q_1)-(\wt h_2,\wt q_2)\|_{\hat\rho^{\hat\alpha}\rho_\circ^{\alpha_\circ}\tilde\rho_2^{\alpha_2}e^{-\beta/\tilde\rho_2}H_{\qop,0 0,\eps}^s\oplus\hat\rho^{\hat\alpha-1}\rho_\circ^{\alpha_\circ}\tilde\rho_2^{\alpha_2}e^{-\beta/\tilde\rho_2}H_{\qop,0 0,\eps}^s} \\
      &\qquad\hspace{3.12em} \times \max_{j=1,2} \|(\wt h_j,\wt q_j)\|_{\hat\rho^{\hat\alpha}\rho_\circ^{\alpha_\circ}\tilde\rho_2^{\alpha_2}e^{-\beta/\tilde\rho_2}H_{\qop,0 0,\eps}^s\oplus\hat\rho^{\hat\alpha-1}\rho_\circ^{\alpha_\circ}\tilde\rho_2^{\alpha_2}e^{-\beta/\tilde\rho_2}H_{\qop,0 0,\eps}^s}
    \end{split}
    \end{equation}
    for all $(\tilde h_j,\tilde q_j)$, $j=1,2$, whose norm in the function space $\hat\rho^{\hat\alpha}\rho_\circ^{\alpha_\circ}\tilde\rho_2^{\alpha_2}e^{-\beta/\tilde\rho_2}H_{\qop,0 0,\eps}^{s_0}\oplus\hat\rho^{\hat\alpha-1}\rho_\circ^{\alpha_\circ}\tilde\rho_2^{\alpha_2}e^{-\beta/\tilde\rho_2}H_{\qop,0 0,\eps}^{s_0}$ is sufficiently small depending on $s_0>\frac{n}{2}+2$, $\alpha_2\in\R$, $\beta>0$, and $\alpha_\circ-\hat\alpha$ (as well as on $\wt\gamma,\wt k$). In particular, using the same norms as in~\eqref{EqCETotAccq},
    \[
      \|Q(\wt\gamma,\wt k;\wt h,\wt q)\|\leq C\|(\wt h,\wt q)\|^2.
    \]
  \end{enumerate}
\end{lemma}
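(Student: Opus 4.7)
The plan is to proceed from the integral form of Taylor's theorem. For $(\wt h,\wt q)$ small enough that $\wt\gamma+t\wt h$ remains positive definite for $t\in[0,1]$, I write
\begin{equation*}
  Q(\wt\gamma,\wt k;\wt h,\wt q) = \int_0^1 (1-t)\,\pa_t^2 P(\wt\gamma+t\wt h,\wt k+t\wt q;\Lambda)\,dt.
\end{equation*}
Here $\pa_t^2 P$ is a bilinear form $B_t((\wt h,\wt q),(\wt h,\wt q))$, where each scalar entry of $B_t$ is a finite sum of terms of the schematic shape $A\cdot D^{i_1}X_1\cdot D^{i_2}X_2$ with $X_1,X_2\in\{\wt h,\wt q\}$, with $D=\pa_{x^i}$, with $i_1+i_2\leq 2$ for the $P_1$-component and $i_1+i_2\leq 1$ for the $P_2$-component (matching the Douglis--Nirenberg orders, see~\eqref{EqCEDNOrders2}), and with $A$ a polynomial in $\wt\gamma+t\wt h$, $(\wt\gamma+t\wt h)^{-1}$, $\wt k+t\wt q$, and their first derivatives.

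For part~\eqref{ItCETotAccPhg}, I track index sets exactly as in the proof of Proposition~\ref{PropCETot}\eqref{ItCETotPhg}. Using that $\hat\cE',\cE'\subset\hat\cE,\cE$ and that $\N_0\cup\hat\cE$, $\N_0\cup\cE$ are nonlinearly closed, the Neumann series for $(\wt\gamma+t\wt h)^{-1}$ converges (for small $(\wt h,\wt q)$) inside $\wt\upbeta^*\CI(X)+\cA_\phg^{\N_0\cup\hat\cE,\N_0\cup\cE}$; hence $A\in\wt\upbeta^*\CI(X)+\cA_\phg^{\N_0\cup\hat\cE,\N_0\cup\cE}$, and taken together with the other coefficient factors still lies in this space. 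Since $\pa_{x^i}\in\hat\rho^{-1}\Vq(\wt X)$ (Lemma~\ref{LemmaGTot}\eqref{ItGTotb}), each derivative costs one power of $\hat\rho$; combined with the intrinsic weight $\hat\rho^{-1}$ carried by $\wt q$ relative to $\wt h$, every summand lies in $\hat\rho^{-2}\cA_\phg^{2\hat\cE'+(\N_0\cup\hat\cE),\,2\cE'+(\N_0\cup\cE)}$ for $P_1$, and in $\hat\rho^{-1}\wt T^*\wt X$-valued $\cA_\phg^{2\hat\cE'+(\N_0\cup\hat\cE)-1,\,2\cE'+(\N_0\cup\cE)}$ for $P_2$; both are absorbed into the claimed bundle $\ul\R\oplus\wt T^*\wt X$-valued space $\cA_\phg^{\hat\cF'-2,\cF'}$. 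Integrating in $t$ preserves this membership.

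For part~\eqref{ItCETotAccq}, I write the difference as a further Taylor expansion in the variables $(\wt h_j,\wt q_j)$, producing a trilinear expression: one factor is $(\wt h_1-\wt h_2,\wt q_1-\wt q_2)$ (or its derivative), and the remaining two factors are drawn from $\{\wt h_j,\wt q_j,D\wt h_j,D\wt q_j\}$, multiplied by coefficients involving $\wt\gamma+s\wt h_j+\ldots$ and their inverses/derivatives. The bound~\eqref{EqCETotAccq} then reduces to uniform-in-$\eps$ algebra and Moser-type estimates on $H^s_{\qop,00,\eps}$. Via the norm equivalence~\eqref{EqGqHqNormEquiv}, this reduces in turn to analogous estimates on b-00-Sobolev spaces over $\hat\Omega$ and $\Omega_\circ$, which via Lemma~\ref{LemmaBAnHb} and~\eqref{EqBAnH00} are controlled cell-by-cell by classical Sobolev algebra estimates on bounded subsets of $\R^n$ (valid since $s>\tfrac{n}{2}+2>\tfrac{n}{2}$). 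The smallness hypothesis on $(\wt h_j,\wt q_j)$, combined with the Sobolev embeddings in Corollaries~\ref{CorBAnSob} and~\ref{CorBAn00Sob}, gives uniform $L^\infty$ bounds on the coefficients $(\wt\gamma+s\wt h_j)^{-1}$ etc., and the bookkeeping of weights matches that already carried out in part~\eqref{ItCETotAccPhg}.

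The main obstacle is the uniformity in $\eps$ in part~\eqref{ItCETotAccq}: one must check that the bilinear (resp.\ trilinear) estimate at the corner $\hat X\cap X_\circ$ loses no more than the advertised powers of $\hat\rho$ and $\rho_\circ$, and that the exponential weight $e^{-\beta/\tilde\rho_2}$ and polynomial weight $\tilde\rho_2^{\alpha_2}$ behave multiplicatively in the nonlinear product (which they do, modulo a loss captured by $\alpha_2'$, since $e^{-2\beta/\tilde\rho_2}\tilde\rho_2^{2\alpha_2}=e^{-\beta/\tilde\rho_2}\cdot e^{-\beta/\tilde\rho_2}\tilde\rho_2^{2\alpha_2}$ and $e^{-\beta/\tilde\rho_2}\tilde\rho_2^{2\alpha_2-\alpha_2'}\in L^\infty$ for appropriate $\alpha_2'$). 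These points are handled cleanly by the bounded-geometry reformulation, which turns the nonlinear estimates into uniform statements in unit cells of fixed geometry.
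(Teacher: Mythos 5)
Your approach is essentially the same as the paper's: the paper expands $P(\wt\gamma+\wt h,\wt k+\wt q;\Lambda)$ as an explicit schematic power series in $(\wt h,\wt q)$ (with coefficients given by a Neumann series for $(\wt\gamma+\wt h)^{-1}$), whereas you package the same content via the integral Taylor remainder $\int_0^1(1-t)\pa_t^2 P\,dt$; either way, the estimate reduces to the same combinatorial tracking of weights, index sets, and Sobolev multiplication, and for part~\eqref{ItCETotAccq} the same bounded-geometry reduction to unit cells is used. So the two proofs are not genuinely different routes, and the verdict hinges on whether your bookkeeping is tight.

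There is one concrete slip. You claim the coefficient $A$ in the decomposition $A\cdot D^{i_1}X_1\cdot D^{i_2}X_2$ is a polynomial in $\wt\gamma+t\wt h$, $(\wt\gamma+t\wt h)^{-1}$, $\wt k+t\wt q$ \emph{and their first derivatives}, and that $A\in\wt\upbeta^*\CI(X)+\cA_\phg^{\N_0\cup\hat\cE,\N_0\cup\cE}$. This fails for the $P_1$-component. The second functional derivative of $R_\gamma$ against $(\wt h,\wt h)$ contains the term $(\wt\gamma+t\wt h)^{-3}\wt h\wt h\,D^2(\wt\gamma+t\wt h)$ (coming from differentiating the prefactor of $\gamma^{-1}D^2\gamma$ twice), i.e.\ a summand with $i_1=i_2=0$ whose coefficient carries a \emph{second} derivative of $\wt\gamma+t\wt h$ and so lies only in $\hat\rho^{-2}\cA_\phg^{\N_0\cup\hat\cE,\N_0\cup\cE}$, not the unweighted space you assert. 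Your stated rule ``each derivative in $D^{i_1}X_1\cdot D^{i_2}X_2$ costs $\hat\rho^{-1}$'' then fails to produce the overall $\hat\rho^{-2}$ for this term. The fix (which the paper carries out explicitly) is to track the total count of singular factors---coordinate derivatives $D$, factors of $\wt k+t\wt q$ or $\wt q$---across the \emph{entire} summand, including those sitting in the coefficient, and observe that this total is exactly $2$ for every term of $Q_1,Q_2,Q_3$. A second, smaller, inaccuracy is in part~\eqref{ItCETotAccq}: a coordinate derivative $D$ costs \emph{two} powers of $\tilde\rho_2$ (not one) relative to a q-00-derivative, since $\pa_{x}\in\tilde\rho_2^{-2}\Diff_{\qop,00}^1$ near $\tilde\rho_2=0$; the paper makes this explicit when bounding the term $c_j(\wt\gamma)\wt h^{1+j}(D^2\wt h)$, and your ``loss captured by $\alpha_2'$'' should be justified with this factor of $\tilde\rho_2^{-4}$ for the $D^2$-terms rather than left implicit.
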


In part~\eqref{ItCETotAccPhg}, note that $L_{\wt\gamma,\wt k}=L_{\wt\gamma,\wt k}\wt w\circ\wt w^{-1}$ maps $(\wt h,\wt q)$ into
\begin{equation}
\label{EqCETotAccLot}
  L_{\wt\gamma,\wt k}\wt w(\cA_\phg^{\hat\cE'-2,\cE'})\subset\cA_\phg^{\hat\cE'+(\N_0\cup\hat\cE)-2,\cE'+(\N_0\cup\cE)}(\wt X\setminus\wt K^\circ;\ul\R\oplus\wt T^*\wt X).
\end{equation}
Thus,~\eqref{EqCETotAccPhg} shows that $P(\wt\gamma+\wt h,\wt k+\wt q;\Lambda)-P(\wt\gamma,\wt k;\Lambda)$ is given by $L_{\wt\gamma,\wt k}(\wt h,\wt q)$ modulo quadratically small error terms; similarly for part~\eqref{ItCETotAccq}.

\begin{proof}[Proof of Lemma~\usref{LemmaCETotAcc}]
  Consider the schematic form of $P(\wt\gamma,\wt k;\Lambda)$ in~\eqref{EqCEOp}. In local coordinates on $X$, lifted to (singular) coordinates on $\wt X$, the Christoffel symbols of $\wt\gamma$ are of the form $\Gamma(\wt\gamma)\sim\wt\gamma^{-1}D\wt\gamma$ where $D$ is a coordinate derivative, and where $\wt\gamma,\wt\gamma^{-1}$ denote components of the (inverse) metric in these coordinates. The scalar curvature is thus of the form
  \[
    R_{\wt\gamma}\sim D(\wt\gamma^{-1}D\wt\gamma)+(\wt\gamma^{-1}D\wt\gamma)^2\sim\wt\gamma^{-1}D^2\wt\gamma+\wt\gamma^{-2}(D\wt\gamma)^2.
  \]
  Now, in the neighborhood of any point on $X$ and for sufficiently small $|\wt h|$ (maximum norm of the components of $\wt h$), we have $(\wt\gamma+\wt h)^{-1}=\sum_{j=0}^\infty(-1)^j\wt\gamma^{-1}(\wt h\wt\gamma^{-1})^j$; therefore, for twice continuously differentiable $\wt h$ with small $|\wt h|$, we can expand $R_{\wt\gamma+\wt h}\sim(\wt\gamma+\wt h)^{-1}D^2(\wt\gamma+\wt h)+(\wt\gamma+\wt h)^{-2}(D\wt\gamma+D\wt h)^2$ into a convergent (in $\cC^0$) power series around $\wt h=0$, giving
  \begin{align*}
    &R_{\wt\gamma+\wt h} = R_{\wt\gamma} + D_{\wt\gamma}R(\wt h) + Q_1(\wt\gamma,\wt h), \\
    &\qquad Q_1(\wt\gamma;\wt h) \sim \sum_{j\geq 0} c_j(\wt\gamma)\tilde h^j \bigl( (D^2\wt\gamma)\tilde h^2 + \tilde h(D^2\tilde h) + (D\wt\gamma)^2\wt h^2 + (D\wt\gamma)\wt h(D\wt h) + (D\wt h)^2\bigr).
  \end{align*}
  Here, we have $c_j(\wt\gamma)\in\cA_\phg^{\N_0\cup\hat\cE,\N_0\cup\cE}$ since $\wt\gamma\in\cA_\phg^{\N_0\cup\hat\cE,\N_0\cup\cE}$. In a similar vein, since $(\tr_{\wt\gamma}\wt k)^2\sim(\wt\gamma^{-1}\wt k)^2$, the expression $(\tr_{\wt\gamma+\wt h}(\wt k+\wt q))^2$ is equal to the sum of the nonlinear leading term $(\tr_{\wt\gamma}\wt k)^2$, its linearization in $(\wt\gamma,\wt k)$ evaluated at $(\wt h,\wt q)$, and a nonlinear error term
  \[
    Q_2(\wt\gamma,\wt k;\wt h,\wt q) \sim \sum_{j\geq 0} c_j(\wt\gamma)\wt h^j\bigl(\wt k^2\wt h^2+\wt k\wt h\wt q+\wt q^2\bigr);
  \]
  the term $|\wt k|_{\wt\gamma}^2\sim(\wt\gamma^{-1}\wt k)^2$ has a completely analogous description.

  For the second component of $P(\wt\gamma,\wt k;\Lambda)$, the terms $\delta_{\wt\gamma}\wt k$ and $\dd\tr_{\wt\gamma}\wt k$ have the schematic forms $\wt\gamma^{-1}(D\wt k+(\wt\gamma^{-1}D\wt\gamma)\wt k)$ and $D(\wt\gamma^{-1}\wt k)$, respectively, which are both schematically equal to $\wt\gamma^{-1}D\wt k+\wt\gamma^{-2}(D\wt\gamma)\wt k$. Thus, the quadratic and higher order (in $(\wt h,\wt q)$) terms of $\delta_{\wt\gamma+\wt h}(\wt k+\wt q)$ and $\dd\tr_{\wt\gamma+\wt h}(\wt k+\wt q)\sim(\wt\gamma+\wt h)^{-1}D(\wt k+\wt q)+(\wt\gamma+\wt h)^{-2}(D(\wt\gamma+\wt h))(\wt k+\wt q)$ are of the schematic form
  \[
    Q_3(\wt\gamma,\wt k;\wt h,\wt q) \sim \sum_{j\geq 0} c_j(\wt\gamma)\wt h^j \bigl( (D\wt k)\wt h^2+\wt h(D\wt q) + (D\wt\gamma)\wt k\wt h^2 + (D\wt\gamma)\wt q\wt h+\wt k(D\wt h)\wt h+(D\wt h)\wt q\bigr).
  \]
  Using
  \[
    P(\wt\gamma+\wt h,\wt k+\wt q;\Lambda)-P(\wt\gamma,\wt k;\Lambda)-L_{\wt\gamma,\wt k}(\wt h,\wt q) \sim Q_1(\wt\gamma;\wt h)+Q_2(\wt\gamma,\wt k;\wt h,\wt q),Q_3(\wt\gamma,\wt k;\wt h,\wt q),
  \]
  and the fact that $D\in\hat\rho^{-1}\Vq(\wt X)$, we can now prove the Lemma.

  For part~\eqref{ItCETotAccPhg}, consider the first term of $Q_1$, which is $\wt h^j(D^2\wt\gamma)\wt h^2\in\cA_\phg^{(\N_0\cup\hat\cE)-2,\N_0\cup\cE}\cdot\cA_\phg^{(2+j)\hat\cE',(2+j)\cE'}$; since $\hat\cE'\subset\hat\cE$, we have $(\N_0\cup\hat\cE)+(2+j)\hat\cE'\subset(\N_0\cup\hat\cE)+2\hat\cE'$, similarly for the un-hatted index sets. Thus, $\wt h^j(D^2\wt\gamma)\wt h^2\in\cA_\phg^{\hat\cF'-2,\cF'}$ indeed. Structurally, note that this term, and indeed all terms comprising $Q_1$ and $Q_2$, contain a total of exactly $2$ factors involving $D,\wt k,\wt q$. Each of these factors reduces the index set at $\hat X$ by $1$ compared to $\wt\gamma,\wt h$; therefore, $Q_1,Q_2\in\cA_\phg^{\hat\cF'-2,\cF'}$. The terms comprising $Q_3$ have the same structure (for instance, $\wt h^{2+j}(D\wt k)$ involves $D$ and $\wt k$, while $\wt h^j(D\wt h)\wt q$ involves $D$ and $\wt q$), and thus also $Q_3\in\cA_\phg^{\hat\cF'-2,\cF'}$.

  Turning to part~\eqref{ItCETotAccq} and considering the case $(\wt h_2,\wt q_2)=0$ first, we use the multiplicative properties of q-00-Sobolev spaces (defined relative to a positive q-00-density), which for unweighted spaces are the same as those of standard Sobolev spaces on $\R^n$ (as follows from the bounded geometry perspective), while for weighted spaces the weights are additive. In particular, $H_{\qop,0 0,\eps}^s$ is an algebra under pointwise multiplication for $s>\frac{n}{2}$, and there exists an $\eps$-independent constant $C$ so that $\|u v\|_{H_{\qop,0 0,\eps}^s}\leq C\|u\|_{H_{\qop,0 0,\eps}^s}\|v\|_{H_{\qop,0 0,\eps}^s}$. Thus, for example, using the membership $D^2\wt\gamma\in\cA^{-2,0}$, we have
  \[
    \|\tilde h^j(D^2\wt\gamma)\tilde h^2\|_{\hat\rho^{(2+j)\hat\alpha-2}\rho_\circ^{(2+j)\alpha_\circ}\tilde\rho_2^{(2+j)\alpha_2}e^{-(2+j)\beta/\tilde\rho_2}H_{\qop,0 0,\eps}^s} \leq C^{2+j}\|\tilde h\|_{\hat\rho^{\hat\alpha}\rho_\circ^{\alpha_\circ}\tilde\rho_2^{\alpha_2}e^{-\beta/\tilde\rho_2}H_{\qop,0 0,\eps}^s}^{2+j}.
  \]
  Since for $\beta>0$ multiplication by $e^{-\beta/\tilde\rho_2}\tilde\rho_2^\alpha$ is uniformly bounded on any fixed weighted q-00-Sobolev space for any $\alpha$, we can replace the weight $\tilde\rho_2^{(2+j)\alpha_2}e^{-(2+j)\beta/\tilde\rho_2}$ on the left hand sides by $C_2^j\tilde\rho_2^{\alpha'_2}e^{-\beta/\tilde\rho_2}$ with $C_2>0$. Turning to the term $c_j(\wt\gamma)\wt h^{j+1}(D^2\wt h)$ in $Q_1(\wt\gamma,\wt h)$, observe that $D$, relative to a q-00-derivative, loses two powers of $\tilde\rho_2$ in addition to one power of $\hat\rho$; we thus have
  \[
    \|\tilde h^{1+j}(D^2\tilde h)\|_{\hat\rho^{(2+j)\hat\alpha-2}\rho_\circ^{(2+j)\alpha_\circ}\tilde\rho_2^{(2+j)\alpha_2-4}e^{-(2+j)\beta/\tilde\rho_2}H_{\qop,0 0,\eps}^s} \leq C^{2+j}\|\tilde h\|_{\hat\rho^{\hat\alpha}\rho_\circ^{\alpha_\circ}\tilde\rho_2^{\alpha_2}e^{-\beta/\tilde\rho_2}H_{\qop,0 0,\eps}^s}^{2+j}.
  \]
  Arguing as before, the $\tilde\rho_2$-weight on the left can be replaced by $\tilde\rho_2^{\alpha'_2}e^{-\beta/\tilde\rho_2}$ upon suitably enlarging the constant $C$. The analysis of the remaining terms comprising $Q_1,Q_2,Q_3$ is analogous. In order to estimate the difference $Q(\wt\gamma,\wt k;\wt h_1,\wt q_1)-Q(\wt\gamma,\wt k;\wt h_2,\wt q_2)$, one uses identities such as $\wt h_1^j-\wt h_2^j=(\wt h_1-\wt h_2)\Sigma$ where $\Sigma:=\sum_{k=0}^{j-1}\wt h_1^k\wt h_2^{j-1-k}$; and the norm of $\Sigma$ is bounded by $C j$ times the $(j-1)$-st power of the maximum of the norms of $\tilde h_1$ and $\tilde h_2$.
\end{proof}

For the construction of a formal solution to the gluing problem, we only need:

\begin{cor}[Accuracy of the normal operators]
\label{CorCETotAccNorm}
  Let $(\wt\gamma,\wt k)$ be a $(\hat\cE,\cE)$-smooth total family with boundary data $(\hat\gamma,\hat k)$ and $(\gamma,k)$; let further $\hat\cE'\subset\hat\cE$ and $\cE'\subset\cE$ be index sets, and let $(\wt h,\wt q)\in\cA_\phg^{\hat\cE',\cE'}(\wt X\setminus\wt K^\circ;S^2\wt T^*\wt X)\oplus\cA_\phg^{\hat\cE'-1,\cE'}(\wt X\setminus\wt K^\circ;S^2\wt T^*\wt X)$. Using the notation of~\S\usref{SsGq}, we then have
  \begin{subequations}
  \begin{alignat}{2}
  \label{EqCETotAccNormHat}
    &P(\wt\gamma+\wt h,\wt k+\wt q;\Lambda)-P(\wt\gamma,\wt k;\Lambda)-\eps^{-2}\hat\chi\sfs\Bigl(L_{\hat\gamma,\hat k}\bigl(\sfs^{-1}\wt h,\sfs^{-1}(\eps\wt q)\bigr)\Bigr) &&\in \cA_\phg^{\hat\cE'+((\N_0+1)\cup\hat\cE)-2,\cE'+(\N_0\cup\cE)}, \\
  \label{EqCETotAccNormCirc}
    &P(\wt\gamma+\wt h,\wt k+\wt q;\Lambda)-P(\wt\gamma,\wt k;\Lambda)-\chi_\circ L_{\circ,\gamma,k}(\wt h,\wt q) &&\in \cA_\phg^{\hat\cE'+(\N_0\cup\cE)-2,\cE'+\cE}.
  \end{alignat}
  \end{subequations}
\end{cor}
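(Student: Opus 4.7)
The plan is to combine the accuracy of the linearization (Lemma~\ref{LemmaCETotAcc}\eqref{ItCETotAccPhg}) with the normal operator identities for $L_{\wt\gamma,\wt k}\wt w$ from Proposition~\ref{PropCETot}\eqref{ItCETotLin}. Writing
\[
  P(\wt\gamma+\wt h,\wt k+\wt q;\Lambda)-P(\wt\gamma,\wt k;\Lambda) = L_{\wt\gamma,\wt k}(\wt h,\wt q) + Q(\wt\gamma,\wt k;\wt h,\wt q),
\]
Lemma~\ref{LemmaCETotAcc}\eqref{ItCETotAccPhg} places $Q$ in $\cA_\phg^{\hat\cF'-2,\cF'}$ with $\hat\cF'=2\hat\cE'+(\N_0\cup\hat\cE)$ and $\cF'=2\cE'+(\N_0\cup\cE)$. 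A short calculation with index sets, using $\hat\cE'\subset\hat\cE$, $\cE'\subset\cE$, and the nonlinear closure of $\hat\cE,\cE$, yields $\hat\cF'\subset\hat\cE'+\hat\cE\subset\hat\cE'+((\N_0+1)\cup\hat\cE)$ and $\cF'\subset\cE'+\cE\subset\cE'+(\N_0\cup\cE)$, so $Q$ lies a fortiori in each of the two target spaces. The task thereby reduces to controlling the difference between $L_{\wt\gamma,\wt k}(\wt h,\wt q)$ and each of the two normal operator expressions on the right hand sides of \eqref{EqCETotAccNormHat}--\eqref{EqCETotAccNormCirc}.

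For the $\hat X$-normal operator, I would rewrite $L_{\wt\gamma,\wt k}(\wt h,\wt q) = (L_{\wt\gamma,\wt k}\wt w)(\wt w^{-1}(\wt h,\wt q))$, noting that $\wt w^{-1}(\wt h,\wt q)\in\cA_\phg^{\hat\cE'-2,\cE'}$, and apply the first identity of \eqref{EqCETotLinNormDiff} to obtain
\[
  L_{\wt\gamma,\wt k}(\wt h,\wt q) - \hat\chi\sfs L_{\hat\gamma,\hat k}\hat w\sfs^{-1}\bigl(\wt w^{-1}(\wt h,\wt q)\bigr) \in \cA_\phg^{\hat\cE'+((\N_0+1)\cup\hat\cE)-2,\cE'+(\N_0\cup\cE)}.
\]
To identify the subtracted term with the quantity appearing in \eqref{EqCETotAccNormHat}, I would use that $\sfs^{-1}$ is multiplication by $\eps^{-2}$ on sections of $S^2\wt T^*\wt X$ (Definition~\ref{DefGTotScale}) together with the relation $\hat\rho\rho_\circ=\eps$, which can be imposed by the choice of $\hat\rho$. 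This yields the bundle identity $\hat w\sfs^{-1}\wt w^{-1}(\wt h,\wt q)=\eps^{-2}(\sfs^{-1}\wt h,\sfs^{-1}(\eps\wt q))$, and hence
\[
  \hat\chi\sfs L_{\hat\gamma,\hat k}\hat w\sfs^{-1}\bigl(\wt w^{-1}(\wt h,\wt q)\bigr) = \eps^{-2}\hat\chi\sfs\bigl(L_{\hat\gamma,\hat k}(\sfs^{-1}\wt h,\sfs^{-1}(\eps\wt q))\bigr).
\]
Combining with the control on $Q$ then gives \eqref{EqCETotAccNormHat}.

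The argument for the $X_\circ$-normal operator is analogous but cleaner, since near $X_\circ$ the weights $\wt w$ and $w_\circ$ coincide under the identification $\wt T^*\wt X|_{X_\circ}\cong\upbeta_\circ^*T^*X$ of Lemma~\ref{LemmaGTot}\eqref{ItGTotXcirc}, and no rescaling by $\sfs$ intervenes. The second identity of \eqref{EqCETotLinNormDiff} applied to $\wt w^{-1}(\wt h,\wt q)\in\cA_\phg^{\hat\cE'-2,\cE'}$ yields
\[
  L_{\wt\gamma,\wt k}(\wt h,\wt q) - \chi_\circ L_{\circ,\gamma,k}(\wt h,\wt q) \in \cA_\phg^{\hat\cE'+(\N_0\cup\hat\cE)-2,\cE'+\cE},
\]
and combining with the control on $Q$ gives \eqref{EqCETotAccNormCirc}. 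The only technical point throughout is the bookkeeping of the $\eps$-power factors from $\sfs^{\pm 1}$ and the matching of the three weights $\wt w,\hat w,w_\circ$ at the two boundary hypersurfaces; once those identifications are recorded, the argument is a direct application of the ingredients already established in Lemma~\ref{LemmaCETotAcc} and Proposition~\ref{PropCETot}.
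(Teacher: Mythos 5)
Your argument reproduces the paper's own proof step for step: decompose $P(\wt\gamma+\wt h,\wt k+\wt q;\Lambda)-P(\wt\gamma,\wt k;\Lambda)$ into $L_{\wt\gamma,\wt k}(\wt h,\wt q)+Q$, bound $Q$ via Lemma~\ref{LemmaCETotAcc}\eqref{ItCETotAccPhg} together with the nonlinear closure of $\hat\cE,\cE$, and bound the linear remainder by acting with~\eqref{EqCETotLinNormDiff} on $\wt w^{-1}(\wt h,\wt q)\in\cA_\phg^{\hat\cE'-2,\cE'}$ and invoking the weight identity $\hat w\sfs^{-1}\wt w^{-1}(\wt h,\wt q)=\eps^{-2}(\sfs^{-1}\wt h,\sfs^{-1}(\eps\wt q))$, respectively $w_\circ\wt w^{-1}=\Id$. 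One small point worth flagging: your computation for~\eqref{EqCETotAccNormCirc} correctly yields the index set $\hat\cE'+(\N_0\cup\hat\cE)-2$ at $\hat X$, which differs from the $\hat\cE'+(\N_0\cup\cE)-2$ appearing in the corollary's statement; since the second membership in~\eqref{EqCETotLinNormDiff} has index set $\N_0\cup\hat\cE$ at $\hat X$, and since $\hat\cF'\subset\hat\cE'+\hat\cE$ but in general $\hat\cE\not\subset\N_0\cup\cE$, the version printed in the corollary appears to be a typo, and you should not assert that your (correct) display literally ``gives~\eqref{EqCETotAccNormCirc}'' without noting the substitution of $\hat\cE$ for $\cE$.
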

\begin{proof}
  This follows from Lemma~\ref{LemmaCETotAcc} upon using~\eqref{EqCETotLinNormDiff}. Indeed, acting with
  \[
    L_{\wt\gamma,\wt k}\wt w-\hat\chi\sfs\circ L_{\hat\gamma,\hat k}\hat w\circ \sfs^{-1} \in \cA_\phg^{(\N_0+1)\cup\hat\cE,\N_0\cup\cE}\Diffq(\wt X)
  \]
  on $\wt w^{-1}(\wt h,\wt q)\in\cA_\phg^{\hat\cE'-2,\cE'}$ gives an element of the space in~\eqref{EqCETotAccNormHat}; note also that acting on the pair $(\wt h,\wt q)$ of symmetric 2-tensors, we have
  \[
    \hat w\sfs^{-1}\wt w^{-1}=\diag(\rho_\circ^{-2},\rho_\circ^{-1})\sfs^{-1}\diag(\hat\rho^{-2},\hat\rho^{-1})=\eps^{-2}\sfs^{-1}\diag(1,\eps).
  \]
  Moreover, the quadratic error term~\eqref{EqCETotAccPhg} lies in the space~\eqref{EqCETotAccNormHat} as well, since in the notation of Lemma~\ref{LemmaCETotAcc} we have $\hat\cF'\subset\hat\cE'+\hat\cE$ and $\cF'\subset\cE'+\cE$. The argument for~\eqref{EqCETotAccNormCirc} is completely analogous, now using that $w_\circ\wt w^{-1}=\Id$.
\end{proof}

\section{Gluing construction}
\label{SGl}

The data for our gluing problem are as follows.

\begin{definition}[Gluing data]
\label{DefGlData}
  Let $X$ be an $n$-dimensional smooth manifold, $n\geq 3$; let $\Lambda\in\R$. Let $\cE\subset\C\times\N_0$ be an index set with $\Re\cE>0$; let $\hat K\subset\R^n$ be compact with smooth boundary and connected complement. Then \emph{$\cE$-gluing data (with cosmological constant $\Lambda$)} consist of a point $\fp\in X$ and two pairs $(\hat\gamma,\hat k)$, $(\gamma,k)$ with the following properties: $(\hat\gamma,\hat k)$ is $\cE$-asymptotically flat on $\R^n\setminus\hat K^\circ$ (see Definition~\usref{DefCEAf}); $\gamma,k$ are smooth symmetric 2-tensors on $X$, with $\gamma$ Riemannian; and both pairs satisfy the nonlinear constraint equations
  \[
    P(\hat\gamma,\hat k;0)=0,\qquad
    P(\gamma,k;\Lambda)=0.
  \]
\end{definition}

Given $X$, we recall from Definition~\ref{DefGTot} the notation $\wt X$ for the total gluing space and $\hat X\cong\ol{\R^n}$ and $X_\circ=[X;\{\fp\}]$ for the boundary hypersurfaces of $\wt X$.

\begin{thm}[Main result, detailed version]
\label{ThmGlT}
  Let $\fp\in X$ and $(\hat\gamma,\hat k)$, $(\gamma,k)$ be $\cE$-gluing data (with cosmological constant $\Lambda$) as in Definition~\usref{DefGlData}. Suppose $\cU^\circ\subset X$ is a smoothly bounded connected open set containing $\fp$ with compact closure $\cU=\ol{\cU^\circ}$, and suppose that $(X,\gamma,k)$ has no KIDs in $\cU^\circ$ (see Definition~\usref{DefCEPctKID}). Let $0<\delta<\min(\Re\cE,n-2)$. Then there exist index sets $\hat\cE_\sharp,\cE_\sharp\subset\C\times\N_0$ with $\Re\hat\cE_\sharp>0$ and $\Re\cE_\sharp>\delta$, and a $(\hat\cE_\sharp,\cE_\sharp)$-smooth total family $(\wt\gamma,\wt k)$ (defined on $\wt X\setminus\wt K^\circ$, with $\wt K^\circ$ given in Definition~\usref{DefCETotID}) with the following properties:
  \begin{enumerate}
  \item\label{ItGlTBdy} the boundary data of $(\wt\gamma,\wt k)$ are equal to $(\hat\gamma,\hat k)$ and $(\gamma,k)$;
  \item\label{ItGlTLoc} we have $(\wt\gamma,\wt k)=(\gamma,k)$ near $\wt X\setminus([0,1)\times\cU^\circ)$, and $(\sfs^{-1}\wt\gamma,\sfs^{-1}(\eps\wt k))=(\hat\gamma,\hat k)$ near $\hat r\leq\hat R_0$ where $\hat R_0>0$ is any fixed constant such that $B(0,\hat R_0)\supset\hat K$;
  \item\label{ItGlTSolv} there exists $\eps_\sharp>0$ so that $(\wt\gamma,\wt k)$ solves the constraint equations in $\eps<\eps_\sharp$, i.e.\ $P(\wt\gamma,\wt k;\Lambda)=0$ on every $\eps$-level set of $\wt X\setminus\wt K^\circ$ for $\eps<\eps_\sharp$.
  \end{enumerate}
\end{thm}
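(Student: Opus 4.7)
\medskip

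\textbf{Proof proposal.} The plan is to carry out the three-step construction outlined in Section~\ref{SsIG}, combining the normal operator analysis on the two model spaces with a contraction mapping argument on the total space.

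First, I would construct an initial $(\hat\cE,\cE)$-smooth approximation $(\wt\gamma_0,\wt k_0)$ with the required boundary data and localization properties. Using geodesic normal coordinates $x$ around $\fp$ and a cutoff $\chi$ localized near $\fp$, one can take, schematically, $(\wt\gamma_0)_{ij}(\eps,x)=\gamma_{ij}(x)+\chi(x)\bigl(\hat\gamma_{\hat i\hat j}(x/\eps)-\delta_{ij}\bigr)$ and correspondingly $(\wt k_0)_{ij}=k_{ij}(x)+\eps^{-1}\chi(x)\hat k_{\hat i\hat j}(x/\eps)$ (cf.\ Remark~\ref{RmkIGCorvino}, equation~\eqref{ItIGCorvinoNo}), extended by $(\gamma,k)$ away from $\fp$ and modified near $\hat r\leq\hat R_0$ to match $(\hat\gamma,\hat k)$ exactly there. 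Definition~\ref{DefCETotID} and the identification of boundary data via~\eqref{EqCETotRestr} guarantee that $(\wt\gamma_0,\wt k_0)$ has the correct boundary data. By Proposition~\ref{PropCETot}\eqref{ItCETotPhg} and the fact that the boundary data separately solve the constraint equations, we have $P(\wt\gamma_0,\wt k_0;\Lambda)\in\cA_\phg^{\hat\cF_0-2,\cF_0}$ with $\Re\hat\cF_0>0$ and $\Re\cF_0>\delta$ (for the $X_\circ$ face this uses that $\gamma,k$ themselves solve the constraints; for the $\hat X$ face this uses~\eqref{EqCETotPhgLot} together with $P(\hat\gamma,\hat k;0)=0$).

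Second, I would iteratively improve this approximation to a formal solution. At each step, Corollary~\ref{CorCETotAccNorm} identifies the leading part of the nonlinear error at each of the two boundary faces with the action of the model linearized operators $L_{\hat\gamma,\hat k}$ and $L_{\circ,\gamma,k}$ (after appropriate rescalings), with subleading terms lying in a strictly smaller index set. Alternating between the two faces, I invert $L_{\hat\gamma,\hat k}$ on $\hat X$ using Proposition~\ref{PropCEAfSolv}\eqref{ItCEAfSolvPhg} (with $\alpha_\circ<n-2$, which is harmless since the correction is supported away from $\hat r\leq\hat R_0$) and $L_{\circ,\gamma,k}$ on $X_\circ$ using Proposition~\ref{PropCEPctSolv}\eqref{ItCEPctSolvPhg}, where the no-KIDs hypothesis enters crucially via its role in the proof of the latter. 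The polyhomogeneous corrections produced vanish near the respective `outer' boundaries ($\hat r\leq\hat R_0$ and $X\setminus\cU^\circ$), so they preserve property~\eqref{ItGlTLoc}. Asymptotic summation (Borel's lemma on index sets) then yields a polyhomogeneous $(\wt\gamma,\wt k)$ with index sets $\hat\cE_\sharp,\cE_\sharp$ built from $\hat\cS,\cS_\circ$, and the initial $\hat\cF_0,\cF_0$, such that $P(\wt\gamma,\wt k;\Lambda)$ vanishes to infinite order at $\hat X\cup X_\circ$, i.e.\ is rapidly vanishing as $\eps\searrow 0$.

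Third, I would correct this formal solution to a true solution. Writing the sought true solution as $(\wt\gamma+\wt h,\wt k+\wt q)$, the constraint equation becomes
\[
  L_{\wt\gamma,\wt k}(\wt h,\wt q) = -P(\wt\gamma,\wt k;\Lambda) - Q(\wt\gamma,\wt k;\wt h,\wt q),
\]
with $Q$ as in~\eqref{EqCETotAccQ}. Using the right inverse $S_{\wt\gamma,\wt k}$ of $L_{\wt\gamma,\wt k}$ provided by Proposition~\ref{PropCETotSolv} (together with the footnote describing its mapping properties), the problem reduces to a fixed point equation
\[
  (\wt h,\wt q) = T(\wt h,\wt q) := -S_{\wt\gamma,\wt k}\bigl(P(\wt\gamma,\wt k;\Lambda)+Q(\wt\gamma,\wt k;\wt h,\wt q)\bigr)
\]
on a ball in the exponentially weighted q-00-Sobolev space $\hat\rho^{\hat\alpha}\rho_\circ^{\alpha_\circ}\tilde\rho_2^{\alpha_2}e^{-\beta/\tilde\rho_2}H_{\qop,0 0,\eps}^s\oplus\hat\rho^{\hat\alpha-1}\rho_\circ^{\alpha_\circ}\tilde\rho_2^{\alpha_2}e^{-\beta/\tilde\rho_2}H_{\qop,0 0,\eps}^s$. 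The forcing $P(\wt\gamma,\wt k;\Lambda)$ is rapidly vanishing as $\eps\searrow 0$, so its norm becomes arbitrarily small for $\eps\leq\eps_\sharp$; the quadratic estimate in Lemma~\ref{LemmaCETotAcc}\eqref{ItCETotAccq} shows $T$ is a contraction on a small ball. The fixed point $(\wt h,\wt q)$ then vanishes to infinite order at $\eps=0$ (by the exponential weight and Corollary~\ref{CorBAn00Sob}) and is supported away from $\pa\cU$ and $\hat r=\hat R_0$ (again by the exponential weight structure), preserving properties~\eqref{ItGlTBdy} and \eqref{ItGlTLoc}.

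The hard part is Step~3, specifically calibrating the function space framework so that the contraction mapping closes. This requires matching the exponential weights at $\pa\cU$ and at $\hat r=\hat R_0$, the polynomial weights at $\hat X$ and $X_\circ$ (compatible with the constraint $\alpha_\circ-\hat\alpha=d<n-2$ of Proposition~\ref{PropCETotSolv}), and the Sobolev order $s>\frac{n}{2}+2$ (needed for the algebra property used in Lemma~\ref{LemmaCETotAcc}\eqref{ItCETotAccq}); once these are aligned, the smallness of the forcing in $\eps$ drives the iteration. The role of q-00-analysis, developed in Sections~\ref{SsB00}--\ref{SsBAn} and \ref{SsGq}, is precisely to make such uniform-in-$\eps$ estimates available.
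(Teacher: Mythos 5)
Your proposal follows the same three-step architecture as the paper: naive gluing (your ansatz $\gamma_{ij}(x)+\chi(\hat\gamma_{\hat i\hat j}(x/\eps)-\delta_{ij})$ is exactly the paper's~\eqref{EqFoNaive}), iterative normal-operator inversion at $\hat X$ and $X_\circ$ followed by asymptotic summation (the paper's Proposition~\ref{PropGlFo}), and a contraction mapping argument on exponentially-weighted q-00-Sobolev spaces (the paper's Proposition~\ref{PropGlT}). Your fixed point on $(\wt h,\wt q)$ via $T=-S_{\wt\gamma,\wt k}(P+Q)$ is equivalent to the paper's, which is the same map after the reparametrization $(\wt h,\wt q)=w_R\cL_{\wt\gamma,\wt k}^*(\wt f,\wt j)$.

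There is, however, a genuine gap in Step 3. The contraction mapping principle on a ball in $\hat\rho^{\hat\alpha}\rho_\circ^{\alpha_\circ}\tilde\rho_2^{\alpha_2}e^{-\beta/\tilde\rho_2}H_{\qop,00,\eps}^s\oplus\cdots$ (or, with the paper's $\eps^N$ weight, in $\cX_\eps^{s,N}$) produces a fixed point at one fixed finite Sobolev order $s$ and one fixed polynomial decay rate $N$ in $\eps$, on an $\eps$-range $(0,\eps_1)$ whose threshold a priori depends on $s$ and $N$. Your conclusion that the correction ``vanishes to infinite order at $\eps=0$ (by the exponential weight and Corollary~\ref{CorBAn00Sob})'' cannot be drawn at this point: Corollary~\ref{CorBAn00Sob} gives $e^{-\beta/\rho}H_{00}^{\infty}\hookrightarrow\CIdot$, and $H_{00}^\infty$ is precisely what the contraction does not yet provide. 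Without upgrading to infinite Sobolev regularity (and conormality in $\eps$), you do not obtain $(\wt h,\wt q)\in\CIdot(\wt X\setminus\wt K^\circ)$, and hence you cannot conclude that $(\wt\gamma+\wt h,\wt k+\wt q)$ is a $(\hat\cE_\sharp,\cE_\sharp)$-smooth total family with unchanged boundary data. The paper closes this with two further arguments that your sketch omits: a quasilinear Douglis--Nirenberg elliptic bootstrap (Proposition~\ref{PropDNEll}, applied to the equation for $(\wt f,\wt j)$ — note the reparametrization $(\wt h,\wt q)=w_R\cL_{\wt\gamma,\wt k}^*(\wt f,\wt j)$ is not merely cosmetic here: the system for $(\wt f,\wt j)$ is Douglis--Nirenberg elliptic, while $L_{\wt\gamma,\wt k}$ acting on $(\wt h,\wt q)$ is underdetermined, so no direct elliptic bootstrap on $(\wt h,\wt q)$ is available), and a conormality-in-$\eps$ argument via the vector field $\cR$ and Lemma~\ref{LemmaCETotReg}. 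This elliptic bootstrap is also what makes the threshold $\eps_\sharp$ independent of the Sobolev order, which is essential for obtaining a single $\eps_\sharp$ in the statement of the theorem. Without it your proof only gives a finite-regularity correction on a progressively shrinking $\eps$-range.
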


\begin{rmk}[$\cE$ and the Positive Mass Theorem]
\label{RmkGlTPMT}
  If $\Re\cE>n-2$, then the ADM energy $E$ and ADM linear momentum $P=(P_1,P_2,P_3)$ of the data set $(\hat\gamma,\hat k)$ are equal to $0$ (which implies that the ADM mass vanishes); see e.g.\ \cite[\S2]{EichmairHuangLeeSchoenPMT} for the definitions. This conjecturally implies that $(\hat X,\hat\gamma,\hat k)$ can be isometrically embedded as a spacelike hypersurface in $(n+1)$-dimensional Minkowski space. This is known for $n\leq 7$ \cite{EichmairJangReduction,HuangLeeSpacetimePMTRigidity}, following earlier work in the case of spin manifolds \cite{WittenPMT,BeigChruscielPMT,ChruscielMaertenPMT}. See also \cite{SchoenYauPMT,EichmairHuangLeeSchoenPMT,SchoenYauPMTHigh,LohkampPMT,HirschZhangSpacetimePMT}. We also remark that while we assume strong regularity of $\hat\gamma,\hat k$ at infinity, the decay may be too weak to allow for a definition of the ADM energy and linear momentum.
\end{rmk}

\subsection{Formal solution}
\label{SsGlFo}

By appropriately alternating the solution operators for the linearized constraints map of the two given initial data sets, we shall now prove:

\begin{prop}[Polyhomogeneous formal solution of the gluing problem]
\label{PropGlFo}
  Under the assumptions of Theorem~\usref{ThmGlT} and using its notation, there exists a $(\hat\cE_\sharp,\cE_\sharp)$-smooth total family $(\wt\gamma,\wt k)$ satisfying properties~\eqref{ItGlTBdy}, \eqref{ItGlTLoc}, but only:
  \begin{enumerate}
  \item[{\rm(3')}] $(\wt\gamma,\wt k)$ is a formal solution of the constraint equations, i.e.\ $P(\wt\gamma,\wt k;\Lambda)\in\CIdot(\wt X\setminus\wt K^\circ;\ul\R\oplus\wt T^*\wt X)$ vanishes to infinite order at $\hat X$ and $X_\circ$.
  \end{enumerate}
\end{prop}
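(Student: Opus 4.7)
My plan is to produce $(\wt\gamma,\wt k)$ in three stages: an initial interpolation fulfilling the boundary and support conditions exactly, an alternating scheme of linearized corrections at the two boundary hypersurfaces, and a polyhomogeneous Borel summation. Working in geodesic normal coordinates $x$ around $\fp$ with $\hat x=x/\eps$, I fix cutoffs $\chi_A=\chi_A(|\hat x|)$ (of compact support, identically $1$ on a neighborhood of $\{|\hat x|\leq\hat R_0\}$) and $\chi_B=\chi_B(|x|)$ ($\equiv 1$ on a neighborhood of $X\setminus\cU^\circ$, vanishing near $\fp$), arranged compatibly so that for all sufficiently small $\eps$ the sets $\{\chi_A>0\}$ and $\{\chi_B<1\}$ are disjoint on each $\eps$-slice of $\wt X$. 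Set
\[
  (\wt\gamma_0)_{ij}:=\delta_{ij}+\chi_B(|x|)(\gamma_{ij}(x)-\delta_{ij})+\chi_A(|\hat x|)(\hat\gamma_{\hat i\hat j}(\hat x)-\delta_{ij}),
\]
together with $(\wt k_0)_{ij}:=\chi_B(|x|)k_{ij}(x)+\eps^{-1}\chi_A(|\hat x|)\hat k_{\hat i\hat j}(\hat x)$ (the $\eps^{-1}$ accounting for the scaling in Lemma~\ref{LemmaCETotScale}), and extend outside the coordinate chart by $(\gamma,k)$. By Definition~\ref{DefCETotID}, $(\wt\gamma_0,\wt k_0)$ is an $(\cE,\cE)$-smooth total family with the correct boundary data, and the support equalities in property~(2) of Theorem~\ref{ThmGlT} hold literally. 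Proposition~\ref{PropCETot}\eqref{ItCETotPhg}, combined with $P(\hat\gamma,\hat k;0)=0$ and $P(\gamma,k;\Lambda)=0$, shows that the error $p_0:=P(\wt\gamma_0,\wt k_0;\Lambda)$ is polyhomogeneous, vanishes at leading order at both $\hat X$ and $X_\circ$, and is supported in the transition region where $\chi_A$ or $\chi_B$ varies.

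Next, I correct the approximation iteratively. Assuming the current error has polyhomogeneous index sets $\hat\cF-2$ at $\hat X$ and $\cF$ at $X_\circ$ with $\Re\hat\cF,\Re\cF>0$, I improve at $\hat X$ by extracting the leading $\hat X$-coefficient---which via Proposition~\ref{PropCETot}\eqref{ItCETotLin} and Lemma~\ref{LemmaCETotScale} corresponds (after rescaling by $\sfs^{-1}$) to polyhomogeneous data $(\hat f,\hat j)$ on $\hat X$ vanishing near $\{\hat r\leq\hat R_0\}$---and applying Proposition~\ref{PropCEAfSolv}\eqref{ItCEAfSolvPhg} with a once-and-for-all fixed weight $\alpha_\circ<n-2$ to produce $(\hat h,\hat q)$ polyhomogeneous on $\hat X$, vanishing near $\{\hat r\leq\hat R_0\}$, with $L_{\hat\gamma,\hat k}(\hat h,\hat q)=-(\hat f,\hat j)$. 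Lifting $(\hat h,\hat q)$ back to $\wt X$ via Lemma~\ref{LemmaGTotRel} with cutoff $\hat\chi$, applying $\sfs$ (with the $\eps^{-1}$ rescaling on the $\hat q$ component), and multiplying by a $\hat\rho^z$-factor matching the leading error exponent yields a correction $(\wt h_z,\wt q_z)$ supported in the permissible region; by Corollary~\ref{CorCETotAccNorm} (equation~\eqref{EqCETotAccNormHat}), adding it cancels the leading $\hat X$-coefficient of the error. The analogous $X_\circ$ step uses Proposition~\ref{PropCEPctSolv}\eqref{ItCEPctSolvPhg}---the no-KID hypothesis on $\cU^\circ$ is precisely what trivialises the cokernel there---with solutions supported in $\cU_\circ$. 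Alternating the two procedures drives the minima of $\Re\hat\cF,\Re\cF$ past any prescribed bound.

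A standard polyhomogeneous Borel summation then combines the countable sequence of accumulated corrections into a single $(\hat\cE_\sharp,\cE_\sharp)$-smooth total family $(\wt\gamma,\wt k)$ matching the $k$-th partial sum to higher and higher order as $k\to\infty$; the index sets $\hat\cE_\sharp,\cE_\sharp$ are built from $\cE$ by iterated extended unions and sums involving $\hat\cS$ from Proposition~\ref{PropCEAfSolv} and $\cS_\circ$ from Proposition~\ref{PropCEPctSolv}. Since every correction lives in the permissible region, property~(2) persists, and $P(\wt\gamma,\wt k;\Lambda)$ vanishes to infinite order at both $\hat X$ and $X_\circ$, giving~(3'). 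The chief technical obstacle is the interplay between the fixed weight $\alpha_\circ<n-2$ forced upon us by Proposition~\ref{PropCEAfSolv}---which reflects the translation and rotation elements of $\ker L_{\hat\gamma,\hat k}^*$ appearing in Lemma~\ref{LemmaCEAfSpecb}---and the ever-growing weights of the iterative input: although $\Re\cG$ grows at each step, each linearized solution carries contributions from $\hat\cS$ whose real parts are only bounded below by $\alpha_\circ$, so one cannot simply read off the structure of $\hat\cE_\sharp$ without careful bookkeeping across iterations. A similar but milder issue at $X_\circ$ from the excluded values $\hat\alpha\in\{-(n-2),-(n-1)\}$ of Proposition~\ref{PropCEPctSolv} is handled by perturbing the working weight.
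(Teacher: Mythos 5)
Your initial interpolation is fundamentally wrong, and this is not a fixable detail: it is precisely the ``bare interpolation'' that Remark~\ref{RmkIGCorvino} warns against. By inserting a cutoff $\chi_A(|\hat x|)$ with compact support in the $\hat x$-variable, you truncate $\hat\gamma$ at a fixed $\hat x$-scale, which on the fiber over $\eps$ is the shrinking region $\{|x|\lesssim\eps R_1\}$. The restriction of your $\wt\gamma_0$ to $\hat X$ is then $\delta_{ij}+\chi_A(|\hat x|)(\hat\gamma_{\hat i\hat j}(\hat x)-\delta_{ij})$, which equals the Euclidean metric for $|\hat x|$ large, \emph{not} the prescribed datum $\hat\gamma$; so property~(1) of Theorem~\ref{ThmGlT} already fails. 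Independently, in the intermediate region $\{\eps R_1\lesssim|x|\lesssim r_1\}$ (where $\chi_A=0$ and $\chi_B=0$) your metric is exactly $\fe$ with $k=0$, and $P(\fe,0;\Lambda)=(-2\Lambda,0)$, so the error is $\cO(1)$ there; worse, even for $\Lambda=0$ the deviation $(1-\chi_B)(\delta-\gamma)$ from $\wt\upbeta^*\gamma$ is $\eps$-independent and nonzero away from $\fp$, so $P(\wt\gamma_0,\wt k_0;\Lambda)$ does not vanish at $X_\circ$ at all, and $(\wt\gamma_0,\wt k_0)$ is not a $(\hat\cE_0,\cE_0)$-smooth total family with boundary datum $\gamma$ at $X_\circ$ in the sense of Definition~\ref{DefCETotID}. (Your claimed disjointness of $\{\chi_A>0\}$ and $\{\chi_B<1\}$ is also false: for small $\eps$ the former is contained in the latter.) The paper instead uses the \emph{additive} gluing~\eqref{EqFoNaive}, $(\wt\gamma_0,\wt k_0)=(\wt\upbeta^*\gamma+\hat\chi(\sfs(\hat\gamma)-\fe),\,\wt\upbeta^*k+\eps^{-1}\hat\chi\sfs(\hat k))$, with a \emph{single} cutoff $\hat\chi$ that is a function on $\wt X$ identically $1$ near $\hat X$ (hence cutting off at fixed $|x|$-scale, not $|\hat x|$-scale). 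There is no gap region: the corrective term $\hat\chi(\sfs(\hat\gamma)-\fe)$ decays at $X_\circ$ with rate $\cE$ because $\hat\gamma-\hat\fe$ does, so both boundary data and index-set conditions are met, and the error $P(\wt\gamma_0,\wt k_0;\Lambda)$ vanishes to a positive order at both hypersurfaces.

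Secondly, the index-set bookkeeping you correctly flag as the ``chief technical obstacle'' is not resolved but merely named. The point is that each application of Proposition~\ref{PropCEAfSolv}\eqref{ItCEAfSolvPhg} requires the incoming index set $\cG$ to satisfy $\Re\cG>\alpha_\circ$, where $\alpha_\circ<n-2$ must be fixed once and for all; but the incoming $\cG_i$ at step $\ell$ is (roughly) $\cF_\ell-\min\Re\hat\cF_\ell$, so one has to propagate a quantitative invariant comparing the decay orders at the two faces. The paper chooses $\alpha_\circ=\delta-1$, and maintains $\min\Re\cF_\ell-\min\Re\hat\cF_\ell>\delta-1$ throughout (see~\eqref{EqFoIndInd} and~\eqref{EqFoSolvHatXFinal}); this is what guarantees $\Re\cG_i>\alpha_\circ$ at every step, and the inequality is preserved because $\Re\hat\cS>\alpha_\circ$ and $\Re\cS_\circ>-\delta+1-\eta$ are compatible. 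Simply saying ``with a once-and-for-all fixed weight $\alpha_\circ<n-2$'' leaves the legality of the iteration unverified. The alternating-correction-plus-asymptotic-summation skeleton of your argument does match the paper's Steps~2--4, so once the initial interpolation and the invariant are repaired the strategy is sound, but as written there is a genuine gap at the very first step and a second one in the induction.
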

\begin{proof}[Proof of Proposition~\usref{PropGlFo}]
  Recall that $\hat\rho$ and $\rho_\circ\in\CI(\wt X)$ are boundary defining functions of $\hat X$ and $X_\circ$, respectively; for convenience, we choose them so that $\hat\rho\rho_\circ=\eps$. Moreover, by replacing $\cE$ with $\cE\cup(\N_0+n-2)$, we may assume that $\min\Re\cE\leq n-2$; and we replace $\cE$ by its nonlinear closure $\bigcup_{j\in\N}j\cE$. Lastly, we choose the localizer $\hat\chi\in\CI(\wt X)$ to a collar neighborhood of $\hat X$ so that $\supp\hat\chi\subset\wt\upbeta^{-1}([0,1)\times\cU^\circ)$.

  \pfstep{Naive gluing.} Fix geodesic normal coordinates $x\in\R^n$, $|x|<r_0$, around $\fp$ with respect to the metric $\gamma$, and set $\fe=\sum_{j=1}^n(\dd x^j)^2$; thus, $\gamma=\fe$ at $\fp$. We regard $\hat x=\frac{x}{\eps}$ as a map from a neighborhood of $\hat X\subset\wt X$ to $\ol{\R^n}$. Writing then
  \[
    \hat\gamma = \hat\gamma_{\hat i\hat j}(\hat x)\,\dd\hat x^i\,\dd\hat x^j,
  \]
  we regard $\sfs(\hat\gamma)=\hat\gamma_{\hat i\hat j}(\hat x)\,\dd x^i\,\dd x^j$ (in the notation of Definition~\ref{DefGTotScale}) as a section of $S^2\wt T^*\wt X\to\wt X$ in a collar neighborhood of $\hat X$; since $\hat\gamma_{\hat i\hat j}-\delta_{i j}$ is polyhomogeneous, with index set $\cE$, in the defining function $|\hat x|^{-1}$ of $X_\circ$, we have
  \[
    \hat\chi\bigl(\sfs(\hat\gamma)-\fe\bigr) \in \cA_\phg^{\N_0,\cE}(\wt X\setminus\wt K^\circ;S^2\wt T^*\wt X),
  \]
  where $\wt K=\{(\eps,\hat x)\colon\hat x\in\hat K\}$ as in Definition~\ref{DefCETotID}, and where $\hat\chi\in\CI(\wt X)$ is $1$ near $\hat X$ and is supported in $|x|<r_0$. Similarly,
  \[
    \eps^{-1}\hat\chi\sfs(\hat k) \in \cA_\phg^{\N_0-1,\cE}(\wt X\setminus\wt K^\circ;S^2\wt T^*\wt X).
  \]
  Therefore, the naive gluing
  \begin{equation}
  \label{EqFoNaive}
    (\wt\gamma_0,\wt k_0) := \bigl(\wt\upbeta^*\gamma + \hat\chi(\sfs(\hat\gamma)-\fe),\ \wt\upbeta^*k+\eps^{-1}\hat\chi\sfs(\hat k)\bigr)
  \end{equation}
  of the two pairs of initial data is an $(\hat\cE_0,\cE_0)$-smooth total family in the sense of Definition~\ref{DefCETotID}, where we set $(\hat\cE_0,\cE_0)=(\N_0+1,\cE)$; its boundary data are $(\hat\gamma,\hat k)$ and $(\gamma,k)$. By Proposition~\ref{PropCETot}\eqref{ItCETotPhg}, we have
  \[
    P(\wt\gamma_0,\wt k_0;\Lambda) \in \cA_\phg^{\hat\cF_0-2,\cF_0}(\wt X\setminus\wt K^\circ;\ul\R\oplus\wt T^*\wt X), \qquad \hat\cF_0:=\N_0+1,\quad \cF_0:=\cE.
  \]
  The absence of the $\cO(\hat\rho^{-2})$ leading order term of the error $P(\wt\gamma_0,\wt k_0;\Lambda)$ at $\hat X$ and the strong vanishing at $X_\circ$ are due to the fact that $(\hat\gamma,\hat k)$ and $(\gamma,k)$ satisfy the constraint equations. This is the starting point for an iterative construction of a formal solution to the gluing problem. We record here that
  \[
    \min\Re\cF_0 - \min\Re\hat\cF_0 = \min\Re\cE-1 > \delta-1.
  \]

  \pfstep{Solving away to leading order at $\hat X$.} Fix any $0<\eta<\min(1,\delta)$. Suppose $(\wt\gamma_\ell,\wt k_\ell)$ (for some $\ell\in\N_0$, starting with $\ell=0$) is an $(\hat\cE_\ell,\cE_\ell)$-smooth total family, with boundary data equal to $(\hat\gamma,\hat k)$ and $(\gamma,k)$, and so that
  \[
    P(\wt\gamma_\ell,\wt k_\ell;\Lambda) \in \cA_\phg^{\hat\cF_\ell-2,\cF_\ell}(\wt X\setminus\wt K^\circ;\ul\R\oplus\wt T^*\wt X),
  \]
  where the index sets satisfy
  \begin{equation}
  \label{EqFoIndInd}
  \begin{alignedat}{2}
    \Re\hat\cE_\ell&>1-\eta,&\qquad
    \hat\cF_\ell+(\N_0\cup\hat\cE_\ell)&\subset\hat\cF_\ell\subset\hat\cE_\ell, \\
    \Re\cE_\ell&>\delta-\eta, &\qquad
    \cF_\ell+(\N_0\cup\cE_\ell)&\subset\cF_\ell\subset\cE_\ell, \\
    &\min\Re\cF_\ell-\min\Re\hat\cF_\ell > \delta-1.\hspace{-10em}&&&
  \end{alignedat}
  \end{equation}
  Moreover, we assume that $(\wt\gamma_\ell,\wt k_\ell)=(\gamma,k)$ in a neighborhood of $[0,1)\times(X_\circ\setminus\cU)$, and $(\sfs^{-1}\wt\gamma_\ell,\sfs^{-1}(\eps\wt k_\ell))=(\hat\gamma,\hat k)$ in a neighborhood of $\hat r\leq\hat R_0$.

  Consider $(z,m)\in\hat\cF_\ell$ so that $(z,m+1)\notin\hat\cF_\ell$ and $\Re z=\min\Re\hat\cF_\ell$. Then there exists $(\wt f,\wt j)\in\cA_\phg^{\N_0,\cF_\ell}(\wt X\setminus\wt K^\circ;\ul\R\oplus\wt T^*\wt X)$, vanishing near $\hat r\leq\hat R_0$, so that
  \[
    P(\wt\gamma_\ell,\wt k_\ell;\Lambda) \equiv \hat\rho^{z-2}(\log\hat\rho)^m(\wt f,\wt j) \bmod \cA_\phg^{\hat\cF_\ell^\flat-2,\cF_\ell},\qquad
    \hat\cF_\ell^\flat:=\hat\cF_\ell\setminus\{(z,m)\}.
  \]
  Our present aim is to correct $(\wt\gamma_\ell,\wt k_\ell)$ so as to remove the term $\hat\rho^{z-2}(\log\hat\rho)^m(\wt f,\wt j)$ from $P(\wt\gamma_\ell,\wt k_\ell;\Lambda)$. To this end, we write $\hat\rho=\eps\rho_\circ^{-1}$ and thus
  \[
    \hat\rho^{z-2}(\log\hat\rho)^m(\wt f,\wt j)=\sum_{i=0}^m{m\choose i}(-1)^i\eps^z(\log\eps)^{m-i}\cdot\eps^{-2}\rho_\circ^{-(z-2)}(\log\rho_\circ)^i(\wt f,\wt j).
  \]
  Consider the $i$-th summand; we have
  \[
    (\hat f_i,\hat j_i):=\sfs^{-1}\bigl(\rho_\circ^{-(z-2)}(\log\rho_\circ)^i(\wt f,\wt j)|_{\hat X}\bigr)\in\cA_\phg^{\cG_i+2}(\hat X\setminus\hat K^\circ;\ul\R\oplus\Tsc^*\hat X)
  \]
  where $\cG_i=\cF_\ell-z+(0,i)$. By~\eqref{EqFoIndInd}, we have $\Re\cG_i>\delta-1$. We can thus apply Proposition~\ref{PropCEAfSolv}\eqref{ItCEAfSolvPhg} (with $\alpha_\circ:=\delta-1<n-2$ indeed) to produce $(\hat h_i,\hat q_i)\in\cA_\phg^{\cG_i\extcup\hat\cS}\oplus\cA_\phg^{(\cG_i\extcup\hat\cS)+1}$, vanishing near $\hat r\leq\hat R_0$, with
  \[
    L_{\hat\gamma,\hat k}(\hat h_i,\hat q_i)=-(\hat f_i,\hat j_i);
  \]
  the index set $\hat\cS$ here only depends on $\delta$ and satisfies $\Re\hat\cS>\delta-1$. Put then
  \begin{align*}
    (\tilde h,\tilde q) &:= \hat\chi\sum_{i=0}^m {m\choose i}(-1)^i\eps^z(\log\eps)^{m-i}\bigl(\sfs(\hat h_i),\eps^{-1}\sfs(\hat q_i)\bigr) \\
    &\hspace{4em} \in \cA_\phg^{(z,m),\cF^\sharp_\ell}(\wt X;S^2\wt T^*\wt X)\oplus\cA_\phg^{(z,m)-1,\cF^\sharp_\ell}(\wt X;S^2\wt T^*\wt X),
  \end{align*}
  where
  \begin{equation}
  \label{EqFoFlSharp}
    \cF^\sharp_\ell=\bigl((\cF_\ell-z)\extcup\hat\cS\bigr)+(z,m).
  \end{equation}
  Note that $\cF^\sharp_\ell\supset\cF_\ell$ and $\min\Re\cF^\sharp_\ell=\min(\min\Re\cF_\ell,\min\Re\hat\cF_\ell+\min\Re\hat\cS)$. Therefore, $\min\Re\cF^\sharp_\ell-\min\Re\hat\cF_\ell>\delta-1$, and $\Re\cF_\ell^\sharp>\min(\delta-\eta,(1-\eta)+(\delta-1))=\delta-\eta$; we may thus replace $\cE_\ell$ by the nonlinear closure of $\cE_\ell\cup(\cF_\ell^\sharp+(\N_0\cup\cE_\ell))$, and replace $\cF_\ell$ by $\cF^\sharp_\ell+(\N_0\cup\cE_\ell)$, without affecting the validity of~\eqref{EqFoIndInd}.

  Having adjusted the index sets in this fashion, we now apply Corollary~\ref{CorCETotAccNorm} (concretely, the membership~\eqref{EqCETotAccNormHat}), with $(\hat\cE,\cE)=(\hat\cE_\ell,\cE_\ell)$ and $(\hat\cE',\cE')=((z,m),\cF_\ell)$. This gives
  \begin{align*}
    P(\wt\gamma_\ell+\wt h,\wt k_\ell+\wt q;\Lambda) &\equiv P(\wt\gamma_\ell,\wt k_\ell;\Lambda) - \hat\rho^{z-2}(\log\hat\rho)^m(\wt f,\wt j) \bmod \cA_\phg^{\hat\cF'_\ell-2,\cF_\ell}, \\
    &\hspace{6em} \hat\cF'_\ell=(z,m)+\bigl((\N_0+1)\cup\hat\cE_\ell\bigr).
  \end{align*}
  Note here that $\hat\cF'_\ell\subsetneq\hat\cF_\ell$, and $\min\Re\hat\cF'_\ell>\min\Re\hat\cF_\ell+(1-\eta)$.

  Proceeding in this manner for all $(z,m)\in\hat\cF_\ell$ with the property that $\min\Re\hat\cF_\ell\leq\Re z\leq\min\Re\hat\cF_\ell+(1-\eta)$ (and adjusting $\cE_\ell$ and $\cF_\ell$ as above) produces $(\tilde h,\tilde q)\in\cA_\phg^{\hat\cF_\ell,\cF_\ell}\oplus\cA_\phg^{\hat\cF_\ell-1,\cF_\ell}$ so that for the $(\hat\cE_\ell,\cE_\ell)$-smooth total family
  \[
    (\wt\gamma^\ell,\wt k^\ell):=(\wt\gamma_\ell,\wt k_\ell)+(\wt h,\wt q),
  \]
  we have
  \begin{equation}
  \label{EqFoSolvHatX}
    P(\wt\gamma^\ell,\wt k^\ell;\Lambda) \in \cA_\phg^{\hat\cF^\ell-2,\cF_\ell}(\wt X\setminus\wt K^\circ;\ul\R\oplus\wt T^*\wt X),\qquad
    \Re\hat\cF^\ell>\min\Re\hat\cF_\ell+(1-\eta).
  \end{equation}
  Thus, the remaining error has improved decay at $\hat X$ by more than $1-\eta$ orders as compared to $P(\wt\gamma_\ell,\wt k_\ell;\Lambda)$, while the adjustments of $\cE_\ell$ and $\cF_\ell$ were mild enough so as to preserve~\eqref{EqFoIndInd}.

  If $\min\Re\cF_\ell-\min\Re\hat\cF^\ell>\delta-1$ still, we may repeat this procedure (with $\hat\cF^\ell$ in place of $\hat\cF_\ell$), thus producing a further correction (with yet stronger decay at $\hat X$) to $(\wt\gamma^\ell,\wt k^\ell)$ whose remaining error has yet another $1-\eta$ orders of vanishing at $\hat X$; and so on. Upon repeating this procedure the maximal number of times,\footnote{Since $\cF_\ell\neq\emptyset$, cf.\ \eqref{EqFoFlSharp}, this maximal number is finite for any fixed $\ell$.} and by a minor abuse of notation again calling the final approximate solution $(\wt\gamma^\ell,\wt k^\ell)$, we thus still have~\eqref{EqFoSolvHatX} and $(\wt\gamma^\ell,\wt k^\ell)=(\gamma,k)$ near $[0,1)\times(X_\circ\setminus\cU)$, and furthermore
  \begin{equation}
  \label{EqFoSolvHatXFinal}
  \begin{gathered}
    \Re\hat\cE_\ell>1-\eta, \qquad \hat\cF^\ell+(\N_0\cup\hat\cE_\ell)\subset\hat\cF^\ell\subset\hat\cE_\ell, \\
    \Re\cE_\ell>\delta-\eta, \qquad \cF_\ell+(\N_0\cup\cE_\ell)\subset\cF_\ell\subset\cE_\ell, \\
    \min\Re\cF_\ell-\min\Re\hat\cF^\ell \in (\delta-2+\eta,\delta-1].
  \end{gathered}
  \end{equation}

  \pfstep{Solving away to leading order at $X_\circ$.} The arguments near $X_\circ$ are very similar to (and somewhat simpler, due to the absence of scaling considerations, than) those near $\hat X$. We thus consider $(z,m)\in\cF_\ell$ with $(z,m+1)\notin\cF_\ell$ and $\Re z=\min\Re\cF_\ell$, and $(\wt f,\wt j)\in\cA_\phg^{\hat\cF^\ell-2,\N_0}(\wt X\setminus\wt K^\circ;\ul\R\oplus\wt T^*\wt X)$ with the property that
  \[
    P(\wt\gamma^\ell,\wt k^\ell;\Lambda) \equiv \rho_\circ^z(\log\rho_\circ)^m(\wt f,\wt j) \bmod \cA_\phg^{\hat\cF^\ell-2,\cF_\ell^\flat},\qquad
    \cF_\ell^\flat:=\cF_\ell\setminus\{(z,m)\}.
  \]
  In the expansion
  \[
    \rho_\circ^z(\log\rho_\circ)^m(\wt f,\wt j) = \sum_{i=0}^m {m\choose i}(-1)^i \eps^z(\log\eps)^{m-i}\cdot \hat\rho^{-z}(\log\hat\rho)^i(\wt f,\wt j),
  \]
  we consider the $i$-th summand. Note that
  \[
    (f_i,j_i) := \hat\rho^{-z}(\log\hat\rho)^i(\wt f,\wt j)|_{X_\circ} \in \cA_\phg^{\cG_i-2}(X_\circ;\ul\R\oplus\upbeta_\circ^*T^*X),\qquad \cG_i=\hat\cF^\ell-z+(0,i),
  \]
  and $(f_i,j_i)$ has support in $\upbeta_\circ^{-1}(\cU^\circ)$; moreover, $\min\Re\cG_i\geq-\delta+1$ by~\eqref{EqFoSolvHatXFinal}. Proposition~\ref{PropCEPctSolv}\eqref{ItCEPctSolvPhg} (with $\hat\alpha=-\delta+1-\eta>-n+2$) then produces
  \[
    (h_i,q_i) \in \cA_\phg^{\cG_i\extcup\cS_\circ}\oplus\cA_\phg^{(\cG_i\extcup\cS_\circ)-1},\qquad L_{\circ,\gamma,k}(h_i,q_i)=-(f_i,j_i),
  \]
  with $\supp h_i$, $\supp q_i\subset\cU^\circ$, and where the index set $\cS_\circ$ only depends on $\delta,\eta$ and satisfies $\Re\cS_\circ>-\delta+1-\eta$. We then put
  \begin{align*}
    (\wt h,\wt q) &:= \chi_\circ\sum_{i=0}^m {m\choose i}(-1)^i\eps^z(\log\eps)^{m-i} (h_i,q_i) \\
    &\hspace{4em} \in \cA_\phg^{\hat\cF^{\ell\sharp},(z,m)}(\wt X;S^2\wt T^*\wt X)\oplus\cA_\phg^{\hat\cF^{\ell\sharp}-1,(z,m)}(\wt X;S^2\wt T^*\wt X),
  \end{align*}
  where
  \[
    \hat\cF^{\ell\sharp} = \bigl( (\hat\cF^\ell-z)\extcup\cS_\circ\bigr) + (z,m).
  \]
  We have $\hat\cF^{\ell\sharp}\supset\hat\cF^\ell$ and $\min\Re\hat\cF^{\ell\sharp}=\min(\min\Re\hat\cF^\ell,\min\Re\cF_\ell+\min\Re\cS_\circ)$. Therefore, $\min\Re\cF_\ell-\min\Re\hat\cF^{\ell\sharp}<\delta-1+\eta$, and $\Re\hat\cF^{\ell\sharp}>\min(1-\eta,(\delta-2+\eta)+(2-\eta)+(-\delta+1-\eta))=1-\eta$. Thus, we can replace $\hat\cE_\ell$ by the nonlinear closure of $\hat\cE_\ell\cup(\hat\cF^{\ell\sharp}+(\N_0\cup\hat\cE_\ell))$, and replace $\hat\cF^\ell$ by $\hat\cF^{\ell\sharp}+(\N_0\cup\hat\cE_\ell)$, without affecting the validity of the first two lines of~\eqref{EqFoSolvHatXFinal}.

  Corollary~\ref{CorCETotAccNorm} (concretely, the membership~\eqref{EqCETotAccNormCirc}), with $(\hat\cE,\cE)=(\hat\cE_\ell,\cE_\ell)$ and $(\hat\cE',\cE')=(\hat\cF^\ell,(z,m))$ gives
  \begin{align*}
    P(\wt\gamma^\ell+\wt h,\wt k^\ell+\wt q;\Lambda) &\equiv P(\wt\gamma^\ell,\wt k^\ell;\Lambda) - \rho_\circ^z(\log\rho_\circ)^m(\wt f,\wt j) \bmod \cA_\phg^{\hat\cF^\ell-2,\cF'_\ell}, \qquad \cF'_\ell=(z,m)+\cE_\ell.
  \end{align*}
  Thus, $\Re\cF_\ell'>\min\Re\cF_\ell+(\delta-\eta)$. In this fashion, we solve away all terms of $P(\wt\gamma^\ell,\wt k^\ell;\Lambda)$ at $X_\circ$ corresponding to $(z,m)\in\cF_\ell$ with $\min\Re\cF_\ell\leq\Re z\leq\min\Re\cF_\ell+(\delta-\eta)$ at once. If subsequently we still have $\min\Re\cF_\ell'-\min\Re\hat\cF^\ell\in(\delta-2+\eta,\delta-1]$, we repeat this procedure. We thus produce $(\wt h,\wt q)\in\cA_\phg^{\hat\cF^\ell,\cF_\ell}\oplus\cA_\phg^{\hat\cF^\ell-1,\cF_\ell}$ (with a new index set $\hat\cE_\ell$) so that $(\wt\gamma_{\ell+1},\wt k_{\ell+1})=(\wt\gamma^\ell,\wt k^\ell)+(\wt h,\wt q)$ is a $(\hat\cE_{\ell+1},\cE_{\ell+1})$-smooth total family (with $\hat\cE_{\ell+1},\cE_{\ell+1}$ denoting the adjustments of the index sets $\hat\cE_\ell,\cE_\ell$ performed in the course of the argument) for which
  \[
    P(\wt\gamma_{\ell+1},\wt k_{\ell+1};\Lambda) \in \cA_\phg^{\hat\cF_{\ell+1}-2,\cF_{\ell+1}},
  \]
  where $\Re\hat\cF_{\ell+1}>\min\Re\hat\cF_\ell+(1-\eta)$ and $\Re\cF_{\ell+1}>\min\Re\cF_\ell+(\delta-\eta)$, and~\eqref{EqFoIndInd} holds for $\ell+1$ in place of $\ell$. This completes the inductive step. 

  \pfstep{Asymptotic summation.} We now have a sequence $(\wt\gamma_\ell,\wt k_\ell)$, $\ell=0,1,2,\ldots$, with the property that $P(\wt\gamma_\ell,\wt k_\ell;\Lambda)$ is polyhomogeneous on $\wt X$ with the index sets at $\hat X$ and $X_\circ$ having minimal real parts tending to $+\infty$, and so that $(\wt h_\ell,\wt q_\ell):=(\wt\gamma_{\ell+1},\wt k_{\ell+1})-(\wt\gamma_\ell,\wt k_\ell)$ is polyhomogeneous with index sets having real parts larger than any desired constant $C$ for all $\ell\geq\ell_0(C)$. The latter property enables us to take $(\wt\gamma,\wt k)$ to be an asymptotic sum
  \[
    (\wt\gamma,\wt k) \sim (\wt\gamma_0,\wt k_0) + \sum_{\ell=0}^\infty (\wt h_\ell,\wt q_\ell),
  \]
  which is an $(\hat\cE_\sharp,\cE_\sharp)$-smooth family for some index sets $\hat\cE_\sharp,\cE_\sharp$ which satisfy $\Re\hat\cE_\sharp>1-\eta$ and $\Re\cE_\sharp>\delta-\eta$ (from~\eqref{EqFoIndInd}). The former property implies, in view of Lemma~\ref{LemmaCETotAcc}\eqref{ItCETotAccPhg} and~\eqref{EqCETotAccLot}, that $P(\wt\gamma,\wt k;\Lambda)$ is polyhomogeneous on $\wt X$ with index sets which have arbitrarily large minimal real parts; thus, $P(\wt\gamma,\wt k;\Lambda)\in\CIdot(\wt X\setminus\wt K^\circ)$, as desired. The proof is complete.
\end{proof}

\begin{rmk}[Better regularity at $\hat X$]
\label{RmkFoBetter}
  In Proposition~\ref{PropGlFo}, one can arrange, for any desired value $N\in\N$, that $\Re\hat\cE_\sharp>N$; thus, $(\wt\gamma,\eps\wt k)$ is of class $\cC^N$ down to $\hat X$. Indeed, starting with the naive gluing $(\wt\gamma_0,\wt k_0)$ in~\eqref{EqFoNaive}, one solves away the error $P(\wt\gamma_0,\wt k_0;\Lambda)$ to leading order at $\hat X$ (where it has index set $(\N_0+1)-2$); arguing iteratively, once the error $P(\wt\gamma_\ell,\wt k_\ell;\Lambda)$ has index set $(\N_0+\ell+1)-2$ at $\hat X$, and an index set $\cE_\ell$ at $X_\circ$ with $\Re\cE_\ell>\Re\cE-\eta$, one solves away the leading term at $\hat X$ using Proposition~\ref{PropCEAfSolv}\eqref{ItCEAfSolvPhg} with $\alpha_\circ=(\Re\cE-\eta)-(\ell+1)$ (which satisfies the bound $\alpha_\circ<n-2$ by an increasingly wide margin as $\ell$ increases). Stopping this procedure at step $\ell\gg N$, one then solves away the error term to a high order at $X_\circ$ by means of Proposition~\ref{PropCEPctSolv}\eqref{ItCEPctSolvPhg} with large values of $\hat\alpha$ (so that the corrections vanish to high order at $\hat X$). Once one has thus constructed a $(\hat\cE_\flat,\cE_\flat)$-smooth total family $(\wt\gamma_\flat,\wt k_\flat)$, with $\Re\hat\cE_\flat>N$, and with $P(\wt\gamma_\flat,\wt k_\flat;\Lambda)$ vanishing to order $\gg N$ at $\hat X$ and $X_\circ$, one can correct $(\wt\gamma_\flat,\wt k_\flat)$ to a formal solution by solving away the error in turns at $\hat X$ and $X_\circ$ as in the proof of Proposition~\ref{PropGlFo}; an inspection of the proof shows that these corrections are polyhomogeneous and vanish to order $\gg N$ at $\hat X$ and $X_\circ$.
\end{rmk}

\subsection{True solution}
\label{SsGlT}

In this section, we prove Theorem~\ref{ThmGlT}. We start with the formal solution $(\wt\gamma,\wt k)$ from Proposition~\ref{PropGlFo}. Theorem~\ref{ThmGlT} is then a consequence of the following result:

\begin{prop}[Correction of a formal solution]
\label{PropGlT}
  With $(\wt\gamma,\wt k)$ as in Proposition~\usref{PropGlFo}, there exists $(\wt h,\wt q)\in\CIdot(\wt X\setminus\wt K^\circ;S^2\wt T^*\wt X\oplus S^2\wt T^*\wt X)$ with the following properties:
  \begin{enumerate}
  \item we have $(\wt h,\wt q)=0$ near $\wt X\setminus([0,1)\times\cU^\circ)$ and near $\{(\eps,x)\colon \eps x\in B(0,\hat R_0)\}\subset\wt X$;
  \item for $(\wt\gamma',\wt k')=(\wt\gamma+\wt h,\wt k+\wt q)$, we have $P(\wt\gamma',\wt k';\Lambda)=0$ in $\eps<\eps_\sharp$ for some $\eps_\sharp>0$.
  \end{enumerate}
\end{prop}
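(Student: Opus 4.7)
The plan is to reduce Proposition~\ref{PropGlT} to a Banach fixed point problem for a nonlinear correction to the formal solution. Write $\wt f:=P(\wt\gamma,\wt k;\Lambda)\in\CIdot(\wt X\setminus\wt K^\circ;\ul\R\oplus\wt T^*\wt X)$ and recall the quadratic remainder $Q$ from~\eqref{EqCETotAccQ}. Denoting by $S_{\wt\gamma,\wt k}$ the right inverse of $L_{\wt\gamma,\wt k}$ provided by Proposition~\ref{PropCETotSolv} (via the footnote therein), the ansatz $(\wt h,\wt q)=S_{\wt\gamma,\wt k}\wt u$ transforms $P(\wt\gamma+\wt h,\wt k+\wt q;\Lambda)=0$ into the fixed point equation
\[
  \wt u = T(\wt u) := -\wt f - Q\bigl(\wt\gamma,\wt k;S_{\wt\gamma,\wt k}\wt u\bigr).
\]

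I would fix weights $\hat\alpha,\alpha_\circ>0$ with $d:=\alpha_\circ-\hat\alpha\in(0,n-2)$, a real $\alpha_2$, a $\beta>0$, and an $s>\tfrac{n}{2}+2$; let $\cY_\eps$ and $\cX_\eps$ denote the source and target function spaces of $S_{\wt\gamma,\wt k}$ appearing in the footnote of Proposition~\ref{PropCETotSolv}. That proposition then yields the uniform bound $\|S_{\wt\gamma,\wt k}\wt u\|_{\cX_\eps}\leq C\|\wt u\|_{\cY_\eps}$, while Lemma~\ref{LemmaCETotAcc}\eqref{ItCETotAccq} applied with $\alpha_2'=2\alpha_2-4$ (or $2\alpha_2-2$ on the second slot) gives, using the positivity of $\hat\alpha$ and $\alpha_\circ$ (so that $\hat\rho^{2\hat\alpha-2}\rho_\circ^{2\alpha_\circ}\subset\hat\rho^{\hat\alpha-2}\rho_\circ^{\alpha_\circ}$), the quadratic estimate
\[
  \|Q(\wt\gamma,\wt k;\wt h_1,\wt q_1)-Q(\wt\gamma,\wt k;\wt h_2,\wt q_2)\|_{\cY_\eps}\leq C\bigl\|(\wt h_1,\wt q_1)-(\wt h_2,\wt q_2)\bigr\|_{\cX_\eps}\max_{j=1,2}\|(\wt h_j,\wt q_j)\|_{\cX_\eps}.
\]
Composing, $T$ is Lipschitz on bounded subsets of $\cY_\eps$ with Lipschitz constant $\lesssim\max_j\|\wt u_j\|_{\cY_\eps}$.

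The key driving observation is that $\wt f$ is supported in $\{\tilde\rho_2\geq c\}$ for some $c>0$ (by the support properties of the formal solution in Proposition~\ref{PropGlFo}, since $(\wt\gamma,\wt k)$ exactly equals $(\gamma,k)$, respectively coincides with the scaled $(\hat\gamma,\hat k)$, in the complementary regions), and it vanishes to infinite order at $\hat X$ and $X_\circ$. Since $\hat\rho\rho_\circ=\eps$, this forces $\|\wt f\|_{\cY_\eps}\leq C_N\eps^N$ for every $N\in\N$. It follows that for all sufficiently small $\eps$, $T$ contracts the closed ball $\{\|\wt u\|_{\cY_\eps}\leq\eps\}$ and the Banach fixed point theorem produces a unique solution $\wt u_\eps\in\cY_\eps$ with $\|\wt u_\eps\|_{\cY_\eps}\leq C_N\eps^N$. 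Rerunning the argument with arbitrarily large $s$, $\hat\alpha$, and $\alpha_\circ$ (each of which leaves $\|\wt f\|_{\cY_\eps}$ rapidly decaying) and invoking uniqueness places $\wt u_\eps$ in the intersection of all such $\cY_\eps$; combined with the exponential factor $e^{-\beta/\tilde\rho_2}$ and Corollary~\ref{CorBAn00Sob}, this gives infinite order vanishing of $(\wt h,\wt q)=S_{\wt\gamma,\wt k}\wt u_\eps$ at every boundary hypersurface of $\wt X$ and a support contained in the desired region. Joint smoothness in $\eps$ down to $\eps=0$, hence membership in $\CIdot(\wt X\setminus\wt K^\circ)$, follows from Lemma~\ref{LemmaCETotReg} applied inductively to $\wt u=T(\wt u)$, using Remark~\ref{RmkCETotSolvCont} to ensure that the theory still applies after perturbing $(\wt\gamma,\wt k)$ by the $\CIdot$ correction. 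The main technical point is the weight bookkeeping: simultaneously arranging for $S_{\wt\gamma,\wt k}$ to be uniformly bounded, for the quadratic gain in $Q$ to overcome the loss of derivatives and polynomial weights on $\hat\rho$ and $\rho_\circ$, and for $\wt f$ to have rapidly decaying norm in $\cY_\eps$.
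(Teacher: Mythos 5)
Your overall strategy is the same as the paper's: recast the equation $P(\wt\gamma+\wt h,\wt k+\wt q;\Lambda)=0$ as a fixed point problem via the right inverse of $L_{\wt\gamma,\wt k}$ from Proposition~\ref{PropCETotSolv}, use the rapid $\eps$-decay of $P(\wt\gamma,\wt k;\Lambda)$ (coming from the formal solution) to shrink the ball, and invoke the quadratic estimate of Lemma~\ref{LemmaCETotAcc}\eqref{ItCETotAccq} to close the contraction. This part is correct, and the weight bookkeeping you describe does work.

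There is, however, a genuine gap in your bootstrap to infinite Sobolev regularity. You propose to ``rerun the argument with arbitrarily large $s$'' and appeal to uniqueness of the fixed point. The problem is that the contraction mapping argument produces a solution only for $\eps<\eps_1(s)$, where $\eps_1(s)$ a priori depends on the Sobolev order $s$ (the constants $C_2$, $C_3$ in the quadratic estimate and in the operator norm of $\cL^{-1}$ depend on $s$, so the threshold at which the contraction closes does too). There is nothing preventing $\eps_1(s)\searrow 0$ as $s\to\infty$, in which case the intersection of the $\eps$-ranges on which the solutions coexist and agree could be empty. Uniqueness therefore only identifies the solutions on a potentially shrinking range, and does not by itself produce a smooth solution on a fixed $(0,\eps_\sharp)$. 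The paper's proof closes this gap by fixing one $s_0>\tfrac{n}{2}+2$, fixing the corresponding $\eps_1(s_0)$, and then establishing the higher Sobolev regularity of the $s_0$-solution on the whole range $\eps<\eps_1(s_0)$ via a local quasilinear elliptic bootstrap for Douglis--Nirenberg systems (Proposition~\ref{PropDNEll} in Appendix~\ref{SDN}), applied in coordinate patches given by the q-00 bounded geometry. This elliptic regularity step, rather than a uniqueness argument across orders, is the essential ingredient you are missing; without it, your argument establishes a solution of some fixed finite regularity, not smoothness down to $\eps=0$.

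A smaller point: for the conormality in $\eps$, your plan of ``applying Lemma~\ref{LemmaCETotReg} inductively to $\wt u=T(\wt u)$'' is on the right track, but the paper's proof shows that one should first rerun the contraction on the $\cA_1$ space to get one $\eps\pa_\eps$-derivative, and then for $k\geq 2$ one differentiates the PDE directly along the testing vector field $\cR$ and solves the resulting linearized equations using the invertibility of $\cL_{\wt\gamma+\wt h,\wt k+\wt q}\cL_{\wt\gamma+\wt h,\wt k+\wt q}^*$ and Remark~\ref{RmkCETotSolvCont}. It is worth spelling this out explicitly, since $(\eps\pa_\eps)^k\wt u$ for $k\geq 2$ is a priori only a distribution, and one must justify its continuity via the invertibility of the linearized operator rather than simply iterating the contraction.
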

\begin{proof}
  We choose $\hat\rho$ and $\rho_\circ$ so that $\hat\rho\rho_\circ=\eps$. Fix $\alpha_2\in\R$, $\beta>0$, and $\hat\alpha,\alpha_\circ>0$ with $\alpha_\circ-\hat\alpha<n-2$. Recall $\wt w$ and $\wt w_2$ from Proposition~\ref{PropCETotSolv}. Set $\tilde\alpha=\hat\alpha+n-2-\frac{n}{2}$ and write
  \[
    w_L=e^{\beta/\tilde\rho_2}\tilde\rho_2^{-\alpha_2}\hat\rho^{-\tilde\alpha}\rho_\circ^{-\alpha_\circ}\wt w_2,\qquad
    w_R=\wt w\rho_\circ^{\alpha_\circ}\hat\rho^{\tilde\alpha}\tilde\rho_2^{\alpha_2}e^{-\beta/\tilde\rho_2},
  \]
  for the weights appearing in the definition~\eqref{EqCETotSolvConj} of $\cL_{\wt\gamma,\wt k}=w_L L_{\wt\gamma,\wt k}w_R$. We shall construct $(\wt h,\wt q)$ of the form
  \[
    (\wt h,\wt q)=w_R^2 L_{\wt\gamma,\wt k}^*w_L(\wt f,\wt j)=w_R\cL_{\wt\gamma,\wt k}^*(\wt f,\wt j),
  \]
  where $(\wt f,\wt j)$ solves the equation
  \begin{equation}
  \label{EqGlTPDE}
    w_L P\bigl((\wt\gamma,\wt k)+w_R\cL_{\wt\gamma,\wt k}^*(\wt f,\wt j)\bigr) = 0.
  \end{equation}
  Recalling the notation $Q(\wt\gamma,\wt k;\wt h,\wt q)$ for the quadratic error~\eqref{EqCETotAccQ}, this equation is equivalent to
  \[
    \cL(\wt f,\wt j) = -w_L P(\wt\gamma,\wt k;\Lambda) - w_L Q\bigl(\wt\gamma,\wt k; w_R\cL_{\wt\gamma,\wt k}^*(\wt h,\wt j)\bigr),
  \]
  where $\cL:=\cL_{\wt\gamma,\wt k}\cL_{\wt\gamma,\wt k}^*$ is an elliptic unweighted q-00-operator (see~\eqref{EqCETotSolvLLstar}) which is uniformly invertible as a map~\eqref{EqCETotSolv}. This lends itself to a fixed point formulation using the map
  \begin{equation}
  \label{EqGlTFixed}
    T \colon (\wt f,\wt j) \mapsto \cL^{-1}\Bigl( -w_L P(\wt\gamma,\wt k;\Lambda) - w_L Q\bigl(\wt\gamma,\wt k;w_R\cL_{\wt\gamma,\wt k}^*(\wt f,\wt j)\bigr) \Bigr),
  \end{equation}
  as we proceed to demonstrate.

  \pfstep{Existence of a solution with fixed regularity.} Fix $s>\frac{n}{2}+2$; let $N>0$ be arbitrary. We claim that there exist $\eps_0>0$ and $\delta>0$ so that $T$ is a uniform contraction on the $\delta$-ball in the space
  \[
    \cX_\eps^{s,N} := \eps^N H_{\qop,0 0,\eps}^{s+2}(\wt\Omega_\eps) \oplus \eps^N H_{\qop,0 0,\eps}^{s+1}(\wt\Omega_\eps;\wt T^*\wt X)
  \]
  for $\eps<\eps_0$, where $\wt\Omega\subset\wt X$ is the domain defined in Proposition~\ref{PropCETotSolv}. To show this, note first that the norm of $w_L P(\wt\gamma,\wt k;\Lambda)$ in the space
  \[
    \cY_\eps^{s,N} := \eps^N H_{\qop,0 0,\eps}^{s-2}(\wt\Omega_\eps) \oplus \eps^N H_{\qop,0 0,\eps}^{s-1}(\wt\Omega_\eps;\wt T^*\wt X)
  \]
  is bounded by $C_1\eps$ since $P(\wt\gamma,\wt k;\Lambda)$ lies in $\CIdot(\wt X)$ and vanishes near $\tilde\rho_2=0$. (Here, $C_1$ depends on $s,N$, and tacitly of course on the fixed constants $\alpha_2,\beta,\alpha_\circ,\hat\alpha$ and on the tensors $\wt\gamma,\wt k$.) Next, when $\|(\wt f,\wt j)\|_{\cX_\eps^{s,N}}<\delta$ and thus
  \[
    \|w_R\cL_{\wt\gamma,\wt k}^*(\wt f,\wt j)\|_{\hat\rho^{\hat\alpha}\rho_\circ^{\alpha_\circ}\tilde\rho_2^{\alpha_2}e^{-\beta/\tilde\rho_2}H_{\qop,0 0,\eps}^s\oplus\hat\rho^{\hat\alpha-1}\rho_\circ^{\alpha_\circ}\tilde\rho_2^{\alpha_2}e^{-\beta/\tilde\rho_2}H_{\qop,0 0,\eps}^s} < C'_2\delta,
  \]
  an application of Lemma~\ref{LemmaCETotAcc}\eqref{ItCETotAccq} gives, a fortiori,
  \[
    \bigl\|w_L Q\bigl(\wt\gamma,\wt k;w_R\cL_{\wt\gamma,\wt k}^*(\wt f,\wt j)\bigr)\bigr\|_{\cY_\eps^{s,N}} \leq C_2\|(\wt f,\wt j)\|_{\cX_\eps^{s,N}}^2 < C_2\delta^2.
  \]
  Here, $C_2$ and $C'_2$ are constants depending only on $s,N$. Finally, write $C_3$ for an upper bound on the ($N$-independent) operator norm of $\cL^{-1}\colon\cY_\eps^{s,N}\to\cX_\eps^{s,N}$ for $\eps<\eps_0$, where $\eps_0$ is given by Proposition~\ref{PropCETotSolv}. Fix $\delta_0>0$ so that $C_3 C_2\delta_0^2<\frac{\delta_0}{2}$; for all $\delta\leq\delta_0$, there then exists $\eps_1\in(0,\eps_0)$ so that $C_3(C_1\eps+C_2\delta^2)<\delta$ for $\eps<\eps_1$. The map $T$ thus maps the $\delta$-ball in $\cX_\eps^{s,N}$ into itself.

  Next, consider the difference
  \begin{equation}
  \label{EqGlTDiff}
    T(\wt f_1,\wt j_1)-T(\wt f_2,\wt j_2) = \cL^{-1}\Bigl( w_L Q\bigl(\wt\gamma,\wt k;w_R\cL_{\wt\gamma,\wt k}^*(\wt f_2,\wt j_2)\bigr)-w_L Q\bigl(\wt\gamma,\wt k;w_R\cL_{\wt\gamma,\wt k}^*(\wt f_1,\wt j_1)\bigr)\Bigr)
  \end{equation}
  for $(\wt h_j,\wt q_j)\in\cX_\eps^{s,N}$ of norm less than $\delta$. Using the estimate~\eqref{EqCETotAccq}, the $\cX_\eps^{s,N}$-norm of~\eqref{EqGlTDiff} is bounded by $C_3 C\delta\|(\wt f_1,\wt j_1)-(\wt f_2,\wt j_2)\|_{\cX_\eps^{s,N}}$. If we choose $\delta$ so that $C_3 C\delta<\frac12$ (and then $\eps_1$ as above), we conclude that $T$ is a contraction on the $\delta$-ball of $\cX_\eps^{s,N}$, uniformly for $\eps<\eps_1$.

  We therefore obtain a solution $(\wt f,\wt j)$ of the equation~\eqref{EqGlTPDE} in $\eps<\eps_1=\eps_1(s,N)$ which is uniformly bounded in the space $\cX_\eps^{s,N}$. Moreover, the solution produced by the contraction mapping principle is independent of the choices of $N$ and $s$, though the constant $\eps_1$ given by the above arguments may depend on these choices. But note that whether or not a family of tensors is uniformly bounded in $\cX_\eps^{s,N}$ depends on $N$ only in an arbitrarily small neighborhood of $\eps=0$; therefore, we can take $\eps_1=\eps_1(s)$ to be independent of $N$. Furthermore, since the construction of $(\wt f,\wt j)$ is independent of $s$, we automatically have
  \begin{equation}
  \label{EqGlTReg0}
    (\wt f,\wt j)\in\cX_\eps^{s,N},\qquad \eps<\eps_1(s),\quad N\in\R.
  \end{equation}

  \pfstep{Higher q-00-regularity.} We proceed to show that, in fact, $(\wt f,\wt j)\in\cX_\eps^{s,N}$ for all $s,N\in\R$ for $\eps<\eps_1(s_0)$ where we fix $s_0>\frac{n}{2}+2$ (e.g.\ $s_0=\frac{n}{2}+3$). For any fixed $s$, we only need to prove this for $\eps\in[\eps_1(s),\eps_1(s_0))$ in view of~\eqref{EqGlTReg0}, but we give an argument which directly works in the full range $\eps<\eps_1(s_0)$. To wit, consider the localization of $(\wt f,\wt j)$ to a q-00-unit cell on $\wt\Omega$, or equivalently, in the notation of~\eqref{EqGTotCutoffs} and~\eqref{EqGqPhis}, the localization of $\hat\phi^*(\hat\chi\wt f,\hat\chi\wt j)$ to a b-00-unit cell on $\{\hat r\geq\hat R_0\}\subset\hat X$ or of $\hat\phi_\circ^*(\chi_\circ\wt f,\chi_\circ\wt j)$ to a b-00-unit cell on $\upbeta_\circ^{-1}(\cU)\subset X_\circ$. Identifying such a unit cell with the unit ball in $\R^n$ via coordinate charts of the bounded geometry perspective, this localization has $\cC^{4,\alpha}\oplus\cC^{3,\alpha}$-norms (with $\alpha\in(0,s-\frac{n}{2}-2)$) uniformly bounded by $C_N\eps^N$ for all $N$; we use this bound for $N=1$. The ellipticity of $\cL$ as an unweighted q-00-operator, and this $C_1\eps$-bound of the localizations of $(\wt f,\wt j)$ implies that for sufficiently small $\eps$ (independently of $s$) the PDE~\eqref{EqGlTPDE} in such charts is a quasilinear Douglis--Nirenberg elliptic system (with uniform dependence on the unit cell); such systems are discussed in Appendix~\ref{SDN}. By Proposition~\ref{PropDNEll}, this implies $\cO(\eps^N)$ bounds in $\cC^{k+2,\alpha}\oplus\cC^{k+1,\alpha}$ on the localizations of $(\wt f,\wt j)$ for all $k$, as claimed.

  \pfstep{Conormality in $\eps$.} Recalling Definition~\ref{DefCETotReg}, we may run the above contraction mapping argument on the space $\cC^0([0,\eps_1(s_0)),\cX_\eps^{s_0,N})$ using Lemma~\ref{LemmaCETotReg} and deduce that $(\wt f,\wt j)$ is in fact continuous with values in $\cX_\eps^{s_0,N}$ (for all $N$); continuity in $\cX_\eps^{s,N}$ for all $s\geq s_0$ follows from a parameter-dependent variant of Proposition~\ref{PropDNEll} (the proof of which requires choosing the small parameter $\delta$ and the elliptic parametrix $Q$ there in a continuous fashion). We conclude that
  \begin{align*}
    (\wt h,\wt q) = w_R\cL_{\wt\gamma,\wt k}^*(\wt f,\wt j) &\in \eps^N\hat\rho^{\hat\alpha}\rho_\circ^{\alpha_\circ}\tilde\rho_2^{\alpha_2}e^{-\beta/\tilde\rho_2}H_{\qop,0 0,\eps}^s(\wt X\setminus\wt K^\circ;S^2\wt T^*\wt X) \\
      &\quad\qquad\qquad \oplus\eps^N\hat\rho^{\hat\alpha-1}\rho_\circ^{\alpha_\circ}\tilde\rho_2^{\alpha_2}e^{-\beta/\tilde\rho_2}H_{\qop,0 0,\eps}^s(\wt X\setminus\wt K^\circ;S^2\wt T^*\wt X)
  \end{align*}
  for all $s,N$, with continuous dependence on $\eps\in(0,\eps_\sharp)$;

  We can improve this further: for some $\eps_\sharp<\eps_1(s_0)$ depending only on $s_0$, we have $(\wt f,\wt j)\in\cA_1([0,\eps_\sharp),\cX_\eps^{s_0,N})$. Indeed, we may run the contraction mapping argument on the space $\cA_1([0,\eps_\sharp),\cX_\eps^{s_0,N})$ by using Lemma~\ref{LemmaCETotReg} and direct differentiation of the argument of $\cL^{-1}$ in~\eqref{EqGlTFixed} along the testing operator $\cR$ (see the paragraph preceding Lemma~\ref{LemmaCETotReg}). Proposition~\ref{PropDNEll}, with added $\cC^1$ dependence on the parameter $\eps$, improves this to
  \begin{equation}
  \label{EqGlTA1}
    (\wt f,\wt j)\in\cA_1([0,\eps_\sharp),\cX_\eps^{s,N})\qquad \forall\, s,N\in\R.
  \end{equation}

  We claim that $(\wt f,\wt j)\in\cA_k([0,\eps_\sharp),\cX_\eps^{s,N})$ for \emph{all} $k\in\N$, with $\eps_\sharp$ fixed. Using the testing operator $\cR\in\Vb(\wt X)$ again, this follows most easily by direct differentiation of~\eqref{EqGlTPDE}. We explain this schematically, for simplicity of notation. The PDE~\eqref{EqGlTPDE} is of the form $\cP(\eps,u(\eps);\Lambda)=0$, so differentiating once along $\eps\pa_\eps$ (which is justified in view of~\eqref{EqGlTA1}), a stand-in for $\cR$, gives $\cL_{\eps,u(\eps)}(\eps\pa_\eps u)=-(\eps\pa_\eps\cP)(\eps,u(\eps))$ where $\cL_{\eps,u(\eps)}$ is the linearization of $\cP$ in the second argument. Differentiating this once more gives an equation of the form $\cL_{\eps,u(\eps)}((\eps\pa_\eps)^2 u)=f$ where $f$ depends only on $\eps,u(\eps),\pa_\eps u(\eps)$ and in particular depends continuously on $\eps$. While a priori $(\eps\pa_\eps)^2 u$ is defined only as a distribution in $\eps$ with values in $\cX_\eps^{s,N}$, the invertibility of $\cL$ (which in our concrete setting is the operator $\cL_{\wt\gamma+\wt h,\wt k+\wt q}\cL_{\wt\gamma+\wt h,\wt k+\wt q}^*$) implies that $(\eps\pa_\eps)^2 u\in\cC^0$ after all. (See Remark~\ref{RmkCETotSolvCont}.) This can be iterated, giving the desired conormal regularity of $(\wt f,\wt j)$.

 By Sobolev embedding (Corollary~\ref{CorBAn00Sob}), we can now conclude that $(\wt h,\wt q)\in\eps^N\CI(\wt X\setminus\wt K^\circ;S^2\wt T^*\wt X\oplus S^2\wt T^*\wt X)$ (for all $N$) vanishes in $\hat r<\hat R_0$ and outside of $[0,1)\times\cU$. The proof is complete.
\end{proof}

\section{Applications}
\label{SA}

As an immediate application of Theorem~\ref{ThmGlT}, we explain in~\S\ref{SsAB} how to glue exact Kerr initial data into a given generic initial data set. On the other hand, all known explicit initial data sets are contained in spacetimes with symmetries, and thus the initial data do admit KIDs; we thus demonstrate in~\S\ref{SsAK} how to extend our methods so as to glue any asymptotically flat data set into the initial data of an exact Kerr--de~Sitter (or Kerr, or Kerr--anti de~Sitter) black hole. Combining these two applications, we can thus produce initial data sets by gluing small Kerr black holes into Kerr (or Kerr--de~Sitter, or Kerr--anti de~Sitter) initial data.

For $\Lambda\in\R$ and parameters $\bhm\in\R$, $\bha\in\R$, the Kerr (for $\Lambda=0$), Kerr--de~Sitter (for $\Lambda>0$), and Kerr--anti de~Sitter (for $\Lambda<0$) metric $g_{\Lambda,\bhm,\bha}$ describes a rotating black hole with mass $\bhm$ and specific angular momentum $\bha$; in the case $\bha=0$, the metric $g_{\Lambda,\bhm,0}$ reduces to the Schwarzschild or Schwarzschild--(anti) de~Sitter metric \cite{SchwarzschildPaper,KerrKerr,CarterHamiltonJacobiEinstein}. In Boyer--Lindquist coordinates, and using the notation from \cite[\S1]{HintzKdSMS}, we have
\begin{align}
\label{EqAKerr}
\begin{split}
  g_{\Lambda,\bhm,\bha} &:= -\frac{\mu_{\Lambda,\bhm,\bha}(r)}{b_{\Lambda,\bhm,\bha}^2\varrho_{\Lambda,\bhm,\bha}^2(r,\theta)}(\dd t-\bha\,\sin^2\theta\,\dd\phi)^2 + \varrho_{\Lambda,\bhm,\bha}^2(r,\theta)\Bigl(\frac{\dd r^2}{\mu_{\Lambda,\bhm,\bha}(r)} + \frac{\dd\theta^2}{c_{\Lambda,\bhm,\bha}(\theta)}\Bigr) \\
    &\qquad + \frac{c_{\Lambda,\bhm,\bha}(\theta)\sin^2\theta}{b_{\Lambda,\bhm,\bha}^2\varrho_{\Lambda,\bhm,\bha}^2(r,\theta)}\bigl( (r^2+\bha^2)\dd\phi - \bha\,\dd t\bigr)^2,
\end{split} \\
  &\hspace{-1em}b_{\Lambda,\bhm,\bha}:=1+\frac{\Lambda\bha^2}{3},\quad
    c_{\Lambda,\bhm,\bha}(\theta):=1+\frac{\Lambda\bha^2}{3}\cos^2\theta,\quad
    \varrho_{\Lambda,\bhm,\bha}^2(r,\theta):=r^2+\bha^2\cos^2\theta. \nonumber
\end{align}
We call the parameters $(\Lambda,\bhm,\bha)$ \emph{subextremal} if $\mu_{\Lambda,\bhm,\bha}$ has four distinct real roots. The metrics $g_{\Lambda,\bhm,\bha}$ are stationary and axisymmetric: the vector fields $\pa_t$ and $\pa_\phi$ are Killing vector fields. When $\bha=0$, the metric is spherically symmetric. (For $\bhm=0$, we obtain the Minkowski, de~Sitter, and anti--de~Sitter metrics, which are maximally symmetric.) We shall henceforth write K(AdS) for Kerr, Kerr--de~Sitter, and Kerr--anti~de Sitter (depending on a fixed choice of $\Lambda$).

For a discussion of the geometry of K(AdS) spacetimes, we refer the reader to \cite{DafermosRodnianskiLectureNotes,VasyMicroKerrdS,HolzegelSmuleviciKAdSKG}. We mention here only that the above metric of a subextremal K(AdS) black hole can be extended analytically past the larger two roots $0<r_{\Lambda,\bhm,\bha}^e<r_{\Lambda,\bhm,\bha}^c$ of $\mu_{\Lambda,\bhm,\bha}(r)$, with the level set $r=r_{\Lambda,\bhm,\bha}^e$ containing the future event horizon.

\subsection{Gluing Kerr initial data into a given data set}
\label{SsAB}

Fix the parameters $\bhm,\bha\in\R$ of a Kerr black hole (not necessarily subextremal). Fix $r^e>0$ large enough so that the Kerr metric, in Boyer--Lindquist coordinates, is defined on the manifold
\[
  M := \R_t \times [r^e,\infty) \times \Sph^2_{\theta,\phi},
\]
We may take $r^e$ to be any number larger than $\bhm+\sqrt{\bhm^2-\bha^2}$ in the subextremal case $|\bha|<\bhm$. We introduce the usual Cartesian spatial coordinates $x=(x^1,x^2,x^3)$ on $[r^e,\infty)\times\Sph^2_{\theta,\phi}$. Fix now the generators $B_j\in\mathfrak{so}(1,3)\subset\R^{4\times 4}$, $j=1,2,3$, of Lorentz boosts in the three spatial coordinate directions $x^1,x^2,x^3$ to be $(B_j)_{p q}=\delta_{0 p}\delta_{j q}+\delta_{0 q}\delta_{j p}$; thus, for example, $\exp(w_1 B_1)$ is a Lorentz boost in the $x^1$-direction with rapidity $w_1$. Let $\bfB=(B_1,B_2,B_3)$; given a rapidity vector $\bfw\in\R^3$, we then write $t_\bfw=\exp(-\bfw\cdot\bfB)^*t$ and $x_\bfw=\exp(-\bfw\cdot\bfB)^*x$ for the time and spatial coordinates of an observer that is boosted with rapidity $\bfw$. Note that since $\exp(-\bfw\cdot\bfB)\in SO(1,3)$, the pullback of the Minkowski metric $-\dd t^2+\dd x^2$ is equal to $-\dd t_\bfw^2+\dd x_\bfw^2$.

\begin{lemma}[Boosted Kerr initial data]
\label{LemmaAB}
  Let $\Sigma\subset M$ denote a spacelike hypersurface which is a graph over $[r^e,\infty)\times\Sph^2$, and which for large $r$ is equal to a level set of $t_\bfw$. Let $\hat K\subset\R^3$ denote the closed ball of radius $r^e$, and identify $\Sigma\cong\R^3\setminus\hat K^\circ$ via a diffeomorphism which is given by $x$, resp.\ $x_\bfw$ near $\hat K$, resp.\ for large $r$. Then the initial data at $\Sigma$ of the metric $g_{0,\bhm,\bha}$ on $M$ are $\cE$-asymptotically flat on $\R^3\setminus\hat K^\circ$, where $\cE=\N_0+1$.
\end{lemma}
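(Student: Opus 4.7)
The plan is to verify the two polyhomogeneity conditions of Definition~\ref{DefCEAf} with $\cE = \N_0 + 1$ for the data induced on the portion of $\Sigma$ at large $|x_\bfw|$, since the compact region contributes only smooth behavior away from $\pa\ol{\R^n}$. I introduce asymptotic Cartesian coordinates $x = (x^1,x^2,x^3)$ on $\R^3$ via, e.g., $x^1+i x^2 := \sqrt{r^2+\bha^2}\sin\theta\,e^{i\phi}$ and $x^3 := r\cos\theta$. A direct expansion of~\eqref{EqAKerr} in inverse powers of $r$ (using that $\mu_{0,\bhm,\bha}/r^2 = 1 - 2\bhm/r + \bha^2/r^2$ and $\varrho^2/r^2 = 1 + \bha^2\cos^2\theta/r^2$ are smooth in $1/r$, and likewise for the coordinate change) yields
\[
  g_{0,\bhm,\bha} = \eta + \sum_{j=1}^\infty r^{-j}\, h_j(\omega),\qquad \eta := -\dd t^2 + \delta_{pq}\,\dd x^p\,\dd x^q,
\]
polyhomogeneous at spatial infinity with index set $\N_0 + 1$, where each $h_j$ is a smooth $t$-independent symmetric 2-tensor in the Cartesian coframe.

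Applying the Lorentz boost $\Phi_\bfw := \exp(-\bfw\cdot\bfB)$, which is a linear isomorphism of $\R^4$ preserving $\eta$, I obtain $\tilde g := \Phi_\bfw^* g_{0,\bhm,\bha} = \eta + \Phi_\bfw^*(g_{0,\bhm,\bha} - \eta)$ in $(t_\bfw, x_\bfw)$-coordinates. Since $\Phi_\bfw$ is affine with nondegenerate spatial part, $|x|$ restricted to $\{t_\bfw = c\}$ is smoothly comparable to $|x_\bfw|$ near $|x_\bfw| = \infty$ as a boundary defining function of the radial compactification, so pullback preserves polyhomogeneity. Hence $\tilde g|_{\{t_\bfw=c\}} - \hat\fe \in \cA_\phg^{\N_0+1}(\hat X \setminus \hat K^\circ; S^2\,\Tsc^*\hat X)$, which gives the first condition of $\cE$-asymptotic flatness for $\hat\gamma$.

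For the second fundamental form, I use the ADM decomposition
\[
  \tilde g = -N^2\,\dd t_\bfw^2 + h_{pq}(\dd x_\bfw^p + N^p\,\dd t_\bfw)(\dd x_\bfw^q + N^q\,\dd t_\bfw),
\]
whence $\hat\gamma = h|_{t_\bfw=c}$ and $\hat k_{pq} = (2N)^{-1}(\pa_{t_\bfw} h_{pq} - \nabla_p^h N_q - \nabla_q^h N_p)|_{t_\bfw=c}$, with $N-1,\, N^p,\, h_{pq} - \delta_{pq} \in \cA_\phg^{\N_0+1}$ (the shift $N^p$ inherits its decay from the cross terms $\tilde g_{t_\bfw p}$). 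Spatial coordinate derivatives gain one order of decay on $1/|x_\bfw|$-polyhomogeneous tensors, so $\nabla_p^h N_q \in \cA_\phg^{\N_0+2}$. The crucial observation for the time derivative is that, although $\pa_{t_\bfw}$ is not Killing for $\tilde g$, the Killing field $\pa_t$ of $g_{0,\bhm,\bha}$ pulls back to a constant-coefficient Killing field $V = a\,\pa_{t_\bfw} + b^p\,\pa_{x_\bfw^p}$ of $\tilde g$ with $a \ne 0$. Therefore
\[
  \pa_{t_\bfw}\tilde g = -a^{-1} b^p\,\pa_{x_\bfw^p}\tilde g = -a^{-1} b^p\,\pa_{x_\bfw^p}(\tilde g - \eta) \in \cA_\phg^{\N_0+2},
\]
since $\pa_{x_\bfw^p}\eta = 0$. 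Dividing by $N$ (polyhomogeneous with leading term $1$) gives $\hat k \in \cA_\phg^{\N_0+2} = \cA_\phg^{\cE+1}$, as required.

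The only substantive step is the $\cA_\phg^{\N_0+2}$ bound on $\pa_{t_\bfw}\tilde g$: computing it component-wise in boosted coordinates would naively yield only $\cA_\phg^{\N_0+1}$, and the upgrade requires using the stationarity of $g_{0,\bhm,\bha}$ via the Killing identity above to convert one time derivative into a spatial one. The remaining index-set bookkeeping through the coordinate change and ADM decomposition is routine.
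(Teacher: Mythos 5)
Your proof is correct, and it reaches the conclusion by a genuinely different (though closely related) route from the paper's. The paper never writes out the ADM decomposition of the boosted metric; instead, it introduces the function class $\rho\CI_{\rm ext}$ (coefficients smooth jointly in $t/r$, $r^{-1}$, $\omega$), proves once and for all that any metric of the form $-\dd t^2+\dd x^2 + g'$ with $g'\in\rho\CI_{\rm ext}$ induces $\cE$-asymptotically flat data on $t=0$ (the key being that $\pa_t,\pa_{x^j}$ map $\rho\CI_{\rm ext}\to\rho^2\CI_{\rm ext}$, so the Christoffel symbols gain an order), and then separately shows that $\rho\CI_{\rm ext}$ is stable under Lorentz boosts by working on the radial compactification $\ol{\R^4}$. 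The two ingredients --- stationarity of $g_{0,\bhm,\bha}$ and boost invariance --- are thus kept at the spacetime level and the initial-data computation is done in a single reference frame. You instead expand the unboosted metric, pull back along the boost, pass to the ADM decomposition in the boosted coordinates, and close the argument with the pulled-back Killing field $V=a\,\pa_{t_\bfw}+b^p\pa_{x_\bfw^p}$ to trade the transversal time derivative for spatial ones. Both mechanisms encode the same fact (stationarity lets one convert time derivatives into spatial ones, which gain a power of $r^{-1}$), but the paper's packaging is a bit more robust --- it would survive with a merely asymptotically stationary metric, whereas your version leans on the exact Killing vector --- while yours is more explicit and self-contained for the Kerr case. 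One minor remark: the gain of one order for $\pa_{t_\bfw}\tilde g$ can also be seen without invoking the Killing identity explicitly, simply by the chain rule, since $\tilde g-\eta$ depends on $(t_\bfw,x_\bfw)$ only through $x=x(t_\bfw,x_\bfw)$ with constant Jacobian and $g-\eta$ is $t$-independent; the Killing field is a clean way to phrase this, but the "naive" component-wise computation is not actually blind to the structure.
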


See also the closely related discussion in \cite[Appendices~E and F]{ChruscielDelayMapping}.

\begin{proof}[Proof of Lemma~\usref{LemmaAB}]
  For $\Lambda=0$, the coefficients $b_{\Lambda,\bhm,\bha}$ and $c_{\Lambda,\bhm,\bha}$ in~\eqref{EqAKerr} are equal to $1$. Let $\rho=r^{-1}$. One can then check that $g_{0,\bhm,\bha}=-\dd t^2+\dd x^2+g'$ where $g'\in\rho\CI(\ol{\R^3}\setminus\hat K^\circ;S^2\,\Tsc^*\ol{\R^3})$ is independent of $t$; that is, the coefficients of $g'$ with respect to $\pa_t,\pa_{x^1},\pa_{x^2},\pa_{x^3}$ are smooth functions of $r^{-1}$ and $\omega=\frac{x}{|x|}$ and vanish at $r^{-1}=0$. A fortiori, $g_{0,\bhm,\bha}$ is thus for large $r$ of the form
  \begin{equation}
  \label{EqABClass}
    {-}\dd t^2+\dd x^2 + g',\qquad g'\in\rho\CI_{\rm ext},
  \end{equation}
  where we write $\rho\CI_{\rm ext}$ for the space of smooth symmetric 2-tensors whose coefficients, for large $r$ and for $|\frac{t}{r}|<1$, are smooth functions of $\frac{t}{r}$, $r^{-1}$, and $\omega$. The Lemma now follows from the following two claims:
  \begin{enumerate}
  \item\label{ItABData} the initial data of any metric $g$ of the form~\eqref{EqABClass} at $t=0$ are $\cE$-asymptotically flat for large $r$;
  \item\label{ItABStable} the class of metrics of the form~\eqref{EqABClass} is preserved under pullback along a Lorentz boost.
  \end{enumerate}

  Regarding claim~\eqref{ItABData}, note that the metric induced by $g$ on $t^{-1}(0)$ satisfies
  \[
    \gamma\in\dd x^2+\rho\CI(\ol{\R^3}\setminus B(0,R);S^2\,\Tsc^*\ol{\R^3})
  \]
  for some large $R$ indeed. The future unit normal vector field $n$ at $t^{-1}(0)$ is $n=\pa_t+n'$, $n'\in\rho\CI(\ol{\R^3}\setminus B(0,R);\Tsc\ol{\R^4})$ (i.e.\ a linear combination of $\pa_t$, $\pa_{x^1}$, $\pa_{x^2}$, $\pa_{x^3}$ with coefficients of class $\rho\CI$). But the Christoffel symbols of $g$ (in the coordinates $t,x$) lie in $\rho^2\CI_{\rm ext}$ (the space of smooth functions of $\frac{t}{r}$, $r^{-1}$, $\omega$ in $r\gg 1$ which vanish quadratically at $r^{-1}=0$), since $\pa_t,\pa_{x^j}\colon\rho\CI_{\rm ext}\to\rho^2\CI_{\rm ext}$. Therefore, the coefficients of the second fundamental form satisfy $k_{i j}=g(\nabla_i\pa_j,n)\in\rho^2\CI(\ol{\R^3}\setminus B(0,R))$.

  Regarding claim~\eqref{ItABStable}, note that $\phi:=\exp(-\bfw\cdot\bfB)$ is an invertible linear map on $\R^4$, and thus extends by continuity to a diffeomorphism of $\ol{\R^4}$ (cf.\ \cite[Lemma~5.1.1]{MelroseDiffOnMwc}) preserving the boundary at infinity. As a consequence, push-forward along $\phi$ or $\phi^{-1}$ maps $\Vb(\ol{\R^4})$ into itself, and also maps functions of class $\rho\CI_{\rm ext}$ into itself, and therefore $\phi^*$ preserves symmetric 2-tensors of class $\rho\CI_{\rm ext}$. Furthermore, as already noted, we have $\phi^*(-\dd t^2+\dd x^2)=-\dd t^2+\dd x^2$. This completes the proof that the pullback of~\eqref{EqABClass} under $\phi^*$ is again of the form~\eqref{EqABClass}.
\end{proof}

Therefore, we may apply Theorem~\ref{ThmGlT} to glue a Kerr black hole (whether subextremal or not) into the neighborhood of a point $\fp$ in any given initial data set $(X,\gamma,k)$ (with or without cosmological constant), provided $X$ does not have KIDs in a neighborhood of $\fp$.

\subsection{Gluing a given data set into subextremal Kerr and Kerr--(anti) de~Sitter initial data}
\label{SsAK}

Fix the parameters $\bhm,\bha\in\R$ of a subextremal K(AdS) black hole. We write $r^e=r_{\Lambda,\bhm,\bha}^e$ for the radius of the event horizon. For sufficiently small $\eta>0$, the K(AdS) metric can then be analytically extended to the manifold
\[
  M := \R_\ft \times [r^e-2\eta,\infty) \times \Sph^2_{\theta,\varphi},
\]
where $\ft=t-T(r)$ and $\varphi=\phi-\Phi(r)$ for suitable functions $T,\Phi$; see e.g.\ \cite[Equation~(1.5)]{HintzKdSMS} in the Kerr--de~Sitter case. We require that the level sets of $\ft$ are spacelike and transversal to the future event horizon. For $\delta\in(0,2\eta]$, let
\[
  X_\delta = \ft^{-1}(0) \cap \{ r > r^e-\delta \} \subset M,
\]
and denote by
\[
  \gamma,k\in\CI(X_\delta;S^2\,\Tsc^* X)
\]
the initial data on $X_\delta$ induced by $g_{\Lambda,\bhm,\bha}$. Suppose we are given an $\cE$-asymptotically flat initial data set $(\hat\gamma,\hat k)$ on $\R^3\setminus\hat K^\circ$ as in Definition~\ref{DefCEAf}, which satisfies the constraint equations,. Since $(M,g_{\Lambda,\bhm,\bha})$ has global Killing vector fields, the data set $(X_\delta,\gamma,k)$ \emph{does} have KIDs, i.e.\ the kernel of $L_{\gamma,k}^*$ is nontrivial on any nonempty open subset of $X_\delta$ \cite{MoncriefLinStabI}. Therefore, Theorem~\ref{ThmGlT} is not directly applicable. Note however that in the spacetime evolving from a putative gluing $(\wt\gamma,\wt k)$ of $(\hat\gamma,\hat k)$ into a neighborhood of a point $\fp\in X_\delta$, the exterior domain is independent of the glued data in $r<r^e-\eta$, provided $\eps>0$ is small enough, in view of finite speed of propagation for solutions of the Einstein field equations. Thus, we shall relax the gluing requirements by tolerating violations of the constraint equations in $r<r^e-\eta$:

\begin{thm}[Gluing into K(AdS)]
\label{ThmAK}
  Let $(\hat\gamma,\hat k)$ be $\cE$-asymptotically flat data on $\R^3\setminus\hat K^\circ$ with $P(\hat\gamma,\hat k;0)=0$, and let $(\gamma,k)$ be K(AdS) data on $X_{2\eta}$ as above. Let $\fp\in X_0$, and let $\cU^\circ\subset X_{2\eta}$ be a smoothly bounded connected open set containing $\fp$ with compact closure $\cU=\ol{\cU^\circ}$ disjoint from $\{r=r^e-2\eta\}$, and so that $\cU^\circ\cap(X_{2\eta}\setminus X_\eta)\neq\emptyset$. Then the conclusions of Theorem~\usref{ThmGlT} hold for $n=3$ and $\wt X=\wt X_\eta$.
\end{thm}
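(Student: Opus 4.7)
The only obstruction to a direct application of Theorem~\ref{ThmGlT} is that $(X_{2\eta},\gamma,k)$ admits KIDs on every open subset, coming from the global Killing vector fields $\pa_\ft$ and $\pa_\varphi$ of $g_{\Lambda,\bhm,\bha}$ (plus additional rotational KIDs in the spherically symmetric case $\bha=0$). The cokernel $\cK := \ker L_{\gamma,k}^*|_{\cU^\circ}$ is thus finite-dimensional but non-trivial; call $N:=\dim\cK$. My strategy is to absorb this cokernel by permitting a controlled violation of the constraints in the deep interior piece $\cU^\circ \cap (X_{2\eta}\setminus X_\eta)$, which is disjoint from $\wt X_\eta$ and hence invisible after restriction. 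Concretely I would fix a basis $\{(\hat f_j^*,\hat j_j^*)\}_{j=1}^N$ of $\cK$ and, exploiting the unique continuation of KIDs established in Lemma~\ref{LemmaCEProlong} (which forces any nontrivial KID to be nonzero on a dense open subset of the connected set $\cU^\circ$), choose smooth compactly supported tensors $\Psi_1,\dots,\Psi_N \in \CIc(\cU^\circ \cap (X_{2\eta}\setminus X_\eta);\ul\R\oplus T^*X)$ for which the pairing matrix $M_{i j} := \int_X\langle\Psi_i,(\hat f_j^*,\hat j_j^*)\rangle_\gamma\,|\dd\mathrm{vol}_\gamma|$ is invertible.

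With these in hand, I would replace $L_{\circ,\gamma,k}$ throughout~\S\ref{SsCEPct}--\S\ref{SsCETot} by the enlarged linearized operator
\[
  \wt L \colon (h,q,c_1,\dots,c_N) \longmapsto L_{\circ,\gamma,k}(h,q) - \sum_{i=1}^N c_i \Psi_i,
\]
whose adjoint has trivial kernel: if $\wt L^*(f^*,j^*)=0$ then $L_{\gamma,k}^*(f^*,j^*)=0$ in $\cU^\circ$ and $(f^*,j^*)$ is $L^2$-orthogonal to each $\Psi_i$, so invertibility of $M$ forces $(f^*,j^*)=0$. Inspection of the proofs of Propositions~\ref{PropCEPctSolv} and~\ref{PropCETotSolv} shows that the only place the no-KIDs hypothesis entered was precisely in the analysis of the kernel of $\cL_{\circ,\gamma,k}^*$; with $\wt L$ in place of $L_{\circ,\gamma,k}$ this step goes through unchanged, and one obtains uniformly bounded right inverses in which the solution $(h,q)$ is paired with a finite-dimensional vector $c=(c_1,\dots,c_N)$ of coefficients. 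I would then run the formal iteration of Proposition~\ref{PropGlFo} with this enlarged solvability. At each step correcting a leading term at $X_\circ$, the modified solver generates polyhomogeneous-in-$\eps$ coefficients $c_{i,\ell}(\eps)$, and the new error agrees with $\sum_i c_{i,\ell}(\eps)\Psi_i$ modulo strictly improved tensorial terms. Asymptotic summation yields an $(\hat\cE_\sharp,\cE_\sharp)$-smooth total family $(\wt\gamma,\wt k)$ on $\wt X_{2\eta}\setminus\wt K^\circ$ for which $P(\wt\gamma,\wt k;\Lambda) - \sum_i c_i(\eps)\Psi_i \in \CIdot(\wt X_{2\eta}\setminus\wt K^\circ)$ for some $c_i \in \CI([0,\eps_\sharp))$.

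The nonlinear correction step of Proposition~\ref{PropGlT} proceeds in parallel: one sets up the contraction mapping on the enlarged space $\cX_\eps^{s,N}\oplus\R^N$, the uniform estimates surviving because the $\Psi_i$ are $\eps$-independent, smooth, and compactly supported away from $\hat X$, $X_\circ$, and $\pa\cU$. The fixed point yields a genuine total family with $P(\wt\gamma,\wt k;\Lambda)=\sum_i \tilde c_i(\eps)\Psi_i$ for some smooth coefficients $\tilde c_i$. Restricting from $\wt X_{2\eta}$ to $\wt X_\eta\setminus\wt K^\circ$, the source term vanishes identically (since $\supp\Psi_i\cap X_\eta=\emptyset$), so the constraints are satisfied exactly; properties~\eqref{ItGlTBdy} and~\eqref{ItGlTLoc} of Theorem~\ref{ThmGlT} are preserved under restriction because $\fp\in X_0\subset X_\eta$ and $\cU^\circ\cap X_\eta$ is still a neighborhood of $\fp$ on which the background data are glued. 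The principal technical point to verify is that the extra parameters $c_i$ fit within the polyhomogeneous and conormal-in-$\eps$ bookkeeping of~\S\ref{SG} and Definition~\ref{DefCETotReg}: since $\Psi_i$ is smooth, $\eps$-independent, and compactly supported, $c_i(\eps)$ inherits the same polyhomogeneity/conormality as the corresponding tensorial corrections, and the uniform invertibility of the enlarged $\wt L\wt L^*$ follows by the same normal operator plus relative-compactness argument used for $\cL\cL^*$. I do not expect any novel analytic difficulty beyond this enlargement.
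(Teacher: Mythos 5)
Your proposal is correct and follows essentially the same strategy as the paper: by Lemma~\ref{LemmaCEProlong} every nontrivial KID has full support in $\cU^\circ$, so one can choose $N$ compactly supported tensors in $\cU^\circ\setminus X_\eta$ that pair nondegenerately with the cokernel; one then enlarges the solvability theory of \S\ref{SsCEPct}--\S\ref{SsCETot} by an extra $\C^N$ of forcing parameters and, in the end, restricts to $\wt X_\eta$ where the extra source terms vanish. The only difference from the paper is presentational: you package the correction as an enlarged underdetermined operator $\wt L(h,q,c)=L_{\circ,\gamma,k}(h,q)-\sum c_i\Psi_i$ and work with $\wt L\wt L^*=\cL\cL^*+\sum_i\Psi_i\otimes\Psi_i^*$, while the paper borders $\cL\cL^*$ into the saddle-point system $\bigl(\begin{smallmatrix}\cL\cL^*&E_1\\E_2&0\end{smallmatrix}\bigr)$; these are algebraically equivalent, and both make the linearized problem invertible for the same reason (nondegeneracy of the pairing with $\cK^*$).
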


See Figure~\ref{FigAK}.

\begin{figure}[!ht]
\centering
\includegraphics{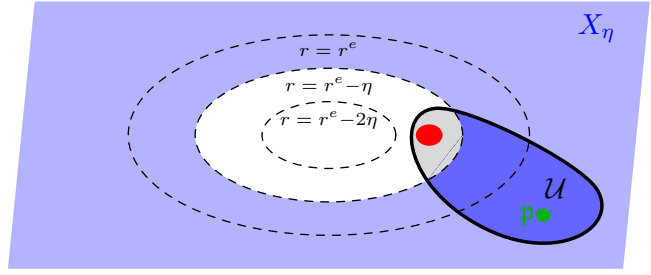}
\caption{Illustration of Theorem~\ref{ThmAK} and its proof; shown is a part of $X_{2\eta}$. We permit violations of the constraint equations in $r<r^e-\eta$; in the gluing procedure, which modifies the data $(\gamma,k)$ in $\cU$, we add suitable error terms (which we may take to be supported in the red region) to the constraints into $X_{2\eta}\setminus X_\eta$.}
\label{FigAK}
\end{figure}

\begin{rmk}[Mass]
\label{RmkAKDisc}
  If we start with Kerr data $(\gamma,k)$, then the glued initial data $(\wt\gamma,\wt k)$, for all $\eps>0$ (in the notation of Theorem~\ref{ThmGlT}), agree with $(\gamma,k)$ for large radii, and therefore their ADM mass is always equal to $\bhm$. Note however that $(\wt\gamma,\wt k)$ differs from $(\gamma,k)$ near the Kerr event horizon, and thus Bartnik-type notions of (quasilocal) mass (see e.g.\ \cite{BartnikQuasilocal,BartnikEnergyLectures,AndersonJaureguiQuasiLocal}) which depend on properties of minimal surfaces or MOTS depend on $\eps$. Heuristically, the proof below thus takes mass away from $(\gamma,k)$ and uses it to glue in the data set $(\hat\gamma,\hat k)$.
\end{rmk}

\begin{rmk}[Other gluing procedures]
  Another possible avenue for accounting for the cokernel of $L_{\gamma,k}$ is to modulate the parameters of the K(AdS) black hole in the gluing procedure, as has been done in related contexts for $\Lambda=0$ in \cite{CorvinoScalar,CorvinoSchoenAsymptotics}, and for $\Lambda>0$ in \cite{ChruscielPollackKottler,CortierKdSGluing}; we shall not pursue this possibility in this paper. Conjecturally, in such a gluing procedure in the case $\Lambda=0$, in which one moreover leaves $(\gamma,k)$ unchanged near the event horizon, the ADM mass of the glued data set is $\bhm+\eps\hat\bhm+o(\eps)$ where $\hat\bhm$ is the ADM mass of $(\hat\gamma,\hat k)$.
\end{rmk}

\begin{proof}[Proof of Theorem~\usref{ThmAK}]
  The main task is to find a suitable replacement for Proposition~\ref{PropCEPctSolv}. Let $\cU_\circ=\upbeta_\circ^*(\cU)$ where $\cU=\ol{\cU^\circ}$. Let $N<\infty$ denote the dimension of the kernel $K^*$ of $L_{\gamma,k}^*$ on $\CI(\cU^\circ)$. By Lemma~\ref{LemmaCEProlong}, elements of $K^*$ have full support on $\cU^\circ$, and therefore we may select $(f_i^*,j_i^*)\in\CIc(\cU^\circ\setminus X_\eta;\ul\R\oplus T^* X_{2\eta})$, $i=1,\ldots,N$, so that
  \[
    E_2\colon \hat\rho^{-\frac{n}{2}}H_{\bop,0 0}^{s+2}(\cU_\circ)\oplus\hat\rho^{-\frac{n}{2}}H_{\bop,0 0}^{s+1}(\cU_\circ;\upbeta^*T^*X_{2\eta})\ni(f^*,j^*)\mapsto(\la(f^*,j^*),(f_i^*,j_i^*)\ra)_{i=1,\ldots,N}\in\C^N
  \]
  restricts to a linear isomorphism $\cK^*\cong\C^N$ where $\cK^*$ is the kernel of $\cL_{\circ,\gamma,k}^*$ on the domain of the map~\eqref{EqCEPctSolv1Inv}. (Note here that elements of $\cK^*$ are rescaled versions of restrictions of pullbacks of elements of $K^*$ to $\cU_\circ$ along $\upbeta_\circ$.) Define $E_1\colon\C^N\to\CIc(\cU^\circ\setminus X_\eta;\ul\R\oplus T^*X_{2\eta})$ by $(c_i)_{i=1,\ldots,N}\mapsto\sum_{i=1}^N c_i(f_i^*,j_i^*)$. Since $\ran E_1$ spans the cokernel of $\cL_{\circ,\gamma,k}\cL_{\circ,\gamma,k}^*$, we conclude that the map
  \[
    \begin{pmatrix}
      \cL_{\circ,\gamma,k}\cL_{\circ,\gamma,k}^* & E_1 \\
      E_2 & 0
    \end{pmatrix}
    \colon \bigl(\hat\rho^{-\frac{n}{2}}H_{\bop,0 0}^{s+2}\oplus\hat\rho^{-\frac{n}{2}}H_{\bop,0 0}^{s+1}\bigr) \oplus \C^N \to \bigl(\hat\rho^{-\frac{n}{2}}H_{\bop,0 0}^{s-2}\oplus\hat\rho^{-\frac{n}{2}}H_{\bop,0 0}^{s-1}\bigr) \oplus \C^N
  \]
  is invertible. Regarding part~\eqref{ItCEPctSolvPhg} of Proposition~\ref{PropCEPctSolv}, we may now uniquely solve, for any given polyhomogeneous $(f,j)$, the system\footnote{The choice $0\in\C^N$ of the right hand side of the equation involving $E_2$ is arbitrary.}
  \[
    L_{\circ,\gamma,k}(h,q)=(f,j)+E_1(c),\qquad
    E_2(h,q)=0,
  \]
  for $c\in\C^N$ and polyhomogeneous $(h,q)$. Since $\supp E_1(c)\subset\cU^\circ\setminus X_\eta$, this implies that $L_{\circ,\gamma,k}(h,q)=(f,j)$ on $X_\eta$.

  We similarly modify Proposition~\ref{PropCETotSolv}: we may take the same $(f_i^*,j_i^*)$, regarded as $\eps$-independent tensors on $\wt X'$, and consider the map
  \begin{equation}
  \label{EqAKNewMap}
    \begin{pmatrix} \cL_{\wt\gamma,\wt k}\cL_{\wt\gamma,\wt k}^* & E_1 \\ E_2 & 0 \end{pmatrix}
  \end{equation}
  between the direct sums of the spaces in~\eqref{EqCETotSolv} with $\C^N$. This can be shown to be invertible with uniformly bounded inverse for all sufficiently small $\eps>0$ by following the proof of Proposition~\ref{PropCETotSolv}: for the normal operator argument at $\hat X_{2\eta}$, one works only with $\cL_{\wt\gamma,\wt k}\cL_{\wt\gamma,\wt k}^*$ and obtains an improvement of the weight at $\hat X_{2\eta}$, while the normal operator argument at $(X_{2\eta})_\circ$ provides the estimate for the $\C^N$-summand, and gives the improvement of the weight at $(X_{2\eta})_\circ$ as before. Lemma~\ref{LemmaCETotReg} remains valid, \emph{mutatis mutandis}, for the map~\eqref{EqAKNewMap}.

  The arguments of~\S\ref{SGl} now apply with minor modifications; for instance, instead of~\eqref{EqGlTPDE}, one solves the system
  \[
    w_L P\bigl((\wt\gamma,\wt k)+w_R\cL_{\wt\gamma,\wt k}^*(\wt f,\wt j)\bigr) + E_1(c) = 0, \qquad
    E_2\bigl((\wt\gamma,\wt k)+w_R\cL_{\wt\gamma,\wt k}^*(\wt f,\wt j)\bigr) = 0.
  \]
  We leave the details to the reader.
\end{proof}

If we take $(\hat\gamma,\hat k)$ to be (boosted) Kerr initial data, then Theorem~\ref{ThmAK} produces initial data sets in which a small mass Kerr black hole is glued into a given K(AdS) initial data set. These are natural initial data sets which (depending on the choice of location and boost parameter of the small black hole, which determine the geodesic along which the small Kerr black hole moves) conjecturally evolve into spacetimes describing extreme mass ratio inspirals; see~\S\ref{SsIO}.

\appendix
\section{Quasilinear elliptic Douglis--Nirenberg systems}
\label{SDN}

We work on $\R^n$, $n\geq 1$, with points in $\R^n$ denoted by $x$. Let $N\in\N$. Consider $u(x)=(u_k(x))_{k=1,\ldots,N}$, with $u_k$ valued in $\R^{d_k}$. Let $t_1,\ldots,t_N,s_1,\ldots,s_N\in\N_0$. We consider a system $(P_j(u))_{j=1,\ldots,N}=0$ of $N$ nonlinear partial differential equations for $u$ which is quasilinear in the following Douglis--Nirenberg sense: writing
\[
  D^{<m}u := (u,D u,\ldots,D^{m-1}u)
\]
for the vector of all (mixed) $x$-derivatives of $u$ up to order $m-1$, we have
\begin{align*}
  P_j(x,u) &= \sum_{k=1}^N \sum_{|\alpha|=t_k+s_j}L_{j k,\alpha}\bigl(x,(D^{<t_\ell+s_j}u_\ell(x))_{\ell=1,\ldots,N}\bigr) D^\alpha u_k(x) \\
    &\qquad + \tilde P_j\bigl(x,(D^{<t_\ell+s_j}u_\ell(x))_{\ell=1,\ldots,N}\bigr),
\end{align*}
where $L_{j k,\alpha}$ and $\tilde P_j$ are smooth functions of their arguments. We furthermore assume that $P$ is elliptic at $u$ in the sense that the linear operator
\[
  L(x,(D^{<t_\ell+s_j}u_\ell),D) := \Bigl( \sum_{|\alpha|=t_k+s_j} L_{j k,\alpha}(x,(D^{<t_\ell+s_j}u_\ell))D^\alpha \Bigr)_{j,k=1,\ldots,N} \in \bigl( \Diff^{t_k+s_j} \bigr)_{j,k=1,\ldots,N}
\]
is elliptic in the Douglis--Nirenberg sense.

\begin{prop}[Bootstrap]
\label{PropDNEll}
  Let $\bar s=\max s_j$. Suppose that $u\in\bigoplus_{k=1}^N\cC^{t_k+\bar s,\alpha}(\R^n;\R^{d_k})$, $\alpha\in(0,1)$, is a solution of $P(x,u)=0$. Then $u\in\CI$.
\end{prop}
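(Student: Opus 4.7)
\medskip

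The plan is the standard bootstrap for quasilinear elliptic systems, adapted to the Douglis--Nirenberg order structure. First I would rewrite the equation $P(x,u)=0$ as a \emph{linear} DN-elliptic equation for $u$ with coefficients frozen at the solution:
\[
  L\bigl(x,(D^{<t_\ell+s_j}u_\ell(x))_\ell,D\bigr)\,u = -\tilde P\bigl(x,(D^{<t_\ell+s_j}u_\ell(x))_\ell\bigr),
\]
where the $(j,k)$-entry of $L$ is a differential operator of order $s_j+t_k$ with, by hypothesis, DN-elliptic principal part. The key bookkeeping observation is that in both the coefficients $L_{jk,\alpha}$ and the lower order term $\tilde P_j$, the nonlinear arguments involve only $D^{<t_\ell+s_j}u_\ell$. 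Given $u_\ell\in\cC^{t_\ell+\bar s,\alpha}$, differentiating $t_\ell+s_j-1$ times leaves H\"older regularity $\cC^{\bar s-s_j+1,\alpha}$, a bound depending only on the row index $j$ (not on $\ell$); composing with the smooth functions $L_{jk,\alpha},\tilde P_j$ gives the same regularity for the $j$-th row of coefficients of $L$ and for $\tilde P_j$.

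Next I would invoke the classical Schauder estimate for DN-elliptic linear systems (Douglis--Nirenberg, or Agmon--Douglis--Nirenberg; equivalently, localize via the bounded-geometry charts used elsewhere in the paper and apply the standard Schauder estimate component-wise after an elliptic parametrix construction compatible with the DN weights): if the $j$-th row of $L$ has coefficients in $\cC^{r-s_j,\alpha}$ and the right-hand side satisfies $f_j\in\cC^{r-s_j,\alpha}$ for some $r\geq\bar s$, then $u_k\in\cC^{r+t_k,\alpha}_{\loc}$. Starting from $r=\bar s$ (the hypothesis) and applying the above observation, the coefficients and the forcing $-\tilde P_j$ lie in $\cC^{\bar s+1-s_j,\alpha}$, so Schauder yields $u_k\in\cC^{t_k+\bar s+1,\alpha}_{\loc}$.

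Finally I would iterate: supposing inductively $u_k\in\cC^{t_k+\bar s+m,\alpha}_{\loc}$, the nonlinear arguments $D^{<t_\ell+s_j}u_\ell$ gain regularity to $\cC^{\bar s+m+1-s_j,\alpha}$, hence so do the coefficients of the $j$-th row of $L$ and $\tilde P_j$ (by smoothness of $L_{jk,\alpha},\tilde P_j$ in their arguments and the fact that $\cC^{r,\alpha}$ is an algebra for $r\geq 0$, together with the chain rule). The Schauder step then promotes $u_k$ to $\cC^{t_k+\bar s+m+1,\alpha}_{\loc}$; taking $m\to\infty$ yields $u\in\CI$.

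The only step with any subtlety is the DN-Schauder estimate itself: one must use that the DN weights $(s_j,t_k)$ are consistent across the rewriting, i.e.\ that $L$ is genuinely DN-elliptic with these weights at $u$, which is exactly the hypothesis. Given this, the rest of the argument is purely mechanical. No compactness or global structure of $\R^n$ is needed, since the conclusion $u\in\CI$ is local.
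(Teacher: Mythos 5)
Your proposal is correct, but it takes a genuinely different route from the paper. You rewrite $P(x,u)=0$ as a linear DN-elliptic system with coefficients frozen at $u$, track the H\"older regularity of the $j$-th row of coefficients and of $\tilde P_j$ (your key bookkeeping observation, that $D^{<t_\ell+s_j}u_\ell$ lands in $\cC^{\bar s+m+1-s_j,\alpha}$ uniformly in $\ell$ when $u_\ell\in\cC^{t_\ell+\bar s+m,\alpha}$, is exactly right), and then invoke the classical Douglis--Nirenberg interior Schauder \emph{regularity} statement to gain one derivative per iteration. The paper instead avoids citing the variable-coefficient DN Schauder regularity theorem as a black box: it rescales $x\mapsto x/\delta$ and $u\mapsto\diag(\delta^{-t_k})u$ so that, in the unit ball, the operator differs from a \emph{constant-coefficient} DN-elliptic operator $\cL$ by a term $\tilde\cL$ with coefficients of size $\cO(\delta)$, then uses a pseudodifferential parametrix $Q$ for $\cL$ and a Neumann series for $I+Q\tilde\cL$ to extract the regularity gain directly. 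In effect the paper re-proves the needed perturbative version of DN elliptic regularity in the course of the argument, relying only on the constant-coefficient parametrix and boundedness of ps.d.o.s on H\"older spaces; your route is shorter but hinges on the regularity (not merely a priori) form of the DN Schauder estimate, which is standard but not stated explicitly in \cite{DouglisNirenbergElliptic} and would need a sentence acknowledging the usual mollification/approximation step to promote the a priori estimate to a regularity statement. Both reach the same conclusion, and your iteration matches the paper's index count exactly.
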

\begin{proof}
  This is a standard scaling and linear approximation argument, but we give a proof here for completeness. Since the claim is local, it suffices to work near $x=0$. Assume that $u_k\in\cC^{t_k+\bar s+m,\alpha}$ with $m\in\N_0$. For $\delta>0$ to be determined later, consider the rescaled coordinate $\hat x=\frac{x}{\delta}$, and write $\hat D=\delta^{-1}D$ for its coordinate derivatives. Setting $\hat u=\diag(\delta^{-t_k})u$, the rescaling $(\delta^{s_j}P_j(x,u))_{j=1,\ldots,N}=0$ then reads
  \[
    L(\delta\hat x,(D^{<t_\ell+s_j}u_\ell(\delta\hat x)),\hat D) \hat u = f(\delta) := -\bigl(\delta^{s_j}\tilde P_j(\delta\hat x,(D^{<t_\ell+s_j}u_\ell(\delta\hat x)))\bigr)_{j=1,\ldots,N}.
  \]
  Working in $|\hat x|\leq 2$, we have $f(\delta)\in\bigoplus_j \cC^{m+1+\bar s-s_j,\alpha}$ (with norm depending on $\delta$). Moreover, since the norm of $D^{<t_\ell+s_j}u_\ell(\delta\hat x)-D^{<t_\ell+s_j}u_\ell(0)\in\cC^{m+1+\bar s-s_j,\alpha}$ is of size $\cO(\delta)$, we can rewrite this further as
  \[
    \cL\hat u + \tilde\cL\hat u = f(\delta),
  \]
  where $\cL=L(0,(D^{<t_\ell+s_j}u_\ell(0)),\hat D)$ is Douglis--Nirenberg elliptic, and where the remainder term $\tilde\cL\in(\Diff^{t_k+s_j})_{j,k=1,\ldots,N}$ has coefficients of size $\cO(\delta)$ in $|\hat x|\leq 2$. Let $\chi\in\CIc(B(0,2))$ be identically $1$ on $B(0,1)$; then
  \[
    \cL(\chi\hat u) + \tilde\cL(\chi\hat u) = \tilde f(\delta),\qquad
    \tilde f(\delta):=\chi f(\delta) + [\cL+\tilde\cL,\chi]\hat u \in \bigoplus_j \cC^{m+1+\bar s-s_j,\alpha}.
  \]
  Let $Q\in(\Psi^{-s_k-t_j})_{j,k=1,\ldots,N}$ denote a parametrix of $\cL$ near $\supp\chi$, with Schwartz kernel compactly supported in $B(0,2)\times B(0,2)$, and so that $Q\cL=I+R$ where $R\circ\chi\in\Psi^{-1}$. We then have
  \[
    Q\tilde f(\delta) = (Q\cL + Q\tilde\cL)(\chi\hat u) = (I+Q\tilde\cL)(\chi\hat u) + R(\chi\hat u).
  \]
  But $Q\tilde f(\delta),R(\chi\hat u)\in\bigoplus_k \cC^{m+1+\bar s+t_k,\alpha}$, while the operator norm of $Q\tilde\cL$ on $\bigoplus_k \cC^{m+\eta+\bar s+t_k,\alpha}$ is of size $\cO(\delta)$ uniformly for $\eta\in[0,1]$. (We use here the boundedness of ps.d.o.s on H\"older spaces, see e.g.\ \cite[\S13.8]{TaylorPDE3}.) Thus, for sufficiently small $\delta$, we can invert $I+Q\tilde\cL$ (with the choice $\eta=0$) via a Neumann series, giving
  \[
    \chi\hat u = (I+Q\tilde\cL)^{-1}\bigl(Q\tilde f(\delta)-R(\chi\hat u)\bigr).
  \]
  But the right hand side is now also defined for the choice $\eta=1$ (upon shrinking $\delta$ further if necessary). This improves the regularity of $\chi\hat u$, and thus of $\chi(x/\delta)u(x)$, by $1$ order relative to the inductive hypothesis. The proof is complete.
\end{proof}

\bibliographystyle{alphaurl}


\end{document}